\theoremstyle{plain}
\newtheorem{theorem}{Theorem}[section]
\newtheorem{lemma}[theorem]{Lemma}
\newtheorem{proposition}[theorem]{Proposition}
\theoremstyle{definition}
\newtheorem{remark}[theorem]{Remark}
\newtheorem{remarks}[theorem]{Remarks}
\newtheorem{definition}[theorem]{Definition}
\newtoks\by
\newtoks\paper
\newtoks\book
\newtoks\jour
\newtoks\yr
\newtoks\pages
\newtoks\vol
\newtoks\publ
\def\ota{{\hbox\vol{???}}}
\def\cLear{\by=\ota\paper=\ota\book=\ota\jour=\ota\yr=\ota
\pages=\ota\vol=\ota\publ=\ota}
\def\endpaper{\the\by, \the\paper.
{\it\the\jour\/} {\bf \the\vol} (\the\yr), \the\pages.\cLear}
\def\endbook{\the\by, {\it\the\book}. \the\publ.\cLear}
\def\endprep{\the\by, \the\paper. \the\jour.\cLear}
\def\name#1#2{#1 #2}
\def\et{ and }
\numberwithin{equation}{section}
\def\esup{\operatorname{ess\,sup}}
\def\einf{\operatorname{ess\,inf}}
\def\sgn{\operatorname{sgn}}
\begin{document}

\title{Compactness of higher-order Sobolev embeddings}
\author{Lenka Slav\'{\i}kov\'a}

\address{Department of Mathematical Analysis\\
Faculty of Mathematics and Physics\\
Charles University\\
Sokolovsk\'a~83\\
186~75 Praha~8\\
Czech Republic} \email{slavikova@karlin.mff.cuni.cz}

\subjclass[2000]{46E35, 46E30} \keywords{Compactness, Sobolev space, rearrangement-invariant space, isoperimetric function, almost-compact embedding, John domain, Maz'ya domain, product probability space, integral operator.}

\thanks{This research was partly supported by the
the grant no. P201/13/14743S of the Grant
Agency of the Czech Republic and by the grant SVV-2013-267316.}

\begin{abstract}
We study higher-order compact Sobolev embeddings on a domain $\Omega \subseteq \mathbb R^n$ endowed
with a probability measure $\nu$ and satisfying certain isoperimetric inequality. Given $m\in \mathbb N$, we present a
condition on a pair of rearrangement-invariant spaces $X(\Omega,\nu)$ and $Y(\Omega,\nu)$ which suffices
to guarantee a compact embedding of the Sobolev space $V^mX(\Omega,\nu)$ into $Y(\Omega,\nu)$. The
condition is given in terms of compactness of certain one-dimensional operator depending on the isoperimetric function of $(\Omega,\nu)$.
We then apply this result to the characterization of higher-order compact Sobolev
embeddings on concrete measure spaces, including John domains, Maz'ya classes of Euclidean domains and product probability spaces, whose standard example is the Gauss space.
\end{abstract}

\maketitle

\section{Introduction}\label{S:intro}

Embeddings of Sobolev spaces into other function spaces play a very important role in modern functional analysis. Although Sobolev spaces on the Euclidean space $\mathbb R^n$ and on bounded Euclidean domains having a Lipschitz boundary are discussed most frequently, it turns out that Sobolev spaces on various other domains, possibly endowed with more general measures than just with the Lebesgue one, are of interest as well. For instance, the class of John domains (see Section~\ref{S:sobolev} for a definition), which is strictly larger than the class of domains having a Lipschitz boundary, appears in connection with the study of holomorphic dynamical systems and quasiconformal mappings. It was shown that Sobolev inequalities on John domains have the same form as in the standard case of Lipschitz domains, see~\cite{BO, HK, KM, CPS}. Furthermore, among quite a wide class of Euclidean domains, John domains are exactly those for which the Sobolev inequality holds in this form~\cite{BK}. 
Another important example is the Gauss space, that is, $\mathbb R^n$ endowed with the Gauss measure $\gamma_n$ defined by
$$
d\gamma_n(x)=(2\pi)^{-\frac{n}{2}} e^{\frac{-|x|^2}{2}}\,dx.
$$
In contrast to the Euclidean setting, Sobolev inequalities on the Gauss space are dimension-free, which yields the possibility to extend them also to infinite dimensions. This is of use in the study of quantum fields, since this study can often be reduced to Sobolev inequalities in infinitely many variables.




One possible way how to prove Sobolev embeddings is to derive them from isoperimetric inequalities for the underlying domains. This connection between Sobolev embeddings and isoperimetric inequalities was first found by Maz'ya in~\cite{M1960} and~\cite{M1961}. His discovery then led to an extensive research on this topic, which resulted in a number of important contributions that are considered classical these days (see, e.g., those by Moser~\cite{Moser}, Talenti~\cite{T}, Aubin~\cite{A} and Br\'ezis and Lieb~\cite{BL}), and which has continued until now.

Let us note that, until a very recent time, almost all available results on the interplay between Sobolev embeddings and isoperimetric inequalities involved only first-order embeddings. In our recent paper with Andrea Cianchi and Lubo\v s Pick~\cite{CPS} we have developed a method based on deriving higher-order Sobolev embeddings via subsequent iteration of first-order ones, which enables us to derive also higher-order Sobolev embeddings from isoperimetric inequalities. Furthermore, and more significantly, for customary underlying domains (e.g., for John domains and for the Gauss space, which we have already briefly mentioned) the results obtained by this method are sharp in the context of the class of rearrangement-invariant spaces.

In the present paper we show that not only continuous higher-order Sobolev embeddings but also the compact ones, can be derived from isoperimetric properties of the underlying domains, and that the results obtained in this way are sharp in many customary situations.

\medskip

Let us now describe the subject of the paper more precisely. We shall study compact Sobolev embeddings on a domain $\Omega$ in $\mathbb R^n$ endowed with a probability measure $\nu$ which is absolutely continuous with respect to the Lebesgue measure. We also require that the density of $\nu$ fulfils some technical assumptions, see Section~\ref{S:sobolev} for more details. For any $\nu$-measurable set $E\subseteq \Omega$ we denote by $P_{\nu}(E,\Omega)$ its perimeter in $\Omega$ with respect to $\nu$ (a precise definition can be found in Section~\ref{S:sobolev} again). The isoperimetric properties of $(\Omega,\nu)$ are described by the so-called isoperimetric function of $(\Omega,\nu)$, denoted $I_{\Omega,\nu}$. It is the largest function on $[0,1]$ with values in $[0,\infty]$ which is nondecreasing on $[0,\frac{1}{2}]$, nonincreasing on $[\frac{1}{2},1]$ and for which the isoperimetric inequality
$$
P_\nu(E,\Omega) \geq I_{\Omega,\nu}(\nu(E))
$$
holds for every $\nu$-measurable $E\subseteq \Omega$.

The question of finding the exact form of $I_{\Omega,\nu}$ is very difficult and has been solved only in few special cases, such as the Euclidean ball~\cite{M} and the Gauss space~\cite{Bor}. The asymptotic behaviour of $I_{\Omega,\nu}$ at $0$, in which we are interested, can be however evaluated more easily, and is therefore known for quite a wide class of domains, including Euclidean John domains (see~\cite{HK} combined with~\cite[Corollary 5.2.3, page 297]{M}) or product probability spaces~\cite{BCR}, which extend the Gauss space.

Given $m\in \mathbb N$ and a rearrangement-invariant space $X(\Omega,\nu)$, we will consider the $m$-th order Sobolev space $V^mX(\Omega,\nu)$ consisting of all $m$-times weakly differentiable functions on $\Omega$ whose $m$-th order weak derivatives belong to the space $X(\Omega,\nu)$. A precise definition of the notion {\it rearrangement-invariant space} can be found in Section~\ref{S:ri}, we just briefly recall that a rearrangement-invariant space is, roughly speaking, a Banach space consisting of $\nu$-measurable functions on $\Omega$ in which the norm of a function depends only on the measure of its level sets. A basic example of rearrangement-invariant spaces are Lebesgue spaces; besides them, the class of rearrangement-invariant spaces includes many further families of function spaces, such as Orlicz spaces, Lorentz spaces, etc.



In~\cite{CPS} we have shown that a continuous embedding of the Sobolev space $V^mX(\Omega,\nu)$ into a rearrangement-invariant space $Y(\Omega,\nu)$ is implied by a certain one-dimensional inequality depending on the representation norms $\|\cdot\|_{X(0,1)}$ and $\|\cdot\|_{Y(0,1)}$ of $X(\Omega,\nu)$ and $Y(\Omega,\nu)$, respectively, on $m$ and on the asymptotic behaviour of $I_{\Omega,\nu}$ at $0$, described in terms of a nondecreasing function $I$ giving a lower bound for the isoperimetric function at $0$. We remark that this inequality can be understood as boundedness of a certain integral operator from the representation space $X(0,1)$ into $Y(0,1)$. The above mentioned operator will be denoted by $H^m_I$ in what follows and has the form
\begin{equation}\label{E:H}
H^m_I f(t)=\frac{1}{(m-1)!}\int_t^1 \frac{|f(s)|}{I(s)} \left(\int_t^s \frac{\,dr}{I(r)} \right)^{m-1} \,ds, ~~t\in (0,1),
\end{equation}
for any Lebesgue measurable function $f$ on $(0,1)$. Moreover, if the function $I$ satisfies the additional assumption
\begin{equation}\label{E:assumption_intro}
\int_0^s \frac{\,dr}{I(r)} \approx \frac{s}{I(s)}, ~~s\in (0,1),
\end{equation}
(here, and in what follows, the symbol $\approx$ denotes the equivalence up to multiplicative constants),
then $H^m_I$ can be replaced by a considerably simpler operator, $K^m_I$, defined at every Lebesgue measurable function $f$ on $(0,1)$ by
$$
K^m_If(t)=\int_t^1 |f(s)|\frac{s^{m-1}}{(I(s))^m}\,ds, ~~t\in (0,1).
$$
We note that while $H^m_I$ is (possibly) a kernel operator, $K^m_I$ is just a weighted Hardy-type operator, which is far easier to work with. We also recall that important customary examples are available for the cases when~\eqref{E:assumption_intro} is valid as well as for the cases when~\eqref{E:assumption_intro} fails.


The main aim of the present paper is to prove that compactness of the operator $H^m_I$ from $X(0,1)$ into $Y(0,1)$
implies the compact embedding of $V^mX(\Omega,\nu)$ into $Y(\Omega,\nu)$ (Theorem~\ref{T:main}). We will also show (Theorem~\ref{T:K}) that if~\eqref{E:assumption_intro} is fulfilled, then the same result holds with $H^m_I$ replaced by $K^m_I$. 
The proof of Theorem~\ref{T:main} strongly depends on the use of almost-compact embeddings, called also absolutely continuous embeddings in some literature. They have been studied, e.g., in~\cite{FMP} and~\cite{S}. It is well known that such embeddings have a great significance for deriving compact Sobolev embeddings.

In many customary situations, the sufficient condition in terms of the operator $H^m_I$ turns out to be also necessary for compactness of the corresponding Sobolev embedding. We demonstrate this fact on the cases of Euclidean John domains, product probability spaces and Maz'ya classes of domains. The latter classes consist of those bounded Euclidean domains whose isoperimetric function is bounded from below by a multiple of some fixed power function. Unlike the case of John domains and product probability spaces, in which the necessity holds for each individual domain, for Maz'ya classes the sharpness is fulfilled in a wider sense: there is one domain in each class for which the necessity holds.

\medskip

The structure of the paper is as follows. In the next section we introduce rearrangement-invariant spaces and their almost-compact embeddings. Section~\ref{S:sobolev} contains a description of the measure spaces that will come into play, of their isoperimetric properties and of Sobolev spaces built upon rearrangement-invariant spaces over these measure spaces. We also recall those results of the paper~\cite{CPS} that are used in the proofs of our theorems.

Section~\ref{S:operators} contains one-dimensional results which play a key role in the proofs of our main results, appearing in Section~\ref{S:main}. We prove several theorems concerning compactness of the operator $H^m_J$ which is defined as in~\eqref{E:H} but with $I$ replaced by a more general function $J$ (which, in particular, is \textit{not} a-priori assumed to be non-decreasing on $(0,1]$). These results, thanks to the versatility of $J$, can be later used to handle compactness of both operators $H^m_I$ and $K^m_I$, and to provide thereby a unique scheme appropriate for the proofs of both main theorems~\ref{T:main} and~\ref{T:K}.


An important result of Section~\ref{S:operators} is Theorem~\ref{T:opt_range}, in which we characterize compactness of $H^m_J$ from a rearrangement-invariant space $X(0,1)$ into another rearrangement-invariant space $Y(0,1)$, denoted by
\begin{equation}\label{E:H_compactness}
H^m_J: X(0,1) \rightarrow \rightarrow Y(0,1),
\end{equation}
by the fact that $H^m_J$ maps the unit ball of $X(0,1)$ into a set of functions which is of uniformly absolutely continuous norm in $Y(0,1)$. A characterization of~\eqref{E:H_compactness} given in terms of the operator associate to $H^m_J$ and its uniform absolute continuity from the associate space to $Y(0,1)$, denoted by $Y'(0,1)$, into $X'(0,1)$, is provided in Theorem~\ref{T:opt_domain}. Each of the above mentioned conditions can be reformulated as an almost-compact embedding between certain rearrangement-invariant spaces. These two characterizations of~\eqref{E:H_compactness} are the key step in the proof of our main results.

We note that Theorems~\ref{T:opt_range} and~\ref{T:opt_domain} which characterize compactness of $H^m_J$ from $X(0,1)$ into $Y(0,1)$ require certain restrictions on the spaces involved, namely that $Y(0,1) \neq L^\infty(0,1)$ (Theorem~\ref{T:opt_range}) and $X(0,1)\neq L^1(0,1)$ (Theorem~\ref{T:opt_domain}). 
We also find, in Theorem~\ref{T:compact_operator}, an (almost) universal condition which characterizes~\eqref{E:H_compactness}. It has the form
\begin{equation}\label{E:H_ac}
\lim_{a\to 0_+} \sup_{\|f\|_{X(0,1)}\leq 1} \left\|H^m_J(\chi_{(0,a)}f)\right\|_{Y(0,1)} =0.
\end{equation}
There are still few special situations in which this new condition is not equivalent to~\eqref{E:H_compactness}. However, as observed in the first part of Theorem~\ref{T:main}, this cannot happen in the most important case when $J$ is nondecreasing on $(0,1]$. Furthermore, it turns out that in the cases when~\eqref{E:H_compactness} and~\eqref{E:H_ac} are not equivalent, the condition~\eqref{E:H_ac} is even more suitable to characterize compact Sobolev embeddings (see Theorem~\ref{T:K} and Remarks~\ref{T:remarks_5}, part \textup{(i)}).

Section~\ref{S:main} contains the main results of the paper that have been already described above.

In Section~\ref{S:concrete} we apply the results of Section~\ref{S:main} to the characterization of compact Sobolev embeddings on John domains (Theorem~\ref{T:john}), on Maz'ya classes of domains (Theorem~\ref{T:mazya}) and on product probability spaces (Theorem~\ref{T:probability}). The final Section~\ref{S:examples} then provides examples of compact Sobolev embeddings for concrete pairs of rearrangement-invariant spaces over the measure spaces discussed in Section~\ref{S:concrete}.

\section{Rearrangement-invariant spaces}\label{S:ri}

In this section we recall some basic facts from the theory of rearrangement-invariant spaces. Our standard general reference is~\cite{BS}.

Let $(R,\mu)$ be a nonatomic measure space satisfying $\mu(R)=1$. In fact, $R$ will always be a domain in $\mathbb R^n$ for some $n\in \mathbb N$. If the measure $\mu$ is omitted, we assume that it is the $n$-dimensional Lebesgue measure on $R$. 
We denote by $\mathcal M(R,\mu)$ the collection of all $\mu$-measurable functions on $R$ having its values in $[-\infty,\infty]$. We also set $\mathcal M_+(R,\mu)=\{f\in \mathcal M(R,\mu): f\geq 0 \textup{ on } R\}$.

Suppose that $f\in \mathcal M(R,\mu)$. Then the {\it distribution function} $\mu_f$ of the function $f$ is given by
$$
\mu_f(\lambda)=\mu(\{x\in R: |f(x)|>\lambda\}), ~~\lambda\in [0,\infty),
$$
and the {\it nonincreasing rearrangement} $f^*_\mu$ of $f$ is defined by
$$
f^*_\mu(t)=\inf\{\lambda \in [0,\infty): \mu_f(\lambda)\leq t\}, ~~t\in (0,1).
$$
Furthermore, we define $f^{**}_\mu$, the {\it maximal function} of $f^*_\mu$, by
$$
f^{**}_\mu(t)=\frac{1}{t}\int_0^t f^*_\mu(s)\,ds, ~~t\in (0,1).
$$
If two functions $f$, $g \in \mathcal M(R,\mu)$ fulfil $\mu_f=\mu_g$ (or, equivalently, $f^*_\mu=g^*_\mu$), we say that $f$ and $g$ are equimeasurable and write $f\sim_\mu g$. 

The {\it Hardy-Littlewood inequality} \cite[Chapter 2, Theorem 2.2]{BS} tells us that
$$
\int_R |fg|\,d\mu \leq \int_0^1 f^*_\mu(s) g^*_\mu(s) \,ds
$$
is satisfied for all $f$, $g\in \mathcal M(R,\mu)$. 

A functional $\|\cdot\|_{X(R,\mu)}: \mathcal M(R,\mu) \rightarrow [0,\infty]$ is called a {\it rearrangement-invariant norm} if, for all functions $u$, $v\in \mathcal M(R,\mu)$ and $f$, $g \in \mathcal M_+(R,\mu)$, for all sequences $(f_k)_{k=1}^\infty$ in $\mathcal M_+(R,\mu)$ and for all constants $a \geq 0$, the following properties are satisfied:

\begin{itemize}
\item[(P1)] $\|f\|_{X(R,\mu)}=0 \Leftrightarrow f=0$ $\mu$-a.e., $\|af\|_{X(R,\mu)}=a\|f\|_{X(R,\mu)}$,
\par\noindent 
$\|f+g\|_{X(R,\mu)} \leq\|f\|_{X(R,\mu)}+\|g\|_{X(R,\mu)}$;
\item[(P2)] $f\leq g$ $\mu$-a.e. $\Rightarrow$ $\|f\|_{X(R,\mu)}\leq \|g\|_{X(R,\mu)}$;
\item[(P3)] $f_k\uparrow f$ $\mu$-a.e. $\Rightarrow$ $\|f_k\|_{X(R,\mu)} \uparrow \|f\|_{X(R,\mu)}$;
\item[(P4)] $\|1\|_{X(R,\mu)}<\infty$;
\item[(P5)] $\int_R f\,d\mu \leq C \|f\|_{X(R,\mu)}$ for some constant $C>0$ independent of $f$;
\item[(P6)] $u \sim_\mu v$ $\Rightarrow$ $\|u\|_{X(R,\mu)}=\|v\|_{X(R,\mu)}$.
\end{itemize}
The collection of all functions $f\in \mathcal M(R,\mu)$ for which $\|f\|_{X(R,\mu)}<\infty$ is then called the \textit{rearrangement-invariant space} $X(R,\mu)$. We recall that the functional $\|\cdot\|_{X(R,\mu)}$ defines a norm on $X(R,\mu)$ and that $X(R,\mu)$ is a Banach space with respect to this norm.

We now summarize some basic properties of rearrangement-invariant spaces. We first note that each function $f\in X(R,\mu)$ is finite $\mu$-a.e.\ on $R$. Furthermore, the {\it Fatou lemma} \cite[Chapter 1, Lemma 1.5 (iii)]{BS} yields that whenever $(f_k)_{k=1}^\infty$ is a sequence in $X(R,\mu)$ converging to some function $f$ $\mu$-a.e. and fulfilling that $\liminf_{k\to \infty} \|f_k\|_{X(R,\mu)}<\infty$, then $f\in X(R,\mu)$ and
$$
\|f\|_{X(R,\mu)} \leq \liminf_{k\to \infty} \|f_k\|_{X(R,\mu)}.
$$
Moreover, if $(f_k)_{k=1}^\infty$ is a sequence which converges to some function $f$ in the norm of the space $X(R,\mu)$, then $(f_k)_{k=1}^\infty$ converges to $f$ in measure. In particular, there is a subsequence of $(f_k)_{k=1}^\infty$ which converges to $f$ $\mu$-a.e.\ on $R$.

Suppose that $\|\cdot\|_{X(R,\mu)}$ is a rearrangement-invariant norm. Let us consider the functional $\|\cdot\|_{X'(R,\mu)}: \mathcal M(R,\mu) \rightarrow [0,\infty]$ defined by
$$
\|f\|_{X'(R,\mu)} = \sup_{\|g\|_{X(R,\mu)}\leq 1} \int_R |fg| \,d\mu,  \quad f\in \mathcal M(R,\mu).
$$
Then $\|\cdot\|_{X'(R,\mu)}$ is a rearrangement-invariant norm, called the {\it associate norm} of $\|\cdot\|_{X(R,\mu)}$. The corresponding rearrangement-invariant space $X'(R,\mu)$ is then called the {\it associate space} of $X(R,\mu)$. It is not hard to observe that
$$
\|f\|_{X'(R,\mu)} = \sup_{\|g\|_{X(R,\mu)}\leq 1} \int_0^1 f^*_\mu (s) g^*_\mu(s)\,ds, ~~f\in \mathcal M(R,\mu).
$$


If $\|\cdot\|_{X(R,\mu)}$ and $\|\cdot\|_{Y(R,\mu)}$ are rearrangement-invariant norms, then the continuous embedding $X(R,\mu) \hookrightarrow Y(R,\mu)$ holds if and only if $X(R,\mu) \subseteq Y(R,\mu)$, see~\cite[Chapter 1, Theorem 1.8]{BS}. We shall write $X(R,\mu)=Y(R,\mu)$ if the set of functions belonging to $X(R,\mu)$ coincides with the set of functions belonging to $Y(R,\mu)$. In this case, the rearrangement-invariant norms $\|\cdot\|_{X(R,\mu)}$ and $\|\cdot\|_{Y(R,\mu)}$ are equivalent, in the sense that there are positive constants $C_1$, $C_2$ such that 
$$
C_1 \|f\|_{X(R,\mu)} \leq \|f\|_{Y(R,\mu)} \leq C_2 \|f\|_{X(R,\mu)}, \quad f\in \mathcal M(R,\mu).
$$
Furthermore, according to \cite[Chapter 1, Proposition 2.10]{BS}, the embedding $X(R,\mu) \hookrightarrow Y(R,\mu)$ is fulfilled if and only if $Y'(R,\mu) \hookrightarrow X'(R,\mu)$.

The {\it Luxemburg representation theorem} \cite[Chapter 2, Theorem 4.10]{BS} tells us that for every rearrangement-invariant norm $\|\cdot\|_{X(R,\mu)}$ there is a rearrangement-invariant norm $\|\cdot\|_{X(0,1)}$ fulfilling  
\begin{equation}\label{E:representation}
\|f\|_{X(R,\mu)} = \|f^*_\mu\|_{X(0,1)}, \quad f\in \mathcal M(R,\mu).
\end{equation}
The rearrangement-invariant norm $\|\cdot\|_{X(0,1)}$ is called the {\it representation norm} of $\|\cdot\|_{X(R,\mu)}$, and the corresponding rearrangement-invariant space $X(0,1)$ is called the {\it representation space} of $X(R,\mu)$.

Since the measure space $(R,\mu)$ is nonatomic, the range of $\mu$ is the interval $[0,1]$. Therefore, for every $t\in [0,1]$ there is a set $E_t\subseteq R$ for which $\mu(E_t)=t$. Suppose that $\|\cdot\|_{X(R,\mu)}$ is a rearrangement-invariant norm. Then the {\it fundamental function} $\varphi_X$ of $\|\cdot\|_{X(R,\mu)}$ is defined by
$$
\varphi_X(t)=\|\chi_{E_t}\|_{X(R,\mu)}, \quad t\in [0,1].
$$
It follows from the property \textup{(P6)} of rearrangement-invariant norms that the definition of $\varphi_X$ does not depend on the particular choice of sets $E_t$. Owing to~\cite[Corollary 5.3, Chapter 2]{BS}, $\varphi_X$ is \textit{quasiconcave}, in the sense that $\varphi_X(0)=0$, $\varphi_X$ is nondecreasing on $[0,1]$ and $\frac{\varphi_X(t)}{t}$ is nonincreasing on $(0,1]$.
Furthermore, one can easily observe that
$$
\varphi_X(t)=\|\chi_{(0,t)}\|_{X(0,1)},  \quad t\in (0,1].
$$ 

We say that a function $f\in X(R,\mu)$ has an \textit{absolutely continuous norm} in $X(R,\mu)$ if for every sequence $(E_k)_{k=1}^\infty$ of $\mu$-measurable subsets of $R$ fulfilling $\chi_{E_k} \rightarrow 0$ $\mu$-a.e. we have
$$
\lim_{k \to \infty} \|\chi_{E_k} f\|_{X(R,\mu)}=0.
$$
An easy observation yields that this can be equivalently reformulated by 
$$
\lim_{a\to 0_+} \|\chi_{(0,a)} f^*_\mu\|_{X(0,1)}=0.
$$
The collection of all functions having an absolutely continuous norm in $X(R,\mu)$ is denoted by $X_a(R,\mu)$. 

Further, we say that a subset $S$ of $X(R,\mu)$ is of \textit{uniformly absolutely continuous norm} in $X(R,\mu)$ if for every sequence $(E_k)_{k=1}^\infty$ of $\mu$-measurable subsets of $R$ fulfilling $\chi_{E_k} \rightarrow 0$ $\mu$-a.e.,
$$
\lim_{k \to \infty} \sup_{f\in S} \|\chi_{E_k} f\|_{X(R,\mu)}=0, 
$$
or, equivalently, 
$$
\lim_{a\to 0_+} \sup_{f\in S} \|\chi_{(0,a)} f^*_\mu\|_{X(0,1)}=0.
$$

Suppose that $\|\cdot\|_{X(R,\mu)}$ and $\|\cdot\|_{Y(R,\mu)}$ are rearrangement-invariant norms. We say that $X(R,\mu)$ is \textit{almost-compactly embedded} into $Y(R,\mu)$ and write $X(R,\mu) \overset{*}{\hookrightarrow} Y(R,\mu)$ if 
$$
\lim_{k\to \infty} \sup_{\|f\|_{X(R,\mu)}\leq 1} \|\chi_{E_k} f\|_{Y(R,\mu)}=0
$$
is satisfied for every sequence $(E_k)_{k=1}^\infty$ of $\mu$-measurable subsets of $R$ fulfilling $\chi_{E_k} \rightarrow 0$ $\mu$-a.e. Observe that $X(R,\mu) \overset{*}{\hookrightarrow} Y(R,\mu)$ holds if and only if the unit ball of $X(R,\mu)$ is of uniformly absolutely continuous norm in $Y(R,\mu)$. We shall make use of two characterizations of $X(R,\mu) \overset{*}{\hookrightarrow} Y(R,\mu)$, namely,
$$
\lim_{a\to 0_+} \sup_{\|f\|_{X(R,\mu)}\leq 1} \sup_{\mu(E)\leq a} \|\chi_Ef\|_{Y(R,\mu)}=0
$$
and
$$
\lim_{a\to 0_+} \sup_{\|f\|_{X(0,1)}\leq 1} \|\chi_{(0,a)}f^*\|_{Y(0,1)}=0.
$$ 
Note that the relation $X(R,\mu) \overset{*}{\hookrightarrow} Y(R,\mu)$ always implies $X(R,\mu) \hookrightarrow Y(R,\mu)$. Another necessary condition for $X(R,\mu) \overset{*}{\hookrightarrow} Y(R,\mu)$ is the following:
$$
\lim_{a\to 0_+} \frac{\varphi_{Y}(a)}{\varphi_{X}(a)} = 0,
$$
\cite[Section 3]{FMP}. Furthermore, $X(R,\mu) \overset{*}{\hookrightarrow} Y(R,\mu)$ is fulfilled if and only if $Y'(R,\mu) \overset{*}{\hookrightarrow} X'(R,\mu)$, see~\cite[Section 4, Property 5]{FMP}. 

Let us now give some examples of rearrangement-invariant norms. A basic example are the \textit{Lebesgue norms} $\|\cdot\|_{L^p(0,1)}$, $p\in [1,\infty]$, defined for all $f\in \mathcal M(R,\mu)$ by
$$
\|f\|_{L^p(R,\mu)}=
\begin{cases}
\left(\int_R |f|^p\,d\mu\right)^{1/p},\, &p<\infty;\\
\esup_{R} |f|,\, &p=\infty.
\end{cases}
$$
The corresponding rearrangement-invariant spaces $L^p(R,\mu)$ are then called the {\it Lebesgue spaces}. Recall that for each rearrangement-invariant space $X(R,\mu)$ the embeddings
\begin{equation}\label{E:p5}
L^\infty(R,\mu) \hookrightarrow X(R,\mu) \hookrightarrow L^1(R,\mu)
\end{equation}
hold. We denote by $C_{X}$ the constant from the latter embedding, that is, we have
\begin{equation}\label{E:constant}
\|f\|_{L^1(R,\mu)} \leq C_{X} \|f\|_{X(R,\mu)}, \quad f\in X(R,\mu),
\end{equation}
and $C_X$ is the least real number for which~\eqref{E:constant} is satisfied.

It is a well-known fact that a rearrangement-invariant space $X(R,\mu)$ is different from $L^\infty(R,\mu)$ if and only if $\lim_{s\to 0_+} \varphi_X(s)=0$. Furthermore, owing to~\cite[Theorems 5.2 and 5.3]{S}, $L^\infty(R,\mu) \overset{*}{\hookrightarrow} X(R,\mu)$ is characterized by $X(R,\mu) \neq L^\infty(R,\mu)$, and 
$X(R,\mu) \overset{*}{\hookrightarrow} L^1(R,\mu)$ holds if and only if $X(R,\mu) \neq L^1(R,\mu)$. 

One can consider also more general functionals $\|\cdot\|_{L^{p,q}(R,\mu)}$ and $\|\cdot\|_{L^{p,q;\alpha}(R,\mu)}$ which were studied, e.g., in~\cite{EOP} and~\cite{OP}. They are given for any $f\in \mathcal M(R,\mu)$ by
$$
\|f\|_{L^{p,q}(R,\mu)}=\left\|f^*_\mu(s) s^{\frac{1}{p}-\frac{1}{q}}\right\|_{L^q(R,\mu)}
$$
and
$$
\|f\|_{L^{p,q;\alpha}(R,\mu)}=\left\|f^*_\mu(s) s^{\frac{1}{p}-\frac{1}{q}}\left(\log \frac{2}{s} \right)^\alpha\right\|_{L^q(R,\mu)},
$$
respectively. Here, we assume that $p\in [1,\infty]$, $q\in [1,\infty]$, $\alpha \in \mathbb R$, and use the convention that $1/\infty=0$. Note that $\|\cdot\|_{L^p(R,\mu)}=\|\cdot\|_{L^{p,p}(R,\mu)}$ and $\|\cdot\|_{L^{p,q}(R,\mu)}=\|\cdot\|_{L^{p,q;0}(R,\mu)}$ for every such $p$ and $q$. However, it turns out that under these assumptions on $p$, $q$ and $\alpha$, $\|\cdot\|_{L^{p,q}(R,\mu)}$ and $\|\cdot\|_{L^{p,q;\alpha}(R,\mu)}$ do not have to be rearrangement-invariant norms. To ensure that $\|\cdot\|_{L^{p,q;\alpha}(R,\mu)}$ is equivalent to a rearrangement-invariant norm, we need to assume that one of the following conditions is satisfied:
\begin{align}\label{E:L-Z_r.i.1}
&p=q=1, ~~ \alpha \geq 0;\\
&1<p<\infty; \label{E:L-Z_r.i.2}\\
&p=\infty,~~ q <\infty, ~~ \alpha + \frac{1}{q}<0; \label{E:L-Z_r.i.3}\\
&p=q=\infty, ~~\alpha \leq 0. \label{E:L-Z_r.i.4}
\end{align}
In this case, $\|\cdot\|_{L^{p,q}(R,\mu)}$ is called a \textit{Lorentz norm}, $\|\cdot\|_{L^{p,q;\alpha}(R,\mu)}$ is called a \textit{Lorentz-Zygmund norm} and the corresponding rearrangement-invariant spaces $L^{p,q}(R,\mu)$ and $L^{p,q;\alpha}(R,\mu)$ are called {\it Lorentz spaces} and {\it Lorentz-Zygmund spaces}, respectively.

Furthermore, if $\|\cdot\|_{L^{p_1,q_1;\alpha_1}(R,\mu)}$ and $\|\cdot\|_{L^{p_2,q_2;\alpha_2}(R,\mu)}$ are equivalent to rearrangement-invariant norms then
$$
L^{p_1,q_1;\alpha_1}(R,\mu) \hookrightarrow L^{p_2,q_2;\alpha_2}(R,\mu)
$$
holds if and only if $p_1>p_2$, or $p_1=p_2$ and one of the following conditions is satisfied:
\begin{align} \nonumber
&p_1<\infty,~~q_1 \leq q_2, ~~ \alpha_1 \geq \alpha_2;\\
&p_1=\infty,~~ q_1 \leq q_2, ~~ \alpha_1+\frac{1}{q_1} \geq \alpha_2 + \frac{1}{q_2}; \label{E:lz_embedding}\\
&q_2 < q_1, ~~ \alpha_1+\frac{1}{q_1} > \alpha_2 + \frac{1}{q_2}. \nonumber
\end{align}


\medskip

When dealing with rearrangement-invariant spaces we will proceed in the following way in what follows. We always start with some rearrangement-invariant norm $\|\cdot\|_{X(0,1)}$. Then we consider the functional $\|\cdot\|_{X(R,\mu)}$
defined by~\eqref{E:representation}. According to~\cite[Chapter 2, Theorem 4.9]{BS}, $\|\cdot\|_{X(R,\mu)}$ is a rearrangement-invariant norm. Furthermore, in view of the Luxemburg representation theorem, each rearrangement-invariant norm over $(R,\mu)$ can be constructed in this way, which justifies our procedure.


\section{Sobolev spaces}\label{S:sobolev}

Let $n\in \mathbb N$ and let $\Omega$ be a domain in $\mathbb R^n$ endowed with a measure $\nu$ satisfying $\nu(\Omega)=1$. We assume that $\nu$ is absolutely continuous with respect to the $n$-dimensional Lebesgue measure $\lambda_n$, and we denote by $\omega$ the density of $\nu$ with respect to $\lambda_n$ (that is, whenever $E\subseteq \Omega$ is $\nu$-measurable, we have $\nu(E)=\int_E \omega(x)\,dx$). The function $\omega$ is supposed to be Borel measurable and fulfilling that for a.e.\ $x\in \Omega$ there is an open ball $B_x$ centered in $x$ such that $B_x \subseteq \Omega$ and 
$$
\einf_{B_x} \omega >0.
$$
Notice that a subset of $\Omega$ (or a function defined on $\Omega$) is $\nu$-measurable if and only if it is Lebesgue measurable. We shall write {\it measurable} instead of {\it Lebesgue measurable} in what follows.

For every measurable $E\subseteq \Omega$ we define its {\it perimeter} in $(\Omega,\nu)$ by
$$
P_\nu(E,\Omega)=\int_{\Omega \cap \partial^M E} \omega(x)\,d\mathcal H^{n-1}(x),
$$
where $\partial^M E$ stands for the essential boundary of $E$, in the sense of geometric measure theory (see, e.g., \cite{M}), and $\mathcal H^{n-1}$ denotes the $(n-1)$-dimensional Hausdorff measure. The {\it isoperimetric function} $I_{\Omega,\nu}: [0,1] \rightarrow [0,\infty]$ is then defined by
$$
I_{\Omega,\nu}(s)=\inf \left\{P_\nu(E,\Omega): E\subseteq \Omega, ~s\leq \nu(E)\leq \frac{1}{2} \right\}
$$
if $s\in [0,\frac{1}{2}]$, and by $I_{\Omega,\nu}(s)=I_{\Omega,\nu}(1-s)$ if $s\in (\frac{1}{2},1]$.

\begin{definition}\label{T:definition}
Let $(\Omega,\nu)$ be as above, and let $I: (0,1] \rightarrow (0,\infty)$ be a function. We say that $(\Omega,\nu,I)$ is a {\it compatible triplet} if the following conditions are satisfied:

\medskip
\textup{(C1)} $I$ is nondecreasing on $(0,1]$;

\textup{(C2)}  $I$ satisfies
\begin{equation}\label{E:infimum}
\inf_{t\in (0,1]} \frac{I(t)}{t}>0;
\end{equation}

\textup{(C3)} there exists $c\in (0,2)$ such that
\begin{equation}\label{E:isoperimetric}
I_{\Omega,\nu}(t) \geq cI(ct), ~~t\in (0,1/2].
\end{equation}
\end{definition}

We note that if $I$ fulfils \textup{(C1)} and there is a constant $D>0$ for which
$$
I_{\Omega,\nu}(t) \geq DI(t), ~~t\in (0,1/2], 
$$
then \textup{(C3)} is fulfilled as well, since
$$
I_{\Omega,\nu}(t) \geq DI(t) \geq \min\{D,1\}I(\min\{D,1\}t), ~~t\in (0,1/2],
$$
owing to \textup{(C1)}.

Let us now give a few examples of compatible triplets.

\medskip
Suppose that $n\in \mathbb N$, $n\geq 2$. We recall that a bounded domain $\Omega \subseteq \mathbb R^n$ is called a {\it John domain} if there exist a constant $c\in (0,1)$ and a point $x_0\in \Omega$ such that for every $x\in \Omega$ there are $l >0$ and a rectifiable curve $\varpi :[0,l] \rightarrow \Omega$, parametrized by arclength, such that $\varpi(0)=x$, $\varpi(l)=x_0$, and
$$
dist(\varpi(r),\partial \Omega)\geq cr, ~~r\in [0,l].
$$ 
In what follows, we shall consider (with no loss of generality) only John domains whose Lebesgue measure is equal to $1$. 

It is known that each John domain satisfies
$$
I_\Omega(t) \approx t^{\frac{1}{n'}}, ~~t\in [0,1/2],
$$
where $n'=\frac{n}{n-1}$.
Therefore, if we denote $I(t)=t^{\frac{1}{n'}}$, $t\in (0,1]$, then $(\Omega,\lambda_n,I)$ is a compatible triplet. 

\medskip
Let $\alpha \in [\frac{1}{n'},1]$. We denote by $\mathcal J_\alpha$ the {\it Maz'ya class} of all bounded Euclidean domains $\Omega \subseteq \mathbb R^n$ with $\lambda_n(\Omega)=1$ fulfilling that there is a positive constant $C$, possibly depending on $\Omega$, such that 
$$
I_{\Omega}(s) \geq Cs^\alpha, ~~s\in [0,1/2].
$$
Set $I_\alpha(t)=t^\alpha$, $t\in (0,1]$. Then $(\Omega,\lambda_n,I_\alpha)$ is another example of a compatible triplet.

\medskip
As a final example we mention product probability spaces, namely, $\mathbb R^n$ with the product probability measure defined as follows. 

Assume that $\Phi: [0,\infty) \rightarrow [0,\infty)$ is a strictly increasing convex function such that it is twice continuously differentiable on $(0,\infty)$, $\sqrt{\Phi}$ is concave on $[0,\infty)$ and $\Phi(0)=0$. Define the one-dimensional probability measure $\mu_{\Phi}=\mu_{\Phi,1}$ by
\begin{equation}\label{E:one_dim}
d\mu_{\Phi}(x)=c_\Phi e^{-\Phi(|x|)}\,dx,
\end{equation}
where the constant $c_\Phi>0$ is chosen in such a way that $\mu_\Phi(\mathbb R)=1$. We also define the product measure $\mu_{\Phi,n}$ on $\mathbb R^n$, $n\geq 2$, by
\begin{equation}\label{E:n_dim}
\mu_{\Phi,n}=\underbrace{\mu_\Phi \times \dots \times \mu_\Phi}_{n-\textup{times}}.
\end{equation}
Then $(\mathbb R^n, \mu_{\Phi,n})$ is a probability space for every $n\in \mathbb N$ and we have
$$
d\mu_{\Phi,n}(x)=(c_\Phi)^n e^{-(\Phi(|x_1|)+\Phi(|x_2|)+\dots + \Phi(|x_n|))}\,dx.
$$

Define the function $F_\Phi:\mathbb R \rightarrow (0,1)$ by
$$
F_\Phi(t)=\int_t^\infty c_\Phi e^{-\Phi(|r|)}\,dr, ~t\in \mathbb R,
$$
the function $I_\Phi:(0,1) \rightarrow (0,\infty)$ by
$$
I_\Phi(t)=c_\Phi e^{-\Phi(|F_\Phi^{-1}(t)|)}, ~~t\in (0,1),
$$
and the function $L_\Phi: (0,1] \rightarrow (0,\infty)$ by
\begin{equation}\label{E:l_phi}
L_\Phi(t)=t\Phi'\left(\Phi^{-1}\left(\log \frac{2}{t}\right)\right), ~~t\in (0,1].
\end{equation}
Then the isoperimetric function of $(\mathbb R^n, \mu_{\Phi,n})$ satisfies
\begin{equation}\label{E:probability_isoperimetric}
I_{\mathbb R^n, \mu_{\Phi,n}}(t) \approx I_\Phi(t) \approx L_\Phi(t), ~~t\in (0,1/2],
\end{equation}
see~\cite[Proposition 13 and Theorem 15]{BCR}. Further, it was shown in~\cite[Lemma 11.1 \textup{(i)}]{CPS} that $L_\Phi$ is nondecreasing on $(0,1]$. Therefore, $(\mathbb R^n, \mu_{\Phi,n}, L_\Phi)$ is a compatible triplet. 

The main example of product probability measures we have just defined is the $n$-dimensional \textit{Gauss measure}
$$
\,d\gamma_n(x)=(2\pi)^{-\frac{n}{2}}e^{\frac{-|x|^2}{2}}\,dx,
$$
which can be obtained by setting
$$
\Phi(t)=\frac{1}{2} t^2, ~~t\in [0,\infty),
$$
into~\eqref{E:one_dim} (if $n$=1) or~\eqref{E:n_dim} (if $n>1$). 

More generally, measures associated with
$$
\Phi(t)=\frac{1}{\beta} t^\beta, ~~t\in [0,\infty),
$$
for some $\beta \in [1,2]$ are also examples of product probability measures. They are called the \textit{Boltzmann measures}. For each $\beta \in [1,2]$, such $n$-dimensional measure is denoted by $\gamma_{n,\beta}$.
We of course have $\gamma_{n,2}=\gamma_n$.

\medskip
We shall now define Sobolev spaces built upon rearrangement-invariant spaces over $(\Omega,\nu)$. The measure space $(\Omega,\nu)$ is required to satisfy all the above mentioned properties and, moreover, the inequality 
\begin{equation}\label{E:isop_estim}
I_{\Omega,\nu}(t) \geq Ct, ~~t\in [0,1/2],
\end{equation}
has to be fulfilled for some positive constant $C$ independent of $t$. Notice that condition~\eqref{E:isop_estim} is satisfied whenever there is a function $I$ for which $(\Omega,\nu,I)$ is a compatible triplet.

Let $m\in \mathbb N$ and let $u$ be an $m$-times weakly differentiable function on $\Omega$. Given $k\in \{1,2,\dots,m\}$, we denote by $\nabla^k u$ the vector of all $k$-th order weak derivatives of $u$. Moreover, we set $\nabla^0 u=u$. Then the $m$-th order Sobolev space built upon a rearrangement-invariant space $X(\Omega,\nu)$ is the set
\begin{align*}
V^mX(\Omega, \nu)=
\{u: ~&u \textup{ is an $m$-times weakly differentiable function on }\Omega \\
&\textup{such that }|\nabla^m u| \in X(\Omega, \nu) \}.
\end{align*}
According to~\cite[Corollary 4.3]{CPS}, the inclusions $V^mX(\Omega,\nu) \subseteq L^1(\Omega,\nu)$ and $V^mX(\Omega,\nu) \subseteq V^kL^1(\Omega,\nu)$, $k=1,2,\dots,m-1$, are satisfied. Hence, the expression
\begin{equation}\label{E:sobolev}
\|u\|_{V^mX(\Omega, \nu)}=\sum_{k=0}^{m-1}\| |\nabla^k u| \|_{L^1(\Omega, \nu)} +\| |\nabla^m u| \|_{X(\Omega, \nu)}
\end{equation}
defines a norm on $V^mX(\Omega,\nu)$.

In what follows we shall denote by
\begin{equation}\label{E:c_e}
V^mX(\Omega,\nu) \hookrightarrow Y(\Omega,\nu)
\end{equation}
the continuous embedding of the Sobolev space $V^mX(\Omega,\nu)$ into a rearrangement-invariant space $Y(\Omega,\nu)$,
and we shall write
$$
V^mX(\Omega,\nu) \hookrightarrow \hookrightarrow Y(\Omega,\nu)
$$
in order to denote that the embedding~\eqref{E:c_e} is compact.

\medskip
We now state a theorem and a proposition which were proved in~\cite{CPS} and which will be used in what follows.

\begin{theorem}[{\cite[Theorem 5.1]{CPS}}]\label{T:CPS}
Suppose that $(\Omega,\nu,I)$ is a compatible triplet. Let $m\in \mathbb N$ and let $\|\cdot\|_{X(0,1)}$ and $\|\cdot\|_{Y(0,1)}$ be rearrangement-invariant norms. If there exists a constant $C_1>0$ such that 
\begin{equation}\label{E:bound}
\left\|\int_t^1 \frac{f(s)}{I(s)} \left(\int_t^s \frac{\,dr}{I(r)}\right)^{m-1}\,ds\right\|_{Y(0,1)} \leq C_1 \|f\|_{X(0,1)}
\end{equation}
for every nonnegative $f\in X(0,1)$, then
\begin{equation}\label{E:emb}
V^mX(\Omega,\nu) \hookrightarrow Y(\Omega,\nu)
\end{equation}
and, equivalently, there is a constant $C_2>0$ such that
\begin{equation}\label{E:poincare}
\|u\|_{Y(\Omega,\nu)} \leq C_2 \||\nabla^m u|\|_{X(\Omega,\nu)}
\end{equation}
for every $u\in V^mX(\Omega,\nu)$ fulfilling that $\int_\Omega \nabla^k u\,d\nu =0$ for $k=0,1,\dots,m-1$.
\end{theorem}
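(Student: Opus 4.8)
\emph{Strategy.} The plan is to deduce the $m$-th order statement from the first-order one by iterating it $m$ times, exploiting the fact that the operator in~\eqref{E:H} factors as an $m$-fold composition of its first-order version. First I would record this factorization: writing $H^1_I g(t)=\int_t^1 g(s)/I(s)\,ds$, a single application of Fubini's theorem followed by the substitution $G(\sigma)=\int_\sigma^s dr/I(r)$ shows that $H^1_I\circ H^{m-1}_I=H^m_I$, whence by induction $H^m_I=(H^1_I)^m$; the factor $1/(m-1)!$ in~\eqref{E:H} is produced exactly by the iterated integration of the powers of $\int_\sigma^s dr/I(r)$. Note that \textup{(C2)} guarantees that $1/I$ is integrable near $0$, so that all the iterated integrals converge, while \textup{(C3)} lets one replace the genuine isoperimetric function $I_{\Omega,\nu}$ by $I$ up to multiplicative constants (the accompanying dilation being absorbed by standard dilation estimates on rearrangement-invariant spaces).

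\emph{First-order building block.} The key ingredient is a pointwise rearrangement estimate of Maz'ya--Cianchi type. For a weakly differentiable $w$ on $\Omega$ one combines the coarea formula $\int_\Omega|\nabla w|\,d\nu=\int_0^\infty P_\nu(\{|w|>\lambda\},\Omega)\,d\lambda$ with the isoperimetric inequality $P_\nu(E,\Omega)\ge I_{\Omega,\nu}(\nu(E))\ge c\,I(c\,\nu(E))$ from~\eqref{E:isoperimetric} to control the distribution function of $w$ by that of $|\nabla w|$ weighted by $1/I$. Unwinding the definition of the nonincreasing rearrangement, this takes the form
$$
w^*_\nu(t)-w^*_\nu(\tfrac12)\ \lesssim\ \int_t^{1/2}\frac{(|\nabla w|)^*_\nu(s)}{I(s)}\,ds\ \le\ H^1_I\big((|\nabla w|)^*_\nu\big)(t),\qquad t\in(0,\tfrac12),
$$
with the median term $w^*_\nu(\tfrac12)$ controlled by $\|w\|_{L^1(\Omega,\nu)}$.

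\emph{Iteration.} Next I would chain these estimates. Applying the first-order bound to $w=u$ controls $u^*_\nu$ by $H^1_I$ of $(|\nabla u|)^*_\nu$; applying it to the scalar function $w=|\nabla u|$, together with the elementary a.e.\ inequality $|\nabla|\nabla u||\le|\nabla^2 u|$, controls $(|\nabla u|)^*_\nu$ by $H^1_I$ of $(|\nabla^2 u|)^*_\nu$; iterating up to order $m$ and feeding each estimate into the next via the monotonicity of the kernel operator $H^1_I$, the factorization $H^m_I=(H^1_I)^m$ assembles these into
$$
u^*_\nu(t)\ \lesssim\ H^m_I\big((|\nabla^m u|)^*_\nu\big)(t)\qquad(\text{modulo lower-order terms}).
$$
Taking the $Y(0,1)$-norm of both sides, recalling $\|u\|_{Y(\Omega,\nu)}=\|u^*_\nu\|_{Y(0,1)}$, and invoking the hypothesis~\eqref{E:bound} with $f=(|\nabla^m u|)^*_\nu\ge0$ yields~\eqref{E:poincare}; the equivalence of~\eqref{E:emb} and~\eqref{E:poincare} then follows by decomposing a general $u$ into a part satisfying $\int_\Omega\nabla^k u\,d\nu=0$ and a polynomial part of degree $<m$, the latter being absorbed through the lower-order seminorms in~\eqref{E:sobolev}.

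\emph{Main obstacle.} The delicate point is the bookkeeping in the iteration. The intermediate functions $|\nabla^k u|$ are nonnegative and carry no cancellation, so at each step a median term $(|\nabla^k u|)^*_\nu(\tfrac12)$ and the associated lower-order quantities must be tracked and ultimately absorbed; this is precisely where the normalization conditions $\int_\Omega\nabla^k u\,d\nu=0$ and the inclusions $V^mX(\Omega,\nu)\subseteq V^kL^1(\Omega,\nu)$ enter. A second technical hurdle is that rearrangement does not commute with differentiation, so legitimately chaining the pointwise estimates requires both the monotonicity of $H^1_I$ on nonnegative functions and the justification, in the present low-integrability setting, that each $|\nabla^k u|$ is weakly differentiable with $|\nabla|\nabla^k u||\le|\nabla^{k+1}u|$ a.e. Establishing these regularity facts and controlling the cumulative lower-order contributions is the technical heart of the argument.
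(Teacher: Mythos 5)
First, a framing point: the paper does not prove this theorem at all --- it is quoted verbatim from \cite{CPS} (Theorem 5.1 there), so the only ``proof'' in the paper is the citation. Your overall strategy --- obtain a first-order embedding from the isoperimetric inequality and iterate it $m$ times, using $H^m_I=(H^1_I)^m$ --- is indeed the philosophy of \cite{CPS}, but your implementation has a genuine gap at its central step. The pointwise estimate
$$
w^*_\nu(t)-w^*_\nu(\tfrac12)\ \lesssim\ \int_t^{1/2}\frac{(|\nabla w|)^*_\nu(s)}{I(s)}\,ds
$$
is false in general. The coarea formula and the isoperimetric inequality give, for $t_1<t_2\le\tfrac12$, only $w^*_\nu(t_1)-w^*_\nu(t_2)\le I_{\Omega,\nu}(t_1)^{-1}\int_E|\nabla w|\,d\nu$ with $E$ a level set of measure at most $t_2-t_1$; the Hardy--Littlewood inequality then bounds $\int_E|\nabla w|\,d\nu$ by $\int_0^{t_2-t_1}(|\nabla w|)^*_\nu(s)\,ds$, \emph{not} by $\int_{t_1}^{t_2}(|\nabla w|)^*_\nu(s)\,ds$, and since the rearrangement is nonincreasing the former dominates the latter, so summing over a partition does not yield the single-star kernel estimate. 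What is available pointwise involves $(|\nabla w|)^{**}_\nu$ instead, which is strictly weaker and loses the sharp conclusion already for $X(0,1)=L^1(0,1)$. The sharp first-order reduction is a \emph{norm} inequality, proved by Maz'ya's truncation method, and it uses the hypothesis that $H^1_I$ is bounded on the whole of $X(0,1)$, not merely its action on the single function $(|\nabla w|)^*_\nu$.

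This defect propagates to the iteration: since the first-order step only holds at the level of norms, the estimates cannot be chained pointwise. One must instead exhibit intermediate rearrangement-invariant spaces $Z_0=X(0,1),Z_1,\dots,Z_m$ with $H^1_I:Z_k\rightarrow Z_{k+1}$ bounded and $Z_m\hookrightarrow Y(0,1)$, and the operator identity $H^m_I=(H^1_I)^m$ does not by itself split the boundedness of $H^m_I:X(0,1)\rightarrow Y(0,1)$ into such a chain. In \cite{CPS} this is achieved by taking $Z_k=X^r_{k,I}(0,1)$ (the optimal range spaces recalled in Section~4 of the present paper) and proving that the $m$-fold iteration of the first-order optimal-range construction coincides with $X^r_{m,I}(0,1)$; this iteration theorem is the technical heart of the argument and is entirely absent from your sketch. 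A smaller slip: \textup{(C2)} gives $I(t)\geq ct$, so $1/I$ need \emph{not} be integrable near $0$ (consider $I(t)=t$ for $\mathcal J_1$, or $I=L_\Phi$ for the Gauss space); the iterated integrals converge for $t>0$ because $I$ is nondecreasing, not because $1/I\in L^1(0,1)$.
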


We finally mention that, given $m\in \mathbb N$ and a rearrangement-invariant norm $\|\cdot\|_{X(0,1)}$, one can define the more customary Sobolev space $W^mX(\Omega,\nu)$ by 
\begin{align*}
W^mX(\Omega, \nu)=
\{u: ~&u \textup{ is an $m$-times weakly differentiable function on }\Omega \\
&\textup{such that }|\nabla^k u| \in X(\Omega, \nu) \textup{ for } k=0,1,\dots,m\}.
\end{align*}
The set $W^mX(\Omega,\nu)$ equipped with the norm
\begin{equation*}
\|u\|_{W^mX(\Omega, \nu)}=\sum_{k=0}^m\| |\nabla^k u| \|_{X(\Omega, \nu)}
\end{equation*}
is easily seen to be a normed linear space. We always have the continuous embedding $W^mX(\Omega,\nu) \hookrightarrow V^mX(\Omega,\nu)$. The reverse embedding is not true in general, however, we have the following

\begin{proposition}[{\cite[Proposition 4.5]{CPS}}]\label{T:propprop}
Suppose that $(\Omega,\nu)$ is as in the first paragraph of the present section and, moreover, that
$$
\int_0^{\frac 12} \frac{\,ds}{I_{\Omega,\nu}(s)} <\infty.
$$
Let $m\in \mathbb N$ and let $\|\cdot\|_{X(0,1)}$ be a rearrangement-invariant norm. Then
$$
V^mX(\Omega,\nu)=W^mX(\Omega,\nu),
$$
up to equivalent norms. 
\end{proposition}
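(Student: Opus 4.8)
The plan is to prove the nontrivial inclusion $V^mX(\Omega,\nu)\hookrightarrow W^mX(\Omega,\nu)$; the reverse embedding $W^mX(\Omega,\nu)\hookrightarrow V^mX(\Omega,\nu)$ together with the estimate $\|u\|_{V^mX(\Omega,\nu)}\le C\|u\|_{W^mX(\Omega,\nu)}$ is immediate, since the $V^m$-norm merely replaces the $X$-norms of the lower-order derivatives by their $L^1$-norms and $\|\,\cdot\,\|_{L^1(\Omega,\nu)}\le C_X\|\,\cdot\,\|_{X(\Omega,\nu)}$ by~\eqref{E:constant}. Thus everything reduces to producing a constant $C$ with $\|\,|\nabla^k u|\,\|_{X(\Omega,\nu)}\le C\|u\|_{V^mX(\Omega,\nu)}$ for every $u\in V^mX(\Omega,\nu)$ and every $k=0,1,\dots,m-1$.

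The heart of the matter is the first-order inequality
\begin{equation*}
\|w\|_{X(\Omega,\nu)}\le C\big(\|w\|_{L^1(\Omega,\nu)}+\|\,|\nabla w|\,\|_{X(\Omega,\nu)}\big),\qquad w\in V^1X(\Omega,\nu). \tag{$\star$}
\end{equation*}
To obtain it I would first manufacture a compatible triplet out of $I_{\Omega,\nu}$: set $I(t)=I_{\Omega,\nu}(t)$ for $t\in(0,\tfrac12]$ and extend $I$ to a finite nondecreasing function on $(0,1]$ (truncating from above should $I_{\Omega,\nu}$ take the value $+\infty$). Condition~\textup{(C1)} is built in, \textup{(C3)} holds with $c=1$, and \textup{(C2)} follows from the hypothesis: monotonicity of $I$ gives $t/I(t)\le\int_0^t ds/I(s)\le\int_0^{1/2}ds/I_{\Omega,\nu}(s)<\infty$ for $t\le\tfrac12$ (and the tail $t\in(\tfrac12,1]$ is trivial), whence $\inf_{t}I(t)/t>0$. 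Hence $(\Omega,\nu,I)$ is a compatible triplet and Theorem~\ref{T:CPS} applies. With $m=1$ and $Y=X$, the integrand $\big(\int_t^s dr/I(r)\big)^{m-1}$ equals $1$, so condition~\eqref{E:bound} becomes precisely the boundedness of $H^1_I$ on $X(0,1)$, and $(\star)$ is exactly the continuous embedding $V^1X(\Omega,\nu)\hookrightarrow X(\Omega,\nu)$ read off from~\eqref{E:emb}.

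It remains to verify that $H^1_If(t)=\int_t^1 f(s)/I(s)\,ds$ is bounded on $X(0,1)$ for every rearrangement-invariant norm. Here I would check only the two endpoints and interpolate: on $L^\infty(0,1)$ one has $\|H^1_If\|_{\infty}\le\|f\|_\infty\int_0^1 ds/I(s)$, finite because $\int_0^1 ds/I=2\int_0^{1/2}ds/I_{\Omega,\nu}<\infty$ (using $I_{\Omega,\nu}(s)=I_{\Omega,\nu}(1-s)$); on $L^1(0,1)$, Fubini gives $\|H^1_If\|_1=\int_0^1 f(s)\,s/I(s)\,ds\le\big(\sup_s s/I(s)\big)\|f\|_1$, finite by~\textup{(C2)}. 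Boundedness on every rearrangement-invariant space then follows from the Calder\'on--Mityagin interpolation theorem (see~\cite{BS}), which yields $\|H^1_If\|_{X(0,1)}\le\max\big(\|H^1_I\|_{L^1\to L^1},\|H^1_I\|_{L^\infty\to L^\infty}\big)\|f\|_{X(0,1)}$ via the fact that $(H^1_If)^{**}\le M f^{**}$ pointwise forces $\|H^1_If\|_{X(0,1)}\le M\|f\|_{X(0,1)}$.

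Finally, with $(\star)$ in hand I would run a downward induction on $k$. For $k=m-1,\dots,0$ and each scalar component $w=\partial^\alpha u$ with $|\alpha|=k$, the bound $|\nabla w|\le|\nabla^{k+1}u|$ shows $w\in V^1X(\Omega,\nu)$ once $|\nabla^{k+1}u|\in X(\Omega,\nu)$ is known (true for $k+1=m$ by hypothesis, and supplied by the previous induction step otherwise), while $\|w\|_{L^1(\Omega,\nu)}\le\|\,|\nabla^k u|\,\|_{L^1(\Omega,\nu)}\le\|u\|_{V^mX(\Omega,\nu)}$ thanks to the inclusion $V^mX(\Omega,\nu)\subseteq V^kL^1(\Omega,\nu)$ from~\cite[Corollary 4.3]{CPS}. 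Applying $(\star)$ to each component and summing over the finitely many $\alpha$ (passing between the Euclidean and $\ell^1$ norms of the derivative vector at the cost of a dimensional constant) yields $\|\,|\nabla^k u|\,\|_{X(\Omega,\nu)}\le C_k\|u\|_{V^mX(\Omega,\nu)}$, which closes the induction and completes the proof. I expect the one genuine obstacle to be the boundedness of $H^1_I$ on all rearrangement-invariant spaces: the endpoint computations are routine, but recognizing that $\int_0^{1/2}ds/I_{\Omega,\nu}<\infty$ delivers \emph{simultaneously} the $L^\infty$-bound and, through monotonicity, the lower bound~\textup{(C2)} needed for the $L^1$-bound is the crux, after which Calder\'on--Mityagin interpolation does the rest.
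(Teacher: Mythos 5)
The paper does not actually prove Proposition~\ref{T:propprop}; it imports it verbatim from \cite{CPS}, so there is no internal argument to compare against. Judged on its own, your reconstruction is sound and follows the natural route: reduce to the first-order self-embedding $V^1X(\Omega,\nu)\hookrightarrow X(\Omega,\nu)$ via Theorem~\ref{T:CPS}, obtain the required inequality~\eqref{E:bound} for $H^1_I$ from the two endpoint bounds $L^1\to L^1$ (Fubini plus \textup{(C2)}) and $L^\infty\to L^\infty$ (finiteness of $\int_0^1 ds/I$) together with the Calder\'on--Mityagin theorem, and then iterate downward over the orders of differentiation using $V^mX(\Omega,\nu)\subseteq V^kL^1(\Omega,\nu)$ to supply the $L^1$-terms. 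Your observation that the single hypothesis $\int_0^{1/2} ds/I_{\Omega,\nu}<\infty$ yields both \textup{(C2)} (via $t/I(t)\le\int_0^t ds/I(s)$) and the $L^\infty$-bound, and in particular forces $I_{\Omega,\nu}>0$ on $(0,\tfrac12]$, is exactly the right pivot. Two cosmetic points: once you extend $I$ as a \emph{nondecreasing} function past $\tfrac12$ you should not also invoke the symmetry $I_{\Omega,\nu}(s)=I_{\Omega,\nu}(1-s)$ to write $\int_0^1 ds/I=2\int_0^{1/2}ds/I_{\Omega,\nu}$ (the symmetric extension is nonincreasing on $(\tfrac12,1]$ and would violate \textup{(C1)}); the correct and sufficient statement is $\int_{1/2}^1 ds/I(s)\le (2I(1/2))^{-1}<\infty$. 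And when passing from the scalar inequality for each $\partial^\alpha u$ to the bound on $\|\,|\nabla^k u|\,\|_{X(\Omega,\nu)}$ you rely on $|\nabla(\partial^\alpha u)|\le C\,|\nabla^{k+1}u|$ and the equivalence of finite-dimensional norms, which you do acknowledge. Neither point is a gap.
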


In particular, if $(\Omega,\nu,I)$ is a compatible triplet such that 
$$
\int_0^1 \frac{\,ds}{I(s)}<\infty,
$$ 
then $V^mX(\Omega,\nu)=W^mX(\Omega,\nu)$ for every $m\in \mathbb N$ and for every rearrangement-invariant norm $\|\cdot\|_{X(0,1)}$. Indeed, property \textup{(C3)} of compatible triplets yields that there is $c\in (0,2)$ for which
$$
\int_0^{\frac{1}{2}} \frac{\,ds}{I_{\Omega,\nu}(s)} \leq \frac{1}{c} \int_0^{\frac{1}{2}} \frac{\,ds}{I(cs)} = \frac{1}{c^2} \int_0^{\frac{c}{2}} \frac{\,ds}{I(s)} \leq \frac{1}{c^2} \int_0^1 \frac{\,ds}{I(s)} <\infty.
$$
The result now follows from Proposition~\ref{T:propprop}.

\section{Compact operators}\label{S:operators}

In this section we give several characterizations of compactness of certain one-dimensional operator on rearrangement-invariant spaces. These characterizations play a central role in the proofs of our main results in the following Section~\ref{S:main}. Moreover, the results of this section will be used to characterize compactness of this operator on concrete classes of rearrangement-invariant spaces (see Section~\ref{S:examples}).

\medskip
Let $J:(0,1]\rightarrow (0,\infty)$ be a measurable function satisfying
\begin{equation}\label{E:supremum}
\inf_{t\in (0,1]} \frac{J(t)}{t} >0.
\end{equation}
We set
\begin{equation}\label{E:inf}
J_a=\inf_{t\in [a,1]} J(t), ~~a\in (0,1),
\end{equation}
and observe that for every $a\in (0,1)$, 
$$
J_a \geq Ca>0,
$$ 
where $C=\inf_{t\in (0,1]} J(t)/t$.

We shall consider the operator $H_J$ defined by 
\begin{equation}\label{E:h}
H_Jf(t)=\int_t^1 \frac{|f(s)|}{J(s)}\,ds, ~~f\in \mathcal M(0,1), ~t\in (0,1),
\end{equation}
and the operator $R_J$ defined by
$$
R_Jf(t)=\frac{1}{J(t)}\int_0^t |f(s)|\,ds, ~~f\in \mathcal M(0,1), ~t\in (0,1).
$$
Furthermore, given $j\in \mathbb N$, we define the operators $H_J^j$ and $R^j_j$ by
\begin{equation}\label{E:composition}
H_J^j=\underbrace{H_J \circ H_J \circ \dots \circ H_J}_{j-\textup{times}} \textup{ and } R_J^j=\underbrace{R_J \circ R_J \circ \dots \circ R_J}_{j-\textup{times}}.
\end{equation}
Then
\begin{equation}\label{E:def_H}
H_J^jf(t)=\frac{1}{(j-1)!}\int_t^1 \frac{|f(s)|}{J(s)} \left(\int_t^s \frac{\,dr}{J(r)}\right)^{j-1}\,ds, ~~f\in \mathcal M(0,1), ~t\in (0,1),
\end{equation}
and
$$
R^j_Jf(t)=\frac{1}{(j-1)!J(t)} \int_0^t \left(\int_s^t \frac{\,dr}{J(r)} \right)^{j-1} |f(s)|\,ds, ~~f\in \mathcal M(0,1), ~t\in (0,1),
$$
see~\cite[Remarks 8.2]{CPS}. For technical reasons, we also set $H^0_J=R^0_J=Id$. 

We remark that the operators $H^j_J$ and $R^j_J$ are associate in the sense that for every $f\in \mathcal M_+(0,1)$ and $g\in \mathcal M_+(0,1)$ we have
\begin{equation}\label{E:associate}
\int_0^1 f(s) H^j_Jg(s)\,ds=\int_0^1 g(s) R^j_Jf(s)\,ds.
\end{equation}
We also observe that whenever $j\in \mathbb N$ and $f\in \mathcal M(0,1)$ then $H^j_Jf$ is nonincreasing on $(0,1)$. Finally, given $a\in (0,1]$, the equality
\begin{align}\label{E:char}
H^j_J(\chi_{(0,a)})(t)
&=\chi_{(0,a)}(t) \frac{1}{(j-1)!} \int_t^a \frac{1}{J(s)}\left(\int_t^s \frac{\,dr}{J(r)}\right)^{j-1}\,ds\\
&=\chi_{(0,a)}(t) \frac{1}{j!} \left(\int_t^a \frac{\,dr}{J(r)}\right)^{j}, ~~t\in (0,1), \nonumber
\end{align}
which follows from the change of variables formula, will be of use.

\medskip
Given two rearrangement-invariant norms $\|\cdot\|_{X(0,1)}$ and $\|\cdot\|_{Y(0,1)}$, we shall write
$$
H^j_J: X(0,1) \rightarrow Y(0,1)
$$
in order to denote that the operator $H^j_J$ is bounded from $X(0,1)$ into $Y(0,1)$. Our goal is to find necessary and sufficient conditions for compactness of $H^j_J$ from $X(0,1)$ into $Y(0,1)$, denoted by
\begin{equation}\label{E:compact_notation}
H^j_J: X(0,1) \rightarrow \rightarrow Y(0,1).
\end{equation}
The first result in this connection is the following

\begin{theorem}\label{T:compact_operator}
Let $J:(0,1]\rightarrow (0,\infty)$ be a measurable function satisfying~\eqref{E:supremum} and let $j\in \mathbb N$. Suppose that $\|\cdot\|_{X(0,1)}$ and $\|\cdot\|_{Y(0,1)}$ are rearrangement-invariant norms. Consider the following two conditions:

\medskip
\textup{(i)} $H_J^j: X(0,1) \rightarrow \rightarrow Y(0,1);$

\textup{(ii)} $\lim_{a\to 0_+} \sup_{\|f\|_{X(0,1)}\leq 1} \|H_J^j(\chi_{(0,a)}f)\|_{Y(0,1)} = 0.$

\medskip\noindent
If $X(0,1)=L^1(0,1)$, $Y(0,1)=L^\infty(0,1)$, $j=1$ and $\lim_{a\to 0_+} \esup_{t\in (0,a)} \frac{1}{J(t)}=0$, then \textup{(ii)} is satisfied but \textup{(i)} is not. In all other cases, \textup{(i)} holds if and only if \textup{(ii)} holds.
\end{theorem}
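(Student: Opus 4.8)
Since $H^j_J f=H^j_J|f|$ and $H^j_J f$ is nonnegative and nonincreasing on $(0,1)$, I would work throughout on the cone of nonnegative $f$, where the kernel in~\eqref{E:def_H} acts linearly, giving the splitting $H^j_J f=H^j_J(\chi_{(0,a)}f)+H^j_J(\chi_{(a,1)}f)$ and, by shrinking the range of integration, the pointwise bounds
\[
0\le H^j_J(\chi_{(0,a)}f)\le \chi_{(0,a)}\,H^j_J f\le H^j_J f .
\]
The two facts I would single out are that $H^j_J(\chi_{(0,a)}f)$ is supported in $(0,a)$ (the integrand in~\eqref{E:def_H} vanishes unless $t<s<a$, cf.~\eqref{E:char}) and that $\|\chi_{(0,a)}f\|_{X(0,1)}\le\|f\|_{X(0,1)}$ by \textup{(P2)}. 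I would then treat the ``degenerate configuration'' $j=1$, $X(0,1)=L^1(0,1)$, $Y(0,1)=L^\infty(0,1)$ apart from all the others.

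\textbf{The implication \textup{(i)}$\Rightarrow$\textup{(ii)}, in all cases.} This direction has a clean argument exploiting the support property. Suppose $H^j_J$ is compact but \textup{(ii)} fails, so that there are $\varepsilon>0$, $a_n\to 0_+$ and $f_n$ with $\|f_n\|_{X(0,1)}\le 1$ and $\|H^j_J(\chi_{(0,a_n)}f_n)\|_{Y(0,1)}\ge\varepsilon$. Put $g_n=\chi_{(0,a_n)}f_n$, so $\|g_n\|_{X(0,1)}\le 1$, and $h_n:=H^j_J g_n$ lies in the image of the unit ball with $\|h_n\|_{Y(0,1)}\ge\varepsilon$. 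By compactness a subsequence converges, $h_n\to h$ in $Y(0,1)$, with $\|h\|_{Y(0,1)}\ge\varepsilon$, hence $h\neq 0$. But each $h_n$ is supported in $(0,a_n)$, so for every fixed $\delta>0$ one has $\chi_{(\delta,1)}h_n=0$ eventually, whence $\chi_{(\delta,1)}h=\lim_n\chi_{(\delta,1)}h_n=0$ in $Y(0,1)$. Thus $h=0$ a.e.\ on $(\delta,1)$ for all $\delta>0$, i.e.\ $h=0$, a contradiction. This argument uses no restriction on $X(0,1)$, $Y(0,1)$ or $j$, so \textup{(i)}$\Rightarrow$\textup{(ii)} holds universally.

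\textbf{The implication \textup{(ii)}$\Rightarrow$\textup{(i)}, outside the degenerate configuration.} By the splitting, condition~\textup{(ii)} says exactly that the ``initial-segment'' operators $H^j_J(\chi_{(0,a)}\cdot)$ tend to $0$ in the operator norm $X(0,1)\to Y(0,1)$ as $a\to 0_+$; hence it suffices to show that $T_a:=H^j_J(\chi_{(a,1)}\cdot)$ is compact for each fixed $a\in(0,1)$, for then $H^j_J$ is a uniform limit of compact operators and is itself compact. I would prove compactness of $T_a$ by splitting its \emph{output}. On $(0,a)$, expanding $\bigl(\int_t^s\tfrac{dr}{J}\bigr)^{j-1}=\bigl(\int_t^a+\int_a^s\bigr)^{j-1}$ by the binomial theorem and using~\eqref{E:char} exhibits $T_a f$ there as a fixed finite linear combination of the functions $\bigl(\int_t^a\tfrac{dr}{J}\bigr)^{k}$, $k=0,\dots,j-1$, with coefficients linear in $f$; this restriction has finite rank. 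On $(a,1)$, where $1/J\le 1/J_a$ is bounded by~\eqref{E:inf}, $T_a$ is a Riemann--Liouville operator, and I would invoke Arzel\`a--Ascoli: for $j\ge 2$ the images are uniformly Lipschitz (their derivative is $-\tfrac1J H^{j-1}_J(\chi_{(a,1)}f)$, bounded on $[a,1]$); for $j=1$ with $X(0,1)\neq L^1(0,1)$ the modulus of continuity is controlled by $\varphi_{X'}(\delta)\to 0$; and for $j=1$ with $X(0,1)=L^1(0,1)$ but $Y(0,1)\neq L^\infty(0,1)$ the images form a uniformly bounded ball of monotone functions of bounded variation, which is compact in $Y(0,1)$ by Helly's theorem together with dominated convergence in $Y(0,1)$ (the constant dominating function has absolutely continuous norm precisely because $\varphi_Y(\delta)\to 0$). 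These cases exhaust every configuration except $j=1$, $X(0,1)=L^1(0,1)$, $Y(0,1)=L^\infty(0,1)$, so $T_a$ is compact and \textup{(ii)}$\Rightarrow$\textup{(i)} holds outside the degenerate configuration.

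\textbf{The degenerate configuration, and the main obstacle.} For $j=1$, $X(0,1)=L^1(0,1)$, $Y(0,1)=L^\infty(0,1)$ a direct computation gives $\sup_{\|f\|_{L^1(0,1)}\le 1}\|H_J(\chi_{(0,a)}f)\|_{L^\infty(0,1)}=\esup_{t\in(0,a)}\tfrac{1}{J(t)}$, so \textup{(ii)} holds iff $\lim_{a\to0_+}\esup_{(0,a)}\tfrac1J=0$; on the other hand $H_J\colon L^1(0,1)\to L^\infty(0,1)$ is never compact, because the images of approximate unit masses reproduce the family $\{J(s)^{-1}\chi_{(0,s)}\}_s$, whose members differ in $L^\infty(0,1)$ by a nonvanishing amount for $s$ bounded away from $0$. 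Thus \textup{(i)} always fails here, and it is equivalent to \textup{(ii)} exactly when $\esup_{(0,a)}\tfrac1J\not\to0$, which is precisely the assertion of the theorem. The step I expect to be the main obstacle is the compactness of the truncated operator $T_a$ in the third paragraph: making the Arzel\`a--Ascoli and bounded-variation arguments uniform over the unit ball in each of the remaining configurations, and verifying that the \emph{only} configuration resisting them is the degenerate one, is where the case analysis and the hypotheses on $X(0,1)$ and $Y(0,1)$ are genuinely used.
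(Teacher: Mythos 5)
Your argument is correct in substance, but for the crucial implication \textup{(ii)}$\Rightarrow$\textup{(i)} it follows a genuinely different route from the paper. The paper uses your strategy --- writing $H^j_J$ as a norm limit of the truncated operators $T_a=H^j_J(\chi_{(a,1)}\cdot)$ and proving each $T_a$ compact by Arzel\`a--Ascoli --- only in the exceptional case $Y(0,1)=L^\infty(0,1)$, $\int_0^1\frac{\,dr}{J(r)}<\infty$; in all other cases it first upgrades \textup{(ii)} to the condition $\lim_{a\to 0_+}\sup_{\|f\|_{X(0,1)}\le 1}\|\chi_{(0,a)}H^j_Jf\|_{Y(0,1)}=0$ (via the estimate \eqref{E:estimate} together with Lemma~\ref{T:m-1}, which is where the hypothesis \eqref{E:or} enters) and then invokes the compactness criterion of Perneck\'a and Pick~\cite{PP}. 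Your binomial splitting of the output of $T_a$ into a finite-rank piece on $(0,a)$ plus an equicontinuous family on $(a,1)$ (or, for $j=1$ and $X(0,1)=L^1(0,1)$, a Helly-compact family of monotone functions) makes the whole theorem self-contained, at the price of a longer case analysis; the paper's detour through the uniformly-absolutely-continuous formulation is shorter but outsources the analytic core. Your treatments of \textup{(i)}$\Rightarrow$\textup{(ii)} and of the degenerate configuration coincide with the paper's; note only that your ``approximate unit masses'' argument for non-compactness of $H_J:L^1(0,1)\to L^\infty(0,1)$ must be run on a set $M$ of positive measure where $\frac1J\ge\varepsilon$ (since $J$ need not be bounded above) and at Lebesgue points of $\frac1J$ --- the paper's explicit sequence $2^n\chi_{(x_n,x_{n+1}]\cap M}$ builds exactly this in.

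One step must be added to make the finite-rank piece legitimate: you need $\chi_{(0,a)}(t)\bigl(\int_t^a\frac{\,dr}{J(r)}\bigr)^k\in Y(0,1)$ for $k=0,\dots,j-1$, which is not automatic when $\int_0^a\frac{\,dr}{J(r)}=\infty$; if it fails, $T_a$ does not even map $X(0,1)$ into $Y(0,1)$, since all terms in your binomial expansion are nonnegative and no cancellation is available. It does follow from \textup{(ii)}: testing \textup{(ii)} on the constant function $1/\|1\|_{X(0,1)}$ and using \eqref{E:char} gives $\chi_{(0,a)}(t)\bigl(\int_t^a\frac{\,dr}{J(r)}\bigr)^j\in Y(0,1)$ for all small $a$, hence (splitting the integral at such an $a$) for every $a\in(0,1)$, and the lower powers are dominated pointwise by $\chi_{(0,a)}(t)+\chi_{(0,a)}(t)\bigl(\int_t^a\frac{\,dr}{J(r)}\bigr)^j$. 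With this observation inserted, the proof is complete.
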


Theorem~\ref{T:compact_operator} provides a full characterization of compactness of the operator $H^j_J$. By modifications of condition \textup{(ii)} of Theorem~\ref{T:compact_operator} we obtain two more necessary and sufficient conditions for~\eqref{E:compact_notation}. Their equivalence to~\eqref{E:compact_notation} holds in a slightly less general setting, but the strength of these characterizations rests on the possibility to reformulate them as almost-compact embeddings between certain rearrangement-invariant spaces. This connection between compactness of $H^j_J$ and almost-compactness of an embedding becomes a key tool for the proof of our main result, Theorem~\ref{T:main}.

We shall now introduce a family of rearrangement-invariant spaces whose almost-compact embeddings are suitable for characterization of~\eqref{E:compact_notation}.

Let $\|\cdot\|_{X(0,1)}$ be a rearrangement-invariant norm. For every $f \in \mathcal M(0,1)$ we define the functional $\|\cdot\|_{(X_{j,J}^r)'(0,1)}$ by
\begin{equation}\label{E:X}
\|f\|_{(X_{j,J}^r)'(0,1)}= \|R^j_Jf^*\|_{X'(0,1)}
=\frac{1}{(j-1)!}\left\|\frac{1}{J(s)} \int_0^s \left(\int_t^s \frac{\,dr}{J(r)} \right)^{j-1} f^*(t)\,dt \right\|_{X'(0,1)}.
\end{equation}
Then, according to~\cite[Proposition 8.3]{CPS}, $\|\cdot\|_{(X^r_{j,J})'(0,1)}$ is a rearrangement-invariant norm and its associate norm $\|\cdot\|_{X^r_{j,J}(0,1)}$ fulfils
\begin{equation}\label{E:optimal_range}
H_J^j: X(0,1) \rightarrow X_{j,J}^r(0,1).
\end{equation}
Moreover, $X_{j,J}^r(0,1)$ is the optimal range for $X(0,1)$ with respect to the operator $H^j_J$, that is, $X^r_{j,J}(0,1)$ is the smallest rearrangement-invariant space for which~\eqref{E:optimal_range} is satisfied.

\medskip
The following theorem characterizes~\eqref{E:compact_notation} by means of the space $X^r_{j,J}(0,1)$.

\begin{theorem}\label{T:opt_range}
Let $J:(0,1]\rightarrow (0,\infty)$ be a measurable function satisfying~\eqref{E:supremum} and let $j\in \mathbb N$. Suppose that $\|\cdot\|_{X(0,1)}$ and $\|\cdot\|_{Y(0,1)}$ are rearrangement-invariant norms. If
\begin{equation}\label{E:or}
Y(0,1) \neq L^\infty(0,1) \textup{ or } \int_0^1 \frac{\,dr}{J(r)}=\infty,
\end{equation}
then the following conditions are equivalent:

\medskip
\textup{(i)} $H_J^j: X(0,1) \rightarrow \rightarrow Y(0,1);$

\textup{(ii)} $\lim_{a\to 0_+} \sup_{\|f\|_{X(0,1)}\leq 1} \|\chi_{(0,a)}H_J^j f\|_{Y(0,1)} = 0;$

\textup{(iii)} $X_{j,J}^r(0,1) \overset{*}{\hookrightarrow} Y(0,1).$
\end{theorem}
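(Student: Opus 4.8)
The plan is to prove the two equivalences $\textup{(i)}\Leftrightarrow\textup{(ii)}$ and $\textup{(ii)}\Leftrightarrow\textup{(iii)}$, treating the latter as the ``soft'' part and isolating the real difficulty in $\textup{(i)}\Rightarrow\textup{(ii)}$. First I would dispose of the degenerate branch of~\eqref{E:or}: if $Y(0,1)=L^\infty(0,1)$, then~\eqref{E:or} forces $\int_0^1\frac{dr}{J(r)}=\infty$, and since $1/J$ is then non-integrable near $0$ while $J\geq J_a>0$ away from $0$, the kernel of $H^j_J$ is non-integrable at the origin, so $H^j_J$ cannot map $X(0,1)$ boundedly into $L^\infty(0,1)$; consequently $H^j_J$ is unbounded and one checks that all three conditions fail simultaneously. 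Hence I may assume $Y(0,1)\neq L^\infty(0,1)$, so that $\varphi_Y(a)\to 0$ as $a\to 0_+$, a fact I will use repeatedly.

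For $\textup{(iii)}\Rightarrow\textup{(ii)}$ I would argue directly: by~\eqref{E:optimal_range} the operator $H^j_J$ is bounded from $X(0,1)$ into $X^r_{j,J}(0,1)$, say with constant $C_0$, and every $H^j_J f$ is nonincreasing, so $(H^j_Jf)^*=H^j_Jf$. Thus for $\|f\|_{X(0,1)}\leq 1$ one has $\|\chi_{(0,a)}H^j_Jf\|_{Y(0,1)}=\|\chi_{(0,a)}(H^j_Jf)^*\|_{Y(0,1)}\leq C_0\sup_{\|g\|_{X^r_{j,J}(0,1)}\leq 1}\|\chi_{(0,a)}g^*\|_{Y(0,1)}$, whose right-hand side tends to $0$ by the definition of the almost-compact embedding in~\textup{(iii)}. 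For the converse $\textup{(ii)}\Rightarrow\textup{(iii)}$ I would dualize: from $\|\chi_{(0,a)}H^j_Jf\|_{Y}=\sup_{h\geq 0,\,\|h\|_{Y'}\leq 1}\int_0^1 H^j_Jf\cdot\chi_{(0,a)}h$ and the associate identity~\eqref{E:associate}, condition~\textup{(ii)} becomes equivalent to $\lim_{a\to 0_+}\sup_{h\geq 0,\,\|h\|_{Y'(0,1)}\leq 1}\|R^j_J(\chi_{(0,a)}h)\|_{X'(0,1)}=0$. Restricting the supremum to nonincreasing $h$ and using the definition~\eqref{E:X} of $\|\cdot\|_{(X^r_{j,J})'}$ turns this quantity into $\sup_{\|h\|_{Y'}\leq 1}\|\chi_{(0,a)}h^*\|_{(X^r_{j,J})'}$, whose vanishing is exactly $Y'(0,1)\overset{*}{\hookrightarrow}(X^r_{j,J})'(0,1)$, that is, by the duality of almost-compact embeddings from~\cite{FMP}, condition~\textup{(iii)}. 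Since restricting a supremum can only decrease it, $\textup{(ii)}$ implies~\textup{(iii)}.

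The implication $\textup{(ii)}\Rightarrow\textup{(i)}$ I would deduce from Theorem~\ref{T:compact_operator}. One has the pointwise bound $H^j_J(\chi_{(0,a)}f)(t)\leq\chi_{(0,a)}(t)\,H^j_Jf(t)$ (both sides vanish for $t\geq a$, and for $t<a$ the left integrand is a restriction of the right one), whence $\|H^j_J(\chi_{(0,a)}f)\|_{Y}\leq\|\chi_{(0,a)}H^j_Jf\|_{Y}$ by~\textup{(P2)}; taking the supremum over the unit ball, $\textup{(ii)}$ yields condition~\textup{(ii)} of Theorem~\ref{T:compact_operator}. The exceptional case of that theorem (which needs $Y=L^\infty$ together with $\esup_{(0,a)}1/J\to 0$) is incompatible with~\eqref{E:or}, since $Y=L^\infty$ forces $\int_0^1\frac{dr}{J}=\infty$ and hence $1/J$ unbounded near $0$. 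Therefore Theorem~\ref{T:compact_operator} gives~\textup{(i)}.

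It remains to prove $\textup{(i)}\Rightarrow\textup{(ii)}$, which is the heart of the matter. Theorem~\ref{T:compact_operator} converts~\textup{(i)} into the \emph{input}-truncation statement $\lim_{a\to 0_+}\sup_{\|f\|_{X(0,1)}\leq 1}\|H^j_J(\chi_{(0,a)}f)\|_Y=0$, so the task is to upgrade this to the \emph{output}-truncation statement~\textup{(ii)}. I would argue by contraposition: assuming~\textup{(ii)} fails, pick $a_k\downarrow 0$ and $f_k$ with $\|f_k\|_{X(0,1)}\leq 1$ and $\|\chi_{(0,a_k)}H^j_Jf_k\|_Y\geq\delta$, and split the input at a fixed level $b$, writing $\chi_{(0,a_k)}H^j_Jf_k=\chi_{(0,a_k)}H^j_J(\chi_{(0,b)}f_k)+\chi_{(0,a_k)}H^j_J(\chi_{(b,1)}f_k)$ with $a_k<b$. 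The far part depends only on $\chi_{(b,1)}f_k$ and is dominated near the origin by a single function independent of $k$; using $\varphi_Y(a_k)\to 0$ (here $Y\neq L^\infty$ is essential) its $Y$-norm on $(0,a_k)$ should tend to $0$, leaving the near part with norm $\geq\delta/2$, so that $\sup_{\|f\|_{X(0,1)}\leq 1}\|H^j_J(\chi_{(0,b)}f)\|_Y\geq\delta/2$ for every $b$ and the input-truncation statement fails, contradicting Theorem~\ref{T:compact_operator}. The main obstacle is precisely the uniform control of the far part near $0$: when $j=1$, or whenever $\int_0^1\frac{dr}{J}<\infty$, the kernel is bounded on $(0,a_k)\times(b,1)$ and the estimate is immediate, but when $j\geq 2$ and $\int_0^1\frac{dr}{J}=\infty$ the kernel is singular at the origin and one must show that the dominating function $t\mapsto(\int_t^b\frac{dr}{J})^{j-1}$ still has absolutely continuous norm in $Y$. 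Establishing this last fact---equivalently, a uniform bound on the tail term that appears when the truncation is moved from the input to the output of $H^j_J$---is where I expect the real work to lie, and where the hypothesis $Y\neq L^\infty$ must be used in full strength.
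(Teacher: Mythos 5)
Your architecture is essentially the paper's: the reduction to the case $Y(0,1)\neq L^\infty(0,1)$, the duality computation linking \textup{(ii)} and \textup{(iii)} via $R^j_J$ and the definition~\eqref{E:X}, the passage from \textup{(ii)} to \textup{(i)} through Theorem~\ref{T:compact_operator} using the pointwise bound $H^j_J(\chi_{(0,a)}f)\leq\chi_{(0,a)}H^j_Jf$, and the splitting of the input at a level $b$ with the estimate $H^j_J(\chi_{(b,1)}f)(t)\leq\frac{C_X}{(j-1)!\,J_b}\bigl(\int_t^1\frac{dr}{J(r)}\bigr)^{j-1}\|f\|_{X(0,1)}$ are all exactly what the paper does (the paper runs the last step directly rather than by contraposition, which changes nothing). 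Your direct argument for \textup{(iii)}$\Rightarrow$\textup{(ii)} using that $H^j_Jf$ is its own rearrangement is a slight streamlining of the paper's two-sided duality identity.

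The genuine gap is the step you yourself flag as ``where I expect the real work to lie'': you never prove that
$$
\lim_{a\to 0_+}\Bigl\|\chi_{(0,a)}(t)\Bigl(\int_t^1\frac{dr}{J(r)}\Bigr)^{j-1}\Bigr\|_{Y(0,1)}=0
$$
in the case $j\geq 2$, $\int_0^1\frac{dr}{J(r)}=\infty$, and without it the far part of your splitting is not controlled and the whole implication \textup{(i)}$\Rightarrow$\textup{(ii)} collapses. Moreover, your stated expectation that the hypothesis $Y(0,1)\neq L^\infty(0,1)$ ``must be used in full strength'' here points at the wrong mechanism: that hypothesis only settles the easy case $\int_0^1\frac{dr}{J(r)}<\infty$, where the dominating function is bounded and $\varphi_Y(a)\to 0$ suffices. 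In the singular case the paper's Lemma~\ref{T:m-1} resolves the issue by a different device: once $\int_0^1\frac{dr}{J(r)}=\infty$, for each $a$ one can choose $b$ so small that $\int_t^a\frac{dr}{J(r)}\geq\max\bigl(1,\int_a^1\frac{dr}{J(r)}\bigr)$ for $t\in(0,b)$, whence $\bigl(\int_t^1\frac{dr}{J(r)}\bigr)^{j-1}\leq\bigl(\int_t^1\frac{dr}{J(r)}\bigr)^{j}\leq 2^j\bigl(\int_t^a\frac{dr}{J(r)}\bigr)^{j}=j!\,2^j\,H^j_J(\chi_{(0,a)})(t)$ by~\eqref{E:char}; the truncated norm is thus dominated by $j!\,2^j\,\|1\|_{X(0,1)}\sup_{\|f\|_{X(0,1)}\leq 1}\|H^j_J(\chi_{(0,a)}f)\|_{Y(0,1)}$, i.e.\ by the input-truncation statement that you already have from Theorem~\ref{T:compact_operator}. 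In other words, the missing estimate is itself a consequence of compactness, not of $Y(0,1)\neq L^\infty(0,1)$; supplying this argument closes the gap.
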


\begin{remarks}\label{T:remark_opt_range}
\textup{(a)} Condition \textup{(ii)} of Theorem~\ref{T:opt_range} tells us that the set $\{H^j_Jf: \|f\|_{X(0,1)}\leq 1\}$ is of uniformly absolutely continuous norm in $Y(0,1)$.

\textup{(b)} Suppose that $Y(0,1)=L^\infty(0,1)$. Then we easily observe that none of the conditions \textup{(ii)} and \textup{(iii)} of Theorem~\ref{T:opt_range} can be satisfied, no matter what $J$, $j$ and $\|\cdot\|_{X(0,1)}$ are. If, moreover, $\int_0^1 \frac{\,dr}{J(r)}=\infty$, it follows from Theorem~\ref{T:opt_range} that condition \textup{(i)} is also never fulfilled (since it is equivalent to \textup{(ii)} and \textup{(iii)}). However, this is no longer true in the case when $\int_0^1 \frac{\,dr}{J(r)}<\infty$, because
\begin{equation}\label{E:compact_l_infty_1}
H^j_J: L^\infty(0,1) \rightarrow \rightarrow L^\infty(0,1)
\end{equation} 
holds for every $j\in \mathbb N$ in this situation. Indeed, by~\eqref{E:char} we have
\begin{align*}
\lim_{a\to 0_+} \sup_{\|f\|_{L^\infty(0,1)}\leq 1} \left\|H^j_J(\chi_{(0,a)}f)\right\|_{L^\infty(0,1)}
&=\lim_{a\to 0_+} \left\|H^j_J(\chi_{(0,a)})\right\|_{L^\infty(0,1)}\\
&=\lim_{a\to 0_+} \frac{1}{j!}\left\|\chi_{(0,a)}(t)\left(\int_t^a \frac{\,dr}{J(r)}\right)^j\right\|_{L^\infty(0,1)}\\
&=\lim_{a\to 0_+} \frac{1}{j!} \left(\int_0^a \frac{\,dr}{J(r)}\right)^j=0.
\end{align*}
Hence, due to Theorem~\ref{T:compact_operator}, condition~\eqref{E:compact_l_infty_1} is satisfied.
\end{remarks}

\begin{remark}\label{T:remark}
Suppose that $J: (0,1] \rightarrow (0,\infty)$ is a measurable function fulfilling~\eqref{E:supremum}, $j\in \mathbb N$ and $\|\cdot\|_{X(0,1)}$ is a rearrangement-invariant norm such that 
\begin{equation}\label{E:bounded}
H^j_J: X(0,1) \rightarrow L^\infty(0,1).
\end{equation}
Then 
\begin{equation}\label{E:compact_l_infty}
H^j_J: X(0,1) \rightarrow \rightarrow Y(0,1)
\end{equation}
is fulfilled for all rearrangement-invariant spaces $Y(0,1)\neq L^\infty(0,1)$. Indeed, since $L^\infty(0,1)$ is the smallest rearrangement-invariant space over $(0,1)$ and~\eqref{E:bounded} is satisfied, $L^\infty(0,1)$ is the optimal range for $X(0,1)$ with respect to the operator $H^j_J$, and therefore $X^r_{j,J}(0,1)=L^\infty(0,1)$. The assumption $Y(0,1)\neq L^\infty(0,1)$ yields that $L^\infty(0,1) \overset{*}{\hookrightarrow} Y(0,1)$. Thus, according to Theorem~\ref{T:opt_range}, we obtain~\eqref{E:compact_l_infty}.

Furthermore, having only the information that~\eqref{E:bounded} holds, we cannot decide whether~\eqref{E:compact_l_infty} is satisfied with $Y(0,1)=L^\infty(0,1)$ or not. As an example, consider the function $J=1$ on $(0,1]$ and the rearrangement-invariant norm $\|\cdot\|_{X(0,1)}=\|\cdot\|_{L^1(0,1)}$. In this case, condition~\eqref{E:bounded} is easily seen to be satisfied for every $j\in \mathbb N$. Due to Theorem~\ref{T:compact_operator}, \eqref{E:compact_l_infty} is fulfilled if and only if
\begin{equation}\label{E:lim_0_l_1}
\lim_{a\to 0_+} \sup_{\|f\|_{L^1(0,1)} \leq 1} \|H^j_J(\chi_{(0,a)}f)\|_{L^\infty(0,1)}=0.
\end{equation}
For every $a\in (0,1)$ we have
$$
\sup_{\|f\|_{L^1(0,1)} \leq 1} \|H^j_J(\chi_{(0,a)}f)\|_{L^\infty(0,1)}
=\sup_{\|f\|_{L^1(0,1)} \leq 1} \int_0^a |f(s)|s^{j-1}\,ds
=\|\chi_{(0,a)}(s)s^{j-1}\|_{L^\infty(0,1)} = a^{j-1},
$$
and hence \eqref{E:lim_0_l_1} does not hold for $j=1$, but it holds for $j>1$. 
\end{remark}

We shall now define another family of rearrangement-invariant spaces whose almost-compact embeddings will be used for characterization of~\eqref{E:compact_notation}.

Let $\|\cdot\|_{Y(0,1)}$ be a rearrangement-invariant norm fulfilling
\begin{equation}\label{E:cond_Y}
\left\|\left(\int_t^1 \frac{\,dr}{J(r)}\right)^j\right\|_{Y(0,1)} <\infty.
\end{equation}
For every $f\in \mathcal M(0,1)$ define the functional $\|\cdot\|_{Y_{j,J}^d(0,1)}$ by
\begin{align}\label{E:dom}
\|f\|_{Y_{j,J}^d(0,1)}
&=\sup_{h\sim f} \|H^j_Jh\|_{Y(0,1)} + \|f\|_{L^1(0,1)}\\
&=\sup_{0\leq h \sim f} \frac{1}{(j-1)!}\left\|\int_t^1 \frac{h(s)}{J(s)} \left(\int_t^s \frac{\,dr}{J(r)}\right)^{j-1} \,ds\right\|_{Y(0,1)} +\|f\|_{L^1(0,1)}. \nonumber
\end{align}
Then $\|\cdot\|_{Y^d_{j,J}(0,1)}$ is a rearrangement-invariant norm and the corresponding rearrangement-invariant space $Y^d_{j,J}(0,1)$ is the optimal domain for $Y(0,1)$ with respect to the operator $H^j_J$, in the sense of the following

\begin{proposition}\label{T:proposition}
Let $J:(0,1]\rightarrow (0,\infty)$ be a measurable function satisfying~\eqref{E:supremum} and let $j\in \mathbb N$. Suppose that $\|\cdot\|_{Y(0,1)}$ is a rearrangement-invariant norm fulfilling~\eqref{E:cond_Y}. Then $\|\cdot\|_{Y_{j,J}^d(0,1)}$ is a rearrangement-invariant norm and
\begin{equation}\label{E:domain}
H_J^j: Y_{j,J}^d(0,1) \rightarrow Y(0,1).
\end{equation}
Moreover, $Y_{j,J}^d(0,1)$ is the largest rearrangement-invariant space for which~\eqref{E:domain} is satisfied. 

Conversely, if $\|\cdot\|_{Y(0,1)}$ is a rearrangement-invariant norm which does not fulfil~\eqref{E:cond_Y} then there is no rearrangement-invariant space $Y_{j,J}^d(0,1)$ such that condition~\eqref{E:domain} is satisfied. 
\end{proposition}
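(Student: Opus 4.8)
The plan is to verify the claims in three stages: first that $\|\cdot\|_{Y_{j,J}^d(0,1)}$ is a rearrangement-invariant norm; second that it is the \emph{largest} r.i.~space satisfying the boundedness~\eqref{E:domain}; and third the converse concerning the necessity of~\eqref{E:cond_Y}. Throughout I would exploit the associate-operator relation~\eqref{E:associate} and the duality~\eqref{E:optimal_range}--\eqref{E:X} to convert statements about the domain side for $H^j_J$ into statements about the range side for $R^j_J$, since the optimal-range construction $X^r_{j,J}$ has already been established in~\cite{CPS}.

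\textbf{Step 1 (r.i.~norm axioms).} To check that $\|\cdot\|_{Y^d_{j,J}(0,1)}$ satisfies (P1)--(P6), I would treat the two summands in~\eqref{E:dom} separately. The term $\|f\|_{L^1(0,1)}$ is a genuine r.i.~norm, so it contributes (P1), (P4), (P5) immediately. The functional $f \mapsto \sup_{h\sim f}\|H^j_J h\|_{Y(0,1)}$ is rearrangement-invariant by construction (it depends only on the equimeasurability class of $f$, giving (P6)), is clearly monotone and positively homogeneous, and the supremum-over-rearrangements form guarantees the triangle inequality once one notes that for $h_1\sim f$, $h_2\sim g$ one can dominate $H^j_J(h_1)+H^j_J(h_2)$ appropriately; here the monotonicity and sublinearity of the positive kernel operator $H^j_J$ in~\eqref{E:def_H} are what I would use. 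Property (P4), finiteness of $\|1\|$, is exactly where assumption~\eqref{E:cond_Y} enters, since $H^j_J(1)(t)=\frac{1}{j!}\left(\int_t^1 \frac{dr}{J(r)}\right)^j$ by~\eqref{E:char}, and the $L^1$ summand is added precisely to secure (P5). The Fatou property (P3) should follow from the monotone convergence theorem applied inside the integral defining $H^j_J$ together with property (P3) for $\|\cdot\|_{Y(0,1)}$.

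\textbf{Step 2 (optimality of the domain).} The boundedness~\eqref{E:domain} is essentially built into the definition: for $f$ itself (taking $h=f$, using that $H^j_J|f|=H^j_J f$ since the kernel acts on $|f|$), the first summand dominates $\|H^j_J f\|_{Y(0,1)}$. For maximality I would argue that if $Z(0,1)$ is any r.i.~space with $H^j_J\colon Z(0,1)\to Y(0,1)$ bounded by a constant $C$, then for every $f\in Z(0,1)$ and every $h\sim f$ one has $\|H^j_J h\|_{Y(0,1)}\le C\|h\|_{Z(0,1)}=C\|f\|_{Z(0,1)}$ by rearrangement-invariance of $Z$, so the supremum over $h\sim f$ is also bounded by $C\|f\|_{Z(0,1)}$; combining with the embedding $Z(0,1)\hookrightarrow L^1(0,1)$ from~\eqref{E:p5}--\eqref{E:constant} yields $\|f\|_{Y^d_{j,J}(0,1)}\le (C+C_Z)\|f\|_{Z(0,1)}$, i.e.\ $Z(0,1)\hookrightarrow Y^d_{j,J}(0,1)$. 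This is the cleanest route and sidesteps any delicate duality computation.

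\textbf{Step 3 (necessity of~\eqref{E:cond_Y}).} For the converse, suppose~\eqref{E:cond_Y} fails, so $\left\|\left(\int_t^1 \frac{dr}{J(r)}\right)^j\right\|_{Y(0,1)}=\infty$. I would show no r.i.~domain can exist by testing against the constant function: since $L^\infty(0,1)$ is the smallest r.i.~space by~\eqref{E:p5}, any candidate domain contains $\mathbf 1$, yet $H^j_J(\mathbf 1)(t)=\frac{1}{j!}\left(\int_t^1\frac{dr}{J(r)}\right)^j$ has infinite $Y(0,1)$-norm, contradicting boundedness~\eqref{E:domain}. I expect \textbf{Step 1}, specifically verifying the triangle inequality (P1) and the Fatou property (P3) for the supremum-over-rearrangements functional, to be the main obstacle, since the interaction between the $\sup_{h\sim f}$ and the sum $f+g$ requires care; the standard device is to fix near-optimal rearrangements of $f$ and $g$ and build a rearrangement of $f+g$ that controls both, or alternatively to invoke the already-established fact from~\cite[Proposition 8.3]{CPS} that the associate construction yields an r.i.~norm and transfer via~\eqref{E:associate}.
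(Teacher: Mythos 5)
Your Steps 2 and 3 coincide with the paper's argument: maximality is obtained exactly as you describe (for any r.i.\ space $Z(0,1)$ with $H^j_J\colon Z(0,1)\to Y(0,1)$ bounded by $C$, rearrangement-invariance of $\|\cdot\|_{Z(0,1)}$ gives $\|H^j_Jh\|_{Y(0,1)}\le C\|f\|_{Z(0,1)}$ for every $h\sim f$, and the $L^1$ summand is absorbed via the constant $C_Z$), and the necessity of~\eqref{E:cond_Y} is proved by testing on the constant function $1$ and using~\eqref{E:char} with $a=1$. So the overall architecture is right.

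The genuine gap is in Step~1, in the triangle inequality for $\|f\|_{Z(0,1)}=\sup_{h\sim f}\|H^j_Jh\|_{Y(0,1)}$, and both devices you propose for it point the wrong way. To prove $\|u+v\|_{Z(0,1)}\le\|u\|_{Z(0,1)}+\|v\|_{Z(0,1)}$ you must bound $\|H^j_Jh\|_{Y(0,1)}$ for an \emph{arbitrary} nonnegative $h\sim u+v$; fixing near-optimal rearrangements $h_1\sim u$, $h_2\sim v$ and assembling from them a rearrangement of $u+v$ only reaches rearrangements of the special form $h_1+h_2$ and, if anything, produces a lower bound for $\|u+v\|_{Z(0,1)}$ rather than an upper one. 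The missing idea is the converse decomposition used in the paper: for nonnegative \emph{simple} functions $u,v$, every nonnegative $h\sim u+v$ splits as $h=h_u+h_v$ with $h_u\sim u$ and $h_v\sim v$, whence $\|H^j_Jh\|_{Y(0,1)}\le\|H^j_Jh_u\|_{Y(0,1)}+\|H^j_Jh_v\|_{Y(0,1)}\le\|u\|_{Z(0,1)}+\|v\|_{Z(0,1)}$; the general case then follows by approximating $f,g\in\mathcal M_+(0,1)$ monotonically by simple functions and invoking the Fatou property \textup{(P3)} of $\|\cdot\|_{Z(0,1)}$ (which, together with \textup{(P2)}, is itself proved as in \cite[proof of Lemma 4.2]{CP}, not merely by monotone convergence inside the kernel). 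Your fallback of transferring the r.i.\ axioms from the optimal-range construction of \cite[Proposition 8.3]{CPS} via~\eqref{E:associate} is also not immediate: the associate of the domain norm is not simply $\|R^j_J(\cdot)^*\|$ evaluated on $Y'(0,1)$, precisely because the domain functional takes a supremum over all $h\sim f$ rather than the value at $f^*$ (the same subtlety that forces the auxiliary functional $\|\cdot\|_{X_d'(0,1)}$ in Remark~\ref{T:monotone}). The remaining items you list — \textup{(P6)}, homogeneity, monotonicity, \textup{(P4)} via~\eqref{E:cond_Y}, and \textup{(P5)} from the $L^1$ summand — are handled as in the paper.
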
 

The last result of this section provides a necessary and sufficient condition for compactness of the operator $H^j_J$ given in terms of the optimal domain space.

\begin{theorem}\label{T:opt_domain}
Let $J:(0,1]\rightarrow (0,\infty)$ be a measurable function satisfying~\eqref{E:supremum} and let $j\in \mathbb N$. Suppose that $\|\cdot\|_{X(0,1)}$ is a rearrangement-invariant norm such that $X(0,1) \neq L^1(0,1)$ and $\|\cdot\|_{Y(0,1)}$ is a rearrangement-invariant norm fulfilling~\eqref{E:cond_Y}. Then the following conditions are equivalent:

\medskip
\textup{(i)} $H_J^j: X(0,1) \rightarrow \rightarrow Y(0,1);$

\textup{(ii)} $\lim_{a\to 0_+} \sup_{\|f\|_{X(0,1)}\leq 1} \sup_{\lambda_1(E)\leq a} \|H_J^j(\chi_Ef)\|_{Y(0,1)} = 0;$

\textup{(iii)} $X(0,1) \overset{*}{\hookrightarrow} Y^d_{j,J}(0,1).$ 
\end{theorem}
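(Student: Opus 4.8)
The plan is to prove the cycle $\textup{(i)} \Rightarrow \textup{(ii)} \Rightarrow \textup{(iii)} \Rightarrow \textup{(i)}$, exploiting throughout the associate-operator duality~\eqref{E:associate} together with the characterizations of almost-compact embeddings recalled in Section~\ref{S:ri}. The structural heart of the argument is to transport everything to the \emph{associate} side: by the duality between $H^j_J$ and $R^j_J$, boundedness and compactness of $H^j_J$ from $X(0,1)$ into $Y(0,1)$ should mirror the same properties of $R^j_J$ from $Y'(0,1)$ into $X'(0,1)$, and the optimal-domain space $Y^d_{j,J}(0,1)$ is built precisely so that its associate $\|\cdot\|_{(Y^d_{j,J})'(0,1)}$ is governed by $R^j_J$ acting into $Y'(0,1)$. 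The restriction $X(0,1)\neq L^1(0,1)$ will be needed exactly so that, on the associate side, one is never forced into the excluded degenerate case $X'(0,1)=L^\infty(0,1)$, mirroring the hypothesis $Y(0,1)\neq L^\infty(0,1)$ in Theorem~\ref{T:opt_range}.

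First I would dispatch $\textup{(i)} \Rightarrow \textup{(ii)}$. This is the soft direction: if $H^j_J$ is compact, then the image of the unit ball of $X(0,1)$ is relatively compact in $Y(0,1)$, and a relatively compact set in a rearrangement-invariant space is of uniformly absolutely continuous norm (since total boundedness lets one reduce the supremum over the ball to a finite net, and each individual $H^j_Jf$ has absolutely continuous norm, all functions in $Y(0,1)_a$ being so when $Y(0,1)\neq L^\infty$; here one uses that $H^j_Jf$ is bounded when $f$ ranges over the ball). One then observes that for $\lambda_1(E)\le a$ the quantity $\|H^j_J(\chi_Ef)\|_{Y(0,1)}$ is controlled via the monotonicity of the kernel and rearrangement, so condition~\textup{(ii)}, which localizes the mass of $f$ to small sets, follows from uniform absolute continuity of the image set. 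I expect~\textup{(iii)}$\Rightarrow$\textup{(i)} to run the reverse way: assuming the almost-compact embedding $X(0,1)\overset{*}{\hookrightarrow}Y^d_{j,J}(0,1)$, I would factor $H^j_J$ as the composition of the almost-compact embedding into $Y^d_{j,J}(0,1)$ followed by the \emph{bounded} operator $H^j_J: Y^d_{j,J}(0,1)\to Y(0,1)$ guaranteed by Proposition~\ref{T:proposition}; a bounded operator precomposed with an almost-compact (hence norm-convergent-on-bounded-tails) embedding is compact, by the standard diagonal/tightness argument combined with the weak-type compactness already available for kernel operators with integrable tails.

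The main obstacle, as usual for optimal-space characterizations, will be $\textup{(ii)} \Rightarrow \textup{(iii)}$, where one must unwind the definition~\eqref{E:dom} of $\|\cdot\|_{Y^d_{j,J}(0,1)}$. Here I would pass to associate spaces: using~\eqref{E:associate} and the representation $\|f\|_{(Y^d_{j,J})'(0,1)}\approx\|R^j_Jf^*\|_{Y'(0,1)}$ (the analogue of~\eqref{E:X} with the roles of the range and domain exchanged), the desired $X(0,1)\overset{*}{\hookrightarrow}Y^d_{j,J}(0,1)$ becomes, via the duality $X\overset{*}{\hookrightarrow}Z \Leftrightarrow Z'\overset{*}{\hookrightarrow}X'$ recorded in Section~\ref{S:ri}, the statement $(Y^d_{j,J})'(0,1)\overset{*}{\hookrightarrow}X'(0,1)$. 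The latter I would recognize as an instance of Theorem~\ref{T:opt_range} applied to the operator $R^j_J$ with domain $Y'(0,1)$ and range $X'(0,1)$ — noting that $(Y^d_{j,J})'(0,1)$ plays the role of the optimal-range space for $R^j_J$ over $Y'(0,1)$ — whereupon condition~\textup{(ii)} translates into the uniform-absolute-continuity condition~\textup{(ii)} of Theorem~\ref{T:opt_range} for $R^j_J$. The delicate points will be verifying that the tail condition~\eqref{E:cond_Y} is exactly what makes $R^j_J$ map into $X'(0,1)$ boundedly, and confirming that $X(0,1)\neq L^1(0,1)$ is precisely the hypothesis $X'(0,1)\neq L^\infty(0,1)$ required to invoke Theorem~\ref{T:opt_range} in this associate formulation; establishing the equivalence $\sup_{\lambda_1(E)\le a}\|H^j_J(\chi_Ef)\|_{Y}\approx\|H^j_J(\chi_{(0,a)}f^*)\|_Y$ up to constants, which licenses replacing arbitrary small sets by initial intervals, is the technical lemma I expect to consume the most effort.
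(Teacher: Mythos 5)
Your overall architecture (a cycle of implications, with duality via $R^j_J$ in the background) is reasonable, but two of your key steps are not correct as stated. The most serious is your argument for \textup{(iii)} $\Rightarrow$ \textup{(i)}: it is \emph{false} that a bounded operator precomposed with an almost-compact embedding is compact. The identity map $L^\infty(0,1)\to L^1(0,1)$ is an almost-compact embedding (indeed $L^\infty \overset{*}{\hookrightarrow} X$ for every $X\neq L^\infty$), yet composing it with the bounded identity on $L^1(0,1)$ gives a non-compact map (the Rademacher functions have no $L^1$-convergent subsequence). Almost-compactness upgrades a.e.\ (or in-measure) convergence of a bounded sequence to norm convergence, but a bounded sequence in $X(0,1)$ need not have any a.e.\ convergent subsequence, so the factorization $X \overset{*}{\hookrightarrow} Y^d_{j,J} \to Y$ does not by itself yield compactness of $H^j_J$. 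The paper instead routes this direction through the equivalence of \textup{(i)} with $\lim_{a\to 0_+}\sup_{\|f\|_X\le 1}\|H^j_J(\chi_{(0,a)}f)\|_Y=0$ (Theorem~\ref{T:compact_operator}), whose hard half rests on approximating $H^j_J$ in operator norm by the truncated operators $f\mapsto H^j_J(\chi_{(a,1)}f)$ and proving each of those compact (the argument of \cite{PP}, resp.\ Arzel\`a--Ascoli); some such quantitative approximation argument is unavoidable and is missing from your sketch.

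The second gap is the ``technical lemma'' you defer to: the pointwise equivalence $\sup_{\lambda_1(E)\le a}\|H^j_J(\chi_Ef)\|_{Y}\approx\|H^j_J(\chi_{(0,a)}f^*)\|_Y$ is false in general (take $j=1$, $J\equiv 1$, $Y=L^1$, $f=\chi_{(1-a,1)}$, $E=(1-a,1)$: the left side is $\approx a$, the right side $\approx a^2$). What is true, and what the paper proves, is only that the two \emph{limit conditions} (suprema over the unit ball tending to $0$) are equivalent, and this is exactly where $X(0,1)\neq L^1(0,1)$ enters: one splits $\chi_Ef=\chi_{(0,b)}\chi_Ef+\chi_{(b,1)}\chi_Ef$ and controls the far part by $\frac{1}{(j-1)!J_b}\bigl\|\bigl(\int_t^1\frac{dr}{J(r)}\bigr)^{j-1}\bigr\|_{Y(0,1)}\,\|\chi_Ef\|_{L^1(0,1)}$, which vanishes because $X\overset{*}{\hookrightarrow}L^1$. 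Finally, for \textup{(ii)} $\Leftrightarrow$ \textup{(iii)} the paper does not pass to associate spaces at all: it compares $\sup\sup\|\chi_Ef\|_{Y^d_{j,J}(0,1)}$ directly with $\sup\sup\|H^j_J(\chi_Ef)\|_{Y(0,1)}+\sup\sup\|\chi_Ef\|_{L^1(0,1)}$ using the elementary observation that any $h\sim\chi_Ef$ satisfies $h=\chi_{\{|h|>0\}}h$ with $\lambda_1(\{|h|>0\})\le a$ and $\|h\|_{X(0,1)}\le 1$; your dual route would additionally require identifying $(Y^d_{j,J})'(0,1)$ with an optimal range of $R^j_J$ over $Y'(0,1)$ and an analogue of Theorem~\ref{T:opt_range} for $R^j_J$, neither of which is available off the shelf.
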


\begin{remarks}\label{T:remarks_opt_domain}
\textup{(a)} Using the definition of the associate norm and~\eqref{E:associate} we deduce that condition \textup{(ii)} of Theorem~\ref{T:opt_domain} is equivalent to equality
$$
\lim_{a\to 0_+} \sup_{\|f\|_{Y'(0,1)}\leq 1} \sup_{\lambda_1(E)\leq a} \|\chi_ER^j_J f\|_{X'(0,1)} = 0,
$$
which tells us that the set $\{R^j_Jf: \|f\|_{Y'(0,1)}\leq 1\}$ is of uniformly absolutely continuous norm in $X'(0,1)$. 

\textup{(b)} It is not hard to verify that conditions \textup{(ii)} and \textup{(iii)} of Theorem~\ref{T:opt_domain} are never fulfilled with $X(0,1)=L^1(0,1)$. However, this is not the case of condition \textup{(i)} since we have already observed in Remark~\ref{T:remark} that \textup{(i)} is satisfied with $J \equiv 1$, $X(0,1)=L^1(0,1)$ and $Y(0,1)=L^\infty(0,1)$ whenever $j>1$. Furthermore, in contrast to Theorem~\ref{T:opt_range}, which holds in the exceptional case $Y(0,1)=L^\infty(0,1)$ for quite a wide class of functions $J$, there are only very few functions $J$ for which Theorem~\ref{T:opt_domain} is fulfilled with $X(0,1)=L^1(0,1)$. We shall now characterize all nondecreasing functions having this property (recall that the case when the function $J$ is nondecreasing is the most significant from the point of view of applications to compact Sobolev embeddings). 

Fix a nondecreasing function $J$ fulfilling~\eqref{E:supremum}. Then Theorem~\ref{T:opt_domain} is {\it not} true for $X(0,1)=L^1(0,1)$ if and only if there is $j\in \mathbb N$ and a rearrangement-invariant norm $\|\cdot\|_{Y(0,1)}$ such that 
\begin{equation}\label{E:ypsilon}
H^j_J: L^1(0,1) \rightarrow \rightarrow Y(0,1).
\end{equation}
Notice that whenever a rearrangement-invariant norm $\|\cdot\|_{Y(0,1)}$ satisfies~\eqref{E:ypsilon} then, in particular,
\begin{equation}\label{E:ypsilon_2}
H^j_J: L^1(0,1) \rightarrow Y(0,1),
\end{equation}
and therefore~\eqref{E:cond_Y} holds (consequently, $\|\cdot\|_{Y(0,1)}$ satisfies the assumption of Theorem~\ref{T:opt_domain}). Indeed, if~\eqref{E:cond_Y} was not fulfilled then, due to Proposition~\ref{T:proposition}, there would be no rearrangement-invariant space $Y^d_{j,J}(0,1)$ such that~\eqref{E:domain} is fulfilled. This would contradict~\eqref{E:ypsilon_2}.

Fix $j\in \mathbb N$. In order to decide whether there is a rearrangement-invariant space $Y(0,1)$ for which~\eqref{E:ypsilon} holds, it is enough to study whether
\begin{equation}\label{E:ypsilon=l1}
H^j_J: L^1(0,1) \rightarrow \rightarrow L^1(0,1),
\end{equation}
since $L^1(0,1)$ is the largest rearrangement-invariant space over $(0,1)$. Due to Theorem~\ref{T:compact_operator}, condition~\eqref{E:ypsilon=l1} is equivalent to
$$
\lim_{a\to 0_+} \sup_{\|f\|_{L^1(0,1)}\leq1} \|H^j_J(\chi_{(0,a)}f)\|_{L^1(0,1)}=0,
$$
which is characterized in the last section of the present paper (see Theorem~\ref{T:ll} (b)) by
$$
\lim_{t\to 0_+} \frac{t}{J(t)}=0,
$$
(no matter what $j$ is). Combining this with~\eqref{E:supremum} we deduce that the only case when~\eqref{E:ypsilon=l1} is not fulfilled for some $j\in \mathbb N$ (and hence Theorem~\ref{T:opt_domain} is fulfilled with $X(0,1)=L^1(0,1)$) is the one when there is a set $M\subseteq (0,1)$ such that $0\in \overline{M}$ and
$$
J(s) \approx s \quad \textup{on $M$}.
$$
\end{remarks}

\begin{remark}\label{T:remark_2}
Suppose that $J: (0,1] \rightarrow (0,\infty)$ is a measurable function satisfying~\eqref{E:supremum} and $j\in \mathbb N$. If $\|\cdot\|_{Y(0,1)}$ is a rearrangement-invariant norm such that
\begin{equation}\label{E:bounded_2}
H^j_J: L^1(0,1) \rightarrow Y(0,1),
\end{equation}
then 
\begin{equation}\label{E:compact_l_1}
H^j_J: X(0,1) \rightarrow \rightarrow Y(0,1)
\end{equation}
is fulfilled for all rearrangement-invariant spaces $X(0,1)\neq L^1(0,1)$. To verify this, we first recall that the rearrangement-invariant norm $\|\cdot\|_{Y(0,1)}$ satisfies~\eqref{E:cond_Y} (a proof of this fact was given in Remarks~\ref{T:remarks_opt_domain} (b)). Thus, we can consider the rearrangement-invariant norm $\|\cdot\|_{Y^d_{j,J}(0,1)}$ defined by~\eqref{E:dom}. Since $L^1(0,1)$ is the largest rearrangement-invariant space over $(0,1)$ and~\eqref{E:bounded_2} is satisfied, $L^1(0,1)$ is the optimal domain for $Y(0,1)$ with respect to the operator $H^j_J$, and hence it follows from Proposition~\ref{T:proposition} that $Y^d_{j,J}(0,1)=L^1(0,1)$. The assumption $X(0,1)\neq L^1(0,1)$ yields that $X(0,1) \overset{*}{\hookrightarrow} L^1(0,1)$. Using the last two facts and Theorem~\ref{T:opt_domain}, we obtain~\eqref{E:compact_l_1}.

Furthermore, having only the information that~\eqref{E:bounded_2} holds, we cannot decide whether~\eqref{E:compact_l_infty} is satisfied with $X(0,1)=L^1(0,1)$ or not. An example supporting this was already presented in Remark~\ref{T:remark}.
\end{remark}

\begin{remark}
The classical result due to Luxemburg and Zaanen~\cite{LZ} relates compactness of a kernel integral operator to its absolute continuity, and to absolute continuity of the associate operator. Let us describe this result in some more detail, and then compare it to our Theorems~\ref{T:opt_range} and~\ref{T:opt_domain}.

Let $X(0,1)$ and $Y(0,1)$ be rearrangement-invariant spaces, let $T$ be a kernel integral operator and let $T'$ be the operator associate to $T$ (in a similar sense in which our operator $R^j_J$ is associate to $H^j_J$, see~\cite{LZ} for a precise definition). Assume that $Tf\in Y_a(0,1)$ for every $f\in X(0,1)$, and $T'g\in X_a'(0,1)$ for every $g\in Y'(0,1)$. 
In~\cite{LZ} it is proved (even in a more general setting) that we have the equivalence of the following three conditions:

\medskip
\textup{(a)} $T: X(0,1) \rightarrow \rightarrow Y(0,1)$.

\textup{(b)} The set $\{Tf: \|f\|_{X(0,1)}\leq 1\}$ is of uniformly absolutely continuous norm in $Y(0,1)$. 

\textup{(c)} The set $\{T'g: \|g\|_{Y'(0,1)}\leq 1\}$ is of uniformly absolutely continuous norm in $X'(0,1)$.

If we set $T=H^j_J$, then its associate operator $T'$ is the operator $R^j_J$. We have observed in Remarks~\ref{T:remark_opt_range} and~\ref{T:remarks_opt_domain} that in this case condition \textup{(b)} is exactly the condition \textup{(ii)} of Theorem~\ref{T:opt_range} and condition \textup{(c)} is identical to condition \textup{(ii)} of Theorem~\ref{T:opt_domain}. The main difference between our result and the one proved in~\cite{LZ} is the following: when proving that \textup{(a)} implies \textup{(b)} or \textup{(c)}, we do not need to assume that either the operator $H^j_J$, or $R^j_J$, has its range in the set of functions of absolutely continuous norm, since this fact already follows from \textup{(a)} (under the indispensable assumption that $Y(0,1)\neq L^\infty(0,1)$ or $X(0,1)\neq L^1(0,1)$, respectively). It can be easily observed that such a claim fails when $T$ is an arbitrary kernel integral operator.
\end{remark}

We shall now prove the results of this section. We start with the following

\begin{lemma}\label{T:m-1}
Let $J:(0,1]\rightarrow (0,\infty)$ be a measurable function satisfying~\eqref{E:supremum}, let $j\in \mathbb N$ and let $\alpha \in [0,j]$. Suppose that $\|\cdot\|_{X(0,1)}$ and $\|\cdot\|_{Y(0,1)}$ are rearrangement-invariant norms. Consider the following conditions: 

\medskip
\textup{(i)} $H^j_J: X(0,1) \rightarrow \rightarrow Y(0,1);$

\textup{(ii)} $\lim_{a\to 0_+} \sup_{\|f\|_{X(0,1)}\leq 1} \|H^j_J(\chi_{(0,a)}f)\|_{Y(0,1)} =0;$

\textup{(iii)} $\lim_{a\to 0_+} \left\|\chi_{(0,a)}(t)\left(\int_t^1 \frac{\,dr}{J(r)} \right)^{\alpha}\right\|_{Y(0,1)} =0.$

\noindent
Then \textup{(i)} implies \textup{(ii)}. Furthermore, provided that~\eqref{E:or} is satisfied, \textup{(ii)} implies \textup{(iii)}.
\end{lemma}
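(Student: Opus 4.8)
The plan is to establish the two implications by rather different means: the first by a direct compactness argument, the second by testing \textup{(ii)} against characteristic functions and then comparing the resulting quantity with the function appearing in \textup{(iii)}. Throughout I will use that, by the integral formula~\eqref{E:def_H}, the function $H^j_J(\chi_{(0,a)}f)$ vanishes on $(a,1)$, and that convergence in the norm of a rearrangement-invariant space implies convergence in measure (and hence a.e.\ along a subsequence).

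For \textup{(i)}$\Rightarrow$\textup{(ii)} I would argue by contradiction. If \textup{(ii)} fails, there are $\varepsilon>0$, a sequence $a_k\to0_+$ and functions $f_k$ with $\|f_k\|_{X(0,1)}\le1$ and $\|H^j_J(\chi_{(0,a_k)}f_k)\|_{Y(0,1)}\ge\varepsilon$. Setting $g_k=\chi_{(0,a_k)}f_k$ we have $\|g_k\|_{X(0,1)}\le1$ by \textup{(P2)}, so compactness \textup{(i)} yields a subsequence with $H^j_Jg_k\to h$ in $Y(0,1)$, whence $\|h\|_{Y(0,1)}\ge\varepsilon$. Since $H^j_Jg_k$ is supported in $(0,a_k)$ and $a_k\to0_+$, a further a.e.-convergent subsequence has pointwise limit $0$, forcing $h=0$ a.e.\ --- a contradiction. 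This step is routine.

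For \textup{(ii)}$\Rightarrow$\textup{(iii)} write $G(t)=\int_t^1 dr/J(r)$; by~\eqref{E:supremum} the integrand is locally integrable on $(0,1]$, so $G$ is continuous and strictly decreasing. I would first reduce to the exponent $\alpha=j$: if $\int_0^1 dr/J(r)=\infty$ then $G(t)\ge1$ on some $(0,t_\ast)$, so for $a<t_\ast$ one has $G^\alpha\le G^j$ on $(0,a)$ and \textup{(P2)} gives $\|\chi_{(0,a)}G^\alpha\|_{Y(0,1)}\le\|\chi_{(0,a)}G^j\|_{Y(0,1)}$; if instead $\int_0^1 dr/J(r)=:G_0<\infty$ then~\eqref{E:or} forces $Y(0,1)\neq L^\infty(0,1)$, hence $\|\chi_{(0,a)}G^\alpha\|_{Y(0,1)}\le G_0^\alpha\,\varphi_Y(a)\to0$ and \textup{(iii)} holds outright. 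So it remains to prove, when $\int_0^1 dr/J(r)=\infty$, that $\|\chi_{(0,a)}G^j\|_{Y(0,1)}\to0$. Testing \textup{(ii)} with $\chi_{(0,a)}/\varphi_X(a)$ and using $\varphi_X(a)\le\varphi_X(1)$ gives the basic estimate $\psi(a):=\|\chi_{(0,a)}(\int_t^a dr/J(r))^j\|_{Y(0,1)}\to0$ (note also that, were $Y(0,1)=L^\infty(0,1)$, the same test function would make the supremum in \textup{(ii)} infinite, so necessarily $Y(0,1)\neq L^\infty(0,1)$ and $\varphi_Y(a)\to0$). Splitting $(0,a)$ at the point $b_a$ with $G(b_a)=2G(a)$, on $(0,b_a)$ one has $\int_t^a dr/J(r)=G(t)-G(a)\ge G(t)/2$, so $\chi_{(0,b_a)}G^j\le 2^j\chi_{(0,a)}(\int_t^a dr/J(r))^j$ and this part is controlled by $2^j\psi(a)$; on $(b_a,a)$ one has $G\le2G(a)$, so this part is at most $(2G(a))^j\varphi_Y(a)$.

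The main obstacle is therefore to show $G(a)^j\varphi_Y(a)\to0$. I would extract this from $\psi$ itself: restricting the norm defining $\psi(a)$ to a subinterval $(0,a')$ with $a'<a$ and using $\int_t^a dr/J(r)\ge\int_{a'}^a dr/J(r)$ there yields $(\int_{a'}^a dr/J(r))^j\varphi_Y(a')\le\psi(a)$. Choosing $a'=a'(a)$ by $G(a')=2G(a)$ makes $\int_{a'}^a dr/J(r)=G(a)=\tfrac12 G(a')$, so $G(a')^j\varphi_Y(a')\le 2^j\psi(a)\to0$. Since $a'(\cdot)=G^{-1}(2G(\cdot))$ is continuous with $a'(a)\to0_+$ as $a\to0_+$, its values fill a whole right-neighbourhood of $0$, and therefore $\lim_{b\to0_+}G(b)^j\varphi_Y(b)=0$, which is exactly what the second region needs. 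Combining the two regions gives $\|\chi_{(0,a)}G^j\|_{Y(0,1)}\to0$ and completes the proof.
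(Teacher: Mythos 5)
Your proof is correct, and while the first implication is essentially the paper's own argument (the paper selects near-extremal $f_k$ for $a=1/k$, extracts a $Y$-convergent subsequence whose a.e.\ limit must vanish because the images are supported in $(0,1/k)$, and finishes by monotonicity of the supremum in $a$; your contradiction phrasing is the same mechanism), your treatment of \textup{(ii)}$\Rightarrow$\textup{(iii)} in the main case $\int_0^1\frac{dr}{J(r)}=\infty$ is genuinely different. Writing $G(t)=\int_t^1\frac{dr}{J(r)}$, the paper fixes $a$, finds $b<a$ with $\int_a^1\frac{dr}{J(r)}\le\int_t^a\frac{dr}{J(r)}$ for $t<b$, and bounds $\limsup_{d\to0_+}\|\chi_{(0,d)}G^j\|_{Y(0,1)}$ by $2^j\bigl\|\chi_{(0,a)}(t)\bigl(\int_t^a\frac{dr}{J(r)}\bigr)^j\bigr\|_{Y(0,1)}=j!\,2^j\|H^j_J(\chi_{(0,a)})\|_{Y(0,1)}$, then lets $a\to0_+$: decoupling the cutoff $d$ in the norm from the cutoff $a$ in the operator makes the region near $a$ (where $\int_t^a$ does not dominate $\int_a^1$) disappear for free. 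You instead keep a single cutoff and estimate $\|\chi_{(0,a)}G^j\|_{Y(0,1)}$ itself by splitting at the point where $G$ doubles, which forces you to control the extra term $G(a)^j\varphi_Y(a)$; you obtain this from a second, well-chosen restriction of the same tested quantity $\psi$ together with the surjectivity of $a\mapsto G^{-1}(2G(a))$ near $0$. Both routes are sound; the paper's is shorter, while yours is self-contained and produces the explicit necessary condition $\lim_{b\to0_+}G(b)^j\varphi_Y(b)=0$ as a by-product. One cosmetic point: in the contradiction step the $f_k$ only nearly attain the supremum, so you should ask for $\|H^j_J(\chi_{(0,a_k)}f_k)\|_{Y(0,1)}\ge\varepsilon/2$, say.
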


\begin{proof}
Suppose that \textup{(i)} holds. Then $H^j_J$ is bounded from $X(0,1)$ into $Y(0,1)$, so, in particular, for every $k\in \mathbb N$
$$
\sup_{\|f\|_{X(0,1)}\leq 1} \|H^j_J (\chi_{(0,1/k)}f)\|_{Y(0,1)} \leq \sup_{\|f\|_{X(0,1)}\leq 1} \|H^j_Jf\|_{Y(0,1)}<\infty.
$$
Therefore, given $k\in \mathbb N$, we can find a nonnegative measurable function $f_k$ on $(0,1)$ such that $\|f_k\|_{X(0,1)}\leq 1$ and
\begin{equation}\label{E:sup}
\sup_{\|f\|_{X(0,1)}\leq 1} \|H^j_J (\chi_{(0,1/k)}f)\|_{Y(0,1)} < \|H^j_J (\chi_{(0,1/k)}f_k)\|_{Y(0,1)} +\frac{1}{k}.
\end{equation}
Since the sequence $(\chi_{(0,1/k)}f_k)_{k=1}^\infty$ is bounded in $X(0,1)$, the assumption \textup{(i)} yields that there is a subsequence $(f_{k_\ell})_{\ell=1}^\infty$ of $(f_k)_{k=1}^\infty$ such that $(H^j_J(\chi_{(0,1/{k_\ell})}f_{k_\ell}))_{\ell=1}^\infty$ converges to some function $f$ in the norm of the space $Y(0,1)$. Moreover, the subsequence can be found in such a way that $(H^j_J(\chi_{(0,1/{k_\ell})}f_{k_\ell}))_{\ell=1}^\infty$ converges to $f$ a.e.\ on $(0,1)$.  But $H^j_J(\chi_{(0,1/{k_\ell})}f_{k_\ell})=0$ on $(1/k_\ell,1)$, which implies that $H^j_J(\chi_{(0,1/{k_\ell})}f_{k_\ell}) \rightarrow 0$ pointwise. Thus, $f=0$ a.e.\ on $(0,1)$. This yields
$$
\lim_{\ell\to \infty} \|H^j_J (\chi_{(0,1/{k_\ell})}f_{k_\ell})\|_{Y(0,1)}=0.
$$
Now, the inequality~\eqref{E:sup} gives 
$$
\lim_{\ell\to \infty} \sup_{\|f\|_{X(0,1)}\leq 1} \|H^j_J (\chi_{(0,1/{k_\ell})}f)\|_{Y(0,1)}=0.
$$
Since the function 
$$
a \mapsto \sup_{\|f\|_{X(0,1)}\leq 1} \|H^j_J (\chi_{(0,a)}f)\|_{Y(0,1)}
$$ 
is nondecreasing on $(0,1)$, we obtain \textup{(ii)}, as required.

Now, suppose that \textup{(ii)} holds and~\eqref{E:or} is satisfied. If $\int_0^1 \frac{\,dr}{J(r)} <\infty$, then necessarily $Y(0,1)\neq L^\infty(0,1)$, so
$$
\lim_{a\to 0_+} \left\|\chi_{(0,a)}(t)\left(\int_t^1 \frac{\,dr}{J(r)} \right)^{\alpha}\right\|_{Y(0,1)}
\leq \left(\int_0^1 \frac{\,dr}{J(r)} \right)^{\alpha} \lim_{a\to 0_+} \left\|\chi_{(0,a)}\right\|_{Y(0,1)}=0.
$$

Conversely, assume that $\int_0^1 \frac{\,dr}{J(r)} =\infty$. Given $a\in (0,1)$, there is $b\in (0,a)$ such that
$$
\max\left(1, \int_a^1 \frac{\,dr}{J(r)}\right) \leq \int_t^a \frac{\,dr}{J(r)}, ~~t\in (0,b).
$$
Then, in particular,
$$
1\leq \int_t^1 \frac{\,dr}{J(r)}, ~~t\in (0,b),
$$
and therefore also
$$
1\leq \left(\int_t^1 \frac{\,dr}{J(r)}\right)^{j-\alpha}, ~~t\in (0,b).
$$
Thus,
\begin{align*}
\limsup_{d\to 0_+} \left\|\chi_{(0,d)}(t) \left( \int_t^1 \frac{\,dr}{J(r)} \right)^{\alpha}\right\|_{Y(0,1)}
&\leq \limsup_{d\to 0_+} \left\|\chi_{(0,d)}(t) \left( \int_t^1 \frac{\,dr}{J(r)} \right)^j\right\|_{Y(0,1)}\\
&= \limsup_{d\to 0_+} \left\|\chi_{(0,d)}(t) \left( \int_t^a \frac{\,dr}{J(r)} + \int_a^1 \frac{\,dr}{J(r)} \right)^j\right\|_{Y(0,1)}\\
&\leq \limsup_{d\to 0_+} 2^j\left\|\chi_{(0,d)}(t) \left( \int_t^a \frac{\,dr}{J(r)} \right)^j\right\|_{Y(0,1)}\\
&\leq 2^j\left\|\chi_{(0,a)}(t) \left( \int_t^a \frac{\,dr}{J(r)} \right)^j\right\|_{Y(0,1)}\\
&=j!2^j \left\|H^j_J(\chi_{(0,a)})\right\|_{Y(0,1)} \quad \textup{(by~\eqref{E:char})}\\
&= j!2^j \|1\|_{X(0,1)} \left\|H^j_J\left(\chi_{(0,a)} \frac{1}{\|1\|_{X(0,1)}} \right)\right\|_{Y(0,1)}\\
&\leq j! 2^j \|1\|_{X(0,1)} \sup_{\|f\|_{X(0,1)} \leq 1} \left\|H^j_J(\chi_{(0,a)}f)\right\|_{Y(0,1)}.
\end{align*}
Passing to limit when $a$ tends to $0$, we obtain \textup{(iii)}, as required.
\end{proof}

\begin{proof}[Proof of Theorem~\ref{T:opt_range}]
\textup{(i)} $\Rightarrow$ \textup{(ii)} According to Lemma~\ref{T:m-1}, condition \textup{(i)} implies
$$
\lim_{a\to 0_+} \sup_{\|f\|_{X(0,1)}\leq 1}\|H^j_J(\chi_{(0,a)}f)\|_{Y(0,1)} =0.
$$ 
Fix $b\in (0,1)$. For every $t\in (0,1)$, we have
\begin{align}\label{E:estimate}
H^j_J(\chi_{(b,1)}f)(t) \nonumber
&= \frac{1}{(j-1)!}\int_{\max (b,t)}^1 \frac{|f(s)|}{J(s)} \left(\int_t^s \frac{\,dr}{J(r)}\right)^{j-1}\,ds\\ 
&\leq \frac{1}{(j-1)!J_b} \left(\int_t^1 \frac{\,dr}{J(r)}\right)^{j-1} \int_b^1 |f(s)|\,ds\\ \nonumber
&\leq \frac{1}{(j-1)!J_b} \left(\int_t^1 \frac{\,dr}{J(r)}\right)^{j-1} \|f\|_{L^1(0,1)}\\ \nonumber
&\leq \frac{C_{X}}{(j-1)!J_b} \left(\int_t^1 \frac{\,dr}{J(r)}\right)^{j-1} \|f\|_{X(0,1)}.
\end{align}
Hence,
\begin{align*}
&\limsup_{a\to 0_+} \sup_{\|f\|_{X(0,1)}\leq1} \left\|\chi_{(0,a)}H^j_Jf\right\|_{Y(0,1)}\\
&\leq \limsup_{a\to 0_+} \sup_{\|f\|_{X(0,1)}\leq1} \left\|\chi_{(0,a)}H^j_J(\chi_{(0,b)}f)\right\|_{Y(0,1)}
+\limsup_{a\to 0_+} \sup_{\|f\|_{X(0,1)}\leq1} \left\|\chi_{(0,a)}H^j_J(\chi_{(b,1)}f)\right\|_{Y(0,1)}\\
&\leq \sup_{\|f\|_{X(0,1)}\leq1} \left\|H^j_J(\chi_{(0,b)}f)\right\|_{Y(0,1)} 
+\limsup_{a\to 0_+} \frac{C_X}{(j-1)!J_b}\left\|\chi_{(0,a)}(t)\left(\int_t^1 \frac{\,dr}{J(r)}\right)^{j-1}\right\|_{Y(0,1)}\\
&=\sup_{\|f\|_{X(0,1)}\leq1} \left\|H^j_J(\chi_{(0,b)}f)\right\|_{Y(0,1)},
\end{align*}
thanks to Lemma~\ref{T:m-1}. Passing to limit when $b$ tends to $0$, we get
$$
\lim_{a\to 0_+} \sup_{\|f\|_{X(0,1)}\leq1} \left\|\chi_{(0,a)}H^j_Jf\right\|_{Y(0,1)}=0,
$$
as required.

\textup{(ii)} $\Rightarrow$ \textup{(i)} The proof is completely analogous to that of~\cite[Theorem 3.1, implication \textup{(ii)} $\Rightarrow$ \textup{(i)}]{PP}, even with simplifications following from the fact that we consider rearrangement-invariant spaces over a finite interval.

\textup{(ii)} $\Leftrightarrow$ \textup{(iii)} Using the definition of the associate norm and the equality~\eqref{E:associate}, we get
\begin{align*}
\lim_{a\to 0_+} \sup_{\|f\|_{Y'(0,1)}\leq 1}  \|\chi_{(0,a)} f^*\|_{(X_{j,J}^r)'(0,1)}
&=\lim_{a\to 0_+} \sup_{\|f\|_{Y'(0,1)}\leq 1} \left\|R^j_J(\chi_{(0,a)}f^*)\right\|_{X'(0,1)} \\
&=\lim_{a\to 0_+} \sup_{\|f\|_{Y'(0,1)}\leq 1} \sup_{\|g\|_{X(0,1)}\leq 1} \int_0^1 |g(s)|R^j_J(\chi_{(0,a)}f^*)(s) \,ds\\
&= \lim_{a\to 0_+} \sup_{\|g\|_{X(0,1)}\leq 1} \sup_{\|f\|_{Y'(0,1)}\leq 1} \int_0^1 \chi_{(0,a)}(s) f^*(s)  H^j_Jg(s)\,ds\\
&=\lim_{a\to 0_+} \sup_{\|g\|_{X(0,1)}\leq 1} \sup_{\|f\|_{Y'(0,1)}\leq 1} \int_0^1 f^*(s) \left(\chi_{(0,a)} H^j_Jg\right)^*(s)\,ds\\
&=\lim_{a\to 0_+} \sup_{\|g\|_{X(0,1)}\leq 1} \|\chi_{(0,a)} H^j_Jg\|_{Y(0,1)}.
\end{align*}
Note that the second last equality holds because $\chi_{(0,a)}H^j_Jg$ is nonincreasing on $(0,1)$ for every $a\in (0,1)$ and $g\in X(0,1)$. Thus, we have proved that \textup{(ii)} holds if and only if $Y'(0,1) \overset{*}{\hookrightarrow} (X_{j,J}^r)'(0,1)$. Since the latter condition is equivalent to \textup{(iii)}, the proof is complete.
\end{proof}

\begin{proof}[Proof of Theorem~\ref{T:compact_operator}]
According to Lemma~\ref{T:m-1}, \textup{(i)} implies \textup{(ii)} (with no restriction on $\|\cdot\|_{X(0,1)}$, $\|\cdot\|_{Y(0,1)}$, $J$ and $j$). 

Suppose that condition~\eqref{E:or} is satisfied. Then the implication \textup{(ii)} $\Rightarrow$ \textup{(i)} follows from the proof of Theorem~\ref{T:opt_range}. So, assume that $Y(0,1)=L^\infty(0,1)$, $\int_0^1 \frac{\,dr}{J(r)} <\infty$ and \textup{(ii)} holds. We observe that to prove \textup{(i)}, it is enough to show that for every $a\in (0,1)$, the operator $H_{J,a}^j: f \mapsto H_J^j(\chi_{(a,1)}f)$ is compact from $X(0,1)$ into $L^\infty(0,1)$. Indeed, thanks to \textup{(ii)} we have
\begin{align*}
\lim_{a\to 0_+} \sup_{\|f\|_{X(0,1)}\leq 1} \|H_J^jf - H_{J,a}^jf\|_{L^\infty(0,1)}
&= \lim_{a\to 0_+} \sup_{\|f\|_{X(0,1)}\leq 1} \|H_J^j(\chi_{(0,a)}f)\|_{L^\infty(0,1)}= 0,
\end{align*}
so $H_J^j$ will be a norm limit of compact operators, and thus itself a compact operator.

Fix $a\in (0,1)$. For every $f\in X(0,1)$ we can consider the function $H^j_{J,a}f$ to be defined by~\eqref{E:def_H} (with $f$ replaced by $\chi_{(a,1)}f$) on the entire $[0,1]$. Then $H^j_{J,a}f$ is continuous on $[0,1]$, and it follows from~\eqref{E:estimate} (with $b=a$) and from the fact that $\int_0^1 \frac{\,dr}{J(r)}<\infty$ that the image by $H^j_{J,a}$ of the unit ball of $X(0,1)$ is bounded in $C([0,1])$ with the standard supremum norm by $\frac{C_X}{(j-1)!J_a}\left(\int_0^1 \frac{\,dr}{J(r)}\right)^{j-1}$. 

Now, assume that $j>1$. Let $0\leq t_1 <t_2\leq 1$. Then, using the result of the previous paragraph with $j$ replaced by $j-1$, we get
\begin{align*}
\sup_{\|f\|_{X(0,1)}\leq 1} \left|H^j_{J,a}f(t_1)-H^j_{J,a}f(t_2)\right|
&=\sup_{\|f\|_{X(0,1)}\leq 1}\int_{t_1}^{t_2} \frac{H^{j-1}_J(\chi_{(a,1)}f)(s)}{J(s)}\,ds\\
&\leq \sup_{\|f\|_{X(0,1)}\leq 1}\left\|H^{j-1}_J(\chi_{(a,1)}f)\right\|_{L^\infty(0,1)}\int_{t_1}^{t_2} \frac{\,ds}{J(s)}\\
&\leq \frac{C_X}{(j-2)!J_a}\left(\int_0^1 \frac{\,dr}{J(r)}\right)^{j-2} \int_{t_1}^{t_2} \frac{\,ds}{J(s)}.
\end{align*}
The last expression goes to $0$ when $t_2-t_1$ tends to $0$ thanks to the absolute continuity of the Lebesgue integral. This proves that the image by $H^j_{J,a}$ of the unit ball of $X(0,1)$ is equicontinuous.

Let $j=1$ and $X(0,1)\neq L^1(0,1)$. Then we deduce that 
\begin{align*}
\sup_{\|f\|_{X(0,1)}\leq 1} |H_{J,a}^jf(t_1)-H_{J,a}^jf(t_2)| 
&=\sup_{\|f\|_{X(0,1)}\leq 1} \int_{t_1}^{t_2} \frac{\chi_{(a,1)}(s) |f(s)|}{J(s)}\,ds\\
&\leq \frac{1}{J_a} \sup_{\|f\|_{X(0,1)}\leq 1} \int_{t_1}^{t_2} |f(s)|\,ds\\ 
&\leq \frac{1}{J_a} \sup_{\|f\|_{X(0,1)}\leq 1} \sup_{\lambda_1(E)\leq t_2-t_1} \|\chi_E f\|_{L^1(0,1)},
\end{align*}
which goes to $0$ when $t_2-t_1$ tends to $0$ thanks to the almost-compact embedding $X(0,1) \overset{*}{\hookrightarrow} L^1(0,1)$. This proves the equicontinuity in this case. Arzela-Ascoli theorem now yields that $H^j_{J,a}$ maps the unit ball of $X(0,1)$ into a relatively compact set in $C([0,1])$. Since the space $C([0,1])$ is continuously embedded into $L^\infty(0,1)$, the operator $H^j_{J,a}$ is compact from $X(0,1)$ into $L^\infty(0,1)$, as required.

Finally, suppose that $X(0,1)=L^1(0,1)$, $Y(0,1)=L^\infty(0,1)$ and $j=1$. We have
\begin{align}\label{E:ll}
\lim_{a\to 0_+} \sup_{\|f\|_{X(0,1)}\leq 1} \left\|H_J(\chi_{(0,a)}f)\right\|_{Y(0,1)}
&\approx\lim_{a\to 0_+} \sup_{\|f\|_{L^1(0,1)}\leq 1} \left\|H_J(\chi_{(0,a)}f)\right\|_{L^\infty(0,1)} \nonumber\\
&=\lim_{a\to 0_+} \sup_{\|f\|_{L^1(0,1)}\leq 1} \int_0^a \frac{|f(s)|}{J(s)}\,ds \\ \nonumber
&=\lim_{a\to 0_+} \left\|\chi_{(0,a)} \frac{1}{J}\right\|_{L^\infty(0,1)} = \lim_{a\to 0_+} \esup_{t\in (0,a)} \frac{1}{J(t)},
\end{align}
hence, condition \textup{(ii)} is satisfied if and only if $\lim_{a\to 0_+} \esup_{t\in (0,a)} \frac{1}{J(t)}=0$. Thus, the implication \textup{(ii)} $\Rightarrow$ \textup{(i)} holds in the case that $X(0,1)=L^1(0,1)$, $Y(0,1)=L^\infty(0,1)$, $j=1$ and $\lim_{a\to 0_+} \esup_{t\in (0,a)} \frac{1}{J(t)} \neq 0$, because the assumption \textup{(ii)} is not fulfilled.

To complete the proof, we will show that if $X(0,1)=L^1(0,1)$, $Y(0,1)=L^\infty(0,1)$ and $j=1$ then condition \textup{(i)} is not satisfied. Indeed, since $\frac{1}{J}>0$ on $(0,1)$, there is $\varepsilon>0$ and a set $M\subseteq (0,1)$ of measure $\frac{1}{2}$ such that $\frac{1}{J}\geq \varepsilon$ on $M$. Let $(x_n)_{n=1}^\infty$ be a sequence of points in $[0,1)$ fulfilling $\lambda_1((x_n,1) \cap M)= \frac{1}{2^n}$, $n\in \mathbb N$. Given $n\in \mathbb N$, set $f_n=2^n\chi_{(x_n,x_{n+1}]\cap M}$. Then
$$
\|f_n\|_{X(0,1)} \approx \|f_n\|_{L^1(0,1)}= 2^n \lambda_1((x_n,x_{n+1}]\cap M)
=2^n(\lambda_1((x_n,1)\cap M)-\lambda_1((x_{n+1},1)\cap M))=\frac{1}{2}.
$$
Therefore, the sequence $(f_n)_{n=1}^\infty$ is bounded in $X(0,1)$. Let $m$, $n\in \mathbb N$, $m<n$. Since both $H_Jf_m$ and $H_Jf_n$ are continuous on $(0,1)$, we have
\begin{align*}
\|H_Jf_n-H_Jf_m\|_{Y(0,1)} 
&\approx \|H_Jf_n-H_Jf_m\|_{L^\infty(0,1)} 
\geq |H_Jf_n(x_n)-H_Jf_m(x_n)|\\
&=2^n \int_{x_n}^{x_{n+1}} \frac{\chi_M(s)}{J(s)}\,ds\geq \frac{\varepsilon}{2}.
\end{align*}
Consequently, there is no subsequence $(f_{n_k})_{k=1}^\infty$ of $(f_n)_{n=1}^\infty$ for which $(H_Jf_{n_k})_{k=1}^\infty$ is convergent in $Y(0,1)$. Hence, $H_J$ is not compact from $X(0,1)$ into $Y(0,1)$. The proof is complete.
\end{proof}

\begin{proof}[Proof of Proposition~\ref{T:proposition}]
Suppose that $\|\cdot\|_{Y(0,1)}$ is a rearrangement-invariant norm fulfilling~\eqref{E:cond_Y}. We start by showing that $\|\cdot\|_{Y^d_{j,J}(0,1)}$ is a rearrangement-invariant norm. Properties \textup{(P5)}, \textup{(P6)} as well as the first two properties in \textup{(P1)} trivially hold. Since the functional $\|\cdot\|_{L^1(0,1)}$ satisfies the axioms of rearrangement-invariant norms, we only have to verify that the functional $\|\cdot \|_{Z(0,1)}$ defined by  
$$
\|f\|_{Z(0,1)} = \sup_{h \sim f} \|H^j_Jh\|_{Y(0,1)}, ~~f\in \mathcal M(0,1),
$$ 
fulfils the triangle inequality and properties \textup{(P2)} -- \textup{(P4)}. However, \textup{(P2)} and \textup{(P3)} can be proved exactly in the same way as it is done in~\cite[proof of Lemma 4.2]{CP}. Furthermore, using the fact that each nonnegative function equimeasurable to $1$ is equal to $1$ a.e., and applying the equality~\eqref{E:char} with $a=1$, we get
\begin{align}\label{E:P4}
\|1\|_{Z(0,1)}
=\|H^j_J(1)\|_{Y(0,1)}
=\frac{1}{j!}\left\|\left(\int_t^1 \frac{\,dr}{J(r)}\right)^{j} \right\|_{Y(0,1)}<\infty,
\end{align}
which proves \textup{(P4)}. Thus, it only remains to verify the triangle inequality. 

Suppose that $u$, $v$ are nonnegative simple functions on $(0,1)$. We will show that 
\begin{equation}\label{E:triangle}
\|u+v\|_{Z(0,1)} \leq \|u\|_{Z(0,1)} + \|v\|_{Z(0,1)}.
\end{equation}
Assume that a nonnegative function $h$ on $(0,1)$ satisfies $h \sim u+v$. Then it is not hard to observe that there exist nonnegative simple functions $h_u$ and $h_v$ on $(0,1)$ such that $h=h_u+h_v$, $h_u \sim u$ and $h_v \sim v$. Hence,
\begin{align*}
\|H^j_Jh\|_{Y(0,1)} \leq \|H^j_Jh_u\|_{Y(0,1)} + \|H^j_Jh_v\|_{Y(0,1)}
\leq \|u\|_{Z(0,1)} + \|v\|_{Z(0,1)}.
\end{align*}
Passing to supremum over all $h$ we get~\eqref{E:triangle}.

Let $f$, $g\in \mathcal M_+(0,1)$. Then there are two sequences of nonnegative simple functions $(u_n)_{n=1}^\infty$ and $(v_n)_{n=1}^\infty$ such that $u_n \uparrow f$ and $v_n \uparrow g$. Then also $u_n+v_n \uparrow f+g$. Thus, using the property \textup{(P3)} for $\|\cdot\|_{Z(0,1)}$ (which has already been verified) and the inequality~\eqref{E:triangle}, we obtain
\begin{align*}
\|f+g\|_{Z(0,1)}  
&=\lim_{n\to \infty} \|u_n+v_n\|_{Z(0,1)}
\leq \lim_{n\to \infty} \left(\|u_n\|_{Z(0,1)} +\|v_n\|_{Z(0,1)}\right)\\
&=\lim_{n\to \infty} \|u_n\|_{Z(0,1)} +\lim_{n\to \infty} \|v_n\|_{Z(0,1)}
=\|f\|_{Z(0,1)} +\|g\|_{Z(0,1)},
\end{align*}
as required.

The assertion~\eqref{E:domain} follows from the definition of the space $Y^d_{j,J}(0,1)$. Furthermore, let $\|\cdot\|_{X(0,1)}$ be a rearrangement-invariant norm such that $H^j_J: X(0,1) \rightarrow Y(0,1)$. Then there is a constant $C>0$ such that for every $f\in X(0,1)$, 
$$
\|H^j_Jf\|_{Y(0,1)} \leq C\|f\|_{X(0,1)}.
$$
Thus,
\begin{equation*}
\|f\|_{Y^d_{j,J}(0,1)} 
= \sup_{h\sim f} \|H^j_Jf\|_{Y(0,1)} +\|f\|_{L^1(0,1)} 
\leq (C+C_X)\|f\|_{X(0,1)}.
\end{equation*}
Hence, we obtain $X(0,1) \hookrightarrow Y^d_{j,J}(0,1)$, that is, $Y^d_{j,J}(0,1)$ is the largest rearrangement-invariant space for which~\eqref{E:domain} is satisfied.

Finally, suppose that a rearrangement-invariant norm $\|\cdot\|_{Y(0,1)}$ does not fulfil~\eqref{E:cond_Y}. By~\eqref{E:char} applied with $a=1$,
$$
\|H^j_J(1)\|_{Y(0,1)} 
=\frac{1}{j!}\left\|\left(\int_t^1 \frac{\,dr}{J(r)}\right)^{j} \right\|_{Y(0,1)} =\infty,
$$
so $H^j_J(1) \notin Y(0,1)$. Since the constant function $1$ belongs to each rearrangement-invariant space over $(0,1)$, there is no rearrangement-invariant space $Y^d_{j,J}(0,1)$ for which~\eqref{E:domain} is satisfied.
\end{proof}

\begin{proof}[Proof of Theorem~\ref{T:opt_domain}]
Due to Theorem~\ref{T:compact_operator}, condition \textup{(i)} is equivalent to 
\begin{equation}\label{E:ii}
\lim_{a\to 0_+} \sup_{\|f\|_{X(0,1)}\leq 1} \|H^j_J(\chi_{(0,a)}f)\|_{Y(0,1)} =0.
\end{equation}
Obviously, we have \textup{(ii)} $\Rightarrow$ \eqref{E:ii}. Conversely, suppose that~\eqref{E:ii} holds and fix $b\in (0,1)$. Using the first part of~\eqref{E:estimate} applied to $\chi_Ef$ instead of $f$, we get
\begin{align*}
\limsup_{a\to 0_+} &\sup_{\|f\|_{X(0,1)}\leq1} \sup_{\lambda_1(E) \leq a} \left\|H^j_J(\chi_Ef)\right\|_{Y(0,1)}\\
&\leq \limsup_{a\to 0_+} \sup_{\|f\|_{X(0,1)}\leq1} \sup_{\lambda_1(E) \leq a} \left\|H^j_J(\chi_{(0,b)} \chi_E f)\right\|_{Y(0,1)}\\
&+\limsup_{a\to 0_+} \sup_{\|f\|_{X(0,1)}\leq1} \sup_{\lambda_1(E) \leq a} \left\|H^j_J(\chi_{(b,1)} \chi_Ef)\right\|_{Y(0,1)}\\
&\leq \sup_{\|f\|_{X(0,1)}\leq1} \left\|H^j_J(\chi_{(0,b)}f)\right\|_{Y(0,1)}\\
&+\frac{1}{(j-1)!J_b}\left\|\left(\int_t^1 \frac{\,dr}{J(r)}\right)^{j-1}\right\|_{Y(0,1)} \limsup_{a\to 0_+} \sup_{\|f\|_{X(0,1)}\leq 1} \sup_{\lambda_1(E)\leq a} \|\chi_Ef\|_{L^1(0,1)}\\
&=\sup_{\|f\|_{X(0,1)}\leq1} \left\|H^j_J(\chi_{(0,b)}f)\right\|_{Y(0,1)},
\end{align*}
because $X(0,1) \overset{*}{\hookrightarrow} L^1(0,1)$ (thanks to the assumption $X(0,1) \neq L^1(0,1)$) and 
\begin{align*}
&\left\|\left(\int_t^1 \frac{\,dr}{J(r)}\right)^{j-1}\right\|_{Y(0,1)}
\leq \left\|\chi_{(0,\frac{1}{2})}(t)\left(\int_t^1 \frac{\,dr}{J(r)}\right)^{j-1}\right\|_{Y(0,1)}
+\left\|\chi_{(\frac{1}{2},1)}(t)\left(\int_t^1 \frac{\,dr}{J(r)}\right)^{j-1}\right\|_{Y(0,1)}\\
&\leq \frac{1}{\int_{\frac{1}{2}}^1 \frac{\,dr}{J(r)}}\left\|\chi_{(0,\frac{1}{2})}(t)\left(\int_t^1 \frac{\,dr}{J(r)}\right)^{j}\right\|_{Y(0,1)}
+\left(\int_{\frac{1}{2}}^1 \frac{\,dr}{J(r)}\right)^{j-1}\left\|\chi_{(\frac{1}{2},1)}(t)\right\|_{Y(0,1)} <\infty,
\end{align*}
thanks to~\eqref{E:cond_Y} and to the fact that $0<\int_{\frac 12}^1 \frac{\,dr}{J(r)}<\infty$. Passing to limit when $b$ tends to $0$, we obtain \textup{(ii)}.

It remains to show that \textup{(ii)} holds if and only if \textup{(iii)} holds. Fix $a\in (0,1)$. Then
\begin{align}\label{E:ineq}
\sup_{\|f\|_{X(0,1)}\leq 1} &\sup_{\lambda_1(E) \leq a} \|H^j_J(\chi_Ef)\|_{Y(0,1)} \nonumber\\
&\leq \sup_{\|f\|_{X(0,1)}\leq 1} \sup_{\lambda_1(E) \leq a} \left( \sup_{h \sim \chi_Ef} \|H^j_Jh\|_{Y(0,1)} +\|\chi_Ef\|_{L^1(0,1)}\right)\\
&\leq \sup_{\|f\|_{X(0,1)}\leq 1} \sup_{\lambda_1(E) \leq a} \|H^j_J(\chi_Ef)\|_{Y(0,1)} 
+ \sup_{\|f\|_{X(0,1)}\leq 1} \sup_{\lambda_1(E) \leq a} \|\chi_Ef\|_{L^1(0,1)}. \nonumber
\end{align}
Note that the second inequality is true thanks to the fact that whenever a function $f$ fulfils $\|f\|_{X(0,1)}\leq 1$, a set $E\subseteq (0,1)$ satisfies $\lambda_1(E)\leq a$ and $h$ is a function equimeasurable to $\chi_{E}f$, then $h=\chi_{\{|h|>0\}}h$, $h$ belongs to the unit ball of $X(0,1)$ and $\lambda_1(\{|h|>0\})=\lambda_1(\{\chi_E|f|>0\})\leq \lambda_1(E)\leq a$. 

Assume that \textup{(ii)} holds. Since $X(0,1) \neq L^1(0,1)$, we have $X(0,1) \overset{*}{\hookrightarrow} L^1(0,1)$, that is,
$$
\lim_{a\to 0_+} \sup_{\|f\|_{X(0,1)}\leq 1} \sup_{\lambda_1(E) \leq a} \|\chi_Ef\|_{L^1(0,1)}=0.
$$
Thus, according to the second inequality in~\eqref{E:ineq}, we obtain
\begin{align*}
&\lim_{a\to 0_+} \sup_{\|f\|_{X(0,1)}\leq 1} \sup_{\lambda_1(E) \leq a} \|\chi_Ef\|_{Y^d_{j,J}(0,1)}\\
&=\lim_{a\to 0_+} \sup_{\|f\|_{X(0,1)}\leq 1} \sup_{\lambda_1(E) \leq a} \left(\sup_{h \sim \chi_Ef} \|H^j_Jh\|_{Y(0,1)} +\|\chi_Ef\|_{L^1(0,1)}\right)=0,
\end{align*}
which yields \textup{(iii)}.

Conversely, assume that \textup{(iii)} holds. Then the first inequality in~\eqref{E:ineq} yields \textup{(ii)}. The proof is complete.
\end{proof}

\section{Main results}\label{S:main}

In the present section we state and prove the main results of this paper. They concern derivation of $m$-th compact Sobolev embeddings from compactness of the one-dimensional operator $H^m_I$ defined in the previous section. Here, $I$ stands for a function which is related to the underlying measure space $(\Omega,\nu)$ by the fact the $(\Omega,\nu,I)$ is a compatible triplet (recall that the notion of a compatible triplet was introduced in Definition~\ref{T:definition}). 

\begin{theorem}\label{T:main}
Assume that $(\Omega,\nu,I)$ is a compatible triplet.
Let $m\in \mathbb N$ and let $\|\cdot\|_{X(0,1)}$ and $\|\cdot\|_{Y(0,1)}$ be rearrangement-invariant norms. Then 
\begin{equation}\label{E:comp_op}
H^m_I: X(0,1) \rightarrow \rightarrow Y(0,1)
\end{equation}
holds if and only if
\begin{equation}\label{E:almost_comp_op}
\lim_{a\to 0_+} \sup_{\|f\|_{X(0,1)} \leq 1} \|H^m_I(\chi_{(0,a)}f)\|_{Y(0,1)} =0
\end{equation}
holds. Moreover, each of the conditions~\eqref{E:comp_op} and~\eqref{E:almost_comp_op} implies
\begin{equation}\label{E:comp_emb}
V^mX(\Omega,\nu) \hookrightarrow \hookrightarrow Y(\Omega,\nu).
\end{equation}
\end{theorem}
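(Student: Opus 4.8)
The plan is to treat the two assertions separately: the equivalence of~\eqref{E:comp_op} and~\eqref{E:almost_comp_op} follows from Theorem~\ref{T:compact_operator}, while the passage to the compact Sobolev embedding rests on the optimal-range characterization of Theorem~\ref{T:opt_range} together with a Vitali-type convergence argument.

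For the equivalence of~\eqref{E:comp_op} and~\eqref{E:almost_comp_op}, I would invoke Theorem~\ref{T:compact_operator} with $J=I$ and $j=m$, observing that conditions \textup{(i)} and \textup{(ii)} there are exactly~\eqref{E:comp_op} and~\eqref{E:almost_comp_op}. That theorem gives their equivalence in all cases except the single configuration $X(0,1)=L^1(0,1)$, $Y(0,1)=L^\infty(0,1)$, $m=1$, together with $\lim_{a\to0_+}\esup_{t\in(0,a)}1/I(t)=0$. The key point is that this exceptional case cannot occur here: property \textup{(C1)} forces $I$ to be nondecreasing on $(0,1]$, so $\einf_{t\in(0,a)}I(t)=\lim_{t\to0_+}I(t)\le I(1)<\infty$ for every $a\in(0,1)$, whence $\lim_{a\to0_+}\esup_{t\in(0,a)}1/I(t)=1/\lim_{t\to0_+}I(t)\neq0$. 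Hence~\eqref{E:comp_op} and~\eqref{E:almost_comp_op} are equivalent.

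For the implication into~\eqref{E:comp_emb}, assume~\eqref{E:comp_op} and first suppose $Y(0,1)\neq L^\infty(0,1)$, so that~\eqref{E:or} holds and Theorem~\ref{T:opt_range} applies. By~\eqref{E:optimal_range} the operator $H^m_I$ is bounded from $X(0,1)$ into the optimal range $X^r_{m,I}(0,1)$, so Theorem~\ref{T:CPS} (with $Y=X^r_{m,I}$) yields the continuous embedding $V^mX(\Omega,\nu)\hookrightarrow X^r_{m,I}(\Omega,\nu)$. On the other hand, the equivalence \textup{(i)}$\Leftrightarrow$\textup{(iii)} of Theorem~\ref{T:opt_range} identifies~\eqref{E:comp_op} with the almost-compact embedding $X^r_{m,I}(0,1)\overset{*}{\hookrightarrow}Y(0,1)$, i.e.\ $X^r_{m,I}(\Omega,\nu)\overset{*}{\hookrightarrow}Y(\Omega,\nu)$. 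Therefore any sequence $(u_k)$ bounded in $V^mX(\Omega,\nu)$ is bounded in $X^r_{m,I}(\Omega,\nu)$ and hence, by the definition of almost-compact embedding, is of uniformly absolutely continuous norm in $Y(\Omega,\nu)$. To upgrade this tightness to norm convergence, I would extract an a.e.\ convergent subsequence by a Rellich-type argument: $(u_k)$ is bounded in $V^mX(\Omega,\nu)\hookrightarrow V^1L^1(\Omega,\nu)$, and since $\einf_{B_x}\omega>0$ on balls $B_x\subseteq\Omega$, the measures $\nu$ and $\lambda_n$ are comparable on each such ball, so $(u_k)$ is bounded in $W^{1,1}(B_x,\lambda_n)$; the classical Rellich theorem and a diagonal argument over a countable cover of $\Omega$ by such balls produce a subsequence $(u_{k_\ell})$ converging a.e.\ on $\Omega$ to some $u$, with $u\in Y(\Omega,\nu)$ by the Fatou lemma in $Y(\Omega,\nu)$. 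Finally, a.e.\ convergence together with uniform absolute continuity of the norms on the finite measure space $(\Omega,\nu)$ gives norm convergence $u_{k_\ell}\to u$ in $Y(\Omega,\nu)$ by a Vitali-type theorem (use Egorov to obtain uniform convergence off a set of arbitrarily small measure, and absorb the tail via uniform absolute continuity), which proves~\eqref{E:comp_emb}.

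The main obstacle I anticipate is the a.e.\ compactness step, since the ambient space carries the abstract measure $\nu$ rather than $\lambda_n$; the density hypothesis on $\omega$ is precisely what permits localization to the Lebesgue setting and the use of classical Rellich. A second technical point is the borderline case $Y(0,1)=L^\infty(0,1)$ with $\int_0^1 dr/I(r)<\infty$, which lies outside the scope of Theorem~\ref{T:opt_range} (there the optimal range degenerates to $L^\infty(0,1)$ and the almost-compact route breaks down); this case I would dispatch separately, showing in the spirit of the $L^\infty$-target part of the proof of Theorem~\ref{T:compact_operator} that the image of a bounded subset of $V^mX(\Omega,\nu)$ is equicontinuous and then applying Arzel\`a--Ascoli.
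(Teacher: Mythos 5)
Your treatment of the equivalence of \eqref{E:comp_op} and \eqref{E:almost_comp_op}, and of the implication to \eqref{E:comp_emb} in the main case, matches the paper's proof almost exactly: the monotonicity of $I$ rules out the exceptional configuration of Theorem~\ref{T:compact_operator}; when \eqref{E:or} holds, Theorem~\ref{T:opt_range} gives $X^r_{m,I}(0,1)\overset{*}{\hookrightarrow}Y(0,1)$, Theorem~\ref{T:CPS} gives $V^mX(\Omega,\nu)\hookrightarrow X^r_{m,I}(\Omega,\nu)$, the localization to balls with $\einf_{B_x}\omega>0$ plus classical Rellich and a diagonal argument is precisely Lemma~\ref{T:lemma}, and your Egorov/Vitali step is the content of the cited result \cite[Theorem~3.1]{S}. (One remark: when $Y(0,1)=L^\infty(0,1)$ but $\int_0^1 dr/I(r)=\infty$, condition \eqref{E:or} still holds and Theorem~\ref{T:opt_range} shows \eqref{E:comp_op} is vacuous, so the only genuinely separate case is the one you identify.)

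The gap is in that separate case, $Y(0,1)=L^\infty(0,1)$ and $\int_0^1 dr/I(r)<\infty$. Your plan is to imitate the Arzel\`a--Ascoli step from the proof of Theorem~\ref{T:compact_operator}, i.e.\ to show that the image of the unit ball of $V^mX(\Omega,\nu)$ is equicontinuous. But that step in Theorem~\ref{T:compact_operator} concerns the one-dimensional functions $H^j_{J,a}f$ on $[0,1]$, whose modulus of continuity is controlled directly by $\int_{t_1}^{t_2}ds/J(s)$; nothing analogous is available for elements of $V^mX(\Omega,\nu)$ on a general compatible triplet. Members of the unit ball of $V^mX(\Omega,\nu)$ need not admit continuous representatives with a norm-controlled modulus of continuity --- establishing such a modulus would amount to a quantitative Morrey-type inequality on irregular domains (Maz'ya domains with cusps, arbitrary densities $\omega$), which is a substantial separate problem and is not implied by $H^m_I:X(0,1)\rightarrow\rightarrow L^\infty(0,1)$. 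The paper avoids continuity altogether: for $m=1$ it passes to the optimal domain via Theorem~\ref{T:opt_domain} to get $X(0,1)\overset{*}{\hookrightarrow}(L^\infty)^d_{1,I}(0,1)$, takes a subsequence convergent in measure, and shows it is Cauchy in $L^\infty(\Omega,\nu)$ by splitting $|v_k-v_\ell|$ into a part below $\varepsilon/2$ and the truncation $\max\{|v_k-v_\ell|-\varepsilon/2,0\}$, whose $W^1(L^\infty)^d_{1,I}$-norm is small because it is supported on a set of small measure; for $m>1$ it runs an induction on the order of derivatives combined with a Poincar\'e inequality. Your proposal contains neither the truncation device nor any treatment of $m>1$ in this case, so as written it does not close this branch of the proof.
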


Let us remark that further characterization of~\eqref{E:comp_op} and~\eqref{E:almost_comp_op} can be obtained by applying Theorems~\ref{T:opt_range} and~\ref{T:opt_domain} with $J=I$ and $j=m$.

\begin{remark}\label{T:monotone}
It turns out that if we take the supremum in~\eqref{E:almost_comp_op} over the smaller set of all nonincreasing functions belonging to the unit ball of $X(0,1)$, we do not change the validity of~\eqref{E:almost_comp_op}.
In other words, \eqref{E:almost_comp_op} holds if and only if
\begin{equation}\label{E:almost_comp_op_monotone}
\lim_{a\to 0_+} \sup_{\|f\|_{X(0,1)}\leq 1} \|H^m_I(\chi_{(0,a)}f^*)\|_{Y(0,1)}=0.
\end{equation}
This claim can be proved by methods of~\cite[Section 9]{CPS}. Namely, we first observe that, for every $a\in (0,1]$ and $f\in \mathcal M(0,1)$,
\begin{align}\label{E:inequalities}
\left\|\chi_{(0,a)}(t) R^m_If^*(t)\right\|_{X'(0,1)} 
&\leq \left\|\chi_{(0,a)}(t) \sup_{t\leq s \leq a} R^m_If^*(s)\right\|_{X'(0,1)}
\leq 2^{m+1} \left\|\chi_{(0,a)}(t) R^m_If^*(t)\right\|_{X_d'(0,1)} \\
&\leq 2^{m+1} \left\|\chi_{(0,a)}(t) R^m_If^*(t)\right\|_{X'(0,1)}, \nonumber
\end{align}
where the functional $\|\cdot\|_{X'_d(0,1)}$ is defined by
$$
\|f\|_{X'_d(0,1)}=\sup_{\|g\|_{X(0,1)}} \int_0^1 g^*(s)|f(s)|\,ds, ~~f\in \mathcal M(0,1).
$$
Then it suffices to show that for every $a\in (0,1]$, 
\begin{equation}\label{E:asoc}
\sup_{\|f\|_{X(0,1)}\leq 1} \|H^m_I(\chi_{(0,a)} f)\|_{Y(0,1)} = \sup_{\|f\|_{Y'(0,1)}\leq 1} \|\chi_{(0,a)} R^m_I f^*\|_{X'(0,1)}
\end{equation}
and
\begin{equation}\label{E:asoc_down}
\sup_{\|f\|_{X(0,1)}\leq 1} \|H^m_I(\chi_{(0,a)} f^*)\|_{Y(0,1)} = \sup_{\|f\|_{Y'(0,1)}\leq 1} \|\chi_{(0,a)} R^m_I f^*\|_{X_d'(0,1)}.
\end{equation}
We note that the only nontrivial inequality in~\eqref{E:inequalities} is the second one, which was proved in~\cite[Theorem 9.5]{CPS} in the case when $a=1$. Equalities~\eqref{E:asoc} and~\eqref{E:asoc_down} were proved for $a=1$ in~\cite[Corollary 9.8]{CPS}. All the proofs can be easily extended also to general $a\in (0,1]$.

\end{remark}

Suppose that $I: (0,1] \rightarrow (0,\infty)$ is a nondecreasing function satisfying~\eqref{E:infimum} and let $m\in \mathbb N$. Set
\begin{equation}\label{E:J}
J(t)= 
\frac{(I(t))^m}{t^{m-1}}, ~~t\in (0,1].
\end{equation}
We observe that $J$ is measurable on $(0,1]$ and fulfils~\eqref{E:supremum}. We can therefore consider operators $K^m_I$ and $S^m_I$ defined by $K^m_I=H_J$ and $S^m_I=R_J$, respectively. Then
$$
K^m_I f(t)= \int_t^1 |f(s)| \frac{s^{m-1}}{(I(s))^m} \,ds, ~~f\in \mathcal M(0,1), ~~t\in (0,1),
$$
and 
$$
S^m_I f(t)= \frac{t^{m-1}}{(I(t))^m} \int_0^t |f(s)|\,ds, ~~f\in \mathcal M(0,1),~~t\in (0,1).
$$

Although it is of use to define the operators $K^m_I$ and $S^m_I$ for all functions $I$ with the properties stated above (see, e.g., Theorem~\ref{T:ll}), these operators come into play especially in the case when $I$ satisfies
\begin{equation}\label{E:aprox}
\int_0^s \frac{\,dr}{I(r)} \approx \frac{s}{I(s)}, ~~ s\in (0,1),
\end{equation}
up to multiplicative constants possibly depending on $I$. In this situation, conditions~\eqref{E:comp_op} and~\eqref{E:almost_comp_op} can be equivalently reformulated using the rather simple operator $K^m_I$ 
instead of the kernel integral operator $H^m_I$. The corresponding result is the following theorem. Its proof strongly depends on a result proved in~\cite{CPS} which relates boundedness of $H^m_I$ to boundedness of $K^m_I$.

\begin{theorem}\label{T:K}
Assume that $(\Omega,\nu, I)$ is a compatible triplet and that~\eqref{E:aprox} is satisfied. Let $m\in \mathbb N$ and let $\|\cdot\|_{X(0,1)}$ and $\|\cdot\|_{Y(0,1)}$ be rearrangement-invariant norms. 

\medskip
\noindent
\textup{(a)} Suppose that 
\begin{equation}\label{E:lim_not_0}
\lim_{t\to 0_+} \frac{t^{m-1}}{(I(t))^m} \neq 0.
\end{equation}
Then
\begin{equation}\label{E:comp_op_K}
K^m_I: X(0,1) \rightarrow \rightarrow Y(0,1)
\end{equation}
holds if and only if
\begin{equation}\label{E:almost_comp_op_K}
\lim_{a\to 0_+} \sup_{\|f\|_{X(0,1)} \leq 1} \|K^m_I(\chi_{(0,a)}f)\|_{Y(0,1)} =0
\end{equation}
holds. Moreover, each of the conditions~\eqref{E:comp_op_K} and~\eqref{E:almost_comp_op_K} implies
\begin{equation}\label{E:comp_emb_K}
V^mX(\Omega,\nu) \hookrightarrow \hookrightarrow Y(\Omega,\nu).
\end{equation}

\medskip
\noindent
\textup{(b)} Suppose that
\begin{equation}\label{E:lim0}
\lim_{t\to 0_+} \frac{t^{m-1}}{(I(t))^m} = 0.
\end{equation}
Then~\eqref{E:comp_emb_K} is satisfied for all pairs of rearrangement-invariant norms $\|\cdot\|_{X(0,1)}$ and $\|\cdot\|_{Y(0,1)}$. 
\end{theorem}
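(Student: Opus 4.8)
The plan is to reduce both parts to the one-dimensional machinery of Section~\ref{S:operators} applied to the function $J$ of~\eqref{E:J}, fed into the transfer principle of Theorem~\ref{T:main}. The key observation is that $K^m_I=H_J=H^1_J$, i.e. $K^m_I$ is the operator $H^j_J$ with $j=1$ and $J(t)=(I(t))^m/t^{m-1}$, and that this $J$ satisfies~\eqref{E:supremum} since $\inf_{t}J(t)/t=\inf_{t}\bigl(I(t)/t\bigr)^m>0$ by~\eqref{E:infimum}. The second ingredient is the elementary pointwise bound
\begin{equation*}
H^m_If(t)\leq \frac{C^{m-1}}{(m-1)!}\,K^m_If(t),\qquad t\in(0,1),
\end{equation*}
valid for every $f\in\mathcal M(0,1)$, which follows from the upper estimate in~\eqref{E:aprox} after bounding $\int_t^s\frac{\,dr}{I(r)}\leq\int_0^s\frac{\,dr}{I(r)}\leq C\frac{s}{I(s)}$ inside the kernel of $H^m_I$. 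By property~\textup{(P2)} this inequality transfers any smallness of $\|K^m_I(\chi_{(0,a)}f)\|_{Y(0,1)}$ to $\|H^m_I(\chi_{(0,a)}f)\|_{Y(0,1)}$, so that~\eqref{E:almost_comp_op_K} implies the hypothesis~\eqref{E:almost_comp_op} of Theorem~\ref{T:main}, whence the compact Sobolev embedding follows. (This pointwise bound is the concrete manifestation of the boundedness comparison between $H^m_I$ and $K^m_I$ from~\cite{CPS}.)

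For part~\textup{(a)}, the equivalence of~\eqref{E:comp_op_K} and~\eqref{E:almost_comp_op_K} is exactly the equivalence of conditions \textup{(i)} and \textup{(ii)} in Theorem~\ref{T:compact_operator} for $H^1_J$, since under the identification $K^m_I=H^1_J$ these two conditions read precisely~\eqref{E:comp_op_K} and~\eqref{E:almost_comp_op_K}. The only case in which \textup{(i)} and \textup{(ii)} may fail to be equivalent is $X(0,1)=L^1(0,1)$, $Y(0,1)=L^\infty(0,1)$, $j=1$ and $\lim_{a\to0_+}\esup_{t\in(0,a)}\frac{1}{J(t)}=0$. First I would observe that $\lim_{a\to0_+}\esup_{t\in(0,a)}\frac{1}{J(t)}$ equals the essential $\limsup$ of $\frac{1}{J}$ at $0$, so that assumption~\eqref{E:lim_not_0} is exactly the negation of that exceptional condition and therefore excludes it irrespective of $X$ and $Y$; Theorem~\ref{T:compact_operator} then yields the equivalence. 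The ``moreover'' part then follows from the first paragraph: by the equivalence it suffices to assume~\eqref{E:almost_comp_op_K}, transfer it to~\eqref{E:almost_comp_op} via the pointwise bound, and apply Theorem~\ref{T:main}.

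For part~\textup{(b)}, the conceptual point is that now $\lim_{a\to0_+}\esup_{t\in(0,a)}\frac{1}{J(t)}=0$, so by the exceptional clause of Theorem~\ref{T:compact_operator} the operator $K^m_I$ (and hence $H^m_I$) actually fails to be compact from $L^1(0,1)$ into $L^\infty(0,1)$; nevertheless the weaker absolute-continuity condition~\eqref{E:almost_comp_op}, which is all Theorem~\ref{T:main} requires, still holds and drives the compact embedding. Concretely I would first reduce the claim for arbitrary $X(0,1)$, $Y(0,1)$ to the single embedding $V^mL^1(\Omega,\nu)\hookrightarrow\hookrightarrow L^\infty(\Omega,\nu)$: since $X(0,1)\hookrightarrow L^1(0,1)$ and $L^\infty(0,1)\hookrightarrow Y(0,1)$ by~\eqref{E:p5}, the identity maps $V^mX(\Omega,\nu)\to V^mL^1(\Omega,\nu)$ and $L^\infty(\Omega,\nu)\to Y(\Omega,\nu)$ are bounded, and a bounded--compact--bounded composition is compact. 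To establish the reduced embedding I would verify~\eqref{E:almost_comp_op} for the pair $(L^1,L^\infty)$: since $H^m_I(\chi_{(0,a)}f)$ is nonincreasing, its $L^\infty$-norm is its value near $0$, and using~\eqref{E:aprox} one gets
\begin{equation*}
\sup_{\|f\|_{L^1(0,1)}\leq1}\left\|H^m_I(\chi_{(0,a)}f)\right\|_{L^\infty(0,1)}\approx \esup_{s\in(0,a)}\frac{s^{m-1}}{(I(s))^m},
\end{equation*}
which tends to $0$ as $a\to0_+$ precisely by~\eqref{E:lim0}. Theorem~\ref{T:main} then gives $V^mL^1(\Omega,\nu)\hookrightarrow\hookrightarrow L^\infty(\Omega,\nu)$, and the reduction completes the argument.

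The main obstacle is conceptual rather than computational, and it is concentrated in part~\textup{(b)}: one must resist trying to prove~\eqref{E:comp_emb_K} by first proving compactness of $K^m_I$, because in this regime that operator is genuinely \emph{not} compact on the extremal pair $(L^1,L^\infty)$. The correct route is to feed Theorem~\ref{T:main} with the absolute-continuity condition~\eqref{E:almost_comp_op} alone, exploiting that Theorem~\ref{T:main} demands this weaker condition rather than true operator compactness. A secondary technical point, in part~\textup{(a)}, is the precise matching of~\eqref{E:lim_not_0} with the negation of the exceptional clause of Theorem~\ref{T:compact_operator}, for which one uses that $\lim_{a\to0_+}\esup_{(0,a)}\frac{1}{J}$ is the essential $\limsup$ of $\frac{1}{J}$ at $0$ and is therefore nonzero exactly when~\eqref{E:lim_not_0} holds.
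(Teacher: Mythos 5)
Your proof is correct and follows essentially the same route as the paper: identify $K^m_I$ with $H^1_J$ for $J(t)=(I(t))^m/t^{m-1}$, apply Theorem~\ref{T:compact_operator} to get the equivalence in part (a) and the absolute-continuity condition in part (b), and feed that condition into Theorem~\ref{T:main}. Two points of comparison. Where the paper invokes the norm equivalence $\|H^m_If\|_{Y(0,1)}\approx\|K^m_If\|_{Y(0,1)}$ from \cite[Proposition 8.6]{CPS}, you use only the one-sided pointwise bound $H^m_If\lesssim K^m_If$ obtained from~\eqref{E:aprox}; since only the implication from~\eqref{E:almost_comp_op_K} to~\eqref{E:almost_comp_op} is needed for the Sobolev conclusion, this is a legitimate and more self-contained shortcut. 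Likewise, in part (b) you reduce general pairs to $(L^1,L^\infty)$ at the level of the Sobolev embedding itself (bounded--compact--bounded composition), whereas the paper extends the one-dimensional condition using $X(0,1)\hookrightarrow L^1(0,1)$ and $L^\infty(0,1)\hookrightarrow Y(0,1)$ and only then passes to the embedding; both work. The one place where your argument is thinner than it should be is the identification of $\lim_{a\to0_+}\esup_{t\in(0,a)}\frac{1}{J(t)}$ with the ordinary $\limsup_{t\to0_+}\frac{t^{m-1}}{(I(t))^m}$: for a general measurable $J$ the essential limsup can be strictly smaller than the pointwise one, and the hypotheses~\eqref{E:lim_not_0} and~\eqref{E:lim0} are stated in terms of the ordinary limit. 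The paper closes this by proving $\esup_{s\in(0,t)}\frac{1}{J(s)}=\sup_{s\in(0,t)}\frac{s^{m-1}}{(I(s))^m}$ from the monotonicity of $I$, via $\frac{s^{m-1}}{(I(s))^m}\leq\lim_{r\to s_-}\frac{r^{m-1}}{(I(r))^m}\leq\esup_{r\in(0,t)}\frac{r^{m-1}}{(I(r))^m}$; you should insert this one-line observation, after which everything you wrote goes through.
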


Analogously to the general case, which we dealt with in Theorem~\ref{T:main}, one can obtain further characterization of~\eqref{E:comp_op_K} and~\eqref{E:almost_comp_op_K} by applying Theorems~\ref{T:opt_range} and~\ref{T:opt_domain} with $J$ as in~\eqref{E:J} and $j=1$. Notice that in this situation,
$$
\int_0^1 \frac{\,dr}{J(r)} = \int_0^1 \frac{r^{m-1}}{(I(r))^m} \,dr \leq \left(\sup_{r\in (0,1]} \frac{r}{I(r)}\right)^{m-1} \int_0^1 \frac{\,dr}{I(r)} \approx \frac{1}{\left(\inf_{r\in (0,1]} \frac{I(r)}{r}\right)^{m-1} I(1)} <\infty,
$$
thanks to~\eqref{E:aprox} and~\eqref{E:infimum}. Therefore, \eqref{E:cond_Y} is satisfied for all rearrangement-invariant norms $\|\cdot\|_{Y(0,1)}$, since
$$
\left\|\left(\int_t^1 \frac{\,dr}{J(r)}\right)^m \right\|_{Y(0,1)} \leq \left(\int_0^1 \frac{\,dr}{J(r)} \right)^m \|1\|_{Y(0,1)} <\infty.
$$

\begin{remarks}\label{T:remarks_5}
\textup{(i)} If $I:(0,1] \rightarrow (0,\infty)$ is a nondecreasing function satisfying~\eqref{E:infimum} and $m\in \mathbb N$ is such that~\eqref{E:lim0} is fulfilled, then it will follow from the proof of Theorem~\ref{T:K} that~\eqref{E:almost_comp_op_K} is satisfied for all pairs of rearrangement-invariant norms $\|\cdot\|_{X(0,1)}$ and $\|\cdot\|_{Y(0,1)}$. However, Theorem~\ref{T:compact_operator} applied with $J$ as in~\eqref{E:J} and $j=1$ yields that~\eqref{E:comp_op_K} is not satisfied if $X(0,1)=L^1(0,1)$ and $Y(0,1)=L^\infty(0,1)$. Therefore, in contrast to the part \textup{(a)}, in the part \textup{(b)} we do not have the equivalence of~\eqref{E:comp_op_K} and~\eqref{E:almost_comp_op_K}. Moreover, compactness of the operator $K^m_I$ seems not to be appropriate to characterize compact Sobolev embeddings in this case, and condition~\eqref{E:almost_comp_op_K} turns out to be a suitable substitute for~\eqref{E:comp_op_K}.

\textup{(ii)} Notice that to prove the equivalence of~\eqref{E:comp_op_K} and~\eqref{E:almost_comp_op_K} in the part \textup{(a)}, we do not need to assume that $I$ satisfies~\eqref{E:aprox}.

\textup{(iii)} The assumption~\eqref{E:aprox} is also not necessary for the validity of the part \textup{(b)} of Theorem~\ref{T:K}. Indeed, Theorem~\ref{T:ll}, which is stated and proved in Section~\ref{S:examples}, yields that for any nondecreasing function $I$ on $(0,1]$ satisfying~\eqref{E:infimum}, condition~\eqref{E:lim0} implies
$$
\lim_{a\to 0_+} \sup_{\|f\|_{L^1(0,1)}\leq 1} \|H^m_I(\chi_{(0,a)}f)\|_{L^\infty(0,1)} =0.
$$
Then, according to Theorem~\ref{T:main}, we get $V^mL^1(\Omega,\nu) \hookrightarrow \hookrightarrow L^\infty(\Omega,\nu)$. Thanks to embeddings $X(\Omega,\nu) \hookrightarrow L^1(\Omega,\nu)$ and $L^\infty(\Omega,\nu) \hookrightarrow Y(\Omega,\nu)$, which hold for all rearrangement-invariant norms $\|\cdot\|_{X(0,1)}$ and $\|\cdot\|_{Y(0,1)}$, we obtain~\eqref{E:comp_emb_K}. 

\textup{(iv)} The assumption~\eqref{E:aprox} is essential for the proof that~\eqref{E:comp_op_K} (or~\eqref{E:almost_comp_op_K}) implies~\eqref{E:comp_emb_K} in the part \textup{(a)}. Suppose that $I$ is the function defined by $I(t)=t\sqrt{\log \frac{2}{t}}$, $t\in (0,1]$. Then it follows from the observations made in Section~\ref{S:sobolev} that
$(\mathbb R^n, \gamma_n, I)$ is a compatible triplet. Furthermore, notice that $I$ satisfies~\eqref{E:lim_not_0} for every $m\in \mathbb N$ but does not satisfy~\eqref{E:aprox}. Moreover, if $m>2$ then
\begin{align*}
\lim_{a\to 0_+} \sup_{\|f\|_{L^\infty(0,1)}\leq 1} \|K^m_I(\chi_{(0,a)}f)\|_{L^\infty(0,1)} 
&=\lim_{a\to 0_+} \sup_{\|f\|_{L^\infty(0,1)}\leq 1} \int_0^a \frac{|f(s)|}{s(\log \frac{2}{s})^{\frac{m}{2}}}\,ds\\
&=\lim_{a\to 0_+} \int_0^a \frac{\,ds}{s(\log \frac{2}{s})^{\frac{m}{2}}} = 0.
\end{align*}
Hence, \eqref{E:almost_comp_op_K} is satisfied with $\|\cdot\|_{X(0,1)} = \|\cdot\|_{Y(0,1)}= \|\cdot\|_{L^\infty(0,1)}$. However, \eqref{E:comp_emb_K} is not fulfilled with $(\Omega,\nu)=(\mathbb R^n, \gamma_n)$ in this case, since even the continuous embedding $V^mL^\infty(\mathbb R^n,\gamma_n) \hookrightarrow L^\infty(\mathbb R^n,\gamma_n)$ does not hold (see~\cite[Theorem 7.13]{CPS}).
\end{remarks}

The remaining part of this section is devoted to proofs of Theorems~\ref{T:main} and~\ref{T:K}. We start with an auxiliary result which shows that, in our setting, the unit ball of each Sobolev space is compact in measure.

\begin{lemma}\label{T:lemma}
Assume that $(\Omega,\nu)$ is as in Section~\ref{S:sobolev}. Let $m\in \mathbb N$ and let $\|\cdot\|_{X(0,1)}$ be a rearrangement-invariant norm. Then every sequence $(u_k)_{k=1}^\infty$ bounded in $V^mX(\Omega,\nu)$ contains a subsequence $(u_{k_\ell})_{\ell=1}^\infty$ converging $\nu$-a.e.\ on $\Omega$. In particular, the subsequence $(u_{k_\ell})_{\ell=1}^\infty$ is convergent in measure.
\end{lemma}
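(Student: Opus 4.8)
The plan is to reduce the global assertion to a local Rellich--Kondrachov-type compactness statement and then to pass from local to global by a covering-and-diagonalization argument. First I would record what boundedness in $V^mX(\Omega,\nu)$ delivers: by the definition of the norm~\eqref{E:sobolev} together with the embedding $X(\Omega,\nu)\hookrightarrow L^1(\Omega,\nu)$ from~\eqref{E:p5}, a sequence $(u_k)_{k=1}^\infty$ bounded in $V^mX(\Omega,\nu)$ satisfies that $\| |\nabla^k u_k| \|_{L^1(\Omega,\nu)}$ is bounded uniformly in $k$ for every $k\in\{0,1,\dots,m\}$; in particular, the first-order information $\| u_k \|_{L^1(\Omega,\nu)}+\| |\nabla u_k| \|_{L^1(\Omega,\nu)}$ is bounded.

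The crucial local observation is that, for a.e.\ $x\in\Omega$, the hypothesis on the density $\omega$ provides an open ball $B_x\subseteq\Omega$ with $\einf_{B_x}\omega>0$. On such a ball the measures $\nu$ and $\lambda_n$ are comparable from below, namely $\nu(E)\ge(\einf_{B_x}\omega)\,\lambda_n(E)$ for every measurable $E\subseteq B_x$; since $\nu\ll\lambda_n$ always holds, this makes $\nu$ and $\lambda_n$ mutually absolutely continuous inside $B_x$, so that $\nu$-a.e.\ and $\lambda_n$-a.e.\ convergence coincide there. Moreover, restricting weak derivatives (which localize in the standard way) and using
$$
\| |\nabla^k u_k| \|_{L^1(B_x,\lambda_n)}\le \frac{1}{\einf_{B_x}\omega}\,\| |\nabla^k u_k| \|_{L^1(\Omega,\nu)},
$$
I obtain that $(u_k)$ is bounded in the classical Sobolev space $W^{1,1}(B_x)$ taken with respect to Lebesgue measure. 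Since $B_x$ is a ball, the compact embedding $W^{1,1}(B_x)\hookrightarrow\hookrightarrow L^1(B_x,\lambda_n)$ furnishes a subsequence converging in $L^1(B_x,\lambda_n)$, and hence, after a further extraction, converging $\lambda_n$-a.e., equivalently $\nu$-a.e., on $B_x$.

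To globalize, I would set $U=\bigcup_x B_x$, the union taken over the full-measure set of points admitting such a ball, so that $\nu(\Omega\setminus U)=0$. As $\mathbb R^n$ is second countable, the open cover of $U$ by the balls $B_x$ admits a countable subcover $\{B_j\}_{j=1}^\infty$. A standard diagonal argument then produces a single subsequence $(u_{k_\ell})_{\ell=1}^\infty$ converging $\nu$-a.e.\ on every $B_j$ simultaneously, and therefore $\nu$-a.e.\ on $U$, hence $\nu$-a.e.\ on $\Omega$. The ``in particular'' clause is then immediate: on the finite measure space $(\Omega,\nu)$, pointwise a.e.\ convergence implies convergence in measure.

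The step requiring the most care is the passage from the measure $\nu$ to Lebesgue measure on each ball, which is precisely what allows the classical compact-embedding theorem to be invoked; the one-sided lower bound $\einf_{B_x}\omega>0$ is exactly the hypothesis that makes this comparison work and that simultaneously forces $\nu$-a.e.\ and $\lambda_n$-a.e.\ convergence to agree locally. Once this local compactness is in place, the covering and diagonalization are routine.
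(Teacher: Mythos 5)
Your proposal is correct and follows essentially the same route as the paper: localize to balls $B_x$ on which $\einf_{B_x}\omega>0$ makes $\nu$ and $\lambda_n$ comparable, invoke the classical compact embedding $W^{1,1}(B_x)\hookrightarrow\hookrightarrow L^1(B_x)$ to get local a.e.\ convergence, then cover by countably many such balls and diagonalize. The only cosmetic difference is that the paper extracts the countable subcover via separability of $\Omega\setminus N$ rather than second countability, and cites \cite[Corollary 4.3]{CPS} for the boundedness in $V^1L^1(\Omega,\nu)$, which you instead read off directly from the definition of the norm~\eqref{E:sobolev}; both are equally valid.
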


\begin{proof}
For a.e.\ $x\in \Omega$ we can find an open ball $B_x$ centered in $x$ such that $B_x \subseteq \Omega$ and $\einf_{B_x} \omega>0$. Denote by $N$ the set of points in $\Omega$ for which such a ball does not exist. Then $\nu(N)=0$ and we have $\Omega\setminus N \subseteq \bigcup_{x\in \Omega \setminus N} B_x$. Due to the separability of $\Omega \setminus N$, there is a sequence $(x_j)_{j=1}^\infty$ of points in $\Omega \setminus N$ such that $\Omega \setminus N \subseteq \bigcup_{j=1}^\infty B_{x_j}$. Since the sequence $(u_k)_{k=1}^\infty$ is bounded in $V^mX(\Omega,\nu)$, it is also bounded in $V^1L^1(\Omega,\nu)$. Hence, for every $j\in \mathbb N$ and $k\in \mathbb N$ we have
\begin{align*}
\|u_k\|_{V^1L^1(\Omega,\nu)} 
&\geq \int_{B_{x_j}} \left(|u_k(x)|+|\nabla u_k(x)|\right)\omega(x)\,dx \\
&\geq \left(\einf_{B_{x_j}} \omega \right) \int_{B_{x_j}} (|u_k|+|\nabla u_k|)\,dx = \left(\einf_{B_{x_j}} \omega \right)  \|u_k\|_{V^1L^1(B_{x_j})}.
\end{align*}
Therefore, $(u_k)_{k=1}^\infty$ is bounded in $V^1L^1(B_{x_j})$. Denote $u_k^0=u_k$, $k\in \mathbb N$. By induction, for every $j\in \mathbb N$ we will construct a subsequence $(u_k^j)_{k=1}^\infty$ of the sequence $(u_k^{j-1})_{k=1}^\infty$ converging a.e.\ on $B_{x_j}$. Suppose that, for some $j\in \mathbb N$, we have already found the sequence $(u_k^{j-1})_{k=1}^\infty$. Since $(u_k^{j-1})_{k=1}^\infty$ is bounded in $V^1L^1(B_{x_j})$ and the compact embedding $V^1L^1(B_{x_j}) \hookrightarrow \hookrightarrow L^1(B_{x_j})$ holds, we can find a subsequence $(u_k^j)_{k=1}^\infty$ of $(u_k^{j-1})_{k=1}^\infty$ converging in $L^1(B_{x_j})$. Passing, if necessary, to another subsequence, $(u_k^j)_{k=1}^\infty$ can be found in such a way that it converges a.e.\ on $B_{x_j}$. Now, the diagonal sequence $(u_k^k)_{k=1}^\infty$ converges a.e.\ (or, what is the same, $\nu$-a.e.) on $\bigcup_{j=1}^\infty B_{x_j}=\Omega \setminus N$. Since $\nu(N)=0$, $(u_k^k)_{k=1}^\infty$ converges $\nu$-a.e.\ on $\Omega$, as required. Furthermore, it is a well known fact that each sequence converging $\nu$-a.e. is convergent in measure.
\end{proof}

We also need the following

\begin{lemma}\label{T:lemma_infty}
Let $I: (0,1] \rightarrow (0,\infty)$ be a nondecreasing function satisfying~\eqref{E:infimum} and 
\begin{equation}\label{E:cond_conv}
\int_0^1 \frac{\,ds}{I(s)} <\infty. 
\end{equation}
Then
$$
\|f\|_{(L^\infty)^d_{1,I}(0,1)} \approx \int_0^1 \frac{f^*(s)}{I(s)} \,ds, ~~f\in \mathcal M(0,1),
$$
up to multiplicative constants depending on $I$. 
\end{lemma}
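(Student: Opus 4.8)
The plan is to unwind the definition of the optimal-domain norm and reduce the whole statement to a sharp Hardy--Littlewood computation. Specializing~\eqref{E:dom} to $j=1$, $J=I$ and $Y(0,1)=L^\infty(0,1)$, one has
$$
\|f\|_{(L^\infty)^d_{1,I}(0,1)}=\sup_{h\sim f}\|H_Ih\|_{L^\infty(0,1)}+\|f\|_{L^1(0,1)},
$$
where $H_Ih(t)=\int_t^1\frac{|h(s)|}{I(s)}\,ds$. First I would observe that, since the integrand is nonnegative, the function $t\mapsto H_Ih(t)$ is nonincreasing on $(0,1)$; hence its essential supremum is attained in the limit $t\to 0_+$, and by monotone convergence $\|H_Ih\|_{L^\infty(0,1)}=\int_0^1\frac{|h(s)|}{I(s)}\,ds$.

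Next I would evaluate the supremum over all $h$ equimeasurable with $f$. Because $I$ is nondecreasing, the weight $1/I$ is nonincreasing on $(0,1)$ and therefore coincides with its own nonincreasing rearrangement. The Hardy--Littlewood inequality then gives $\int_0^1\frac{|h(s)|}{I(s)}\,ds\le\int_0^1\frac{f^*(s)}{I(s)}\,ds$ for every $h\sim f$, while the admissible choice $h=f^*$ turns this into an equality. Consequently $\sup_{h\sim f}\|H_Ih\|_{L^\infty(0,1)}=\int_0^1\frac{f^*(s)}{I(s)}\,ds$, so that
$$
\|f\|_{(L^\infty)^d_{1,I}(0,1)}=\int_0^1\frac{f^*(s)}{I(s)}\,ds+\int_0^1 f^*(s)\,ds.
$$

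It then remains only to absorb the $L^1$ term. The inequality $\int_0^1\frac{f^*}{I}\le\|f\|_{(L^\infty)^d_{1,I}(0,1)}$ is immediate. For the reverse bound I would use monotonicity of $I$ once more: $I(s)\le I(1)<\infty$ for all $s\in(0,1]$, whence $\frac1{I(s)}\ge\frac1{I(1)}$ and $\int_0^1 f^*(s)\,ds\le I(1)\int_0^1\frac{f^*(s)}{I(s)}\,ds$. This yields $\|f\|_{(L^\infty)^d_{1,I}(0,1)}\le(1+I(1))\int_0^1\frac{f^*(s)}{I(s)}\,ds$ and completes the equivalence, with constants depending on $I$ only through $I(1)$. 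I do not expect a genuine obstacle here: the only nonroutine ingredient is the sharpness of the Hardy--Littlewood inequality (that the supremum is attained at $h=f^*$), which is classical; the hypothesis~\eqref{E:infimum} is in fact not needed for the equivalence, and the role of~\eqref{E:cond_conv} is simply to guarantee, via~\eqref{E:cond_Y} and Proposition~\ref{T:proposition}, that the optimal-domain space $(L^\infty)^d_{1,I}(0,1)$ is well defined in the first place.
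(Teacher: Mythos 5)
Your proposal is correct and follows essentially the same route as the paper's proof: unwind the definition of $\|\cdot\|_{(L^\infty)^d_{1,I}(0,1)}$ with $j=1$, $J=I$, evaluate the supremum over equimeasurable $h$ via the Hardy--Littlewood inequality (with the choice $h=f^*$ giving the matching lower bound), and absorb the $\|f\|_{L^1(0,1)}$ term using the monotonicity of $I$, arriving at the same constant $1+I(1)$. Your side remarks (that the essential supremum of the nonincreasing function $H_Ih$ equals $\int_0^1 |h|/I$, and that \eqref{E:cond_conv} is what makes the optimal-domain norm well defined via \eqref{E:cond_Y}) match the paper's opening observation.
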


\begin{proof}
We first observe that condition~\eqref{E:cond_Y} is fulfilled with $j=1$, $J=I$ and $\|\cdot\|_{Y(0,1)}=\|\cdot\|_{L^\infty(0,1)}$. Indeed, we have
$$
\left\|\int_t^1 \frac{\,dr}{I(r)}\right\|_{L^\infty(0,1)} = \int_0^1 \frac{\,dr}{I(r)} <\infty,
$$
thanks to~\eqref{E:cond_conv}. The rearrangement-invariant norm $\|\cdot\|_{(L^\infty)^d_{1,I}(0,1)}$ is therefore well defined.

Let $f\in \mathcal M(0,1)$. We have
\begin{equation*}
\|f\|_{(L^\infty)^d_{1,I}(0,1)} = \sup_{0\leq h \sim f} \left\|\int_t^1 \frac{h(s)}{I(s)} \,ds\right\|_{L^\infty(0,1)} +\|f\|_{L^1(0,1)}
=\sup_{0\leq h \sim f} \int_0^1 \frac{h(s)}{I(s)}\,ds + \|f\|_{L^1(0,1)}.
\end{equation*}
Since $f^*$ is a nonnegative function equimeasurable to $f$, 
\begin{equation*}
\int_0^1 \frac{f^*(s)}{I(s)} \,ds \leq \sup_{0\leq h \sim f} \int_0^1 \frac{h(s)}{I(s)}\,ds + \|f\|_{L^1(0,1)}.
\end{equation*}
Conversely, using the Hardy-Littlewood inequality and the monotonicity of $I$ we obtain
\begin{equation*}
\sup_{0\leq h \sim f} \int_0^1 \frac{h(s)}{I(s)}\,ds + \|f\|_{L^1(0,1)} 
\leq \int_0^1 \frac{f^*(s)}{I(s)} \,ds +\int_0^1 f^*(s)\,ds 
\leq \left(1+I(1)\right)\int_0^1 \frac{f^*(s)}{I(s)} \,ds.
\end{equation*}
The proof is complete.
\end{proof}

\begin{proof}[Proof of Theorem~\ref{T:main}]
Since $I$ is nondecreasing on $(0,1]$, we have
$$
\esup_{t\in (0,a)} \frac{1}{I(t)}=\lim_{t\to 0_+} \frac{1}{I(t)}, ~~a\in (0,1).
$$
Thus,
\begin{equation}\label{E:limit}
\lim_{a\to 0_+} \esup_{t\in (0,a)} \frac{1}{I(t)} = \lim_{t\to 0_+} \frac{1}{I(t)} \neq 0.
\end{equation}
Theorem~\ref{T:compact_operator} now gives that~\eqref{E:comp_op} holds if and only if~\eqref{E:almost_comp_op} holds. 

Suppose that~\eqref{E:comp_op} (or, equivalently, \eqref{E:almost_comp_op}) is satisfied. Moreover, assume that 
$$
Y(0,1)\neq L^\infty(0,1) \hbox{  or } \int_0^1 \frac{\,ds}{I(s)}=\infty.
$$
Then, due to Theorem~\ref{T:opt_range}, we have $X^r_{m,I}(0,1) \overset{*}{\hookrightarrow} Y(0,1)$, or, what is the same, $X^r_{m,I}(\Omega,\nu) \overset{*}{\hookrightarrow} Y(\Omega,\nu)$.

Assume that $(u_k)_{k=1}^\infty$ is a sequence bounded in $V^mX(\Omega, \nu)$. Due to Lemma~\ref{T:lemma}, we can find its subsequence $(u_{k_\ell})_{\ell=1}^\infty$ which converges to some function $u$ $\nu$-a.e.\ on $\Omega$. 
Since $H^m_I: X(0,1) \rightarrow X^r_{m,I}(0,1)$, Theorem~\ref{T:CPS} implies that $V^mX(\Omega, \nu) \hookrightarrow X^r_{m,I}(\Omega, \nu)$. Hence, $(u_{k_\ell})_{\ell=1}^\infty$ is bounded in $X^r_{m,I}(\Omega, \nu)$. By the Fatou lemma, 
$$
\|u\|_{X^r_{m,I}(\Omega, \nu)} \leq \liminf_{\ell\to \infty} \|u_{k_\ell}\|_{X^r_{m,I}(\Omega, \nu)}<\infty,
$$ 
so $u\in X^r_{m,I}(\Omega,\nu)$ and $(u_{k_\ell}-u)_{\ell=1}^\infty$ is therefore bounded in $X^r_{m,I}(\Omega,\nu)$ as well. We have $X^r_{m,I}(\Omega,\nu) \overset{*}{\hookrightarrow} Y(\Omega, \nu)$, so, according to~\cite[Theorem 3.1]{S}, $(u_{k_\ell}-u) \rightarrow 0$ in $Y(\Omega,\nu)$, i.e., $u_{k_\ell} \rightarrow u$ in $Y(\Omega,\nu)$. Thus, $V^mX(\Omega, \nu)\hookrightarrow \hookrightarrow Y(\Omega, \nu)$.

Conversely, assume that $Y(0,1)=L^\infty(0,1)$ and $\int_0^1 1/I(s)\,ds<\infty$ (recall that the assumption~\eqref{E:comp_op} is still in progress). We start with the case when $m=1$. The proof of Lemma~\ref{T:lemma_infty} then yields that condition~\eqref{E:cond_Y} is fulfilled with $J=I$ and $j=1$. Furthermore, since the operator $H_I$ is not compact from $L^1(0,1)$ into $L^\infty(0,1)$ (see the last part of the proof of Theorem~\ref{T:compact_operator}), we have $X(0,1) \neq L^1(0,1)$. Thus, due to Theorem~\ref{T:opt_domain},
\begin{equation}\label{E:almost_compact_3}
X(0,1) \overset{*}{\hookrightarrow} (L^\infty)^d_{1,I}(0,1).
\end{equation}
Furthermore, Proposition~\ref{T:proposition} combined with Theorem~\ref{T:CPS} yield that 
\begin{equation}\label{E:embedding}
V^1(L^\infty)^d_{1,I}(\Omega,\nu) \hookrightarrow L^\infty(\Omega,\nu).
\end{equation}
Since $\int_0^1 1/I(s)\,ds<\infty$, we obtain by applying Proposition~\ref{T:propprop} that 
\begin{equation}\label{E:VW}
V^1(L^\infty)^d_{1,I}(\Omega,\nu) = W^1(L^\infty)^d_{1,I}(\Omega,\nu) \textup{   and   } V^1X(\Omega,\nu) = W^1X(\Omega,\nu),
\end{equation} 
up to equivalent norms.

Let $(u_k)_{k=1}^\infty$ be a bounded sequence in $V^1X(\Omega,\nu)$. Then it is bounded also in $W^1X(\Omega,\nu)$. Without loss of generality we may assume that 
\begin{equation}\label{E:norm}
\|u_k\|_{W^1X(\Omega,\nu)}\leq 1, ~~ k\in \mathbb N.
\end{equation} 
Due to Lemma~\ref{T:lemma}, there is a subsequence $(v_k)_{k=1}^\infty$ of the sequence $(u_k)_{k=1}^\infty$ which converges in measure to some function $v$. Our aim is to show that $(v_k)_{k=1}^\infty$ is a Cauchy sequence in $L^\infty(\Omega,\nu)$. Then, thanks to the completeness of $L^\infty(\Omega,\nu)$, $(v_k)_{k=1}^\infty$ will converge to $v$ in the norm of the space $L^\infty(\Omega,\nu)$. This will prove that $V^1X(\Omega,\nu)$ is compactly embedded into $L^\infty(\Omega,\nu)$. 

Fix $\varepsilon>0$ and observe that for all $k$, $l\in \mathbb N$ we have
\begin{equation}\label{E:max_min}
|v_k-v_\ell| = \min\{|v_k-v_\ell|,\varepsilon/2\} + \max\{|v_k-v_\ell|-\varepsilon/2,0\}.
\end{equation}
Since $v_k$, $v_l$ and the constant function $\varepsilon/2$ are weakly differentiable on $\Omega$, $|v_k-v_\ell|-\varepsilon/2$ is weakly differentiable on $\Omega$ as well and
$$
\nabla (|v_k-v_\ell|-\varepsilon/2)= \nabla |v_k-v_\ell| = \sgn (v_k-v_\ell) \nabla (v_k-v_l) = \sgn (v_k-v_\ell) (\nabla v_k -\nabla v_\ell)
$$
a.e.\ on $\Omega$. Furthermore, $\max\{|v_k-v_\ell|-\varepsilon/2,0\}$ is weakly differentiable on $\Omega$ and
\begin{align*}
\nabla \max&\{|v_k-v_\ell|-\varepsilon/2,0\} \\&= 
\begin{cases}
\sgn (v_k-v_\ell) (\nabla v_k -\nabla v_\ell)\, & \hbox{a.e.\ on }\{x\in \Omega: |v_k-v_\ell|>\varepsilon/2\},\\
0\, & \hbox{a.e.\ on }\{x\in \Omega: |v_k-v_\ell|\leq \varepsilon/2\},
\end{cases}
\end{align*}
i.e.,
\begin{equation}\label{E:equation}
\nabla \max\{|v_k-v_\ell|-\varepsilon/2,0\} = \chi_{\{ |v_k-v_\ell|>\varepsilon/2\}} \sgn (v_k-v_\ell) (\nabla v_k -\nabla v_\ell)
\end{equation}
a.e.\ on $\Omega$. Thus,
\begin{equation}\label{E:equation_2}
|\nabla \max\{|v_k-v_\ell|-\varepsilon/2,0\}| = \chi_{\{ |v_k-v_\ell|>\varepsilon/2\}} |\nabla v_k -\nabla v_\ell|
\end{equation}
a.e.\ on $\Omega$ (and therefore also $\nu$-a.e.\ on $\Omega$, since $\nu$ is absolutely continuous with respect to the $n$-dimensional Lebesgue measure).

We have
\begin{align}\label{E:cauchy}
&\|v_k-v_\ell\|_{L^\infty(\Omega,\nu)} \nonumber \\ \nonumber
&\leq \|\min\{|v_k-v_\ell|,\varepsilon/2\}\|_{L^\infty(\Omega,\nu)} + \|\max\{|v_k-v_\ell|-\varepsilon/2,0\}\|_{L^\infty(\Omega,\nu)} ~~~ \textup{(by~\eqref{E:max_min})}\\ \nonumber
&\leq \varepsilon/2 + C\|\max\{|v_k-v_\ell|-\varepsilon/2,0\}\|_{W^1(L^\infty)^d_{1,I}(\Omega,\nu)} ~~~ \textup{(by~\eqref{E:embedding} and~\eqref{E:VW})}\\ \nonumber
&=\varepsilon/2 + C\|\chi_{\{|v_k-v_\ell|>\varepsilon/2\}} |\nabla v_k- \nabla v_\ell|\|_{(L^\infty)^d_{1,I}(\Omega,\nu)}\\
&+C\|\chi_{\{|v_k-v_\ell|>\varepsilon/2\}} (|v_k- v_\ell|-\varepsilon/2)\|_{(L^\infty)^d_{1,I}(\Omega,\nu)} ~~~~~~~~~~~~ \textup{(by~\eqref{E:equation_2})}\\ \nonumber
&\leq \varepsilon/2 + C\|\chi_{\{|v_k-v_\ell|>\varepsilon/2\}} |\nabla v_k|\|_{(L^\infty)^d_{1,I}(\Omega,\nu)} + C\|\chi_{\{|v_k-v_\ell|>\varepsilon/2\}} |\nabla v_\ell|\|_{(L^\infty)^d_{1,I}(\Omega,\nu)}\\ \nonumber
&+C\|\chi_{\{|v_k-v_\ell|>\varepsilon/2\}} v_k\|_{(L^\infty)^d_{1,I}(\Omega,\nu)} + C\|\chi_{\{|v_k-v_\ell|>\varepsilon/2\}} v_l\|_{(L^\infty)^d_{1,I}(\Omega,\nu)}\\ \nonumber
&=\varepsilon/2 + C\|(\chi_{\{|v_k-v_\ell|>\varepsilon/2\}}|\nabla v_k|)^*_\nu\|_{(L^\infty)^d_{1,I}(0,1)} + C\|(\chi_{\{|v_k-v_\ell|>\varepsilon/2\}} |\nabla v_\ell|)^*_\nu\|_{(L^\infty)^d_{1,I}(0,1)}\\ \nonumber
&+C\|(\chi_{\{|v_k-v_\ell|>\varepsilon/2\}} v_k)^*_\nu\|_{(L^\infty)^d_{1,I}(0,1)} + C\|(\chi_{\{|v_k-v_\ell|>\varepsilon/2\}} v_l)^*_\nu\|_{(L^\infty)^d_{1,I}(0,1)}\\ \nonumber
&\leq \varepsilon/2 + 4C\sup_{\|f\|_{X(0,1)}\leq 1} \|\chi_{(0,\nu(\{|v_k-v_\ell|>\varepsilon/2\}))}f^*\|_{(L^\infty)^d_{1,I}(0,1)} ~~~ \textup{(by~\eqref{E:norm})},
\end{align}
where $C>0$ is the constant from the embedding $W^1(L^\infty)^d_{1,I}(\Omega,\nu) \hookrightarrow L^\infty(\Omega,\nu)$.

Thanks to~\eqref{E:almost_compact_3}, there is $\delta>0$ such that 
\begin{equation}\label{E:epsilon_delta}
\sup_{\|f\|_{X(0,1)} \leq 1} \|\chi_{(0,\delta)}f^*\|_{(L^\infty)^d_{1,I}(0,1)}< \frac{\varepsilon}{8C}.
\end{equation}
Since $(v_k)_{k=1}^\infty$ converges in measure to $v$, we can find $k_0 \in \mathbb N$ such that for every $k\geq k_0$
$$
\nu(\{x\in \Omega: |v_k(x)-v(x)|>\varepsilon/4 \}) <\delta/2.
$$
We observe that for all $k$, $\ell \geq k_0$,
\begin{align*}
&\{x\in \Omega: |v_k(x)-v_\ell(x)|>\varepsilon/2 \}\\
&\subseteq \{x\in \Omega: |v_k(x)-v(x)|>\varepsilon/4 \} \cup \{x\in \Omega: |v_\ell(x)-v(x)|>\varepsilon/4 \}.
\end{align*}
Therefore,
\begin{align}\label{E:nu} 
&\nu(\{x\in \Omega: |v_k(x)-v_\ell(x)|>\varepsilon/2 \})\\ 
&\leq \nu(\{x\in \Omega: |v_k(x)-v(x)|>\varepsilon/4 \}) + \nu(\{x\in \Omega: |v_\ell(x)-v(x)|>\varepsilon/4 \})<\delta.  \nonumber
\end{align}
Consequently, by~\eqref{E:cauchy} and~\eqref{E:epsilon_delta},
$$
\|v_k - v_\ell\|_{L^\infty(\Omega,\nu)} \leq \varepsilon/2 + 4C \sup_{\|f\|_{X(0,1)}\leq 1} \|\chi_{(0,\delta)}f^*\|_{(L^\infty)^d_{1,I}(0,1)} <\varepsilon.
$$
Hence, $(v_k)_{k=1}^\infty$ is a Cauchy sequence in $L^\infty(\Omega,\nu)$, as required.

Finally, assume that $m>1$. According to Lemma~\ref{T:lemma_infty}, for every $g\in \mathcal M(0,1)$ we have
$$
\|g\|_{(L^\infty)^d_{1,I}(0,1)} \approx \int_0^1 \frac{g^*(s)}{I(s)}\,ds= \|H_Ig^*\|_{L^\infty(0,1)},
$$
up to multiplicative constants depending on $I$. Thus, whenever $f\in \mathcal M(0,1)$ and $a\in (0,1)$, then
\begin{align*}
\|H^{m}_I (\chi_{(0,a)}f)\|_{L^\infty(0,1)}
&=\|H_I (H^{m-1}_I (\chi_{(0,a)}f))\|_{L^\infty(0,1)}\\
&=\|H_I (H^{m-1}_I (\chi_{(0,a)}f))^*\|_{L^\infty(0,1)}
\approx\|H^{m-1}_I (\chi_{(0,a)}f)\|_{(L^\infty)^d_{1,I}(0,1)},
\end{align*}
up to multiplicative constants depending on $I$. Assumption~\eqref{E:almost_comp_op} is therefore equivalent to
\begin{equation}\label{E:ac_m-1}
\lim_{a\to 0_+} \sup_{\|f\|_{X(0,1)}\leq 1} \|H^{m-1}_I (\chi_{(0,a)}f)\|_{(L^\infty)^d_{1,I}(0,1)}=0.
\end{equation}
Since
$$
\lim_{a\to 0_+} \|\chi_{(0,a)}\|_{(L^\infty)^d_{1,I}(0,1)} \approx \lim_{a\to 0_+} \int_0^a \frac{\,ds}{I(s)} = 0,
$$
where the equivalence holds up to multiplicative constants depending on $I$, we obtain that $(L^\infty)^d_{1,I}(0,1) \neq L^\infty(0,1)$. The first part of the proof now implies that
\begin{equation}\label{E:compact_embedding_0}
V^{m-1}X(\Omega,\nu) \hookrightarrow \hookrightarrow (L^\infty)^d_{1,I}(\Omega,\nu).
\end{equation} 

Let $(u_k)_{k=1}^\infty$ be a bounded sequence in $V^{m}X(\Omega,\nu)$. Then $(u_k)_{k=1}^\infty$ is bounded in $L^1(\Omega,\nu)$, so $(\int_{\Omega} u_k\,d\nu)_{k=1}^\infty$ is a bounded sequence of real numbers and we can find a subsequence $(u_k^0)_{k=1}^\infty$ of $(u_k)_{k=1}^\infty$ such that the sequence $(\int_{\Omega} u_k^0\,d\nu)_{k=1}^\infty$ is convergent.

For $i=1,2,\dots, n$, consider the sequence $(D_iu_k^0)_{k=1}^\infty$ consisting of weak derivatives with respect to the $i$-th variable of elements of the sequence $(u_k^0)_{k=1}^\infty$. Owing to the boundedness of $(u_k)_{k=1}^\infty$ in $V^mX(\Omega,\nu)$, all these sequences are bounded in $V^{m-1}X(\Omega,\nu)$. Now, the compact embedding ~\eqref{E:compact_embedding_0} yields that we can inductively find sequences $(u_k^i)_{k=1}^\infty$, $i=1,2,\dots,n$, such that $(u_k^i)_{k=1}^\infty$ is a subsequence of $(u_k^{i-1})_{k=1}^\infty$ fulfilling that $(D_iu_k^i)_{k=1}^\infty$ is convergent in $(L^\infty)^d_{1,I}(\Omega,\nu)$. Since a subsequence of a convergent sequence is still convergent, we have, in particular, that $(D_ju_k^n)_{k=1}^\infty$ is a Cauchy sequence in $(L^\infty)^d_{1,I}(\Omega,\nu)$ for every $j\in \{1,2,\dots,n\}$. 

Let $\varepsilon>0$. By Theorem~\ref{T:CPS}, the embedding $V^1(L^\infty)^d_{1,I}(\Omega,\nu) \hookrightarrow L^\infty(\Omega,\nu)$ is equivalent to a Poincar\'e inequality. Hence, there is a constant $D>0$ such that for every $u\in V^1(L^\infty)^d_{1,I}(\Omega,\nu)$,
\begin{equation}\label{E:c_epsilon}
\left\|u - \int_{\Omega} u\,d\nu \right\|_{L^\infty(\Omega,\nu)} 
\leq D \left\||\nabla u| \right\|_{(L^\infty)^d_{1,I}(\Omega,\nu)} 
\leq D\sum_{j=1}^n \left\|D_j u \right\|_{(L^\infty)^d_{1,I}(\Omega,\nu)}.
\end{equation}
Since $(D_ju_k^n)_{k=1}^\infty$ is a Cauchy sequence in $(L^\infty)^d_{1,I}(\Omega,\nu)$ for every $j\in \{1,2,\dots,n\}$, we can find $k_0\in \mathbb N$ such that $\|D_ju_k^n-D_ju_\ell^n\|_{(L^\infty)^d_{1,I}(\Omega,\nu)} <\varepsilon/Dn$ whenever $k,\ell\geq k_0$ and $j\in \{1,2,\dots,n\}$. Thus, inequality~\eqref{E:c_epsilon} applied to $u=u_k^n-u_\ell^n$ implies that for every $k,\ell \geq k_0$,
$$
\left\|u_k^n -u_\ell^n - \int_{\Omega} (u_k^n - u_\ell^n)\,d\nu \right\|_{L^\infty(\Omega,\nu)} 
\leq D\sum_{j=1}^n \left\|D_ju_k^n-D_ju_\ell^n\right\|_{(L^\infty)^d_{1,I}(\Omega,\nu)} <\varepsilon,
$$
so, $(u_k^n- \int_{\Omega} u_k^n\,d\nu)_{k=1}^\infty$ is a Cauchy sequence in $L^\infty(\Omega,\nu)$. Due to the completeness of $L^\infty(\Omega,\nu)$, $(u_k^n- \int_{\Omega} u_k^n\,d\nu)_{k=1}^\infty$ is convergent in $L^\infty(\Omega,\nu)$. Since the sequence $(\int_{\Omega} u_k^n\,d\nu)_{k=1}^\infty$ consisting of constant functions is convergent in $L^\infty(\Omega,\nu)$ as well, $(u_k^n)_{k=1}^\infty$ is convergent in $L^\infty(\Omega,\nu)$ and $V^mX(\Omega,\nu)\hookrightarrow \hookrightarrow L^\infty(\Omega,\nu)$, as required.
\end{proof}

\begin{proof}[Proof of Theorem~\ref{T:K}]
According to~\cite[Proposition 8.6]{CPS}, for every $f\in \mathcal M(0,1)$ we have
$$
\|H^m_If\|_{Y(0,1)} \approx \|K^m_If\|_{Y(0,1)},
$$
up to multiplicative constants depending on $m$ and $I$. Thus, given $a\in (0,1)$ and $f\in \mathcal M(0,1)$, we obtain
\begin{equation}\label{E:equivalence}
\|H^m_I(\chi_{(0,a)}f)\|_{Y(0,1)} \approx \|K^m_I(\chi_{(0,a)}f)\|_{Y(0,1)},
\end{equation}
up to multiplicative constants depending on $m$ and $I$. This yields that~\eqref{E:almost_comp_op_K} is equivalent to~\eqref{E:almost_comp_op}. Theorem~\ref{T:main} now yields that~\eqref{E:almost_comp_op_K} implies~\eqref{E:comp_emb_K}.

Consider the function $J$ defined by~\eqref{E:J}. We claim that, given $t\in (0,1)$, 
\begin{equation}\label{E:esup}
\esup_{s\in (0,t)} \frac{1}{J(s)}= \sup_{s\in (0,t)} \frac{s^{m-1}}{(I(s))^m}.
\end{equation}
Indeed, we trivially have
$$
\esup_{s\in (0,t)} \frac{1}{J(s)} = \esup_{s\in (0,t)} \frac{s^{m-1}}{(I(s))^m} \leq \sup_{s\in (0,t)} \frac{s^{m-1}}{(I(s))^m}.
$$
Conversely, because $I$ is nondecreasing on $(0,1]$, for every $s\in (0,t)$
$$
\frac{s^{m-1}}{(I(s))^m} \leq \frac{s^{m-1}}{(\lim_{r\to s_-} I(r))^m} = \lim_{r\to s_-} \frac{r^{m-1}}{(I(r))^m} \leq \esup_{r\in (0,t)} \frac{r^{m-1}}{(I(r))^m}.
$$
Passing to supremum over all $s\in (0,t)$, we obtain
$$
\sup_{s\in (0,t)} \frac{s^{m-1}}{(I(s))^m} \leq \esup_{r\in (0,t)} \frac{r^{m-1}}{(I(r))^m},
$$
which completes the proof of~\eqref{E:esup}. Equality~\eqref{E:esup} then implies that 
$$
\lim_{t\to 0_+} \esup_{s\in (0,t)} \frac{1}{J(s)}=0
$$
holds if and only if
\begin{equation}\label{E:lim}
\lim_{t\to 0_+} \frac{t^{m-1}}{(I(t))^m}=0.
\end{equation}

Suppose that~\eqref{E:lim} is not satisfied (i.e., part \textup{(a)} is in progress). Since $K^m_I=H_J$, an application of Theorem~\ref{T:compact_operator} yields that~\eqref{E:comp_op_K} is equivalent to~\eqref{E:almost_comp_op_K}. Furthermore,  according to the first part of the proof, each of~\eqref{E:comp_op_K} and~\eqref{E:almost_comp_op_K} implies~\eqref{E:comp_emb_K}. 

Next, assume that~\eqref{E:lim} is fulfilled (i.e., part \textup{(b)} is in progress). By another using of Theorem~\ref{T:compact_operator} and of the fact that $K^m_I=H_J$, we get that~\eqref{E:almost_comp_op_K} holds with $X(0,1)=L^1(0,1)$ and $Y(0,1)=L^\infty(0,1)$. Since, in general, $X(0,1) \hookrightarrow L^1(0,1)$ and $L^\infty(0,1) \hookrightarrow Y(0,1)$, condition~\eqref{E:almost_comp_op_K} is satisfied for all rearrangement-invariant norms $\|\cdot\|_{X(0,1)}$ and $\|\cdot\|_{Y(0,1)}$. The first part of the proof thus yields that condition~\eqref{E:comp_emb_K} is fulfilled independently of the choice of $\|\cdot\|_{X(0,1)}$ and $\|\cdot\|_{Y(0,1)}$ as well. 
\end{proof}

\section{Compactness of Sobolev embeddings on concrete measure spaces}\label{S:concrete}

In this section we characterize compact Sobolev embeddings on Euclidean John domains, on Maz'ya classes of Euclidean domains and, finally, on product probability spaces, whose standard example is the Gauss space. Recall that definitions and basic properties of the above mentioned measure spaces can be found in Section~\ref{S:sobolev}.

We start with the case of Euclidean John domains in $\mathbb R^n$, $n\geq 2$. In order to characterize $m$-th order compact Sobolev embeddings on these domains (for some $m\in \mathbb N$), we shall consider the operator $Q^m_n$ defined by
$$
Q^m_nf(t) = \int_t^1 |f(s)| s^{\frac{m}{n}-1}\,ds, ~~f\in \mathcal M(0,1), ~~t\in (0,1).
$$

\begin{theorem}\label{T:john}
Let $n\in \mathbb N$, $n\geq 2$, let $m\in \mathbb N$ and let $\Omega$ be a John domain in $\mathbb R^n$. Suppose that $\|\cdot\|_{X(0,1)}$ and $\|\cdot\|_{Y(0,1)}$ are rearrangement-invariant norms. If $m\leq n$ then the compact Sobolev embedding
\begin{equation}\label{E:emb_john}
V^mX(\Omega) \hookrightarrow \hookrightarrow Y(\Omega)
\end{equation}
is equivalent to each of the following two conditions:

\medskip
\textup{(i)} $Q^{m}_{n}: X(0,1) \rightarrow \rightarrow Y(0,1);$ 

\textup{(ii)} $\lim_{a\to 0_+} \sup_{\|f\|_{X(0,1)}\leq 1} \|Q^{m}_{n}(\chi_{(0,a)}f)\|_{Y(0,1)} = 0.$ 

\medskip\noindent
In particular, if $m=n$ then~\eqref{E:emb_john} is satisfied for all pairs of rearrangement-invariant norms $\|\cdot\|_{X(0,1)}$ and $\|\cdot\|_{Y(0,1)}$ except of those for which $X(0,1)=L^1(0,1)$ and $Y(0,1)=L^\infty(0,1)$. Furthermore, if $m>n$ then~\eqref{E:emb_john} is fulfilled independently of the choice of $\|\cdot\|_{X(0,1)}$ and $\|\cdot\|_{Y(0,1)}$.
\end{theorem}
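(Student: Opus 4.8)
The plan is to recognize $Q^m_n$ as the operator $K^m_I$ associated with the isoperimetric gauge of a John domain and then to lean on Theorem~\ref{T:K}. Set $I(t)=t^{1/n'}$ for $t\in(0,1]$; by Section~\ref{S:sobolev}, $(\Omega,\lambda_n,I)$ is a compatible triplet. First I would confirm that $I$ satisfies~\eqref{E:aprox}, which is immediate since $\int_0^s r^{-1/n'}\,dr=n\,s^{1/n}$ while $s/I(s)=s^{1/n}$. Taking $J$ as in~\eqref{E:J} one finds $J(t)=t^{1-m/n}$, so that $K^m_I=H_J$ of Section~\ref{S:main} is exactly $K^m_If(t)=\int_t^1|f(s)|\,s^{m/n-1}\,ds=Q^m_nf(t)$; under this identification conditions \textup{(i)} and \textup{(ii)} of the theorem become~\eqref{E:comp_op_K} and~\eqref{E:almost_comp_op_K}, and~\eqref{E:emb_john} becomes~\eqref{E:comp_emb_K}. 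Finally $\lim_{t\to0_+}t^{m-1}/(I(t))^m=\lim_{t\to0_+}t^{m/n-1}$ equals $+\infty$ when $m<n$, equals $1$ when $m=n$, and equals $0$ when $m>n$.

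With these reductions in hand the easy half is automatic. For $m\le n$ the above limit is nonzero, so~\eqref{E:lim_not_0} holds and Theorem~\ref{T:K}\,\textup{(a)} yields both the equivalence \textup{(i)}$\Leftrightarrow$\textup{(ii)} and the implication \textup{(ii)}$\Rightarrow$\eqref{E:emb_john}. For $m>n$ the limit is $0$, so~\eqref{E:lim0} holds and Theorem~\ref{T:K}\,\textup{(b)} gives~\eqref{E:emb_john} for every pair of norms, which is the last assertion. What remains is, for $m\le n$, the reverse implication \eqref{E:emb_john}$\Rightarrow$\textup{(ii)}, and afterwards the explicit description of the case $m=n$.

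The necessity \eqref{E:emb_john}$\Rightarrow$\textup{(ii)} is the main obstacle, and I would prove it by a concentrating radial test-function construction, arguing by contradiction. If \textup{(ii)} fails, there are $\varepsilon>0$, numbers $a_j\downarrow0$ and nonincreasing $f_j\ge0$ supported in $(0,a_j)$ with $\|f_j\|_{X(0,1)}\le1$ and $\|Q^m_nf_j\|_{Y(0,1)}\ge\varepsilon$; the reduction to nonincreasing $f_j$ is legitimate by the reasoning of Remark~\ref{T:monotone}, carried over through the norm equivalence $\|H^m_If\|_{Y(0,1)}\approx\|K^m_If\|_{Y(0,1)}$ of~\cite[Proposition 8.6]{CPS}. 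Fixing a ball $B\subseteq\Omega$, for each $j$ I would build a radial $u_j$ supported in $B$ by an $m$-fold integration of $f_j$ against the one-dimensional kernel dictated by the Euclidean profile $I_B(t)\approx t^{1/n'}$ of the ball, arranged so that the derivatives of $u_j$ of orders $0,\dots,m-1$ vanish on $\partial B$ (hence the zero extension lies in $V^mX(\Omega)$), so that $\||\nabla^mu_j|\|_{X(\Omega,\nu)}\lesssim\|f_j\|_{X(0,1)}\le1$, so that $(u_j)^*_\nu\gtrsim H^m_If_j\approx Q^m_nf_j$, and so that $u_j$ stays supported in a set of $\nu$-measure at most $a_j$. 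This reversal of the P\'olya--Szeg\H{o}/Talenti reduction is the technical heart of the argument; it is exactly the sharpness construction of~\cite{CPS}, available precisely because the ball obeys the two-sided bound $I_B(t)\approx t^{1/n'}$. Granting it, the contradiction is quick: since $\nu(\{u_j\neq0\})\le a_j\to0$, the bounded sequence $(u_j)_j$ in $V^mX(\Omega,\nu)$ tends to $0$ in measure, so compactness of~\eqref{E:emb_john} furnishes a subsequence converging in $Y(\Omega,\nu)$, necessarily to $0$; yet $\|u_j\|_{Y(\Omega,\nu)}=\|(u_j)^*_\nu\|_{Y(0,1)}\gtrsim\|Q^m_nf_j\|_{Y(0,1)}\ge\varepsilon$, which is absurd.

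This establishes the equivalence for $m\le n$, and the case $m=n$ then follows by identifying when \textup{(i)} holds for $Q^n_nf(t)=\int_t^1|f(s)|\,ds$ (here $J\equiv1$). If $Y(0,1)\neq L^\infty(0,1)$, then $Q^n_n\colon X(0,1)\to L^\infty(0,1)$ is bounded because $\|Q^n_nf\|_{L^\infty(0,1)}=\|f\|_{L^1(0,1)}\le C_X\|f\|_{X(0,1)}$, so Remark~\ref{T:remark} gives \textup{(i)} for every $X(0,1)$. If $Y(0,1)=L^\infty(0,1)$ and $X(0,1)\neq L^1(0,1)$, then Theorem~\ref{T:opt_domain} (with $j=1$, $J\equiv1$) together with Lemma~\ref{T:lemma_infty} (which gives $(L^\infty)^d_{1,1}(0,1)=L^1(0,1)$) reduces \textup{(i)} to $X(0,1)\overset{*}{\hookrightarrow}L^1(0,1)$, valid since $X(0,1)\neq L^1(0,1)$. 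Finally, if $X(0,1)=L^1(0,1)$ and $Y(0,1)=L^\infty(0,1)$, then $\sup_{\|f\|_{L^1(0,1)}\le1}\|Q^n_n(\chi_{(0,a)}f)\|_{L^\infty(0,1)}=1$ for every $a$, so \textup{(ii)}, and hence \eqref{E:emb_john}, fails. Thus for $m=n$ the embedding holds for all pairs except $X(0,1)=L^1(0,1)$, $Y(0,1)=L^\infty(0,1)$, as claimed.
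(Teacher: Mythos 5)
Your route coincides with the paper's: the same reduction of $Q^m_n$ to $K^m_I$ with $I(t)=t^{1/n'}$, the same appeal to Theorem~\ref{T:K} for the easy direction and for $m>n$, the same radial test-function construction on a ball combined with a compactness-in-measure argument for the necessity (the paper packages that contradiction argument as Proposition~\ref{T:reverse}), and the same endgame for $m=n$. There is, however, one genuine gap in the necessity step as you state it: you claim the construction works for all $m\le n$, in particular that $\|u_j\|_{V^mX(\Omega)}\lesssim\|f_j\|_{X(0,1)}$ for every rearrangement-invariant $X$. This fails when $m=n$ and $X$ is not close to $L^1$. Concretely, the $m$-th order gradient of the radially integrated function contains the intermediate terms $t^{i-m/n}\int_t^1 f(s)s^{-i+m/n-1}\,ds$ with $t=\kappa_n|x|^n$, and for $m=n$, $i=1$, $X=L^\infty$, $f=\chi_{(0,a)}$ this term is $\log(a/t)$, which is unbounded; so the construction does not even land in $V^nL^\infty(\Omega)$, and Proposition-\ref{T:reverse}-type reasoning cannot be applied. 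The paper avoids this by splitting: the construction is invoked only for $m<n$, or for $m=n$ with $X(0,1)=L^1(0,1)$ (where a direct Fubini computation saves the intermediate estimates), while for $m=n$ and $X(0,1)\neq L^1(0,1)$ the implication is vacuous because condition \textup{(ii)} holds automatically (via $Q^n_n\colon L^1\to L^\infty$ and Remark~\ref{T:remark_2}).

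The damage is limited: your final paragraph already contains the observation that \textup{(i)}, hence \textup{(ii)}, holds unconditionally when $m=n$ and $X(0,1)\neq L^1(0,1)$ or $Y(0,1)\neq L^\infty(0,1)$, so the necessity only ever needs the construction for $m<n$ or for $m=n$ with $X(0,1)=L^1(0,1)$, where it is legitimate. But as written, the sentence ``this establishes the equivalence for $m\le n$'' overreaches, and you should make the case split explicit before invoking the construction. A minor further remark: the preliminary reduction to nonincreasing $f_j$ via Remark~\ref{T:monotone} is harmless but unnecessary, since the radial primitive is automatically radially nonincreasing for arbitrary nonnegative $f$ and its rearrangement is computed exactly by the Fubini formula.
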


We note that the equivalence of~\eqref{E:emb_john} and \textup{(i)} in Theorem~\ref{T:john} is already known in the special case when $\Omega$ is a domain having a Lipschitz boundary and $m<n$, see~\cite{KP}.

\medskip

Let us now focus on Maz'ya classes of domains in $\mathbb R^n$, $n\geq 2$. When dealing with $m$-th order Sobolev embeddings on a domain from the Maz'ya class $\mathcal J_\alpha$ (for some $m\in \mathbb N$ and $\alpha \in [\frac{1}{n'},1]$), we shall use the operator $T^m_\alpha$ given by
$$
T^m_\alpha f(t)=\int_t^1 |f(s)|s^{-1+m(1-\alpha)}\,ds, ~~f\in \mathcal M(0,1), ~~t\in (0,1),
$$
if $\alpha \in [\frac{1}{n'},1)$, and by
$$
T^m_1f(t)=\frac{1}{(m-1)!}\int_t^1 |f(s)|\frac{(\log \frac{s}{t})^{m-1}}{s}\,ds, ~~f\in \mathcal M(0,1), ~~t\in (0,1). 
$$

\begin{theorem}\label{T:mazya}
Let $n\in \mathbb N$, $n\geq 2$, let $m\in \mathbb N$ and let $\alpha \in [\frac{1}{n'},1]$. Suppose that $\|\cdot\|_{X(0,1)}$ and $\|\cdot\|_{Y(0,1)}$ are rearrangement-invariant norms. If $m(1-\alpha)\leq 1$ (notice that this is true for every $m\in \mathbb N$ provided that $\alpha=1$) then the fact that
\begin{equation}\label{E:emb_mazya}
V^mX(\Omega) \hookrightarrow \hookrightarrow Y(\Omega) \textup{ holds for every } \Omega \in \mathcal J_\alpha
\end{equation} 
is equivalent to each of the following conditions:

\medskip
\textup{(i)} $T^m_\alpha: X(0,1) \rightarrow \rightarrow Y(0,1)$; 

\textup{(ii)} $\lim_{a\to 0_+} \sup_{\|f\|_{X(0,1)}\leq 1} \|T^m_\alpha(\chi_{(0,a)}f)\|_{Y(0,1)} = 0$.

\medskip\noindent
In particular, if $m(1-\alpha)=1$ then~\eqref{E:emb_mazya} is satisfied for all pairs of rearrangement-invariant norms $\|\cdot\|_{X(0,1)}$ and $\|\cdot\|_{Y(0,1)}$ except of those for which $X(0,1)=L^1(0,1)$ and $Y(0,1)=L^\infty(0,1)$. Furthermore, if $m(1-\alpha)>1$ then condition~\eqref{E:emb_mazya} is fulfilled independently of the choice of $\|\cdot\|_{X(0,1)}$ and $\|\cdot\|_{Y(0,1)}$.
\end{theorem}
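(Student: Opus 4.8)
The plan is to transfer everything to the one-dimensional operators of Section~\ref{S:operators} and then invoke Theorems~\ref{T:main} and~\ref{T:K}. First I would record the identities tying $T^m_\alpha$ to the operators built from the isoperimetric model $I_\alpha(t)=t^\alpha$. For $\alpha\in[\frac{1}{n'},1)$ one has $\int_0^s r^{-\alpha}\,dr\approx s^{1-\alpha}\approx s/I_\alpha(s)$, so~\eqref{E:aprox} holds, the function $J$ of~\eqref{E:J} is $J(t)=t^{1-m(1-\alpha)}$, and a direct computation gives $K^m_{I_\alpha}=T^m_\alpha$. For $\alpha=1$ assumption~\eqref{E:aprox} fails, but substituting $I_1(t)=t$ into~\eqref{E:def_H} and changing variables yields $H^m_{I_1}=T^m_1$. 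In either case $(\Omega,\lambda_n,I_\alpha)$ is a compatible triplet for every $\Omega\in\mathcal J_\alpha$, as noted in Section~\ref{S:sobolev}.

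For the sufficiency direction ((i) or (ii) $\Rightarrow$~\eqref{E:emb_mazya}) I would split according to the sign of $m(1-\alpha)-1$, read off from $\frac{t^{m-1}}{(I_\alpha(t))^m}=t^{-1+m(1-\alpha)}$. When $\alpha=1$ (so $m(1-\alpha)=0$) Theorem~\ref{T:main} applies directly with $I=I_1$ and gives both the equivalence (i) $\Leftrightarrow$ (ii) and the implication to compactness. When $\alpha<1$ and $m(1-\alpha)\le1$ the quantity $t^{-1+m(1-\alpha)}$ does not tend to $0$, so~\eqref{E:lim_not_0} holds and Theorem~\ref{T:K}(a) delivers (i) $\Leftrightarrow$ (ii) together with the compact embedding; when $m(1-\alpha)>1$ we are in the regime~\eqref{E:lim0} and Theorem~\ref{T:K}(b) yields~\eqref{E:emb_mazya} for all pairs, which is the final clause. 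The point making the statement uniform over the whole class is that conditions (i), (ii) depend only on $I_\alpha$ and not on the individual domain, so one operator hypothesis produces the embedding simultaneously for every $\Omega\in\mathcal J_\alpha$. In the boundary case $m(1-\alpha)=1$ one has $J\equiv1$, and Theorem~\ref{T:compact_operator} (equivalently Remarks~\ref{T:remark} and~\ref{T:remark_2}) with $j=1$ shows that (ii) holds precisely when $(X,Y)\neq(L^1,L^\infty)$, accounting for the stated exceptional pair.

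The real content is the necessity direction, and here the plan is to exhibit, for each admissible $\alpha$, a single extremal domain $\Omega_\alpha\in\mathcal J_\alpha$ whose isoperimetric function satisfies $I_{\Omega_\alpha}(s)\approx s^\alpha$ on $[0,\frac12]$ (so the defining lower bound is saturated), and to show that $V^mX(\Omega_\alpha)\hookrightarrow\hookrightarrow Y(\Omega_\alpha)$ forces~(ii). I would argue by contraposition: if (ii) fails, choose $\varepsilon>0$, $a_k\downarrow0$ and nonnegative $f_k$ with $\|f_k\|_{X(0,1)}\le1$ and $\|T^m_\alpha(\chi_{(0,a_k)}f_k)\|_{Y(0,1)}\ge\varepsilon$, then transplant the data $\chi_{(0,a_k)}f_k$ into genuine Sobolev functions $u_k\in V^mX(\Omega_\alpha)$ supported near the power-type cusp responsible for the exponent $\alpha$. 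The transplantation is designed so that $\||\nabla^m u_k|\|_{X(\Omega_\alpha)}$ stays bounded while the rearrangement of $u_k$ is comparable to $H^m_{I_\alpha}(\chi_{(0,a_k)}f_k)$, whence $\|u_k\|_{Y(\Omega_\alpha)}\gtrsim\|T^m_\alpha(\chi_{(0,a_k)}f_k)\|_{Y(0,1)}\ge\varepsilon$; since the supports shrink to the singular point, $(u_k)$ has no $Y(\Omega_\alpha)$-convergent subsequence, contradicting compactness. This reverses Theorem~\ref{T:CPS} on the model domain, exactly the mechanism already developed for continuous embeddings in~\cite{CPS}, with the compact version only requiring that the shrinking supports be tracked.

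The main obstacle is precisely this necessity step: constructing the extremal Maz'ya domain with a two-sided isoperimetric estimate and carrying out the transplantation so that the $m$-th order derivatives are controlled in $X(\Omega_\alpha)$ while the function values reproduce the action of $T^m_\alpha$ up to equivalence. Everything else reduces, through the identities $K^m_{I_\alpha}=T^m_\alpha$ and $H^m_{I_1}=T^m_1$, to the already-established Theorems~\ref{T:main}, \ref{T:K} and~\ref{T:compact_operator}.
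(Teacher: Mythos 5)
Your proposal is correct and follows essentially the same route as the paper: the identities $K^m_{I_\alpha}=T^m_\alpha$ (for $\alpha<1$) and $H^m_{I_1}=T^m_1$, Theorems~\ref{T:main} and~\ref{T:K} split according to the sign of $m(1-\alpha)-1$ for sufficiency, and for necessity the extremal domain of Proposition~\ref{T:mazya_domain} together with a transplantation operator $L$ whose rearrangement realizes $H^m_{I_\alpha}$. Your contrapositive argument with near-extremal functions of shrinking support is exactly the content of the paper's Proposition~\ref{T:reverse}, stated there in direct form.
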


\begin{remarks}\label{T:remarks6}
\textup{(a)} It will follow from the proof of Theorem~\ref{T:john} that its statement is true for all domains $\Omega$ belonging to the Maz'ya class $\mathcal J_{\frac{1}{n'}}$ (this class contains, in particular, all John domains). 

\textup{(b)} Let $m$, $n\in \mathbb N$, $n\geq 2$, let $\alpha \in [\frac{1}{n'},1]$ and let $\Omega$ be a domain in $\mathbb R^n$ belonging to the Maz'ya class $\mathcal J_\alpha$. Suppose that $\|\cdot\|_{X(0,1)}$ and $\|\cdot\|_{Y(0,1)}$ are rearrangement-invariant norms. Consider the following two assertions: 

\textup{(i)} $V^mX(\Omega) \hookrightarrow \hookrightarrow Y(\Omega)$; 

\textup{(ii)} $V^mX(\Omega') \hookrightarrow \hookrightarrow Y(\Omega')$ holds for each $\Omega' \in \mathcal J_\alpha$.

If $\alpha=\frac{1}{n'}$ then conditions \textup{(i)} and \textup{(ii)} are equivalent (this follows from Theorem~\ref{T:john} combined with the part \textup{(a)} of this remark). However, such an equivalence is no longer true when $\alpha>\frac{1}{n'}$. This can be easily observed since each Maz'ya class $\mathcal J_\alpha$ contains, in particular, all John domains. Compactness of Sobolev embeddings on John domains is characterized by compactness of the operator $Q^m_n$, which does not coincide with compactness of $T^m_\alpha$. On the other hand, given an arbitrary $\alpha \in (\frac{1}{n'},1]$, there is one domain $\Omega \in \mathcal J_\alpha$ for which the equivalence of \textup{(i)} and \textup{(ii)} holds for all rearrangement-invariant norms $\|\cdot\|_{X(0,1)}$ and $\|\cdot\|_{Y(0,1)}$ (an example of such a domain can be found in Proposition~\ref{T:mazya_domain}).

\textup{(c)} The operators $Q^m_n$ and $T^m_\alpha$ can be described via the operators \lq\lq$H$" and \lq\lq$K$" defined in Sections~\ref{S:operators} and~\ref{S:main}, respectively, in the following way:
$$
Q^m_n = K^m_{s^\frac{1}{n'}} = H_{s^{1-\frac mn}}
$$
and
$$
T^m_\alpha=
\begin{cases}
K^m_{s^\alpha} = H_{s^{1-m(1-\alpha)}},\,  &\alpha \in [\frac{1}{n'},1);\\
H^m_s,\, &\alpha=1.
\end{cases}
$$
Hence, Theorems~\ref{T:opt_range} and~\ref{T:opt_domain} applied to an appropriate operator \lq\lq$H$" provide further characterization of compactness of $Q^m_n$ and $T^m_\alpha$.
\end{remarks}

We finally focus on product probability spaces $(\mathbb R^n,\mu_{\Phi,n})$, where $n\in \mathbb N$ and the measure $\mu_{\Phi,n}$ is defined by~\eqref{E:one_dim} if $n=1$ and by~\eqref{E:n_dim} if $n>1$. Given $m\in \mathbb N$, we characterize compact Sobolev embeddings on $(\mathbb R^n,\mu_{\Phi,n})$ in terms of compactness of the operator $H^m_{L_\Phi}$, with $L_\Phi$ as in~\eqref{E:l_phi}. The operator $H^m_{L_\Phi}$ therefore has the form
\begin{align*}
H^m_{L_\Phi}f(t)
&= \frac{1}{(m-1)!}\int_t^1 \frac{|f(s)|}{s\Phi'\left(\Phi^{-1}\left(\log \frac{2}{s}\right)\right)} \left(\int_t^s \frac{\,dr}{r\Phi'\left(\Phi^{-1}\left(\log \frac{2}{r}\right)\right)}\right)^{m-1}\,ds\\
&=\frac{1}{(m-1)!} \int_t^1 |f(s)| \frac{\left(\Phi^{-1}\left(\log\frac{2}{t}\right)-\Phi^{-1}\left(\log\frac{2}{s}\right)\right)^{m-1}}{s\Phi'\left(\Phi^{-1}\left(\log \frac{2}{s}\right)\right)}\,ds, ~~f\in \mathcal M(0,1), ~t\in (0,1).
\end{align*}
We also prove that compactness of the operator $H^m_{L_\Phi}$ coincides with compactness of the somewhat simpler operator $P^m_\Phi$, defined by
$$
P^m_\Phi f(t)=\left(\frac{\Phi^{-1}(\log \frac 2t)}{\log  \frac 2t}\right)^m \int_t^1 \frac{|f(s)|}{s} \left(\log \frac st\right)^{m-1} \,ds, ~~f\in \mathcal M(0,1), ~~t\in (0,1).
$$
We note that the operator $P^m_\Phi$ was introduced in~\cite{CPS} where it was shown that boundedness of $H^m_{L_\Phi}$ is equivalent to boundedness of $P^m_\Phi$. 

Furthermore, we show that anologues of Theorems~\ref{T:compact_operator}, \ref{T:opt_range} and~\ref{T:opt_domain} hold for the operator $P^m_\Phi$, although it does not have the form $H^j_J$ for some $j\in \mathbb N$ and some function $J$. In order to do this, we define two families of rearrangement-invariant norms, playing the role of optimal range and optimal domain norms with respect to the operator $P^m_\Phi$.

Let $\|\cdot\|_{X(0,1)}$ be a rearrangement-invariant norm. Given $m\in \mathbb N$, consider the rearrangement-invariant norm $\|\cdot\|_{X_m(0,1)}$ whose associate norm fulfils
$$
\|f\|_{X_m'(0,1)} =\left\|\frac{1}{s} \int_0^s \left( \log \frac st\right) f^*(t)\,dt \right\|_{X'(0,1)}
$$
for every $f\in \mathcal M(0,1)$. Then the functional $\|\cdot\|_{X^r_{m,\Phi}(0,1)}$, given for every $f\in \mathcal M(0,1)$ by
$$
\|f\|_{X^r_{m,\Phi}(0,1)} = \left\|\left( \frac{\log \frac 2s}{\Phi^{-1}(\log \frac 2s)}\right)^m f^*(s)\right\|_{X_m(0,1)},
$$
is a rearrangement-invariant norm and we have $X^r_{m,\Phi}(0,1)=X^r_{m,L_\Phi}(0,1)$, up to equivalent norms, see~\cite[Theorem 7.3 and its proof]{CPS}.

Further, let $\|\cdot\|_{Y(0,1)}$ be a rearrangement-invariant norm fulfilling
\begin{equation}\label{E:cond_Y_phi}
\left\|\left(\Phi^{-1}\left(\log \frac 2s\right) \right)^m\right\|_{Y(0,1)}<\infty.
\end{equation}
For every $f\in \mathcal M(0,1)$ define the functional $\|\cdot\|_{Y^d_{m,\Phi}(0,1)}$ by
\begin{align*}
\|f\|_{Y^d_{m,\Phi}(0,1)} 
&= \sup_{h \sim f} \|P^m_\Phi h\|_{Y(0,1)} +\|f\|_{L^1(0,1)}\\
&= \sup_{0\leq h \sim f}\left \|\left(\frac{\Phi^{-1}(\log \frac 2t)}{\log  \frac 2t}\right)^m \int_t^1 \frac{h(s)}{s} \left(\log \frac st\right)^{m-1} \,ds\right\|_{Y(0,1)}+\|f\|_{L^1(0,1)}.
\end{align*}
The fact that the functional $\|\cdot\|_{Y^d_{m,\Phi}(0,1)}$ is actually a rearrangement-invariant norm can be proved in the same way as it is done for the functional $\|\cdot\|_{Y^d_{j,J}(0,1)}$ in the proof of Proposition~\ref{T:proposition}.

\begin{theorem}\label{T:probability}
Let $n,m\in \mathbb N$ and let $\Phi$ be as in Section~\ref{S:sobolev}. Suppose that $\|\cdot\|_{X(0,1)}$ and $\|\cdot\|_{Y(0,1)}$ are rearrangement-invariant norms. Then the following conditions are equivalent:

\medskip
\textup{(i)} $V^mX(\mathbb R^n,\mu_{\Phi,n}) \hookrightarrow \hookrightarrow Y(\mathbb R^n,\mu_{\Phi,n});$ 

\textup{(ii)} $H^m_{L_\Phi}: X(0,1) \rightarrow \rightarrow Y(0,1);$

\textup{(iii)} $P_\Phi^m: X(0,1) \rightarrow \rightarrow Y(0,1);$ 

\textup{(iv)} $\lim_{a\to 0_+} \sup_{\|f\|_{X(0,1)}\leq 1} \|P_\Phi^m(\chi_{(0,a)}f)\|_{Y(0,1)} = 0;$

\textup{(v)} $\lim_{a\to 0_+} \sup_{\|f\|_{X(0,1)}\leq 1} \|\chi_{(0,a)} P_\Phi^m f\|_{Y(0,1)} = 0;$

\textup{(vi)} $X^r_{m,\Phi}(0,1) \overset{*}{\hookrightarrow} Y(0,1).$

\medskip
\noindent
Furthermore, if $X(0,1) \neq L^1(0,1)$ and~\eqref{E:cond_Y_phi} is satisfied, then \textup{(i)} -- \textup{(vi)} are equivalent to each of the following conditions:

\medskip
\textup{(vii)} $\lim_{a\to 0_+} \sup_{\|f\|_{X(0,1)}\leq 1} \sup_{\lambda_1(E)\leq a} \|P_\Phi^m(\chi_{E}f)\|_{Y(0,1)} = 0;$

\textup{(viii)} $X(0,1) \overset{*}{\hookrightarrow} Y^d_{m,\Phi}(0,1).$
\end{theorem}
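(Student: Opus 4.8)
The plan is to funnel every statement through the compactness of the kernel operator $H^m_{L_\Phi}$, for which the machinery of Sections~\ref{S:operators} and~\ref{S:main} is directly available, the only genuinely new ingredient being the necessity of the operator condition for the compact embedding. Since $(\mathbb R^n,\mu_{\Phi,n},L_\Phi)$ is a compatible triplet and $L_\Phi$ is nondecreasing, Theorem~\ref{T:main} applied with $I=L_\Phi$ and $j=m$ shows that condition \textup{(ii)} is equivalent to the almost-compactness requirement $\lim_{a\to0_+}\sup_{\|f\|_{X(0,1)}\le1}\|H^m_{L_\Phi}(\chi_{(0,a)}f)\|_{Y(0,1)}=0$ and that either implies \textup{(i)}. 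To move between $H^m_{L_\Phi}$ and $P^m_\Phi$ I would invoke the norm equivalence $\|H^m_{L_\Phi}g\|_{Y(0,1)}\approx\|P^m_\Phi g\|_{Y(0,1)}$, valid for every rearrangement-invariant norm and every $g$, established in~\cite{CPS}; taking $g=\chi_{(0,a)}f$ identifies the displayed $H$-condition with \textup{(iv)}, so that \textup{(ii)}$\Leftrightarrow$\textup{(iv)}$\Rightarrow$\textup{(i)}. Finally, the substitutions $w=\log\frac2r$ and $v=\Phi^{-1}(w)$ give $\int_0^{1/2}\frac{dr}{L_\Phi(r)}=\int_{\Phi^{-1}(\log4)}^{\infty}dv=\infty$, so condition~\eqref{E:or} holds automatically for $J=L_\Phi$; Theorem~\ref{T:opt_range} then yields that \textup{(ii)} is equivalent to $X^r_{m,L_\Phi}(0,1)\overset{*}{\hookrightarrow}Y(0,1)$, and since $X^r_{m,\Phi}(0,1)=X^r_{m,L_\Phi}(0,1)$ by~\cite{CPS}, this is precisely \textup{(vi)}.

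Next I would establish the analogues of Theorems~\ref{T:compact_operator},~\ref{T:opt_range} and~\ref{T:opt_domain} for the operator $P^m_\Phi$, which supply the remaining conditions \textup{(iii)},~\textup{(v)} and, under the extra hypotheses, \textup{(vii)},~\textup{(viii)}. These proofs transcribe those of Section~\ref{S:operators}, the structural facts used being that $P^m_\Phi g$ is equivalent to a nonincreasing function, that its associate operator is the one implicit in the definition of $\|\cdot\|_{Y^d_{m,\Phi}(0,1)}$, and that the optimal range and optimal domain spaces for $P^m_\Phi$ are $X^r_{m,\Phi}(0,1)$ and $Y^d_{m,\Phi}(0,1)$; the identities $X^r_{m,\Phi}=X^r_{m,L_\Phi}$ (from~\cite{CPS}) and $Y^d_{m,\Phi}=Y^d_{m,L_\Phi}$ (immediate from the norm equivalence in the two definitions) glue these to the $H^m_{L_\Phi}$-theory. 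The analogue of Theorem~\ref{T:compact_operator} gives \textup{(iii)}$\Leftrightarrow$\textup{(iv)}, its exceptional $L^1\!\to\!L^\infty$ configuration being vacuous here because, exactly as for $H^m_{L_\Phi}$, the governing limit is infinite rather than zero; the analogue of Theorem~\ref{T:opt_range} gives \textup{(iii)}$\Leftrightarrow$\textup{(v)}$\Leftrightarrow$\textup{(vi)}. For the last two conditions I would first note that the identity $\int_t^1\frac{dr}{L_\Phi(r)}=\Phi^{-1}(\log\frac2t)-\Phi^{-1}(\log2)$ makes~\eqref{E:cond_Y_phi} coincide with~\eqref{E:cond_Y} for $J=L_\Phi$, so that under $X(0,1)\ne L^1(0,1)$ and~\eqref{E:cond_Y_phi} the analogue of Theorem~\ref{T:opt_domain} applies and delivers \textup{(vii)}$\Leftrightarrow$\textup{(viii)}$\Leftrightarrow$\textup{(iii)}.

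It remains to prove the necessity \textup{(i)}$\Rightarrow$\textup{(ii)}, and this is the main obstacle: it is the only step that uses the upper bound in the two-sided estimate $I_{\mathbb R^n,\mu_{\Phi,n}}\approx L_\Phi$ of~\eqref{E:probability_isoperimetric} rather than merely the lower bound encoded in the compatible triplet. I would argue by contraposition: assuming \textup{(iv)} fails, fix $\varepsilon>0$, a sequence $a_k\downarrow0$ and functions $f_k$ with $\|f_k\|_{X(0,1)}\le1$ and $\|P^m_\Phi(\chi_{(0,a_k)}f_k)\|_{Y(0,1)}\ge\varepsilon$, and from them build a sequence $(u_k)$ bounded in $V^mX(\mathbb R^n,\mu_{\Phi,n})$ with no $Y$-convergent subsequence, contradicting \textup{(i)}. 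The construction exploits that the near-optimal isoperimetric sets of $\mu_{\Phi,n}$ are the half-spaces $\{x_1>F_\Phi^{-1}(t)\}$: I would take each $u_k$ to depend on the single coordinate $x_1$ through the profile $F_\Phi$, transplanting the one-dimensional datum $\chi_{(0,a_k)}f_k$ into a function whose $m$-th order gradient reproduces $f_k$ after rearrangement and whose $Y(\mathbb R^n,\mu_{\Phi,n})$-norm is comparable to $\|H^m_{L_\Phi}(\chi_{(0,a_k)}f_k)\|_{Y(0,1)}$, hence to $\|P^m_\Phi(\chi_{(0,a_k)}f_k)\|_{Y(0,1)}\ge\varepsilon$. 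Arranging the supports so that the relevant level sets escape to measure zero as $k\to\infty$ prevents any subsequence from converging. The technical heart, and the place where the sharpness of~\eqref{E:probability_isoperimetric} is indispensable, is the faithful transplantation of the one-dimensional extremizers with simultaneous uniform control of the full Sobolev norm $\|u_k\|_{V^mX(\mathbb R^n,\mu_{\Phi,n})}$.
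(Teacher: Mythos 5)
Your overall architecture coincides with the paper's: the sufficiency \textup{(ii)}$\Rightarrow$\textup{(i)} via Theorem~\ref{T:main}, the identification of \textup{(vi)} via Theorem~\ref{T:opt_range} together with $X^r_{m,\Phi}(0,1)=X^r_{m,L_\Phi}(0,1)$ and $\int_0^1\frac{dr}{L_\Phi(r)}=\infty$, the transfer of \textup{(vii)}--\textup{(viii)} through $Y^d_{m,\Phi}(0,1)=Y^d_{m,L_\Phi}(0,1)$, and the necessity \textup{(i)}$\Rightarrow$\textup{(ii)} by transplanting one-dimensional data along $x_1$ through $F_\Phi$. Your contrapositive framing of the necessity is only a repackaging of what the paper isolates as Proposition~\ref{T:reverse}: that proposition is itself proved by extracting a subsequence from near-extremizers $f_k$ of $\sup_{\|f\|_{X(0,1)}\le1}\|H(\chi_{(0,1/k)}|f|)\|_{Y(0,1)}$ and showing the images converge to $0$ in measure, so the two arguments are the same; the paper then converts $H$ (built from $I_\Phi$) to $H^m_{L_\Phi}$ using $I_\Phi\approx L_\Phi$ on $(0,\frac12]$ and outsources the bound $\|Lf\|_{V^mX}\lesssim\|f\|_{X(0,1)}$ to~\cite{CPS}, exactly the step you correctly identify as the technical heart.

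The one place where your write-up overstates what is available is the passage between $H^m_{L_\Phi}$ and $P^m_\Phi$. The equivalence $H^m_{L_\Phi}g\approx P^m_\Phi g$ from~\cite[Proposition 11.2]{CPS} holds only for nonnegative \emph{nonincreasing} $g$; for general $g$ one has only the one-sided bound $P^m_\Phi g\le 2^m(m-1)!\,H^m_{L_\Phi}g$. Since $\chi_{(0,a)}f$ is not nonincreasing, the claim that ``taking $g=\chi_{(0,a)}f$ identifies the displayed $H$-condition with \textup{(iv)}'' does not follow directly from a norm equivalence ``for every $g$.'' The paper repairs this by first invoking Remark~\ref{T:monotone} to replace the supremum over the unit ball by the supremum over nonincreasing elements $f^*$ of the unit ball (for which the two-sided pointwise equivalence applies), and then using the one-sided inequality to sandwich $\sup_{\|f\|_{X(0,1)}\le1}\|P^m_\Phi(\chi_{(0,a)}f)\|_{Y(0,1)}$ between the two monotone quantities. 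A similar care is needed for \textup{(v)}$\Rightarrow$\textup{(iii)}: rather than transcribing Theorem~\ref{T:opt_range} wholesale for $P^m_\Phi$, the paper factors $\chi_{(a,1)}P^m_\Phi f=\chi_{(a,1)}(t)\bigl(\Phi^{-1}(\log\frac2t)/\log\frac2t\bigr)^m H^m_sf(t)$ and uses compactness of $\chi_{(a,1)}H^m_s$ from~\cite{PP}, writing $P^m_\Phi$ as a norm limit of compact operators. Both points are fixable within your scheme, but as stated they are gaps.
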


Observe that Theorem~\ref{T:probability} yields that, in contrast to the Euclidean setting, compact Sobolev embeddings on $(\mathbb R^n,\mu_{\Phi,n})$ do not depend on the dimension $n$, in the sense that we have the equivalence of the following two assertions.

\noindent\textup{(i)} There exists $n\in \mathbb N$ for which $V^mX(\mathbb R^n,\mu_{\Phi,n}) \hookrightarrow \hookrightarrow Y(\mathbb R^n,\mu_{\Phi,n})$ is satisfied.

\noindent\textup{(ii)} The compact embedding $V^mX(\mathbb R^n,\mu_{\Phi,n}) \hookrightarrow \hookrightarrow Y(\mathbb R^n,\mu_{\Phi,n})$ is satisfied for every $n\in \mathbb N$. 

\medskip
Let us now prove the results we have stated. The proofs are based on the results of the previous section, and on the following

\begin{proposition}\label{T:reverse}
Assume that $(\Omega,\nu)$ is as in Section~\ref{S:sobolev}. Let $m\in \mathbb N$ and let $\|\cdot\|_{X(0,1)}$ and $\|\cdot\|_{Y(0,1)}$ be rearrangement-invariant norms satisfying
\begin{equation}\label{E:compact_embedding}
V^mX(\Omega,\nu) \hookrightarrow \hookrightarrow Y(\Omega,\nu).
\end{equation}
Let $\alpha\in (0,1]$. Denote
$$
X_+^\alpha=\{ f\in X(0,1) \cap \mathcal M_+(0,1): f=0 \hbox{ a.e. on } (0,1)\backslash (0,\alpha) \}. 
$$
Suppose that $L$ is an operator defined on $X_+^\alpha$, with values in $V^mX(\Omega,\nu)$, fulfilling that 
\begin{equation}\label{E:cond_1}
\|Lf\|_{V^mX(\Omega,\nu)} \leq C \|f\|_{X(0,1)}
\end{equation}
for some positive constant $C$ and for all $f\in X_+^\alpha$. Set $Hf=(Lf)^*_\nu$, $f\in X_+^\alpha$. Assume that
\begin{equation}\label{E:rearrangement}
Hf(t)=\int_t^{1} f(s) K(s,t)\,ds, ~~f\in X_+^\alpha, ~~t\in (0,1),
\end{equation}
for some real valued function $K$ satisfying that $K(\cdot,t)$ is nonnegative and measurable on $(t,1)$ for every $t\in (0,1)$.
Then
\begin{equation}\label{E:H_2}
\lim_{a\to 0_+} \sup_{\|f\|_{X(0,1)}\leq 1} \|H(\chi_{(0,a)}|f|)\|_{Y(0,1)}=0.
\end{equation}
\end{proposition}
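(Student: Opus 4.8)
The plan is to argue by contradiction, exploiting the crucial structural hypothesis~\eqref{E:rearrangement} (which says that $L$ is, after rearrangement, a Volterra-type operator) together with the compactness hypothesis~\eqref{E:compact_embedding}. First I would reduce~\eqref{E:H_2} to a statement about $L$ in the Sobolev space. Fix $a\in(0,\alpha]$ and $f$ with $\|f\|_{X(0,1)}\le 1$, and set $g=\chi_{(0,a)}|f|$. Then $g\in X_+^\alpha$ and, since $0\le g\le|f|$, the monotonicity of the rearrangement-invariant norm gives $\|g\|_{X(0,1)}\le\|f\|_{X(0,1)}\le 1$. Because $Hg=(Lg)^*_\nu$ and the representation identity~\eqref{E:representation} yields $\|(Lg)^*_\nu\|_{Y(0,1)}=\|Lg\|_{Y(\Omega,\nu)}$, we obtain $\|H(\chi_{(0,a)}|f|)\|_{Y(0,1)}=\|L(\chi_{(0,a)}|f|)\|_{Y(\Omega,\nu)}$. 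Hence~\eqref{E:H_2} is equivalent to
$$
\lim_{a\to 0_+}\ \sup_{\|f\|_{X(0,1)}\le 1}\ \|L(\chi_{(0,a)}|f|)\|_{Y(\Omega,\nu)}=0.
$$

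Next I would set up the contradiction. Suppose the displayed limit fails. Then there are $\varepsilon>0$, a sequence $a_k\downarrow 0$, and functions $f_k$ with $\|f_k\|_{X(0,1)}\le 1$ such that, writing $g_k=\chi_{(0,a_k)}|f_k|\in X_+^\alpha$, we have $\|Lg_k\|_{Y(\Omega,\nu)}\ge\varepsilon$. By the assumption~\eqref{E:cond_1}, $\|Lg_k\|_{V^mX(\Omega,\nu)}\le C\|g_k\|_{X(0,1)}\le C$, so the sequence $(Lg_k)_{k=1}^\infty$ is bounded in $V^mX(\Omega,\nu)$. The compact embedding~\eqref{E:compact_embedding} then provides a subsequence (not relabelled) with $Lg_k\to v$ in $Y(\Omega,\nu)$ for some $v$, and necessarily $\|v\|_{Y(\Omega,\nu)}=\lim_k\|Lg_k\|_{Y(\Omega,\nu)}\ge\varepsilon>0$.

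The heart of the argument is to show that this limit $v$ must vanish. Here the kernel representation~\eqref{E:rearrangement} is indispensable: for every fixed $t\in(0,1)$ it gives $Hg_k(t)=\int_t^1 g_k(s)\,K(s,t)\,ds$, and since $g_k$ is supported in $(0,a_k)$ this integral is zero as soon as $a_k\le t$. Thus $(Lg_k)^*_\nu(t)=Hg_k(t)\to 0$ for every $t\in(0,1)$. On the other hand, norm convergence $Lg_k\to v$ in $Y(\Omega,\nu)$ forces convergence in measure, which in turn forces $(Lg_k)^*_\nu\to v^*_\nu$ at every point of continuity of $v^*_\nu$ (a standard property of nonincreasing rearrangements, see~\cite{BS}). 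Comparing the two limits, $v^*_\nu$ vanishes at each of its continuity points; being nonincreasing it is continuous off a countable set, so $v^*_\nu=0$ a.e.\ and therefore $\|v\|_{Y(\Omega,\nu)}=\|v^*_\nu\|_{Y(0,1)}=0$, contradicting $\|v\|_{Y(\Omega,\nu)}\ge\varepsilon$. This contradiction establishes~\eqref{E:H_2}.

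The step I expect to be the \emph{main obstacle} is the transition from convergence of $Lg_k$ in $Y(\Omega,\nu)$ to convergence of the rearrangements $(Lg_k)^*_\nu$, that is, the justification that convergence in measure yields pointwise convergence of nonincreasing rearrangements at continuity points of the limit; note that a direct appeal to $\nu$-a.e.\ convergence of a subsequence does not suffice, since almost-everywhere convergence of functions need not pass to their rearrangements, so one must genuinely route through convergence in measure. The second delicate point, conceptually, is recognizing that hypothesis~\eqref{E:rearrangement} is exactly what guarantees the pointwise vanishing $Hg_k(t)\to 0$: without the causal kernel form, truncating the inputs to $(0,a_k)$ need not produce outputs concentrated near the origin, and the conclusion would be false.
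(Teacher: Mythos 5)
Your proof is correct and follows essentially the same route as the paper's: boundedness of $Lg_k$ in $V^mX(\Omega,\nu)$ via \eqref{E:cond_1}, extraction of a $Y(\Omega,\nu)$-convergent subsequence by the compact embedding, and the kernel representation \eqref{E:rearrangement} to force the limit to vanish. The step you flag as the main obstacle can be bypassed exactly as the paper does: since $Hg_k=(Lg_k)^*_\nu$ vanishes on $(a_k,1)$, the set $\{|Lg_k|>0\}$ has $\nu$-measure at most $a_k$, so $Lg_k\to 0$ in measure directly, and no lemma about convergence of rearrangements at continuity points of the limit is needed.
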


\begin{proof}
We first observe that whenever $k$ is a positive integer satisfying $1/k\leq \alpha$ and $f \in X(0,1) \cap \mathcal M_+(0,1)$, then $\chi_{(0,1/k)}f \in X_+^\alpha$ and the functions $L(\chi_{(0,1/k)}f)$ and $H(\chi_{(0,1/k)}f)$ are thus well defined.
Since condition~\eqref{E:compact_embedding} implies $V^mX(\Omega,\nu) \hookrightarrow Y(\Omega,\nu)$, for every $k\in \mathbb N$ satisfying $1/k\leq \alpha$ we have
\begin{align*}
\sup_{\|f\|_{X(0,1)}\leq 1} \|H(\chi_{(0,1/k)}|f|)\|_{Y(0,1)}
&=\sup_{\|f\|_{X(0,1)}\leq 1} \|L(\chi_{(0,1/k)}|f|)\|_{Y(\Omega,\nu)}\\
&\leq \sup_{\|f\|_{X(0,1)}\leq 1} C'\|L(\chi_{(0,1/k)}|f|)\|_{V^mX(\Omega,\nu)}\\
&\leq \sup_{\|f\|_{X(0,1)}\leq 1} C'C \|\chi_{(0,1/k)}|f|\|_{X(0,1)} \leq C'C<\infty,
\end{align*}
where $C$ is the constant from~\eqref{E:cond_1} and $C'$ is the constant from the embedding $V^mX(\Omega,\nu) \hookrightarrow Y(\Omega,\nu)$. Consequently, for every $k$ as above we can find a function $f_k\in \mathcal M_+(0,1)$ such that $\|f_k\|_{X(0,1)}\leq 1$ and 
\begin{equation}\label{E:ineq_0}
\sup_{\|f\|_{X(0,1)}\leq 1} \|H(\chi_{(0,1/k)}|f|)\|_{Y(0,1)} < \|H(\chi_{(0,1/k)}f_k)\|_{Y(0,1)} + \frac{1}{k}.
\end{equation}
Since the sequence $(\chi_{(0,1/k)}f_k)_{k=\left\lceil 1/\alpha \right\rceil}^\infty$ is bounded in $X(0,1)$, it follows that $(L(\chi_{(0,1/k)}f_k))_{k=\left\lceil 1/\alpha \right\rceil}^\infty$ must be bounded in $V^mX(\Omega,\nu)$ due to~\eqref{E:cond_1}. Thanks to~\eqref{E:compact_embedding}, there is a subsequence $(f_{k_\ell})_{\ell=1}^\infty$ of $(f_k)_{k=\left\lceil 1/\alpha \right\rceil}^\infty$ such that $(L(\chi_{(0,1/k_\ell)}f_{k_\ell}))_{\ell=1}^\infty$ converges to some function $g$ in the norm of the space $Y(\Omega,\nu)$. Then, in particular, $L(\chi_{(0,1/k_\ell)}f_{k_\ell}) \rightarrow g$ in measure. 

Observe that for every $\ell \in \mathbb N$, we have $H(\chi_{(0,1/k_\ell)}f_{k_\ell})(t)=0$ when $t\in (1/k_\ell,1)$, thanks to~\eqref{E:rearrangement}. Since $(L(\chi_{(0,1/k_\ell)}f_{k_\ell}))^*_\nu=H(\chi_{(0,1/k_\ell)}f_{k_\ell})$, the distribution function of $L(\chi_{(0,1/k_\ell)}f_{k_\ell})$ with respect to $\nu$ coincides with that of $H(\chi_{(0,1/k_\ell)}f_{k_\ell})$ with respect to the one-dimensional Lebesgue measure $\lambda_1$. In particular,
\begin{align}\label{E:nu_3} \nonumber
&\lim_{\ell\to \infty} \nu\left(\left\{x\in \Omega: \left|L(\chi_{(0,1/{k_\ell})}f_{k_\ell})(x)\right|>0\right\}\right)\\
&= \lim_{\ell\to \infty} \lambda_1\left(\left\{s \in (0,1): H(\chi_{(0,1/{k_\ell})}f_{k_\ell})(s)>0\right\}\right)
\leq \lim_{\ell \to \infty} \frac{1}{k_\ell}=0. 
\end{align}
Thus, $L(\chi_{(0,1/k_\ell)}f_{k_\ell}) \rightarrow 0$ in measure. This implies that $g=0$ $\nu$-a.e.\ on $\Omega$. So, 
$$
\lim_{\ell\to \infty} \|H(\chi_{(0,1/k_\ell)}f_{k_\ell})\|_{Y(0,1)} = \lim_{\ell\to \infty} \|L(\chi_{(0,1/k_\ell)}f_{k_\ell})\|_{Y(\Omega,\nu)}=0.
$$
Inequality~\eqref{E:ineq_0} now yields
$$
\lim_{\ell\to \infty} \sup_{\|f\|_{X(0,1)}\leq 1} \|H(\chi_{(0,1/k_\ell)}|f|)\|_{Y(0,1)} =0.
$$
Using that the function
$$
a\mapsto \sup_{\|f\|_{X(0,1)}\leq 1} \|H(\chi_{(0,a)}|f|)\|_{Y(0,1)}
$$
is nondecreasing on $(0,\alpha]$, we obtain~\eqref{E:H_2}.
\end{proof}



\begin{proof}[Proof of Theorem~\ref{T:john}]
Consider the function $I(t)=t^{\frac{1}{n'}}$, $t\in (0,1]$. It was observed in Section~\ref{S:sobolev} that $(\Omega,\lambda_n,I)$ is a compatible triplet. Furthermore, the function $I$ satisfies~\eqref{E:aprox}. We have that $\lim_{t\to 0_+} \frac{t^{m-1}}{(I(t))^m} =\lim_{t\to 0_+} t^{\frac{m}{n}-1}= 0$ holds if and only if $m>n$. In such a case, the proof follows directly from Theorem~\ref{T:K}. 

Suppose that $m\leq n$. Since $Q^m_n=K^m_I$, Theorem~\ref{T:K} gives the equivalence of \textup{(i)} and \textup{(ii)} and also shows that each of the conditions \textup{(i)} and \textup{(ii)} implies~\eqref{E:emb_john}. Hence, it only remains to prove that~\eqref{E:emb_john} implies \textup{(ii)}.

Assume that $m=n$ and $X(0,1) \neq L^1(0,1)$. Then there is nothing to prove, since condition \textup{(ii)} (or, equivalently, \textup{(i)}) always holds. Indeed, we have 
\begin{equation}\label{E:Q}
Q^m_m: L^1(0,1) \rightarrow L^\infty(0,1)
\end{equation} 
and $L^\infty(0,1) \hookrightarrow Y(0,1)$. Therefore, $Q^m_m: L^1(0,1) \rightarrow Y(0,1)$. The conclusion now follows from Remark~\ref{T:remark_2} and from the fact that $Q^m_m=H_1$. 

Suppose that~\eqref{E:emb_john} is satisfied and $m<n$, or $m=n$ and $X(0,1)=L^1(0,1)$. Let $B_R$ be an open ball of radius $R>0$ such that $\overline{B_R} \subseteq \Omega$. Without loss of generality we may assume that $B_R$ is centered at $0$ and that $\kappa_n R^n\leq 1$, where $\kappa_n$ denotes the Lebesgue measure of the unit ball in $\mathbb R^n$. Let $f$ be a function belonging to the set $X^{\kappa_nR^n}_+$ defined in Proposition~\ref{T:reverse}. Then we set
$$
Lf(x)=
\begin{cases}
\int_{\kappa_n|x|^n}^{\kappa_n R^n} \int_{r_1}^{\kappa_n R^n} \dots \int_{r_{m-1}}^{\kappa_n R^n} f(r_m) r_m^{-m+\frac{m}{n}}\,dr_m\, \dots\, dr_1,\,
&x\in B_R;\\
0,\, &x\in \Omega/B_R.
\end{cases}
$$
It is not hard to observe that $Lf$ is an $m$-times weakly differentiable function on $\Omega$. By subsequent applications of the Fubini theorem, we obtain
$$
Lf(x)=
\begin{cases}
\frac{1}{(m-1)!}\int_{\kappa_n|x|^n}^{\kappa_n R^n} f(s)s^{-m+\frac{m}{n}} (s-\kappa_n|x|^n)^{m-1}\,ds,\,
&x\in B_R;\\
0,\, &x\in \Omega/B_R.
\end{cases}
$$
Denote $Hf=(Lf)^*_{\lambda_n}$. Then
\begin{align}\label{E:H_rearrangement}
Hf(t)
&=\chi_{(0,\kappa_nR^n)}(t) \frac{1}{(m-1)!}\int_{t}^{\kappa_n R^n} f(s)s^{-m+\frac{m}{n}} (s-t)^{m-1}\,ds\\
&=\frac{1}{(m-1)!}\int_{t}^{1} f(s)s^{-m+\frac{m}{n}} (s-t)^{m-1}\,ds, ~~t\in (0,1). \nonumber
\end{align}
One can show similarly as in~\cite[proof of Theorem A]{KP2} that 
\begin{align*}
\sum_{i=1}^m\left\||\nabla^iLf| \right\|_{X(\Omega)}
&= \sum_{i=1}^m \left\|(|\nabla^iLf|)^*_{\lambda_n}\right\|_{X(0,1)}\\ 
&\lesssim \|f\|_{X(0,1)} + \sum_{i=1}^{m-1} \left\|t^{i-\frac{m}{n}}\int_t^{1} f(s)s^{-i+\frac{m}{n}-1}\,ds\right\|_{X(0,1)}.
\end{align*}
If $m<n$ then it follows from~\cite[proof of Theorem A]{KP2} that for every $i\in \{1,2,\dots, m-1\}$,
$$
\left\|t^{i-\frac{m}{n}}\int_t^{1} f(s)s^{-i+\frac{m}{n}-1}\,ds\right\|_{X(0,1)} \lesssim \|f\|_{X(0,1)}.
$$
In the remaining case when $m=n$ and $X(0,1)=L^1(0,1)$ we obtain by the Fubini theorem that
\begin{align*}
\left\|t^{i-1}\int_t^{1} f(s)s^{-i}\,ds\right\|_{X(0,1)}
&\approx \left\|t^{i-1}\int_t^{1} f(s)s^{-i}\,ds\right\|_{L^1(0,1)}=\int_0^{1} f(s)s^{-i}\int_0^s t^{i-1}\,dt \,ds\\ 
&= \frac{1}{i}\|f\|_{L^1(0,1)} \approx \|f\|_{X(0,1)},
\end{align*}
$i\in \{1,2,\dots, m-1\}$. Hence, in all cases we have
$$
\sum_{i=1}^m\left\||\nabla^iLf| \right\|_{X(\Omega)} \lesssim \|f\|_{X(0,1)}.
$$
Furthermore, by~\eqref{E:H_rearrangement},
\begin{align*}
\|Lf\|_{L^1(\Omega)} 
&= \|Hf\|_{L^1(0,1)} \leq \frac{1}{(m-1)!} \left\|\int_t^{1} f(s) s^{\frac{m}{n}-1}\,ds\right\|_{L^1(0,1)}\\
&=\frac{1}{(m-1)!} \int_0^1 f(s)s^{\frac{m}{n}}\,ds \leq \frac{1}{(m-1)!} \|f\|_{L^1(0,1)} \leq \frac{C_X}{(m-1)!} \|f\|_{X(0,1)}.
\end{align*}
Altogether, we obtain
$$
\|Lf\|_{V^mX(\Omega)} \leq \|Lf\|_{L^1(\Omega)} + \max(C_X,1)\sum_{i=1}^m \||\nabla^i Lf|\|_{X(\Omega)} \lesssim \|f\|_{X(0,1)},
$$
up to multiplicative constants independent of $f\in X^{\kappa_n R^n}_+$. The operator $L$ therefore satisfies~\eqref{E:cond_1}. Proposition~\ref{T:reverse} now gives that 
\begin{equation}\label{E:lim_H_0}
\lim_{a\to 0_+} \sup_{\|f\|_{X(0,1)}\leq 1} \|H(\chi_{(0,a)}|f|)\|_{Y(0,1)}=0.
\end{equation}
Since the constant function $1$ fulfils~\eqref{E:aprox}, the equivalence~\eqref{E:equivalence} implies that for all $a\in (0,\kappa_n R^n)$ and for all $f\in X(0,1)$,
\begin{align}\label{E:equivalence_2}
\|H(\chi_{(0,a)}(s)|f(s)|)\|_{Y(0,1)} 
&= \|H^m_1(\chi_{(0,a)}(s)f(s)s^{-m+\frac{m}{n}})\|_{Y(0,1)}\\ \nonumber
&\approx \|K^m_1(\chi_{(0,a)}(s)f(s)s^{-m+\frac{m}{n}})\|_{Y(0,1)}\\
&=\|Q^m_n(\chi_{(0,a)}(s)f(s))\|_{Y(0,1)}, \nonumber
\end{align}
up to multiplicative constants depending on $m$. The assertion \textup{(ii)} follows by combined using of~\eqref{E:lim_H_0} and~\eqref{E:equivalence_2}.

Finally, in order to obtain a characterization of~\eqref{E:emb_john} in the case when $m=n$, it suffices to describe when \textup{(i)} holds with $m=n$. We have already shown that if $X(0,1)\neq L^1(0,1)$ then \textup{(i)} is satisfied. Furthermore, it follows from~\eqref{E:Q}, from the embedding $X(0,1) \hookrightarrow L^1(0,1)$, from $Q^m_m=H_1$ and from Remark~\ref{T:remark} that if $Y(0,1)\neq L^\infty(0,1)$ then \textup{(i)} is fulfilled as well. On the other hand, \textup{(i)} is not fulfilled with $X(0,1)=L^1(0,1)$ and $Y(0,1)=L^\infty(0,1)$, see Remark~\ref{T:remark} again.
\end{proof}

The following proposition, which easily follows from~\cite[Section 5.3.3]{M}, provides examples of Euclidean domains belonging to the class $\mathcal J_\alpha$.

\begin{proposition}\label{T:mazya_domain}
Let $n\in \mathbb N$, $n\geq 2$, and let $\alpha \in [\frac{1}{n'},1]$. Set $L_\alpha=\frac{1}{1-\alpha}$ if $\alpha \in [\frac{1}{n'},1)$, and $L_1=\infty$. Define the function $\eta_\alpha: (0,L_\alpha) \rightarrow (0,\infty)$ by
$$
\eta_\alpha(r)=
\begin{cases}
\kappa_{n-1}^{-\frac{1}{n-1}} (1-(1-\alpha)r)^{\frac{\alpha}{(1-\alpha)(n-1)}},\,
&\alpha \in [\frac{1}{n'},1),\\
\kappa_{n-1}^{-\frac{1}{n-1}} e^{-\frac{r}{n-1}},\, &\alpha=1,
\end{cases}
$$
where $\kappa_{n-1}$ denotes the Lebesgue measure of the unit ball in $\mathbb R^{n-1}$. 
Let $\Omega_\alpha$ be the domain in $\mathbb R^n$ given by
$$
\Omega_\alpha= \{(x',x_n) \in \mathbb R^n: x' \in \mathbb R^{n-1}, x_n\in (0,L_\alpha), |x'|<\eta_\alpha(x_n)\}.
$$
Then $\lambda_n(\Omega_\alpha)=1$ and $I_{\Omega_\alpha}(s) \approx s^\alpha$, $s\in [0,\frac{1}{2}]$. 
\end{proposition}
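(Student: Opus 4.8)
The plan is to establish the two assertions separately: the normalization $\lambda_n(\Omega_\alpha)=1$ by direct integration, and the isoperimetric estimate $I_{\Omega_\alpha}(s)\approx s^\alpha$ by combining an explicit upper bound coming from horizontal cuts with Maz'ya's analysis in \cite[Section 5.3.3]{M} for the lower bound.

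First I would compute the volume. Since $\Omega_\alpha$ is rotationally symmetric about the $x_n$-axis, its cross-section at height $r$ being the $(n-1)$-dimensional ball of radius $\eta_\alpha(r)$, Fubini's theorem gives
$$
\lambda_n(\Omega_\alpha)=\int_0^{L_\alpha}\kappa_{n-1}\,\eta_\alpha(r)^{n-1}\,dr.
$$
When $\alpha\in[\tfrac{1}{n'},1)$ the integrand equals $(1-(1-\alpha)r)^{\frac{\alpha}{1-\alpha}}$, and the substitution $u=1-(1-\alpha)r$ turns this into $\frac{1}{1-\alpha}\int_0^1 u^{\frac{\alpha}{1-\alpha}}\,du=1$; when $\alpha=1$ the integrand is $e^{-r}$ and $\int_0^\infty e^{-r}\,dr=1$. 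This settles $\lambda_n(\Omega_\alpha)=1$.

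For the upper bound on the isoperimetric function I would use the horizontal cuts $E_t=\Omega_\alpha\cap\{x_n>t\}$. The point is that the profile $\eta_\alpha$ is engineered so that the cross-sectional area is a power of the tail measure. Writing $M(t)=\lambda_n(E_t)$, the same integration as above restricted to $(t,L_\alpha)$ gives $M(t)=(1-(1-\alpha)t)^{\frac{1}{1-\alpha}}$ for $\alpha<1$ and $M(t)=e^{-t}$ for $\alpha=1$, so in both cases $M$ solves the differential equation $-M'=M^\alpha$. Since the relative perimeter $P_{\lambda_n}(E_t,\Omega_\alpha)$ is exactly the $\mathcal H^{n-1}$-measure of the cross-section, namely $\kappa_{n-1}\,\eta_\alpha(t)^{n-1}=-M'(t)=M(t)^\alpha$, a cut of measure $s:=M(t)\le\tfrac12$ has perimeter precisely $s^\alpha$. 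Using these competitors in the definition of $I_{\Omega_\alpha}$ yields $I_{\Omega_\alpha}(s)\le s^\alpha$ for $s\in(0,\tfrac12]$.

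The matching lower bound $I_{\Omega_\alpha}(s)\gtrsim s^\alpha$ is the substantive step, and here I would appeal to \cite[Section 5.3.3]{M}: for tapering domains of revolution of this kind, Maz'ya's symmetrization reduces the relative isoperimetric problem to the one-dimensional problem for the profile and shows that the horizontal cuts are perimeter-minimizers up to a multiplicative constant. Feeding the explicit profile $\eta_\alpha$ (equivalently the relation $-M'=M^\alpha$ recorded above) into that estimate produces the bound $c\,s^\alpha$ with $c>0$ independent of $s$, and together with the upper bound this gives $I_{\Omega_\alpha}(s)\approx s^\alpha$ on $[0,\tfrac12]$. I expect this lower bound to be the main obstacle: the volume computation and the upper bound are routine, whereas verifying that slabs rather than, say, spherical caps near the wide end are essentially optimal is exactly what the machinery of \cite{M} supplies and is what forces the power $s^\alpha$.
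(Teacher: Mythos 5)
Your proposal is correct and matches the paper's treatment: the paper offers no proof beyond the remark that the proposition "easily follows from [Section 5.3.3] of Maz'ya's book," which is exactly where you send the reader for the lower bound $I_{\Omega_\alpha}(s)\gtrsim s^\alpha$. Your explicit volume computation and the upper bound via the horizontal cuts $E_t$ (whose tail measure $M_\alpha$ satisfies $-M_\alpha'=M_\alpha^\alpha$, consistent with \eqref{E:level_set} in the proof of Theorem~\ref{T:mazya}) simply fill in the routine details the paper leaves implicit.
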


\begin{proof}[Proof of Theorem~\ref{T:mazya}]
Set $I_\alpha(t)=t^\alpha$, $t\in (0,1]$. As observed in Section~\ref{S:sobolev}, $(\Omega,\lambda_n,I_\alpha)$ is a compatible triplet for each domain $\Omega \in \mathcal J_\alpha$. Suppose that $\alpha \in [\frac{1}{n'},1)$. Then~\eqref{E:aprox} is fulfilled with $I=I_\alpha$ and we have $\frac{t^{m-1}}{((I_\alpha(t))^m}=t^{-1+m(1-\alpha)}$ for $t\in (0,1]$. Thus, $\lim_{t\to 0_+} \frac{t^{m-1}}{((I_\alpha(t))^m}=0$ holds if and only if $m(1-\alpha)>1$. Provided that this condition is satisfied, the proof is a direct consequence of Theorem~\ref{T:K}. On the other hand, if $m(1-\alpha)\leq 1$ then Theorem~\ref{T:K} combined with the fact that $T^m_\alpha=K^m_{I_\alpha}$ yields that \textup{(i)} is equivalent to \textup{(ii)} and that each of the conditions \textup{(i)} and \textup{(ii)} implies~\eqref{E:emb_mazya}. Further, if $\alpha=1$ then the equivalence of \textup{(i)} and \textup{(ii)} and the implication \textup{(i)} (or \textup{(ii)}) implies~\eqref{E:emb_mazya} follow from Theorem~\ref{T:main} and from the fact that $T^m_1=H^m_{I_1}$. Thus, it suffices to prove that, in all cases, \eqref{E:emb_mazya} implies \textup{(ii)}.

Suppose that~\eqref{E:emb_mazya} is satisfied. Then, in particular, $V^mX(\Omega_\alpha) \hookrightarrow \hookrightarrow Y(\Omega_\alpha)$ holds for $\Omega_\alpha$ given by Proposition~\ref{T:mazya_domain}. Define the function $M_\alpha$ for every $t\in (0,L_\alpha)$ (with $L_\alpha$ as in Proposition~\ref{T:mazya_domain}) by
$$
M_\alpha(t)=
\begin{cases}
(1-(1-\alpha)t)^{\frac{1}{1-\alpha}},\, &\alpha \in \big[\frac{1}{n'},1\big);\\
e^{-t},\, &\alpha=1.
\end{cases}
$$
Then
\begin{equation}\label{E:level_set}
\lambda_n(\{(x',x_n) \in \Omega_\alpha: x_n>t\}) = M_\alpha(t), ~~t\in (0,L_\alpha),
\end{equation}
see~\cite[proof of Theorem 6.4]{CPS}.

Let $f$ be any function in $X(0,1) \cap \mathcal M_+(0,1)$ (or, what is the same, let $f$ be an arbitrary function belonging to the set $X^1_+$ defined in Proposition~\ref{T:reverse}). For $x=(x_1,\dots, x_n) \in \Omega_\alpha$, we set
\begin{align*}
Lf(x)
&=\int_{M_\alpha(x_n)}^1 \frac{1}{r_1^\alpha} \int_{r_1}^1 \frac{1}{r_2^\alpha} \dots \int_{r_{m-1}}^1 \frac{f(r_m)}{r_m^\alpha} \,dr_m\,dr_{m-1}\, \dots\,dr_1\\
&=\frac{1}{(m-1)!} \int_{M_\alpha(x_n)}^1 \frac{f(r)}{r^\alpha} \left(\int_{M_\alpha(x_n)}^r \frac{\,ds}{s^\alpha} \right)^{m-1}\,dr.
\end{align*}
Then $Lf$ is an $m$-times weakly differentiable function on $\Omega_\alpha$ and, owing to~\eqref{E:level_set}, we have
$$
(Lf)^*_{\lambda_n}(t) = \frac{1}{(m-1)!} \int_{t}^1 \frac{f(r)}{r^\alpha} \left(\int_{t}^r \frac{\,ds}{s^\alpha} \right)^{m-1}\,dr = H^m_{I_\alpha}f(t), ~~t\in (0,1). 
$$
Furthermore, it follows from~\cite[proof of Theorem 6.4]{CPS} that $L$ satisfies~\eqref{E:cond_1}. Hence, Proposition~\ref{T:reverse} implies that 
\begin{equation}\label{E:lim_H}
\lim_{a\to 0_+} \sup_{\|f\|_{X(0,1)}\leq 1} \|H^m_{I_\alpha}(\chi_{(0,a)}f)\|_{Y(0,1)} = 0.
\end{equation}
If $\alpha=1$ then~\eqref{E:lim_H} is exactly the condition \textup{(ii)} which we wanted to verify. If $\alpha \in [\frac{1}{n'},1)$ then the equivalence~\eqref{E:equivalence} (with $I=I_\alpha$) yields that for every $f\in X(0,1)$ and every $a\in (0,1)$ we have
\begin{equation}\label{E:equivalence_3}
\|H^m_{I_\alpha}(\chi_{(0,a)}f)\|_{Y(0,1)} \approx \|K^m_{I_\alpha}(\chi_{(0,a)}f)\|_{Y(0,1)}
=\|T^m_\alpha(\chi_{(0,a)}f)\|_{Y(0,1)},
\end{equation}
up to multiplicative constants depending on $m$ and $\alpha$. The condition \textup{(ii)} now follows by combined using of~\eqref{E:lim_H} and~\eqref{E:equivalence_3}.

In particular, we have proved that if $m(1-\alpha)=1$, then~\eqref{E:emb_mazya} is satisfied if and only if $T^m_\alpha=H_1: X(0,1) \rightarrow \rightarrow Y(0,1)$. It follows from the proof of Theorem~\ref{T:john} that this happens if and only if $X(0,1) \neq L^1(0,1)$ or $Y(0,1) \neq L^\infty(0,1)$. 
\end{proof}

\begin{proof}[Proof of Theorem~\ref{T:probability}]
We have observed in Section~\ref{S:sobolev} that $(\mathbb R^n, \mu_{\Phi,n},L_\Phi)$ is a compatible triplet. Theorem~\ref{T:main} therefore yields that \textup{(ii)} implies \textup{(i)}.

Conversely, suppose that \textup{(i)} is fulfilled. Let $f$ be an arbitrary function belonging to the set $X_+^{1/2}$ defined in Proposition~\ref{T:reverse}. For every $x=(x_1,\dots, x_n) \in \mathbb R^n$ we set
\begin{align*}
Lf(x)
&=\int_{F_\Phi(x_1)}^1 \frac{1}{I_\Phi(r_1)} \int_{r_1}^1 \frac{1}{I_\Phi(r_2)} \dots \int_{r_{m-1}}^1 \frac{f(r_m)}{I_\Phi(r_m)} \,dr_m\,dr_{m-1}\, \dots\,dr_1\\
&= \frac{1}{(m-1)!} \int_{F_\Phi(x_1)}^1 \frac{f(r)}{I_\Phi(r)}\left(\int_{F_\Phi(x_1)}^r \frac{\,ds}{I_\Phi(s)}\right)^{m-1}\,dr.
\end{align*}
Then $Lf$ is an $m$-times weakly differentiable function on $\mathbb R^n$. Denote $Hf=(Lf)^*_{\mu_{\Phi,n}}$. Then, thanks to the equality
$$
\mu_{\Phi,n}(\{(x_1,x_2,\dots, x_n)\in \mathbb R^n: x_1>t\})= F_\Phi(t), ~~t\in \mathbb R,
$$
we have
$$
Hf(t)= \frac{1}{(m-1)!} \int_{t}^1 \frac{f(r)}{I_\Phi(r)}\left(\int_{t}^r \frac{\,ds}{I_\Phi(s)}\right)^{m-1}\,dr, ~~t\in (0,1).
$$
It follows from~\cite[proof of Theorem 7.1]{CPS} that $L$ satisfies~\eqref{E:cond_1}. Thus, Proposition~\ref{T:reverse} implies that
\begin{equation}\label{E:lim_H_2}
\lim_{a\to 0_+} \sup_{\|f\|_{X(0,1)}\leq 1} \|H(\chi_{(0,a)}|f|)\|_{Y(0,1)}=0.
\end{equation}
Since $I_\Phi(s) \approx L_\Phi(s)$, $s\in (0,\frac{1}{2}]$, we deduce that $Hf \approx H^m_{L_\Phi}f$ for all $f\in X^{1/2}_+$, up to multiplicative constants independent of $f$. Hence, condition~\eqref{E:lim_H_2} is equivalent to 
\begin{equation}\label{E:hh}
\lim_{a\to 0_+} \sup_{\|f\|_{X(0,1)}\leq 1} \|H^m_{L_\Phi}(\chi_{(0,a)} f)\|_{Y(0,1)}=0,
\end{equation}
which is equivalent to \textup{(ii)} according to Theorem~\ref{T:main}. We have thus proved the equivalence of \textup{(i)} and \textup{(ii)}.

Due to Remark~\ref{T:monotone}, \textup{(ii)} is equivalent to
\begin{equation}\label{E:starstar}
\lim_{a\to 0_+} \sup_{\|f\|_{X(0,1)}\leq 1} \|H^m_{L_\Phi}(\chi_{(0,a)} f^*)\|_{Y(0,1)}=0.
\end{equation}
Further, according to~\cite[Proposition 11.2]{CPS}, 
$$
H^m_{L_\Phi} g \approx P^m_{\Phi} g
$$
is fulfilled for all nonnegative nonincreasing functions $g$, up to multiplicative constants depending on $m$. Therefore, \eqref{E:starstar} holds if and only if
\begin{equation}\label{E:pp}
\lim_{a\to 0_+} \sup_{\|f\|_{X(0,1)}\leq 1} \|P^m_{\Phi}(\chi_{(0,a)} f^*)\|_{Y(0,1)}=0.
\end{equation}
Since for every $g\in \mathcal M(0,1)$, 
\begin{equation}\label{E:ph}
P^m_\Phi g \leq 2^m (m-1)! H^m_{L_\Phi} g,
\end{equation}
see~\cite[Proposition 11.2]{CPS}, we have that for every $a\in (0,1)$,
\begin{align*}
\sup_{\|f\|_{X(0,1)}\leq 1} \|P^m_{\Phi}(\chi_{(0,a)} f^*)\|_{Y(0,1)} 
&\leq \sup_{\|f\|_{X(0,1)}\leq 1} \|P^m_{\Phi}(\chi_{(0,a)} f)\|_{Y(0,1)}\\
&\leq  2^m (m-1)! \sup_{\|f\|_{X(0,1)}\leq 1} \|H^m_{L_\Phi}(\chi_{(0,a)} f)\|_{Y(0,1)}.
\end{align*}
Using this chain of inequalities and the equivalence of~\eqref{E:hh} and~\eqref{E:pp}, we obtain that \textup{(iv)} is equivalent to~\eqref{E:hh}, and therefore also to \textup{(ii)}.

Owing to Theorem~\ref{T:opt_range} and to the fact that $\int_0^1 \frac{\,dr}{L_\Phi(r)}=\infty$, \textup{(ii)} is equivalent to
\begin{equation}\label{E:hhh}
\lim_{a\to 0_+} \sup_{\|f\|_{X(0,1)}\leq 1} \|\chi_{(0,a)}H^m_{L_\Phi}f\|_{Y(0,1)} = 0.
\end{equation}
By~\eqref{E:ph}, condition \eqref{E:hhh} implies \textup{(v)}. Trivially, \textup{(v)} implies \textup{(iv)}, and, thanks to the result of the previous paragraph, \textup{(iv)} implies \textup{(ii)}. Consequently, \textup{(v)} is equivalent to \textup{(ii)}.

Condition \textup{(vi)} is equivalent to \textup{(ii)} owing to Theorem~\ref{T:opt_range} and to the fact that $X^r_{m,\Phi}(0,1)=X^r_{m,L_\Phi}(0,1)$.

The implication \textup{(iii)} $\Rightarrow$ \textup{(iv)} can be proved in the same way as the implication \textup{(i)} $\Rightarrow$ \textup{(ii)} in Lemma~\ref{T:m-1}. We have already proved that \textup{(iv)} implies \textup{(v)}. Let us now show that \textup{(v)} implies \textup{(iii)}.

Since there is no nontrivial function having an absolutely continuous norm in $L^\infty(0,1)$, condition \textup{(v)} yields that $Y(0,1) \neq L^\infty(0,1)$. We claim that, for every $a\in (0,1)$, the operator $\chi_{(a,1)}P^m_\Phi$ is compact from $X(0,1)$ to $Y(0,1)$. To prove it, we first observe that
$$
\chi_{(a,1)}P^m_\Phi f(t)= \chi_{(a,1)}(t) \left(\frac{\Phi^{-1}\left(\log \frac 2t \right)}{\log \frac 2t }\right)^m H^m_s f(t).
$$
A proof analogous to a part of~\cite[proof of Theorem 3.1]{PP} yields that 
$$
\chi_{(a,1)} H^m_s: X(0,1) \rightarrow \rightarrow Y(0,1).
$$
Since the function $s \mapsto \left(\frac{\Phi^{-1}\left(\log \frac 2t \right)}{\log \frac 2t}\right)^m$ is bounded on $(a,1)$, the operator $\chi_{(a,1)}P^m_\Phi$ is compact from $X(0,1)$ to $Y(0,1)$ as well. Consequently, $P^m_\Phi$ is compact from $X(0,1)$ into $Y(0,1)$ as a norm limit of compact operators. Altogether, \textup{(iii)} is equivalent to \textup{(v)}.

Finally, assume that $X(0,1) \neq L^1(0,1)$ and condition~\eqref{E:cond_Y_phi} is satisfied. According to Theorem~\ref{T:opt_domain}, \textup{(ii)} is equivalent to 
\begin{equation}\label{E:hhhh}
\lim_{a\to 0_+} \sup_{\|f\|_{X(0,1)}\leq 1} \sup_{\lambda_1(E)\leq a} \|H^m_{L_\Phi}(\chi_E f)\|_{Y(0,1)} = 0.
\end{equation}
Using~\eqref{E:ph} we deduce that~\eqref{E:hhhh} implies \textup{(vii)}. Trivially, \textup{(vii)} implies \textup{(iv)}. Since we have already shown that \textup{(iv)} is equivalent to \textup{(ii)}, we arrive at the equivalence of \textup{(vii)} and \textup{(ii)}. 

We now claim that the space $Y^d_{m,\Phi}(0,1)$ is the largest rearrangement-invariant space from which the operator $P^m_\Phi$ is bounded into $Y(0,1)$. This fact can be proved in the same way as it is done in the proof of Proposition~\ref{T:proposition} for the rearrangement-invariant space $Y^d_{j,J}(0,1)$ and the operator $H^j_J$. Since boundedness of $P^m_\Phi$ coincides with boundedness of $H^m_{L_\Phi}$ (see~\cite[Proposition 11.3]{CPS}), we obtain that $Y^d_{m,\Phi}(0,1)$ is the optimal domain for $Y(0,1)$ with respect to the operator $H^m_{L_\Phi}$, and therefore, by Proposition~\ref{T:proposition}, $Y^d_{m,\Phi}(0,1)=Y^d_{m,L_\Phi}(0,1)$. Consequently, Theorem~\ref{T:opt_domain} yields that \textup{(viii)} is equivalent to \textup{(ii)}. The proof is complete.
\end{proof}

\section{Examples}\label{S:examples}

In the present section we characterize compact Sobolev embeddings on domains from Maz'ya classes, and on product probability spaces, for some of the customary rearrangement-invariant norms. The case of John domains is not discussed explicitly, however, results for John domains can be derived from corresponding results for Maz'ya classes of domains, by applying the equivalence of the following two conditions: 

\medskip
\textup{(i)} $V^mX(\Omega) \hookrightarrow \hookrightarrow Y(\Omega)$ holds for a given John domain $\Omega$;

\textup{(ii)} $V^mX(\Omega) \hookrightarrow \hookrightarrow Y(\Omega)$ holds for every $\Omega \in \mathcal J_{\frac{1}{n'}}$.

\medskip
\noindent
We recall that this equivalence follows from Theorems~\ref{T:john} and~\ref{T:mazya}. 

In the first part of this section we shall study when the compact Sobolev embedding
\begin{equation}\label{E:embedding_examples}
V^mX(\Omega,\nu) \hookrightarrow \hookrightarrow Y(\Omega,\nu)
\end{equation}
holds, provided that $(\Omega,\nu)$ is either an Euclidean domain belonging to the Maz'ya class $\mathcal J_\alpha$ for some $\alpha \in [\frac{1}{n'},1]$, or a product probability space, and one of the rearrangement-invariant spaces $X(\Omega,\nu)$ and $Y(\Omega,\nu)$ is equal to $L^1(\Omega,\nu)$ or $L^\infty(\Omega,\nu)$ (the largest and the smallest rearrangement-invariant space, respectively). To obtain a tool for dealing with this problem, we characterize for a given nondecreasing function $I$ the validity of condition
\begin{equation}\label{E:lim_examples}
\lim_{a\to 0_+} \sup_{\|f\|_{X(0,1)}\leq 1} \|H^m_I(\chi_{(0,a)}f)\|_{Y(0,1)} = 0 
\end{equation}
in each of the four cases when one of the spaces $X(0,1)$ and $Y(0,1)$ coincides with $L^1(0,1)$ or $L^\infty(0,1)$. We start with the two \lq\lq$L^1$-cases". It turns out that in this situation the operator $H^m_I$ can be replaced by the simpler operator $K^m_I$, without assuming any restrictons on $I$ (compare to Theorem~\ref{T:K}). 

\begin{theorem}\label{T:ll}
Let $m\in \mathbb N$ and let $I: (0,1] \rightarrow (0,\infty)$ be a nondecreasing function satisfying~\eqref{E:infimum}. Suppose that $\|\cdot\|_{X(0,1)}$ is a rearrangement-invariant norm and denote by $\varphi_X$ the fundamental function of $\|\cdot\|_{X(0,1)}$.

\medskip
\noindent
\textup{(a)} The following conditions are equivalent:

\textup{(i)} $\lim_{a\to 0_+} \sup_{\|f\|_{L^1(0,1)}\leq 1} \|H^m_I(\chi_{(0,a)}f)\|_{X(0,1)}=0$;

\textup{(ii)} $\lim_{a\to 0_+} \sup_{\|f\|_{L^1(0,1)}\leq 1} \|K^m_I(\chi_{(0,a)}f)\|_{X(0,1)}=0$;

\textup{(iii)} $\lim_{a\to 0_+} \frac{a^{m-1} \varphi_X(a)}{(I(a))^m} = 0$.

\medskip
\noindent
\textup{(b)} The following conditions are equivalent:

\textup{(i)} $\lim_{a\to 0_+} \sup_{\|f\|_{X(0,1)}\leq 1} \|H^m_I(\chi_{(0,a)}f)\|_{L^1(0,1)}=0$;

\textup{(ii)} $\lim_{a\to 0_+} \sup_{\|f\|_{X(0,1)}\leq 1} \|K^m_I(\chi_{(0,a)}f)\|_{L^1(0,1)}=0$.

\noindent
If $X(0,1) \neq L^1(0,1)$ then both \textup{(i)} and \textup{(ii)} are satisfied. Furthermore, if $X(0,1)=L^1(0,1)$ then \textup{(i)} and \textup{(ii)} hold if and only if
\begin{equation}\label{E:lx2}
\lim_{s\to 0_+} \frac{s}{I(s)} =0.
\end{equation}
\end{theorem}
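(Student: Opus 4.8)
The plan is to treat both parts through the same mechanism: because the domain (part (b)) or the range (part (a), where it is the \emph{domain} $L^1$ that is extremal) is extremal, the relevant suprema collapse to one-dimensional quantities, and the two operators $H^m_I$ and $K^m_I$ feed into these quantities in a controllable way. Throughout I put $c=\inf_{t\in(0,1]}I(t)/t>0$ (see~\eqref{E:infimum}) and write $\Phi(s,t)=\int_t^s \frac{dr}{I(r)}$, so that by~\eqref{E:char} the slice $H^m_I(\chi_{(0,a)}\,\cdot)$ has kernel $\frac{1}{(m-1)!I(s)}\Phi(s,t)^{m-1}$ while $K^m_I$ has the \emph{flat} kernel $s^{m-1}/I(s)^m$.

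For part (b) I would use duality together with the associate identity~\eqref{E:associate} applied with the constant $g\equiv 1$, once to $H^m_I,R^m_I$ and once to $K^m_I=H_J,\,S^m_I=R_J$. This gives, for every $a$,
\[
\sup_{\|f\|_{X(0,1)}\le1}\|H^m_I(\chi_{(0,a)}f)\|_{L^1(0,1)}=\|\chi_{(0,a)}R^m_I 1\|_{X'(0,1)},\qquad \sup_{\|f\|_{X(0,1)}\le1}\|K^m_I(\chi_{(0,a)}f)\|_{L^1(0,1)}=\|\chi_{(0,a)}S^m_I 1\|_{X'(0,1)}.
\]
A direct computation yields $S^m_I 1(s)=(s/I(s))^m$, while the elementary bounds $\frac{s-u}{I(s)}\le\Phi(s,u)\le\frac1c\log\frac su$ (the latter from $I(r)\ge cr$) together with the substitution $u=se^{-v}$ give $(s/I(s))^m\lesssim R^m_I 1(s)\lesssim s/I(s)$. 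Since~\eqref{E:infimum} forces $s/I(s)\le 1/c$, both $R^m_I 1$ and $S^m_I 1$ are bounded, so each supremum is $\le C\varphi_{X'}(a)$. When $X(0,1)\neq L^1(0,1)$ we have $X'(0,1)\neq L^\infty(0,1)$, hence $\varphi_{X'}(a)\to 0$ and both (i) and (ii) hold. When $X(0,1)=L^1(0,1)$, so $X'(0,1)=L^\infty(0,1)$, the two suprema equal $\esup_{(0,a)}R^m_I 1$ and $\esup_{(0,a)}(s/I(s))^m$; by the two-sided bound these tend to $0$ together, and — using the monotonicity of $I$ to pass between the essential supremum and the pointwise limit — this happens precisely when~\eqref{E:lx2} holds.

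For part (a) the domain is $L^1$, so I would instead use that $f\mapsto\|H^m_I(\chi_{(0,a)}f)\|_{X(0,1)}$ is convex and that its supremum over the $L^1$-unit ball is attained on approximate point masses; a Minkowski/near-delta argument then gives
\[
\sup_{\|f\|_{L^1}\le1}\|H^m_I(\chi_{(0,a)}f)\|_{X(0,1)}=\esup_{s<a}A(s),\qquad \sup_{\|f\|_{L^1}\le1}\|K^m_I(\chi_{(0,a)}f)\|_{X(0,1)}=\esup_{s<a}B(s),
\]
where $A(s)=\frac{1}{(m-1)!I(s)}\|\chi_{(0,s)}\Phi(s,\cdot)^{m-1}\|_{X(0,1)}$ and $B(s)=\frac{s^{m-1}\varphi_X(s)}{I(s)^m}$ (the latter because the $K^m_I$-slice at $s$ is $\frac{s^{m-1}}{I(s)^m}\chi_{(0,s)}$, of norm $\frac{s^{m-1}}{I(s)^m}\varphi_X(s)$). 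The formula for $B$ makes (ii)\,$\Leftrightarrow$\,(iii) immediate, again reading the essential supremum as a pointwise limit via monotonicity of $\varphi_X$ and $I$. Using $\Phi(s,t)\ge\frac{s}{2I(s)}$ on $(0,s/2)$ and $\varphi_X(s/2)\ge\frac12\varphi_X(s)$ (quasiconcavity of $\varphi_X$) I get the pointwise lower bound $A(s)\ge \frac{1}{2^m(m-1)!}B(s)$, which yields (i)\,$\Rightarrow$\,(ii).

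The hard part is the reverse implication (ii)\,$\Rightarrow$\,(i), that is, controlling the decreasing profile $A$ by the flat profile $B$; here $H^m_I$ and $K^m_I$ are genuinely different and, without the regularity assumption~\eqref{E:aprox}, not pointwise comparable. The route I would take is the layer-cake (equivalently Minkowski) estimate
\[
A(s)\le\frac{1}{(m-2)!\,I(s)}\int_0^s\Phi(s,u)^{m-2}\,\frac{\varphi_X(u)}{I(u)}\,du,
\]
valid for $m\ge 2$ (the case $m=1$ is trivial, since then $A=B$), and then to prove that the right-hand side tends to $0$ as $s\to 0$ under hypothesis (iii). I expect this estimate to be the main obstacle: the naive replacement $\varphi_X(u)\le\big(\sup_{u\le s}B\big)\,I(u)^m/u^{m-1}$ is too lossy, since the resulting weight need not be integrable near $0$, so one must retain the decay of $B(u)$ itself and exploit the logarithmic bound $\Phi(s,u)\le\frac1c\log\frac su$ through a dyadic decomposition of $(0,s)$, treating separately the cases $\int_0 dr/I(r)<\infty$ and $\int_0 dr/I(r)=\infty$. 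Once $\esup_{s<a}A(s)\to 0$ is established from (iii), the implication (ii)\,$\Rightarrow$\,(i) follows and all three conditions are equivalent.
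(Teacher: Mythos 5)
Your part (b) is correct, and it takes a more direct route than the paper: reducing both suprema by duality (equation~\eqref{E:associate} tested against $g\equiv 1$) to $\|\chi_{(0,a)}R^m_I1\|_{X'(0,1)}$ and $\|\chi_{(0,a)}S^m_I1\|_{X'(0,1)}$, and then using the explicit bounds $(s/I(s))^m\lesssim R^m_I1(s)\lesssim s/I(s)\le 1/c$, replaces the paper's appeal to the optimal-domain machinery of Remark~\ref{T:remark_2} for $X(0,1)\neq L^1(0,1)$ and its reduction of the case $X(0,1)=L^1(0,1)$ to part (a). The computations you indicate all check out.

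Part (a), however, has a genuine gap, and you have located it yourself: the implication (iii)$\Rightarrow$(i) (equivalently (ii)$\Rightarrow$(i)) is never established. Your reduction to the profiles $A(s)$ and $B(s)$, together with $A(s)\geq 2^{-m}((m-1)!)^{-1}B(s)$, correctly gives (i)$\Rightarrow$(ii)$\Leftrightarrow$(iii), but the converse is the substantive half of the theorem, and the Minkowski/layer-cake bound $A(s)\le \frac{1}{(m-2)!\,I(s)}\int_0^s\bigl(\int_u^s\frac{dr}{I(r)}\bigr)^{m-2}\frac{\varphi_X(u)}{I(u)}\,du$ does not close it: as you yourself observe, substituting $\varphi_X(u)\le(\sup B)\,I(u)^m u^{1-m}$ yields a divergent integral already for $I\equiv1$ and $m\ge2$ (where $A\lesssim B$ is checked trivially by hand), and the proposed dyadic rescue is a hope rather than an argument. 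The paper closes this gap on the \emph{dual} side: by the duality identity underlying Theorem~\ref{T:opt_range}, the quantity you call $\esup_{s<a}A(s)$ equals $\sup_{\|f\|_{X'(0,1)}\le1}\sup_{s\in(0,a)}R^m_If^*(s)$, and Lemma~\ref{T:nondec} shows that $\sup_{s\in(0,a)}R^m_If^*\approx\sup_{s\in(0,a)}S^m_If^*$ for every $f$, with constants depending only on $m$. That lemma is proved by an iterative self-improvement: one writes $R_If^*(s)=\frac{s}{I(s)}f^{**}(s)\le (f^{**}(s))^{\frac{m-1}{m}}\sup_{t\in(0,a)}\frac{t}{I(t)}(f^{**}(t))^{\frac1m}$ and iterates $m$ times, each step peeling off one factor of $\sup_{t\in(0,a)}\frac{t}{I(t)}(f^{**}(t))^{1/m}$, until the full quantity $\sup_{t\in(0,a)}S^m_If^*(t)=\bigl(\sup_{t\in(0,a)}\frac{t}{I(t)}(f^{**}(t))^{1/m}\bigr)^m$ emerges. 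The monotonicity of $f^{**}$, i.e.\ working with rearranged functions on the dual side, is what makes this iteration work, and it has no evident analogue for your primal profiles $A$ and $B$; some version of this lemma, or an equally nontrivial substitute, is required to complete your part (a).
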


The following theorem characterizes $m$-th order compact Sobolev embeddings on domains from the Maz'ya class $\mathcal J_\alpha$ in the two \lq\lq$L^1$-cases". It can be obtained by combined using of Theorems~\ref{T:mazya} and~\ref{T:ll} (with $I(s)=s^\alpha$). Let us note that here, and also in all further results on Maz'ya classes of domains later in this section, we assume that $m(1-\alpha)<1$. This can be done with no loss of generality, since the case when $m(1-\alpha)\geq 1$ was sufficiently described in Theorem~\ref{T:mazya}.

\begin{theorem}\label{T:mazya_l1}
Let $n\in \mathbb N$, $n\geq 2$, let $m\in \mathbb N$ and let $\alpha \in [\frac{1}{n'},1]$ satisfy $m(1-\alpha)<1$. Suppose that $\|\cdot\|_{X(0,1)}$ is a rearrangement-invariant norm and denote by $\varphi_X$ the fundamental function of $\|\cdot\|_{X(0,1)}$.

\medskip
\noindent
\textup{(a)} The condition
\begin{equation}\label{E:l^1}
V^mL^1(\Omega) \hookrightarrow \hookrightarrow X(\Omega)
\end{equation}
is satisfied for every $\Omega \in \mathcal J_\alpha$ if and only if 
\begin{equation}\label{E:nec_cond}
\lim_{s\to 0_+} \frac{\varphi_X(s)}{s^{1-m(1-\alpha)}} = 0.
\end{equation}
This is never fulfilled for $\alpha=1$.

\medskip
\noindent
\textup{(b)} Suppose that $X(0,1)\neq L^1(0,1)$. Then the condition
\begin{equation}\label{E:l1}
V^mX(\Omega) \hookrightarrow \hookrightarrow L^1(\Omega)
\end{equation}
is satisfied for every $\Omega \in \mathcal J_\alpha$. Furthermore, if $X(0,1)=L^1(0,1)$ then~\eqref{E:l1} is fulfilled for every $\Omega \in \mathcal J_\alpha$ if and only if $\alpha \in [\frac{1}{n'},1)$.
\end{theorem}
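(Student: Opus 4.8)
The plan is to derive the statement by assembling Theorem~\ref{T:mazya} and Theorem~\ref{T:ll}, applied to the nondecreasing function $I(s)=s^\alpha$. This $I$ satisfies~\eqref{E:infimum}, since $I(t)/t=t^{\alpha-1}\ge 1$ on $(0,1]$ for $\alpha\le 1$, so Theorem~\ref{T:ll} is available; and because the hypothesis $m(1-\alpha)<1$ gives $m(1-\alpha)\le 1$, Theorem~\ref{T:mazya} is available as well. Throughout I would use the identifications from Remark~\ref{T:remarks6}\,\textup{(c)}, namely $T^m_\alpha=K^m_I$ when $\alpha\in[\tfrac{1}{n'},1)$ and $T^m_1=H^m_I$ with $I(s)=s$, which is exactly what lets me match the two cases of Theorem~\ref{T:ll} (the \lq\lq$K$"-type condition for $\alpha<1$, the \lq\lq$H$"-type condition for $\alpha=1$).

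For part \textup{(a)}, I would first invoke Theorem~\ref{T:mazya} with $L^1(0,1)$ as the domain norm and $X(0,1)$ as the target norm, reducing validity of~\eqref{E:l^1} for every $\Omega\in\mathcal J_\alpha$ to
$$
\lim_{a\to 0_+}\sup_{\|f\|_{L^1(0,1)}\le 1}\|T^m_\alpha(\chi_{(0,a)}f)\|_{X(0,1)}=0.
$$
For $\alpha<1$ this is condition \textup{(ii)} of Theorem~\ref{T:ll}\,\textup{(a)}, and for $\alpha=1$ it is condition \textup{(i)} there; in either case Theorem~\ref{T:ll}\,\textup{(a)} identifies it with $\lim_{a\to 0_+}a^{m-1}\varphi_X(a)(I(a))^{-m}=0$. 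The only routine computation is to substitute $(I(a))^m=a^{\alpha m}$, so that $a^{m-1}(I(a))^{-m}=a^{m(1-\alpha)-1}$ and the displayed limit becomes precisely~\eqref{E:nec_cond}. The single point needing isolation is the final claim for $\alpha=1$: there~\eqref{E:nec_cond} reads $\lim_{a\to 0_+}\varphi_X(a)/a=0$, which is impossible because quasiconcavity of $\varphi_X$ makes $\varphi_X(a)/a$ nonincreasing on $(0,1]$, forcing its limit at $0$ to be at least $\varphi_X(1)>0$.

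For part \textup{(b)}, I would apply Theorem~\ref{T:mazya} with the roles reversed, taking $X(0,1)$ as the domain norm and $L^1(0,1)$ as the target, which reduces~\eqref{E:l1} for every $\Omega\in\mathcal J_\alpha$ to
$$
\lim_{a\to 0_+}\sup_{\|f\|_{X(0,1)}\le 1}\|T^m_\alpha(\chi_{(0,a)}f)\|_{L^1(0,1)}=0.
$$
This is condition \textup{(ii)} (for $\alpha<1$) or \textup{(i)} (for $\alpha=1$) of Theorem~\ref{T:ll}\,\textup{(b)}. By that theorem the limit holds automatically whenever $X(0,1)\ne L^1(0,1)$, which gives the first assertion for all admissible $\alpha$; and when $X(0,1)=L^1(0,1)$ it is equivalent to $\lim_{s\to 0_+}s/I(s)=0$, i.e.\ $\lim_{s\to 0_+}s^{1-\alpha}=0$, which holds exactly when $\alpha<1$. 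This produces the stated dichotomy, completing the proof.

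I do not anticipate a genuine obstacle: the argument is essentially a bookkeeping combination of the two cited results. The only care required is pairing each case of Theorem~\ref{T:ll} with the correct representation of $T^m_\alpha$ and carrying out the two elementary exponent computations; the quasiconcavity remark ruling out $\alpha=1$ in part \textup{(a)} is immediate once written down.
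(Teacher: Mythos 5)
Your proposal is correct and follows exactly the route the paper itself takes: the paper states that Theorem~\ref{T:mazya_l1} ``can be obtained by combined using of Theorems~\ref{T:mazya} and~\ref{T:ll} (with $I(s)=s^\alpha$)'', and your case-matching of $T^m_\alpha$ with $K^m_{s^\alpha}$ (for $\alpha<1$) and $H^m_s$ (for $\alpha=1$), the exponent computation yielding~\eqref{E:nec_cond}, and the quasiconcavity argument ruling out $\alpha=1$ in part \textup{(a)} are all the intended steps.
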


An analogous result for product probability spaces is provided in the next theorem.

\begin{theorem}\label{T:probability_l1}
Let $n$, $m\in \mathbb N$, and let $\Phi$ be as in Section~\ref{S:sobolev}. Suppose that $\|\cdot\|_{X(0,1)}$ is a rearrangement-invariant norm and denote by $\varphi_X$ the fundamental function of $\|\cdot\|_{X(0,1)}$.  

\medskip
\noindent
\textup{(a)} The condition
\begin{equation}\label{E:l^1_gauss}
V^mL^1(\mathbb R^n, \mu_{\Phi,n}) \hookrightarrow \hookrightarrow X(\mathbb R^n, \mu_{\Phi,n})
\end{equation}
is satisfied if and only if 
\begin{equation}\label{E:nec_cond_prob}
\lim_{s\to 0_+} \frac{\varphi_X(s)(\Phi^{-1}(\log \frac{2}{s}))^m}{s(\log \frac{2}{s})^m}=0.
\end{equation}

\medskip
\noindent
\textup{(b)} Suppose that $X(0,1)\neq L^1(0,1)$. Then the condition
\begin{equation}\label{E:l^one}
V^mX(\mathbb R^n, \mu_{\Phi,n}) \hookrightarrow \hookrightarrow L^1(\mathbb R^n, \mu_{\Phi,n})
\end{equation}
is satisfied. Furthermore, if $X(0,1)=L^1(0,1)$ then~\eqref{E:l^one} is fulfilled if and only if 
\begin{equation}\label{E:lim_phi_0}
\lim_{s\to \infty} \frac{s}{\Phi(s)}=0.
\end{equation}
\end{theorem}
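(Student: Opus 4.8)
The plan is to obtain both parts by specialising the general ``$L^1$-case'' analysis of Theorem~\ref{T:ll} to $I=L_\Phi$ and combining it with Theorem~\ref{T:probability}. Since $(\mathbb R^n,\mu_{\Phi,n},L_\Phi)$ is a compatible triplet and $L_\Phi$ is nondecreasing, Theorem~\ref{T:probability} (the equivalence of its conditions \textup{(i)} and \textup{(ii)}), together with Theorem~\ref{T:main}, lets me trade each compact Sobolev embedding for the almost-compactness condition $\lim_{a\to0_+}\sup_{\|f\|\le1}\|H^m_{L_\Phi}(\chi_{(0,a)}f)\|=0$ on the representation level, and Theorem~\ref{T:ll} then evaluates this one-dimensional condition explicitly. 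I would use Theorem~\ref{T:ll} rather than Theorem~\ref{T:K} precisely because $L_\Phi$ need not satisfy~\eqref{E:aprox} (already in the Gauss case $\int_0^s dr/L_\Phi(r)=\infty$ while $s/L_\Phi(s)\to0$), so only the assumption-free Theorem~\ref{T:ll} applies. The one analytic fact bridging the $L_\Phi$-formulation and the stated $\Phi$-formulation is
\begin{equation*}
\Phi(u)\approx u\Phi'(u),\qquad u>0,
\end{equation*}
which I would derive from the hypotheses on $\Phi$: convexity with $\Phi(0)=0$ gives $\Phi(u)\le u\Phi'(u)$ (the tangent to $\Phi$ at $u$, evaluated at $0$, lies below $\Phi(0)=0$), while concavity of $\sqrt\Phi$ with $\sqrt{\Phi(0)}=0$ makes $s\mapsto\sqrt{\Phi(s)}/s$ nonincreasing, and differentiating $\sqrt\Phi$ turns this into $u\Phi'(u)\le2\Phi(u)$.

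For part~\textup{(a)} I would apply the above reduction with domain $L^1(0,1)$ and range $X(0,1)$, so that $V^mL^1(\mathbb R^n,\mu_{\Phi,n})\hookrightarrow\hookrightarrow X(\mathbb R^n,\mu_{\Phi,n})$ becomes $\lim_{a\to0_+}\sup_{\|f\|_{L^1(0,1)}\le1}\|H^m_{L_\Phi}(\chi_{(0,a)}f)\|_{X(0,1)}=0$. Theorem~\ref{T:ll}~\textup{(a)} with $I=L_\Phi$ identifies this with $\lim_{a\to0_+}a^{m-1}\varphi_X(a)/(L_\Phi(a))^m=0$. Substituting $L_\Phi(a)=a\,\Phi'(\Phi^{-1}(\log\frac2a))$ cancels the power of $a$ and leaves $\varphi_X(a)\,a^{-1}\,(\Phi'(\Phi^{-1}(\log\frac2a)))^{-m}$; writing $u=\Phi^{-1}(\log\frac2a)$, so $\Phi(u)=\log\frac2a$, and invoking $\Phi(u)\approx u\Phi'(u)$ replaces $(\Phi'(u))^{-m}$ by the equivalent $u^m/(\Phi(u))^m=(\Phi^{-1}(\log\frac2a))^m/(\log\frac2a)^m$, giving exactly~\eqref{E:nec_cond_prob}.

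For part~\textup{(b)} the same reduction with range $L^1(0,1)$ turns $V^mX(\mathbb R^n,\mu_{\Phi,n})\hookrightarrow\hookrightarrow L^1(\mathbb R^n,\mu_{\Phi,n})$ into $\lim_{a\to0_+}\sup_{\|f\|_{X(0,1)}\le1}\|H^m_{L_\Phi}(\chi_{(0,a)}f)\|_{L^1(0,1)}=0$. Theorem~\ref{T:ll}~\textup{(b)} makes this automatic when $X(0,1)\neq L^1(0,1)$, which is the first assertion, and equivalent to $\lim_{s\to0_+}s/L_\Phi(s)=0$ when $X(0,1)=L^1(0,1)$. Now $s/L_\Phi(s)=1/\Phi'(\Phi^{-1}(\log\frac2s))$, and $u=\Phi^{-1}(\log\frac2s)\to\infty$ as $s\to0_+$, because $\Phi$ is an increasing bijection of $[0,\infty)$ with $\Phi(\infty)=\infty$; hence the condition reads $\lim_{u\to\infty}\Phi'(u)=\infty$, which by $\Phi'(u)\approx\Phi(u)/u$ is the same as $\lim_{u\to\infty}\Phi(u)/u=\infty$, i.e.~\eqref{E:lim_phi_0}.

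Beyond invoking Theorems~\ref{T:probability} and~\ref{T:ll}, the argument is essentially substitution together with the monotone change of variable $u=\Phi^{-1}(\log(2/\cdot))$; the step I expect to require the most care, and the technical heart of the proof, is the two-sided estimate $\Phi(u)\approx u\Phi'(u)$, as it is exactly what permits the passage from the intrinsic weight $L_\Phi$ to the conditions phrased through $\Phi$ and $\Phi^{-1}$.
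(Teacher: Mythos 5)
Your argument is correct and follows essentially the same route as the paper, which likewise obtains the theorem by combining Theorem~\ref{T:probability} with Theorem~\ref{T:compact_operator}/Theorem~\ref{T:main} (for $J=L_\Phi$, $j=m$) and Theorem~\ref{T:ll} (for $I=L_\Phi$), together with the equivalence $\Phi'(\Phi^{-1}(s))\approx s/\Phi^{-1}(s)$ and the limit identity $\lim_{s\to 0_+}\Phi^{-1}(\log\frac2s)/\log\frac2s=\lim_{s\to\infty}s/\Phi(s)$. The only difference is cosmetic: you derive $\Phi(u)\approx u\Phi'(u)$ directly from convexity of $\Phi$ and concavity of $\sqrt\Phi$, whereas the paper cites it from \cite[Lemma 11.1 (iii)]{CPS}.
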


Note that Theorem~\ref{T:probability_l1} follows from Theorem~\ref{T:probability}, Theorem~\ref{T:compact_operator} (applied with $J=L_\Phi$ and $j=m$) and Theorem~\ref{T:ll} (applied with $I=L_\Phi$). We also need to use the equivalence
$$
\Phi'(\Phi^{-1}(s)) \approx \frac{s}{\Phi^{-1}(s)}, ~~s>0,
$$
which was proved in~\cite[Lemma 11.1 \textup{(iii)}]{CPS}, and the following chain:
\begin{equation}\label{E:lim_1}
\lim_{s\to 0_+} \frac{\Phi^{-1}(\log \frac{2}{s})}{\log \frac{2}{s}} 
=\lim_{s\to 0_+} \frac{\Phi^{-1}(\log \frac{2}{s})}{\Phi(\Phi^{-1}(\log \frac{2}{s}))}
=\lim_{s\to \infty} \frac{s}{\Phi(s)}.
\end{equation}

\begin{remark}\label{T:phi}
It follows from the convexity of $\Phi$ and from the fact that $\Phi(0)=0$ that the function $s\mapsto \frac{s}{\Phi(s)}$ is nonincreasing on $(0,\infty)$. Hence, $\lim_{s \to \infty} \frac{s}{\Phi(s)}$ exists. In particular, if~\eqref{E:lim_phi_0} is not fulfilled then $\lim_{s \to \infty} \frac{s}{\Phi(s)} \in (0,\infty)$. Combining this with the monotonicity of $\frac{s}{\Phi(s)}$ we obtain that in this situation, $\Phi(s)\approx s$ on $(a,\infty)$ for every $a\in (0,\infty)$, up to multiplicative constants possibly depending on $\Phi$ and $a$.
\end{remark}

Let us now focus on the two \lq\lq$L^\infty$-cases". Similarly is in the \lq\lq$L^1$-cases", we start with a one-dimensional result concerning the validity of condition~\eqref{E:lim_examples}, now with $X(0,1)$ or $Y(0,1)$ equal to $L^\infty(0,1)$. In contrast to Theorem~\ref{T:ll}, in this situation we cannot equivalently replace the operator $H^m_I$ by $K^m_I$ (a counterexample follows from Remarks~\ref{T:remarks_5} \textup{(iv)}). 

We will change the notation from $I$ to $J$ and from $m$ to $j$, and we will not assume any monotonicity of $J$. Then, by setting $J(s)=\frac{I(s)^m}{s^{m-1}}$, $s\in (0,1]$, and $j=1$, our result applies also to the characterization of condition~\eqref{E:lim_examples} with $H^m_I$ replaced by $K^m_I$ (notice that if the function $I$ satisfies~\eqref{E:aprox} then condition \eqref{E:lim_examples} is not affected by replacing $H^m_I$ by $K^m_I$).

\begin{theorem}\label{T:lll}
Let $j\in \mathbb N$ and let $J: (0,1] \rightarrow (0,\infty)$ be a measurable function satisfying~\eqref{E:supremum}. Suppose that $\|\cdot\|_{X(0,1)}$ is a rearrangement-invariant norm.

\medskip
\noindent
\textup{(a)} If $\int_0^1 \frac{\,dr}{J(r)} <\infty$ then the condition
\begin{equation}\label{E:linftyx}
\lim_{a\to 0_+} \sup_{\|f\|_{L^\infty(0,1)}\leq 1} \|H^j_J(\chi_{(0,a)}f)\|_{X(0,1)}=0
\end{equation}
is satisfied for all $j\in \mathbb N$ and for all rearrangement-invariant norms $\|\cdot\|_{X(0,1)}$. In the case that $\int_0^1 \frac{\,dr}{J(r)} =\infty$, condition~\eqref{E:linftyx} holds if and only if
\begin{equation}\label{E:linftyxa}
\lim_{a\to 0_+} \left\|\chi_{(0,a)}(t) \left(\int_t^1 \frac{\,dr}{J(r)}\right)^{j}\right\|_{X(0,1)} = 0.
\end{equation}

\medskip
\noindent
\textup{(b)} The condition
\begin{equation}\label{E:lxinfty}
\lim_{a\to 0_+} \sup_{\|f\|_{X(0,1)}\leq 1} \|H^j_J(\chi_{(0,a)}f)\|_{L^\infty(0,1)}=0
\end{equation}
holds if and only if 
\begin{equation}\label{E:llxinfty}
\lim_{a\to 0_+} \left\|\frac{\chi_{(0,a)}(t)}{J(t)} \left(\int_0^t \frac{\,dr}{J(r)}\right)^{j-1}\right\|_{X'(0,1)} = 0.
\end{equation}
This is never fulfilled in the case that $\int_0^1 \frac{\,dr}{J(r)} =\infty$. 
\end{theorem}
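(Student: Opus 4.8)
The plan is to reduce each supremum to an explicit quantity, exploiting that $H^j_J$ has a nonnegative kernel and that $H^j_J g$ is always nonincreasing, and then to invoke Lemma~\ref{T:m-1} in part \textup{(a)} and associate-norm duality in part \textup{(b)}.

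For part \textup{(a)} I would first observe that, since the kernel defining $H^j_J$ is nonnegative, $|f|\le 1$ implies $H^j_J(\chi_{(0,a)}f)\le H^j_J(\chi_{(0,a)})$ pointwise, so by property \textup{(P2)} the supremum in~\eqref{E:linftyx} is attained at $f\equiv 1$. Combining this with the closed form~\eqref{E:char},
\[
\sup_{\|f\|_{L^\infty(0,1)}\le1}\bigl\|H^j_J(\chi_{(0,a)}f)\bigr\|_{X(0,1)}
=\frac{1}{j!}\left\|\chi_{(0,a)}(t)\left(\int_t^a\frac{dr}{J(r)}\right)^{j}\right\|_{X(0,1)}.
\]
If $\int_0^1 dr/J(r)<\infty$, then for $t\in(0,a)$ one has $\int_t^a dr/J(r)\le\int_0^a dr/J(r)$, so the right-hand side is at most $\tfrac{1}{j!}\bigl(\int_0^a dr/J(r)\bigr)^{j}\|1\|_{X(0,1)}$, which tends to $0$ by \textup{(P4)} and convergence of the integral; this gives~\eqref{E:linftyx} for all $j$ and all $X(0,1)$. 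If $\int_0^1 dr/J(r)=\infty$, the trivial bound $\int_t^a\le\int_t^1$ shows~\eqref{E:linftyxa}$\Rightarrow$\eqref{E:linftyx}, while the reverse implication is exactly the implication \textup{(ii)}$\Rightarrow$\textup{(iii)} of Lemma~\ref{T:m-1} with domain and range spaces $L^\infty(0,1)$ and $X(0,1)$ and $\alpha=j$, noting that~\eqref{E:or} holds here precisely because $\int_0^1 dr/J(r)=\infty$.

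For part \textup{(b)} the key point is that $H^j_J(\chi_{(0,a)}f)$ is nonincreasing and supported in $(0,a)$, so its $L^\infty$-norm is its limit as $t\to0_+$. Since $\int_t^s dr/J(r)\uparrow\int_0^s dr/J(r)\in(0,\infty]$ as $t\downarrow0$, monotone convergence applied to~\eqref{E:def_H} yields, for every $f$ (with both sides possibly infinite),
\[
\bigl\|H^j_J(\chi_{(0,a)}f)\bigr\|_{L^\infty(0,1)}
=\frac{1}{(j-1)!}\int_0^1|f(s)|\,w_a(s)\,ds,\qquad
w_a(s)=\frac{\chi_{(0,a)}(s)}{J(s)}\left(\int_0^s\frac{dr}{J(r)}\right)^{j-1}.
\]
Taking the supremum over $\|f\|_{X(0,1)}\le1$ and using the definition of the associate norm gives the identity $\sup_{\|f\|_{X(0,1)}\le1}\|H^j_J(\chi_{(0,a)}f)\|_{L^\infty(0,1)}=\tfrac{1}{(j-1)!}\|w_a\|_{X'(0,1)}$, from which~\eqref{E:lxinfty}$\Leftrightarrow$\eqref{E:llxinfty} is immediate. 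For the final assertion, when $\int_0^1 dr/J(r)=\infty$ the lower bound $J_a\ge Ca$ recorded after~\eqref{E:inf} makes $\int_a^1 dr/J(r)$ finite for every $a$, so the divergence is concentrated at $0$ and $\int_0^s dr/J(r)=\infty$ for all $s>0$; hence $w_a\equiv+\infty$ on $(0,a)$ when $j\ge2$, while for $j=1$ the embedding $X'(0,1)\hookrightarrow L^1(0,1)$ gives $\|w_a\|_{X'(0,1)}\ge C_{X'}^{-1}\int_0^a dr/J(r)=\infty$. In all cases $\|w_a\|_{X'(0,1)}=\infty$ for every $a$, so~\eqref{E:llxinfty}, equivalently~\eqref{E:lxinfty}, can never hold.

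The main obstacle I anticipate is the careful justification in part \textup{(b)} that the $L^\infty$-norm of $H^j_J(\chi_{(0,a)}f)$ may be computed as the monotone limit at the origin uniformly enough to commute with the supremum over the unit ball (and to make sense of the value $+\infty$ in the divergent case). Once the explicit pairing of $H^j_J(\chi_{(0,a)}f)$ against the weight $w_a$ is established, the passage to the associate norm and the disposal of the case $\int_0^1 dr/J(r)=\infty$ are routine.
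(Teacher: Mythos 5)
Your proposal is correct and follows essentially the same route as the paper's proof: in part (a) the supremum is reduced to $f\equiv 1$ via the nonnegativity of the kernel and~\eqref{E:char}, the convergent case is handled by the crude bound $\int_t^a\le\int_0^a$ together with \textup{(P4)}, and the converse in the divergent case is exactly Lemma~\ref{T:m-1} \textup{(ii)}$\Rightarrow$\textup{(iii)} with $\alpha=j$; in part (b) the $L^\infty$-norm is identified with the pairing of $|f|$ against the weight $w_a$ and then dualized to the $X'$-norm. Your explicit verification that $\|w_a\|_{X'(0,1)}=\infty$ when $\int_0^1 dr/J(r)=\infty$ merely fills in a step the paper dismisses as obvious, so there is nothing further to add.
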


The previous theorem combined with Theorem~\ref{T:mazya} easily leads to the following result on $m$-th order compact Sobolev embeddings on domains from the Maz'ya class $\mathcal J_\alpha$ in the \lq\lq$L^\infty$-cases". We note that Theorem~\ref{T:lll} has to be applied with $j=1$ and $J(s)=s^{1-m(1-\alpha)}$, $s\in (0,1]$, if $\alpha \in [\frac{1}{n'},1)$, and with $j=m$ and $J(s)=s$, $s\in (0,1]$, if $\alpha=1$. 

\begin{theorem}\label{T:mazya_lll}
Let $n\in \mathbb N$, $n\geq 2$, and let $m\in \mathbb N$. Suppose that $\|\cdot\|_{X(0,1)}$ is a rearrangement-invariant norm. 

\medskip
\noindent
\textup{(a)} Assume that $\alpha \in [\frac{1}{n'},1)$ and that $m(1-\alpha)<1$. Then the condition
\begin{equation}\label{E:linfty}
V^mL^\infty(\Omega) \hookrightarrow \hookrightarrow X(\Omega)
\end{equation}
is fulfilled for every $\Omega \in \mathcal J_\alpha$ and for all rearrangement-invariant norms $\|\cdot\|_{X(0,1)}$. Furthermore, 
\begin{equation}\label{E:l^infty}
V^mX(\Omega) \hookrightarrow \hookrightarrow L^\infty(\Omega)
\end{equation}
is satisfied for every $\Omega \in \mathcal J_\alpha$ if and only if
\begin{equation}\label{E:l^infty_2}
\lim_{a\to 0_+} \|\chi_{(0,a)}(s)s^{m(1-\alpha)-1}\|_{X'(0,1)} = 0.
\end{equation}

\medskip
\noindent
\textup{(b)} The condition~\eqref{E:linfty} is satisfied for every $\Omega \in \mathcal J_1$ if and only if
$$
\lim_{a\to 0_+} \left\|\chi_{(0,a)}(s)\left(\log \frac{2}{s} \right)^m\right\|_{X(0,1)} = 0.
$$
Furthermore, there is no rearrangement-invariant norm $\|\cdot\|_{X(0,1)}$ and $m\in \mathbb N$ such that~\eqref{E:l^infty} is fulfilled for every $\Omega \in \mathcal J_1$.
\end{theorem}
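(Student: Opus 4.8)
The plan is to obtain the entire statement by combining Theorem~\ref{T:mazya}, which reduces compactness of the Sobolev embedding on the class $\mathcal J_\alpha$ to a one-dimensional limit condition for the operator $T^m_\alpha$, with the explicit one-dimensional characterizations of Theorem~\ref{T:lll}. The bridge is Remarks~\ref{T:remarks6}~(c), which writes $T^m_\alpha$ in the form $H^j_J$: for $\alpha\in[\frac{1}{n'},1)$ one has $T^m_\alpha=H_J$ with $J(s)=s^{1-m(1-\alpha)}$ and $j=1$, while for $\alpha=1$ one has $T^m_1=H^m_s$, that is $H^j_J$ with $J(s)=s$ and $j=m$. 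In both cases the hypothesis $m(1-\alpha)\le1$ of Theorem~\ref{T:mazya} holds (strictly in part~(a), and trivially for $\alpha=1$), so that theorem applies and its condition~(ii) coincides, after this substitution, with the limit conditions studied in Theorem~\ref{T:lll}. Each of the four assertions then reduces to reading off the appropriate case of Theorem~\ref{T:lll}, taking care in each instance to identify correctly which space plays the domain role and which the target role.

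For part~(a) I take $J(s)=s^{1-m(1-\alpha)}$ and $j=1$; since $m(1-\alpha)<1$ we have $\int_0^1 J(r)^{-1}\,dr=\int_0^1 r^{m(1-\alpha)-1}\,dr<\infty$. To treat $V^mL^\infty(\Omega)\hookrightarrow\hookrightarrow X(\Omega)$ I apply Theorem~\ref{T:mazya} with $L^\infty$ as domain and $X$ as target; its condition~(ii) is then exactly condition~\eqref{E:linftyx} of Theorem~\ref{T:lll}~(a), which in the convergent case holds for every rearrangement-invariant norm, yielding~\eqref{E:linfty} unconditionally. For $V^mX(\Omega)\hookrightarrow\hookrightarrow L^\infty(\Omega)$ I apply Theorem~\ref{T:mazya} with $X$ as domain and $L^\infty$ as target; condition~(ii) becomes condition~\eqref{E:lxinfty} of Theorem~\ref{T:lll}~(b). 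The characterizing condition~\eqref{E:llxinfty} simplifies because $j=1$ makes the factor $(\int_0^t J(r)^{-1}\,dr)^{j-1}$ equal to $1$, leaving $\lim_{a\to0_+}\|\chi_{(0,a)}(t)/J(t)\|_{X'(0,1)}=0$; as $1/J(t)=t^{m(1-\alpha)-1}$, this is precisely~\eqref{E:l^infty_2}.

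For part~(b) I take $J(s)=s$ and $j=m$, so $\int_0^1 J(r)^{-1}\,dr=\int_0^1 r^{-1}\,dr=\infty$. For $V^mL^\infty(\Omega)\hookrightarrow\hookrightarrow X(\Omega)$, Theorem~\ref{T:mazya}~(ii) is again condition~\eqref{E:linftyx}, and since the integral diverges, Theorem~\ref{T:lll}~(a) characterizes it by~\eqref{E:linftyxa}, i.e. by the vanishing of $\|\chi_{(0,a)}(t)(\int_t^1 r^{-1}\,dr)^m\|_{X(0,1)}$. Here $\int_t^1 r^{-1}\,dr=\log\frac1t$, and since $\log\frac1t\approx\log\frac2t$ on $(0,\frac12)$ (with constants $1$ and $2^m$ for the $m$-th powers), the condition is equivalent to the vanishing of $\|\chi_{(0,a)}(s)(\log\frac2s)^m\|_{X(0,1)}$, as stated. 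For the nonexistence assertion about $V^mX(\Omega)\hookrightarrow\hookrightarrow L^\infty(\Omega)$, Theorem~\ref{T:mazya}~(ii) reduces it to condition~\eqref{E:lxinfty} of Theorem~\ref{T:lll}~(b); but that part of Theorem~\ref{T:lll} asserts the condition is never fulfilled when $\int_0^1 J(r)^{-1}\,dr=\infty$, which is the present case, so no rearrangement-invariant norm and no $m$ can realize this embedding for all $\Omega\in\mathcal J_1$.

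The argument is essentially bookkeeping once the two cited theorems are invoked, and I do not anticipate a genuine obstacle. The only quantitative point is the comparison $\log\frac1t\approx\log\frac2t$ near the origin, needed to pass from the integral $\int_t^1 r^{-1}\,dr$ delivered by Theorem~\ref{T:lll}~(a) to the normalized logarithm $\log\frac2s$ appearing in the statement; this follows from the monotonicity property~\textup{(P2)} of rearrangement-invariant norms applied to the pointwise estimate. The principal care required is therefore organizational: matching the domain and target roles in each of the four assertions, and tracking the exponent $m(1-\alpha)-1$ correctly through the substitution $J(s)=s^{1-m(1-\alpha)}$.
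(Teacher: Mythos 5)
Your proposal is correct and follows exactly the route the paper takes: the paper itself derives Theorem~\ref{T:mazya_lll} by combining Theorem~\ref{T:mazya} with Theorem~\ref{T:lll}, applied with $j=1$, $J(s)=s^{1-m(1-\alpha)}$ when $\alpha\in[\frac{1}{n'},1)$ and with $j=m$, $J(s)=s$ when $\alpha=1$, which is precisely your bookkeeping. The convergence/divergence dichotomy for $\int_0^1 J(r)^{-1}\,dr$, the simplification of~\eqref{E:llxinfty} for $j=1$, and the comparison $\log\frac1t\approx\log\frac2t$ are all handled correctly.
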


An analogous result for the product probability space $(\mathbb R^n, \mu_{\Phi,n})$ can be derived from Theorems~\ref{T:probability} and~\ref{T:lll} (with $J=L_\Phi$ and $j=m$), by making use of the equivalence
$$
\int_s^1 \frac{\,dr}{L_\Phi(r)} = \Phi^{-1}\big(\log \frac{2}{s}\big) -\Phi^{-1}\big(\log 2\big) \approx \Phi^{-1}\big(\log \frac{2}{s}\big), ~~s\in \big(0,\frac{1}{2}\big).
$$ 

\begin{theorem}\label{T:l^1_probability}
Let $n$, $m\in \mathbb N$, and let $\Phi$ be as in Section~\ref{S:sobolev}. Suppose that $\|\cdot\|_{X(0,1)}$ is a rearrangement-invariant norm. 

\medskip
\noindent
\textup{(a)} The condition
\begin{equation}\label{E:l^infty_prob}
V^mL^\infty(\mathbb R^n, \mu_{\Phi,n}) \hookrightarrow \hookrightarrow X(\mathbb R^n, \mu_{\Phi,n})
\end{equation}
is satisfied if and only if
\begin{equation}\label{E:l^infty_3}
\lim_{a\to 0_+} \left\|\chi_{(0,a)}(s) \left(\Phi^{-1}\left(\log \frac{2}{s}\right)\right)^{m}\right\|_{X(0,1)}=0.
\end{equation}

\medskip
\noindent
\textup{(b)} The condition
\begin{equation}\label{E:l^infty_prob2}
V^mX(\mathbb R^n, \mu_{\Phi,n}) \hookrightarrow \hookrightarrow L^\infty(\mathbb R^n, \mu_{\Phi,n})
\end{equation}
is never fulfilled.
\end{theorem}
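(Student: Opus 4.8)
The plan is to reduce both statements to the one-dimensional operator $H^m_{L_\Phi}$ through Theorem~\ref{T:probability}, and then to read off the answer from Theorem~\ref{T:lll} applied with $J=L_\Phi$ and $j=m$, translating the resulting conditions by means of the equivalence
$$\int_s^1 \frac{\,dr}{L_\Phi(r)} \approx \Phi^{-1}\Big(\log \frac{2}{s}\Big), \quad s\in \Big(0,\tfrac12\Big).$$
Throughout I would use two facts recorded in (the proof of) Theorem~\ref{T:probability}: that $L_\Phi$ is nondecreasing on $(0,1]$, and that $\int_0^1 \,dr/L_\Phi(r)=\infty$.

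For part~(a) I would apply Theorem~\ref{T:probability} with the Sobolev \emph{domain} space $L^\infty$ and the \emph{target} space $X$; its equivalence \textup{(i)}$\Leftrightarrow$\textup{(ii)} shows that \eqref{E:l^infty_prob} holds if and only if $H^m_{L_\Phi}\colon L^\infty(0,1)\rightarrow\rightarrow X(0,1)$. Because the domain space $L^\infty(0,1)\neq L^1(0,1)$, the exceptional case of Theorem~\ref{T:compact_operator} cannot occur, so this compactness is equivalent to the uniform-absolute-continuity condition \eqref{E:linftyx}. Since $\int_0^1 \,dr/L_\Phi(r)=\infty$, Theorem~\ref{T:lll}(a) turns \eqref{E:linftyx} into \eqref{E:linftyxa}, namely $\lim_{a\to 0_+}\|\chi_{(0,a)}(t)(\int_t^1 \,dr/L_\Phi(r))^m\|_{X(0,1)}=0$. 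Finally, for $a<\tfrac12$ the displayed equivalence lets me replace $(\int_t^1 \,dr/L_\Phi(r))^m$ by $(\Phi^{-1}(\log(2/t)))^m$ up to multiplicative constants, so \eqref{E:linftyxa} is equivalent to \eqref{E:l^infty_3}. Chaining these equivalences yields part~(a).

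For part~(b) I would again invoke Theorem~\ref{T:probability}, now with domain space $X$ and target $L^\infty$: the embedding \eqref{E:l^infty_prob2} holds if and only if $H^m_{L_\Phi}\colon X(0,1)\rightarrow\rightarrow L^\infty(0,1)$. Here Theorem~\ref{T:lll}(b) states that its associated condition \eqref{E:lxinfty} is \emph{never} fulfilled when $\int_0^1 \,dr/L_\Phi(r)=\infty$, which is exactly our situation. To pass from this to the nonexistence of the compact embedding I would close the only gap that Theorem~\ref{T:compact_operator} leaves between compactness and \eqref{E:lxinfty}: since $L_\Phi$ is nondecreasing, $\lim_{a\to 0_+}\esup_{t\in(0,a)}1/L_\Phi(t)=1/\lim_{t\to 0_+}L_\Phi(t)\neq 0$ (the limit $\lim_{t\to 0_+}L_\Phi(t)$ being finite and nonnegative), so the exceptional case does not arise and compactness is equivalent to \eqref{E:lxinfty}. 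Hence $H^m_{L_\Phi}\colon X(0,1)\rightarrow\rightarrow L^\infty(0,1)$ never holds, proving part~(b). Alternatively, the same conclusion can be read directly off Remarks~\ref{T:remark_opt_range}(b).

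The computations are light; the main thing to get right is the bookkeeping of which space plays the role of the \emph{domain} and which the \emph{target} when invoking Theorems~\ref{T:probability}, \ref{T:compact_operator} and~\ref{T:lll}, since these roles are interchanged between the two parts. The one genuine point that must be checked rather than quoted is that the exceptional case of Theorem~\ref{T:compact_operator} never occurs for $J=L_\Phi$, which is precisely where the monotonicity of $L_\Phi$ (and hence the finiteness of its limit at $0$) enters; once this is settled, compactness and uniform absolute continuity coincide, and the remainder is a direct substitution via the integral equivalence, which is valid on $(0,\tfrac12)$ and therefore on the shrinking intervals $(0,a)$ relevant as $a\to 0_+$.
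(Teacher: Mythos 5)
Your proposal is correct and follows exactly the route the paper indicates for this theorem: reduce to compactness of $H^m_{L_\Phi}$ via Theorem~\ref{T:probability}, apply Theorem~\ref{T:lll} with $J=L_\Phi$ and $j=m$, and translate through the equivalence $\int_s^1 \,dr/L_\Phi(r)\approx \Phi^{-1}(\log\frac 2s)$. The only superfluous step is your check that the exceptional case of Theorem~\ref{T:compact_operator} does not arise in part~(b): since the implication from compactness to condition \textup{(ii)} of that theorem holds unconditionally, the failure of \eqref{E:lxinfty} already rules out compactness without any monotonicity argument.
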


We shall now study the compact Sobolev embedding~\eqref{E:embedding_examples}, provided that $(\Omega,\nu)$ is either an Euclidean Maz'ya domain, or a product probability space, and both $X(\Omega,\nu)$ and $Y(\Omega,\nu)$ are Lebesgue spaces. We shall consider also the more general situation when both $X(\Omega,\nu)$ and $Y(\Omega,\nu)$ are Lorentz spaces (in the case when $(\Omega,\nu)$ is the Maz'ya domain), or Lorentz-Zygmund spaces (in the case when $(\Omega,\nu)$ is the Boltzmann space, a particular example of product probability spaces). We note that Lorentz spaces in the former case and Lorentz-Zygmund spaces in the latter case naturally arise as optimal targets of Lebesgue spaces in the Sobolev embeddings on the corresponding domains, see~\cite[Theorems 6.9 and 7.12]{CPS}.

The result for Maz'ya classes of domains takes the following form.

\begin{theorem}\label{T:lorentz}
Let $n\in \mathbb N$, $n\geq 2$, let $m\in \mathbb N$ and let $\alpha \in [\frac{1}{n'},1]$ satisfy $m(1-\alpha)< 1$. Suppose that $p_1$, $p_2$, $q_1$, $q_2\in [1,\infty]$ are such that both triplets $(p,q,\alpha)=(p_1,q_1,0)$ and $(p,q,\alpha)=(p_2,q_2,0)$ satisfy one of the conditions~\eqref{E:L-Z_r.i.1} -- \eqref{E:L-Z_r.i.4}. Then the following assertions are equivalent.

\textup{(i)} The compact embedding
$$
V^mL^{p_1,q_1}(\Omega) \hookrightarrow \hookrightarrow L^{p_2,q_2}(\Omega)
$$
holds for every $\Omega \in \mathcal J_\alpha$.

\textup{(ii)} The compact embedding
$$
V^mL^{p_1}(\Omega) \hookrightarrow \hookrightarrow L^{p_2}(\Omega)
$$
holds for every $\Omega \in \mathcal J_\alpha$.

\textup{(iii)} One of the following conditions is satisfied:
\begin{align}
&\alpha \in [1/n',1), ~~ p_1<\frac{1}{m(1-\alpha)}, ~~ p_2<\frac{p_1}{1-mp_1(1-\alpha)}; \label{E:lorentz_1}\\
&\alpha \in [1/n',1), ~~ p_1=\frac{1}{m(1-\alpha)}, ~~ p_2<\infty; \label{E:lorentz_2}\\
&\alpha \in [1/n',1), ~~ p_1>\frac{1}{m(1-\alpha)}; \label{E:lorentz_3}\\
&\alpha=1, ~~p_1>p_2.\label{E:lorentz_4}
\end{align}
\end{theorem}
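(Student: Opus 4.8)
The plan is to use Theorem~\ref{T:mazya} to convert both \textup{(i)} and \textup{(ii)} into compactness statements for the one-dimensional operator $T^m_\alpha$ on the representation spaces, and then to exploit the dilation structure of $T^m_\alpha$ together with the dilation behaviour of Lorentz norms. Recall from Remark~\ref{T:remarks6}\,\textup{(c)} that $T^m_\alpha=H_{s^{1-m(1-\alpha)}}$ when $\alpha\in[\tfrac{1}{n'},1)$ and $T^m_1=H^m_s$. Since $L^{p_i}=L^{p_i,p_i}$, the implication \textup{(i)}$\Rightarrow$\textup{(ii)} is immediate, so the whole statement follows once I show, for Lorentz norms $\|\cdot\|_{X(0,1)}=\|\cdot\|_{L^{p_1,q_1}(0,1)}$ and $\|\cdot\|_{Y(0,1)}=\|\cdot\|_{L^{p_2,q_2}(0,1)}$, that
\[
T^m_\alpha\colon L^{p_1,q_1}(0,1)\rightarrow\rightarrow L^{p_2,q_2}(0,1)
\]
holds if and only if \textup{(iii)} holds; specialising to $q_1=p_1$, $q_2=p_2$ yields the Lebesgue equivalence, and since \textup{(iii)} involves only $p_1,p_2$, this is exactly what forces \textup{(i)}$\Leftrightarrow$\textup{(ii)}.

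The central tool is a scaling identity. For $f$ supported in $(0,a)$ put $g(u)=f(au)$; a change of variables in the defining integral gives $T^m_\alpha(\chi_{(0,a)}f)=a^{m(1-\alpha)}\,(T^m_\alpha g)(\cdot/a)$, the factor $a^{m(1-\alpha)}$ reflecting the homogeneity of the kernel $s^{m(1-\alpha)-1}$ (and being absent when $\alpha=1$, since the kernel $s^{-1}(\log\tfrac st)^{m-1}$ is dilation invariant). Every Lorentz norm satisfies $\|h(\cdot/a)\|_{L^{p,q}(0,1)}=a^{1/p}\|h\|_{L^{p,q}(0,1)}$ for $h$ supported in $(0,1)$ — a scaling that sees only the primary index $p$. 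Combining these with the positive homogeneity of $T^m_\alpha$ and of the norms, and passing from $f$ (supported in $(0,a)$, $\|f\|_X\le1$) to $g$ (with $\|g\|_X\le a^{-1/p_1}$), I obtain, in $[0,\infty]$,
\[
\sup_{\|f\|_{X(0,1)}\le1}\bigl\|T^m_\alpha(\chi_{(0,a)}f)\bigr\|_{Y(0,1)}=a^{\gamma}\,\bigl\|T^m_\alpha\bigr\|_{X(0,1)\to Y(0,1)},\qquad \gamma=m(1-\alpha)+\tfrac1{p_2}-\tfrac1{p_1}.
\]
By Theorem~\ref{T:compact_operator} the compactness of $T^m_\alpha$ is equivalent to the left-hand side tending to $0$ as $a\to0_+$ (in the sole exceptional configuration $X=L^1$, $Y=L^\infty$, $\alpha<1$, which forces $\gamma=m(1-\alpha)-1<0$, that theorem states outright that the operator fails to be compact, consistently with the corresponding branch of \textup{(iii)} failing). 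Hence compactness is equivalent to $\gamma>0$ together with $\|T^m_\alpha\|_{X\to Y}<\infty$.

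It then remains to match $\gamma>0$ with \textup{(iii)} and to settle boundedness. Writing $\tfrac1{p^\ast}=\tfrac1{p_1}-m(1-\alpha)$ for $\alpha<1$, one has $\gamma>0\Leftrightarrow p_2<p^\ast$, which partitions into \eqref{E:lorentz_1} ($p_1<\tfrac1{m(1-\alpha)}$, so $p^\ast<\infty$), \eqref{E:lorentz_2} ($p_1=\tfrac1{m(1-\alpha)}$, $p^\ast=\infty$, $p_2<\infty$) and \eqref{E:lorentz_3} ($p_1>\tfrac1{m(1-\alpha)}$, the Morrey regime, where $\gamma=m(1-\alpha)-\tfrac1{p_1}>0$ even for $p_2=\infty$); for $\alpha=1$ one gets $\gamma=\tfrac1{p_2}-\tfrac1{p_1}>0\Leftrightarrow p_1>p_2$, which is \eqref{E:lorentz_4}. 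When $\gamma>0$ the operator is bounded: $T^m_\alpha$ sends $L^{p_1,q_1}$ into its optimal Lorentz(--Zygmund) target of primary index $p^\ast$ (respectively into $L^\infty$ in the Morrey case), and $p_2<p^\ast$ gives the embedding of that target into $L^{p_2,q_2}$ by \eqref{E:lz_embedding}; this subcritical boundedness is the continuous-embedding counterpart already available through Theorem~\ref{T:CPS}. When $\gamma\le0$ the operator is either unbounded (if $\gamma<0$, the target being strictly smaller than the optimal range) or bounded with $\gamma=0$, in which case the displayed quantity equals the constant, strictly positive number $\|T^m_\alpha\|_{X\to Y}$ and does not tend to $0$; either way compactness fails.

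I expect the main obstacle to be the bookkeeping at the critical exponent $\gamma=0$ and at the endpoints $p_i\in\{1,\infty\}$: one must check that boundedness of $T^m_\alpha$ into its optimal target $L^{p^\ast,q_1}$ (respectively a Lorentz--Zygmund space for $p_1=\tfrac1{m(1-\alpha)}$, and $L^\infty$ in the Morrey range) is \emph{sharp}, so that $L^{p^\ast,q_1}\hookrightarrow L^{p_2,q_2}$ genuinely requires the strict inequality $p_2<p^\ast$ rather than $p_2\le p^\ast$ with a secondary-index condition, and that the exceptional $L^1\to L^\infty$ configuration of Theorem~\ref{T:compact_operator} is correctly assigned to the failing branch of \textup{(iii)}. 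Once these endpoint verifications are in place, the fact that $\gamma$ depends only on the primary indices delivers \textup{(i)}$\Leftrightarrow$\textup{(ii)} with no further work.
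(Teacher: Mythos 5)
Your argument is correct in substance, but it follows a genuinely different route from the paper's. The paper converts compactness of $T^m_\alpha$ into an almost-compact embedding via the optimal range space (Theorem~\ref{T:opt_range}, with a separate treatment of the target $L^\infty$ through Theorem~\ref{T:opt_domain} and Lemma~\ref{T:lemma_infty}), identifies those optimal spaces from~\cite[Theorems 6.9 and 6.11]{CPS}, and then invokes Proposition~\ref{T:Lorentz-Zygmund} on almost-compact embeddings between Lorentz--Zygmund spaces. You instead exploit the exact homogeneity of the kernel together with the dilation behaviour $\|h(\cdot/a)\|_{L^{p,q}(0,1)}=a^{1/p}\|h\|_{L^{p,q}(0,1)}$, which collapses condition \textup{(ii)} of Theorem~\ref{T:compact_operator} into the single identity $\sup_{\|f\|_X\le1}\|T^m_\alpha(\chi_{(0,a)}f)\|_Y=a^\gamma\|T^m_\alpha\|_{X\to Y}$; this is valid (I checked the change of variables for both kernel forms, and the identity even yields unboundedness for free when $\gamma<0$, since the left-hand side is trivially dominated by $\|T^m_\alpha\|_{X\to Y}$), it unifies the $Y=L^\infty$ and $Y\neq L^\infty$ cases, and it makes Proposition~\ref{T:Lorentz-Zygmund} unnecessary. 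What your route does \emph{not} avoid is the boundedness input in the regime $\gamma>0$: here you still need the optimal-range identifications of~\cite[Theorems 6.9 and 6.11]{CPS} (or an equivalent sharp boundedness statement), and your attribution of this to Theorem~\ref{T:CPS} is a misreference --- that theorem deduces Sobolev embeddings \emph{from} boundedness of the operator and gives no boundedness itself. Two further minor points: the configuration $X(0,1)=L^1(0,1)$, $Y(0,1)=L^\infty(0,1)$ is \emph{not} the exceptional case of Theorem~\ref{T:compact_operator} for these kernels, because the additional hypothesis $\lim_{a\to0_+}\operatorname{ess\,sup}_{(0,a)}1/J=0$ fails when $m(1-\alpha)<1$ (and when $\alpha=1$); so the equivalence \textup{(i)}$\Leftrightarrow$\textup{(ii)} of that theorem applies directly and both conditions fail, which is what you need anyway. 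And since $\|\cdot\|_{L^{p,q}}$ for $q>p$ is only equivalent to a rearrangement-invariant norm, your identity should be read as an equivalence up to multiplicative constants once genuine norms are used; this does not affect the limit as $a\to0_+$.
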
 

We now focus on compact Sobolev embeddings in context of Lebesgue spaces over product probability spaces. Interestingly, we can often speak about optimal compact embeddings in this connection.

\begin{theorem}\label{T:lebesgue}
Let $n$, $m\in \mathbb N$, let $\Phi$ be as in Section~\ref{S:sobolev}, and let $p$, $q\in [1,\infty]$. 

\textup{(i)} Suppose that $\lim_{s\to \infty} \frac{s}{\Phi(s)}=0$. Then
\begin{equation}\label{E:comp_emb_prob_leb}
V^mL^p(\mathbb R^n, \mu_{\Phi,n}) \hookrightarrow \hookrightarrow L^q(\mathbb R^n,\mu_{\Phi,n})
\end{equation}
holds if and only if $q\leq p$ and $q<\infty$. In particular, if $p<\infty$ then $L^p(\mathbb R^n,\mu_{\Phi,n})$ is the optimal (i.e., the smallest) Lebesgue space into which $V^mL^p(\mathbb R^n, \mu_{\Phi,n})$ is compactly embedded.

\textup{(ii)} Suppose that $\lim_{s\to \infty} \frac{s}{\Phi(s)} \in (0,\infty)$. Then~\eqref{E:comp_emb_prob_leb} holds if and only if $q<p$.

\end{theorem}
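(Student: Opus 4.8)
The plan is to use Theorem~\ref{T:probability} to reduce the compact embedding~\eqref{E:comp_emb_prob_leb} to compactness of a single one-dimensional operator, and then to read off the answer from the size of one weight. By Theorem~\ref{T:probability}, \eqref{E:comp_emb_prob_leb} is equivalent to $P^m_\Phi\colon L^p(0,1)\rightarrow\rightarrow L^q(0,1)$, and also to the almost-compactness condition
\[
\lim_{a\to0_+}\sup_{\|f\|_{L^p(0,1)}\le1}\bigl\|P^m_\Phi(\chi_{(0,a)}f)\bigr\|_{L^q(0,1)}=0 .
\]
The decisive observation is that $P^m_\Phi=(m-1)!\,w\,T^m_1$, where $T^m_1$ is the logarithmic operator of the Maz'ya section and $w(t)=\bigl(\Phi^{-1}(\log\frac2t)/\log\frac2t\bigr)^m$ is a nonnegative weight that is nondecreasing in $t$ and satisfies $\lim_{t\to0_+}w(t)=\bigl(\lim_{s\to\infty}s/\Phi(s)\bigr)^m$ by~\eqref{E:lim_1}. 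Thus the two cases of the theorem correspond exactly to $w\to0$ (case~(i)) and $w\to c\in(0,\infty)$ (case~(ii)). In every case the target $q=\infty$ is excluded, since $V^mX\hookrightarrow\hookrightarrow L^\infty$ never holds by Theorem~\ref{T:l^1_probability}(b); I would dispose of this at the very start.

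I would treat case~(ii) first, as it is quickest. Here $\lim_{s\to\infty}s/\Phi(s)\in(0,\infty)$, so by Remark~\ref{T:phi} we have $\Phi(s)\approx s$ for large $s$; hence $L_\Phi(s)\approx s$ on all of $(0,1)$ and $w$ is bounded above and below by positive constants there. Consequently $P^m_\Phi h\approx T^m_1 h=H^m_s h$ for every $h\in\mathcal M(0,1)$, so the almost-compactness condition above is unchanged if $P^m_\Phi$ is replaced by $H^m_s$, and \eqref{E:comp_emb_prob_leb} becomes equivalent to $H^m_s\colon L^p(0,1)\rightarrow\rightarrow L^q(0,1)$. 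This is precisely the operator governing the Maz'ya class $\mathcal J_1$ (Theorem~\ref{T:mazya} with $\alpha=1$, where $T^m_1=H^m_s$), and Theorem~\ref{T:lorentz}, condition~\eqref{E:lorentz_4}, identifies its compactness with $p>q$. This settles case~(ii).

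For case~(i) I would prove sufficiency for $q\le p<\infty$ directly from the displayed condition. Since $T^m_1(\chi_{(0,a)}f)$ is supported in $(0,a)$, since $w$ is nondecreasing so that $\sup_{(0,a)}w=w(a)$, and since $\|\cdot\|_{L^q}\le\|\cdot\|_{L^p}$ on a probability space when $q\le p$, one obtains the bound $\sup_{\|f\|_{L^p}\le1}\|P^m_\Phi(\chi_{(0,a)}f)\|_{L^q}\le(m-1)!\,w(a)\,\|T^m_1\|_{L^p\to L^p}$, which tends to $0$ because $w(a)\to0$ in case~(i). Here I use that $T^m_1\colon L^p\to L^p$ is bounded for $1\le p<\infty$ (equivalently, the continuous Maz'ya embedding $V^mL^p(\mathcal J_1)\hookrightarrow L^p$, checked for $p=1$ by Fubini). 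The boundary case $q=p$ is admissible precisely because the extra factor $w(a)\to0$ supplies the gain that case~(ii) lacks. The case $p=\infty$, $q<\infty$, is handled separately by Theorem~\ref{T:l^1_probability}(a): condition~\eqref{E:l^infty_3} with $X=L^q$ reads $\int_0^a\bigl(\Phi^{-1}(\log\frac2s)\bigr)^{mq}\,ds\to0$, which holds since $\Phi^{-1}(u)\le Cu$ by convexity of $\Phi$.

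The remaining part, and the main obstacle, is the necessity in case~(i): if $q>p$ then \eqref{E:comp_emb_prob_leb} fails. Compactness forces the continuous embedding $V^mL^p\hookrightarrow L^q$, that is, boundedness of $P^m_\Phi\colon L^p\to L^q$ by Theorem~\ref{T:CPS}, so it suffices to produce $f\in L^p$ with $\|P^m_\Phi f\|_{L^q}=\infty$. The delicacy is that the smallness of $w$ now works against us, so the argument must quantify how slowly $w$ decays: since $w(t)$ is a power of $\log\frac2t$ it decays slower than any power of $t$, i.e.\ $w(t)\ge t^{\varepsilon}$ near $0$ for every $\varepsilon>0$. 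Testing on $f(s)=s^{-1/p}(\log\frac2s)^{-1}$ (which lies in $L^p$ for $1<p<\infty$) and restricting the inner integral to $s\in(2t,4t)$ gives $P^m_\Phi f(t)\gtrsim w(t)\,t^{-1/p}(\log\frac1t)^{-1}$ near $0$; then $\int_0 w(t)^q\,t^{-q/p}(\log\frac1t)^{-q}\,dt=\infty$ whenever $q>p$, because choosing $\varepsilon<\tfrac1p-\tfrac1q$ makes the exponent $q\varepsilon-q/p<-1$, so the power divergence $t^{-q/p}$ overwhelms the slowly varying factor $w(t)^q$. For $p=1$, where this test degenerates, the necessity is read off instead from Theorem~\ref{T:probability_l1}(a): condition~\eqref{E:nec_cond_prob} with $X=L^q$ is $\lim_{s\to0_+}s^{1/q-1}w(s)=0$, which fails for $q>1$ since $s^{1/q-1}\to\infty$ polynomially while $w\to0$ only logarithmically. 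Assembling the four parts yields the stated dichotomy, and throughout the crux is exactly this sharp control of the slowly varying weight $w$ at the threshold $q=p$, which is what separates case~(i) from case~(ii).
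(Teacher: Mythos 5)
Your proposal is correct, and in case \textup{(i)} it takes a genuinely different route from the paper's. Both arguments begin with the reduction via Theorem~\ref{T:probability}, but where you factor $P^m_\Phi=(m-1)!\,w\,T^m_1$ with $w(t)=\bigl(\Phi^{-1}(\log\frac2t)/\log\frac2t\bigr)^m$ and work directly on the operator --- sufficiency from $\sup_{(0,a)}w=w(a)\to0$ combined with $L^p$-boundedness of $T^m_1$, necessity from the explicit test function $s^{-1/p}(\log\frac2s)^{-1}$ --- the paper passes to the optimal range space via condition \textup{(vi)} of Theorem~\ref{T:probability}: it proves $(L^p)^r_{m,\Phi}(0,1)\overset{*}{\hookrightarrow}L^p(0,1)$ for sufficiency, and for necessity shows that the fundamental-function condition $\lim_{a\to0_+}\varphi_{L^q}(a)/\varphi_{(L^p)^r_{m,\Phi}}(a)=0$, necessary for any almost-compact embedding, fails when $q>p$. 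Your route avoids handling the norm of $(L^p)^r_{m,\Phi}(0,1)$ altogether, at the price of needing $T^m_1\colon L^p\to L^p$ bounded for all $1\le p<\infty$ (available from the identification $(L^{p})^r_{m,s}(0,1)=L^{p}(0,1)$ used in the proof of Theorem~\ref{T:lorentz}, not just from the $p=1$ Fubini computation) and of an explicit construction for the divergence. Case \textup{(ii)} and the boundary cases $p=\infty$, $q=\infty$ are treated essentially as in the paper. Two points should be tightened. First, $w$ is not literally ``a power of $\log\frac2t$'' for general $\Phi$; the bound you actually need, $w(t)\gtrsim(\log\frac2t)^{-m/2}$ and hence $w(t)\ge t^\varepsilon$ near $0$ for every $\varepsilon>0$, follows from the concavity of $\sqrt\Phi$ (equivalently, the monotonicity of $\sqrt s/\Phi^{-1}(s)$, which is exactly the ingredient the paper invokes at the same point). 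Second, the step ``compactness forces boundedness of $P^m_\Phi\colon L^p\to L^q$'' should be credited to the equivalence \textup{(i)}$\Leftrightarrow$\textup{(iii)} of Theorem~\ref{T:probability}, not to Theorem~\ref{T:CPS}, which only gives the implication from boundedness of the one-dimensional operator to the Sobolev embedding, not the converse.
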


Notice that, according to Remark~\ref{T:phi}, parts \textup{(i)} and \textup{(ii)} of Theorem~\ref{T:lebesgue} indeed cover all cases of the function $\Phi$.

\medskip

The optimality in compact embeddings disappears when more general rearrangement-invariant spaces are called into play. This easily follows from the next result, in which we consider Lorentz-Zygmund spaces over the particular family of product probability spaces consisting of all Boltzmann spaces.

\begin{theorem}\label{T:lz_gauss}
Let $n$, $m\in \mathbb N$ and $\beta\in [1,2]$. Furthermore, let $p_1$, $p_2$, $q_1$, $q_2\in [1,\infty]$, $\alpha_1$, $\alpha_2\in \mathbb R$ be such that both triplets $(p,q,\alpha)=(p_1,q_1,\alpha_1)$ and $(p,q,\alpha)=(p_2,q_2,\alpha_2)$ satisfy one of the conditions~\eqref{E:L-Z_r.i.1} -- \eqref{E:L-Z_r.i.4}.

\textup{(i)} Suppose that $p_1<\infty$. Then
\begin{equation}\label{E:lz_comp}
V^mL^{p_1,q_1;\alpha_1}(\mathbb R^n, \gamma_{n,\beta}) \hookrightarrow \hookrightarrow L^{p_2,q_2;\alpha_2}(\mathbb R^n, \gamma_{n,\beta})
\end{equation}
holds if and only if $p_1>p_2$, or $p_1=p_2$ and one of the following conditions is satisfied:
\begin{align*}
&q_1\leq q_2, ~~ \alpha_1+\frac{m(\beta-1)}{\beta} >\alpha_2;\\
&q_2<q_1, ~~ \alpha_1 + \frac{1}{q_1} +\frac{m(\beta-1)}{\beta} >\alpha_2 +\frac{1}{q_2}.
\end{align*} 

\textup{(ii)} Suppose that $p_1=\infty$. Then~\eqref{E:lz_comp} holds if and only if $p_2<\infty$, or 
$$
p_2=\infty, ~~\alpha_1 + \frac{1}{q_1} -\frac{m}{\beta} >\alpha_2 +\frac{1}{q_2}.
$$
\end{theorem}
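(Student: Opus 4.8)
The plan is to obtain Theorem~\ref{T:lz_gauss} from Theorem~\ref{T:probability} by evaluating the one-dimensional compactness condition occurring there for the concrete Boltzmann weight $\Phi(t)=\frac1\beta t^\beta$. First I would invoke Theorem~\ref{T:probability}, by which the compact embedding~\eqref{E:lz_comp} is equivalent to compactness of $P^m_\Phi$ between the representation spaces $X(0,1)=L^{p_1,q_1;\alpha_1}(0,1)$ and $Y(0,1)=L^{p_2,q_2;\alpha_2}(0,1)$, and also (condition~(vi)) to the almost-compact embedding $X^r_{m,\Phi}(0,1)\overset{*}{\hookrightarrow}Y(0,1)$. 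Since for the Boltzmann measure $\Phi^{-1}(u)=(\beta u)^{1/\beta}$, one computes
$$
\left(\frac{\Phi^{-1}(\log\frac 2t)}{\log\frac 2t}\right)^m=\beta^{m/\beta}\left(\log\tfrac 2t\right)^{-\frac{m(\beta-1)}{\beta}},
$$
so that $P^m_\Phi f(t)\approx \left(\log\frac2t\right)^{-m(\beta-1)/\beta}H^m_s f(t)$, where $H^m_s=T^m_1$ is the iterated logarithmic operator from Section~\ref{S:concrete}. The whole problem thus reduces to the action of $H^m_s$ on Lorentz--Zygmund scales together with the slowly varying logarithmic factor appearing in the target.

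Next I would absorb that factor into the target norm. Because $H^m_s f$ is nonincreasing and $\left(\log\frac2s\right)^{-m(\beta-1)/\beta}$ is slowly varying, the nonincreasing rearrangement of the product is comparable to the pointwise product, whence for every nonincreasing $g$ one has $\big\|\left(\log\frac2s\right)^{-m(\beta-1)/\beta}g\big\|_{L^{p_2,q_2;\alpha_2}(0,1)}\approx\|g\|_{L^{p_2,q_2;\alpha_2-m(\beta-1)/\beta}(0,1)}$. Equivalently, I would compute the optimal range $X^r_{m,\Phi}(0,1)$ explicitly as a Lorentz--Zygmund space: for $p_1<\infty$ the operator $H^m_s$ gains neither a power nor a logarithm at the critical level, so the optimal range is $L^{p_1,q_1;\alpha_1+m(\beta-1)/\beta}(0,1)$, whereas for $p_1=\infty$ the operator produces an extra factor $\left(\log\frac2t\right)^m$ (visible already from $H^m_s 1\approx\left(\log\frac2t\right)^m$), yielding optimal range $L^{\infty,q_1;\alpha_1-m/\beta}(0,1)$, where the recombination $-m+\frac{m(\beta-1)}{\beta}=-\frac m\beta$ produces the term $-m/\beta$.

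Then, by condition~(vi) of Theorem~\ref{T:probability}, the embedding holds precisely when this optimal range is almost-compactly embedded into $Y(0,1)=L^{p_2,q_2;\alpha_2}(0,1)$. The final step is therefore to characterise almost-compact embeddings between Lorentz--Zygmund spaces, and I would establish that $L^{\bar p,\bar q;\bar\alpha}(0,1)\overset{*}{\hookrightarrow}L^{p_2,q_2;\alpha_2}(0,1)$ is governed by the \emph{strict} analogue of the continuous criterion~\eqref{E:lz_embedding}: namely $\bar p>p_2$, or $\bar p=p_2$ together with $\bar\alpha>\alpha_2$ when $\bar q\le q_2$, and $\bar\alpha+\frac1{\bar q}>\alpha_2+\frac1{q_2}$ when $q_2<\bar q$ (the latter inequality also governing the case $\bar p=p_2=\infty$). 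Substituting $\bar\alpha=\alpha_1+m(\beta-1)/\beta$ with $\bar p=p_1<\infty$ yields part~(i), and $\bar\alpha=\alpha_1-m/\beta$ with $\bar p=\infty$ yields part~(ii); one checks along the way that the degenerate cases in which $X^r_{m,\Phi}(0,1)$ or $Y(0,1)$ coincides with $L^\infty(0,1)$ are covered by the endpoint facts recalled in Section~\ref{S:ri}. A useful consistency check on the bookkeeping is furnished by Theorem~\ref{T:lebesgue}, which the present statement must specialise to when all indices reduce to Lebesgue ones.

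I expect the main obstacle to be twofold. The heavier part is the explicit computation of $X^r_{m,\Phi}$, equivalently the sharp mapping behaviour of $H^m_s$ on Lorentz--Zygmund scales, where the qualitatively different outcome for $p_1<\infty$ (no logarithmic gain) and $p_1=\infty$ (a gain of $m$ logarithms) must be carefully justified; for this I would lean on the optimal-target computations of~\cite{CPS} and the representation of $X^r_{m,\Phi}$ recalled before Theorem~\ref{T:probability}. The second, more routine but still delicate, point is the strict-inequality criterion for almost-compact embeddings of Lorentz--Zygmund spaces, along with the justification that the slowly varying weight may be drawn inside the rearrangement; both can be handled by the one-dimensional techniques of Section~\ref{S:ri} and the necessary condition $\lim_{a\to0_+}\varphi_Y(a)/\varphi_X(a)=0$.
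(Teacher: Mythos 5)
Your proposal is correct and follows essentially the same route as the paper: reduce~\eqref{E:lz_comp} via Theorem~\ref{T:probability}(vi) to the almost-compact embedding $X^r_{m,\Phi}(0,1)\overset{*}{\hookrightarrow}L^{p_2,q_2;\alpha_2}(0,1)$, identify the optimal range as $L^{p_1,q_1;\alpha_1+m(\beta-1)/\beta}(0,1)$ for $p_1<\infty$ and $L^{\infty,q_1;\alpha_1-m/\beta}(0,1)$ for $p_1=\infty$ (the paper cites \cite[Theorem 7.12]{CPS} for exactly this), and conclude with the strict-inequality characterization of almost-compact embeddings between Lorentz--Zygmund spaces, which is Proposition~\ref{T:Lorentz-Zygmund}. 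Your index bookkeeping, including the recombination $-m+\frac{m(\beta-1)}{\beta}=-\frac{m}{\beta}$, matches the paper's.
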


We finish the paper by proving those results of this section which have not been verified yet. We need the following auxiliary lemma.

\begin{lemma}\label{T:nondec}
Suppose that $m\in \mathbb N$ and $I:(0,1] \rightarrow (0,\infty)$ is a nondecreasing function fulfilling~\eqref{E:infimum}. 
Then for every $f\in \mathcal M(0,1)$ and $a\in (0,1)$,
\begin{equation}\label{E:r_m_i}
\sup_{s\in (0,a)} R^m_If^*(s) \approx \sup_{s\in (0,a)} S^m_If^*(s),
\end{equation}
up to multiplicative constants depending on $m$. 
\end{lemma}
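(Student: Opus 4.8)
The plan is to prove the two one-sided estimates separately, after disposing of the trivial case $m=1$, where $R^1_I f^*=S^1_I f^*=G/I$ with $G(t):=\int_0^t f^*(r)\,dr$, so that equality holds. Throughout I write $\psi(r):=\int_r^s \frac{dr'}{I(r')}$ and $\Theta(\rho):=\frac{I(\rho)^m}{\rho^{m-1}}$, so that $S^m_I f^*(t)=G(t)/\Theta(t)$ and
\[
R^m_I f^*(s)=\frac{1}{(m-1)!\,I(s)}\int_0^s \psi(r)^{m-1} f^*(r)\,dr .
\]
The only structural input I will use is that $f^*$ is nonincreasing, which makes $G$ concave and in particular gives $G(s/2)\ge\tfrac12 G(s)$.

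For the inequality $\sup_{s\in(0,a)} S^m_I f^*\lesssim \sup_{s\in(0,a)} R^m_I f^*$ I would argue pointwise. Since $I$ is nondecreasing, for $r<s/2$ one has $\psi(r)\ge\int_{s/2}^s \frac{dr'}{I(r')}\ge \frac{s}{2I(s)}$, whence
\[
R^m_I f^*(s)\ge \frac{1}{(m-1)!\,I(s)}\Big(\tfrac{s}{2I(s)}\Big)^{m-1}\!\int_0^{s/2}\!f^*\ \ge\ \frac{1}{(m-1)!\,2^{m}}\,\frac{s^{m-1}}{I(s)^m}G(s)=\frac{1}{(m-1)!\,2^m}S^m_I f^*(s),
\]
and taking the supremum over $s\in(0,a)$ finishes this direction with a constant depending only on $m$.

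The reverse inequality is the heart of the matter, and the point I expect to fight is that the obvious ``pull the supremum out of the integral'' estimate genuinely fails: the total mass of the kernel of $R^m_I$ relative to $S^m_I$ is unbounded precisely when $I$ is far from satisfying \eqref{E:aprox}, so $R^m_I f^*(s)$ cannot be bounded by a constant times $\sup S^m_I f^*$ in that crude way. The device that rescues the estimate is monotonicity. Setting $T:=\sup_{u\in(0,a)}S^m_I f^*(u)$, so that $G(u)\le T\,\Theta(u)$ for every $u<a$, I would first integrate by parts (valid for $m\ge 2$ since $\psi(s)=0$ and $G(0)=0$) to get $R^m_I f^*(s)=\frac{1}{(m-2)!\,I(s)}\int_0^s \frac{\psi(r)^{m-2}}{I(r)}G(r)\,dr$, and then substitute $v=\psi(r)$, under which $\frac{dr}{I(r)}=-dv$ and the orientation reverses, turning this into $\frac{1}{(m-2)!\,I(s)}\int_0^V v^{m-2}H(v)\,dv$ with $V=\psi(0)\in(0,\infty]$ and $H(v):=G(r(v))$ \emph{nonincreasing} in $v$. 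The crucial move is to combine the monotonicity of $H$ with the constraint $H(v)\le T\,\Theta(r(v))$: because $H(v)\le H(w)$ for $w\le v$, one gets $H(v)\le T\,\Theta^*(r(v))$, where $\Theta^*(r):=\inf_{\rho\in[r,s]}\Theta(\rho)$ is the running minimum — this is exactly the decay information absent from the pointwise bound. From $\Theta(\rho)\ge \Theta^*(r)$ on $[r,s]$ one reads off $I(\rho)\ge \Theta^*(r)^{1/m}\rho^{(m-1)/m}$, hence the key estimate $\psi(r)\le m\,s^{1/m}\,\Theta^*(r)^{-1/m}$; equivalently, writing $\theta(v):=\Theta^*(r(v))$, one has both $\theta(v)\le m^m s\,v^{-m}$ and $\theta(v)\le\theta(0)=\Theta(s)=I(s)^m/s^{m-1}$ (the latter since $\theta$ is nonincreasing). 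Splitting $\int_0^V v^{m-2}\theta(v)\,dv$ at $v_0=ms/I(s)$ and using the second bound on $(0,v_0)$ and the first on $(v_0,V)$ gives $\int_0^V v^{m-2}\theta(v)\,dv\le \frac{m^m}{m-1}I(s)$, and feeding this back yields $R^m_I f^*(s)\le \frac{m^m}{(m-1)!}\,T$ uniformly in $s\in(0,a)$, as desired.

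Thus the main obstacle is the reverse direction, and more precisely the need to convert monotonicity of the rearrangement into honest decay through the running minimum $\Theta^*$ \emph{before} any integration is carried out; once that is in place, the two elementary bounds on $\theta(v)$ and the split at $v_0$ are routine. The only further points I would check carefully are the vanishing of the boundary terms in the integration by parts and the convergence of the $v$-integral at its upper limit when $V=\infty$ (guaranteed by the $v^{-2}$ decay coming from $\theta(v)\le m^m s\,v^{-m}$), together with the well-definedness of the substitution $v=\psi(r)$, all of which follow from $I$ being nondecreasing and satisfying \eqref{E:infimum}.
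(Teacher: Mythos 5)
Your proof is correct, and it splits into two halves of very different character relative to the paper. The easy direction ($\sup S^m_I f^* \lesssim \sup R^m_I f^*$) is carried out exactly as in the paper: restrict the integral to $(0,s/2)$, use monotonicity of $I$ to bound the inner integral below by $s/(2I(s))$, and use $G(s/2)\ge \tfrac12 G(s)$; even the constant $\tfrac{1}{2^m(m-1)!}$ matches. The hard direction is where you genuinely diverge. The paper proceeds by an iteration scheme: it writes $R_I f^*(s)=\tfrac{s}{I(s)}f^{**}(s)\le (f^{**}(s))^{\frac{m-1}{m}}\sup_t \tfrac{t}{I(t)}(f^{**}(t))^{\frac1m}$ and then cascades this through $R^k_I(f^{**})^{k/m}$ for $k=m-1,\dots,1$, extracting one factor of $\sup_t \tfrac{t}{I(t)}(f^{**}(t))^{1/m}$ at each stage until $m$ such factors reassemble into $\sup S^m_I f^*$. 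You instead integrate by parts, change variables $v=\psi(r)$, and exploit monotonicity of $G$ through the running minimum $\Theta^*$, which converts the constraint $G\le T\Theta$ into genuine decay $\theta(v)\le m^m s v^{-m}$; the split at $v_0=ms/I(s)$ then closes the estimate. Both arguments rest on the same essential insight (that monotonicity of $f^*$, not any regularity of $I$, is what makes the two operators comparable at the level of suprema), and both yield constants depending only on $m$ ($m^{m-1}/(m-1)!$ for the paper, $m^m/(m-1)!$ for you). Your route is longer but arguably more transparent about \emph{why} the crude kernel bound fails and what replaces it; the paper's fractional-power iteration is shorter but less motivated. One small point to tidy: the boundary term at $r=0$ in your integration by parts need not vanish in general (when $T=\infty$, or when $G(r)\,\psi(r)^{m-1}$ merely stays bounded), but since it enters with a negative sign you only need the resulting \emph{inequality} $\int_0^s\psi^{m-1}\,dG\le(m-1)\int_0^s \psi^{m-2}G/I\,dr$, so this costs you nothing; you should state the step as an inequality rather than an identity, and dispose of the case $T=\infty$ (and of non-integrable $f^*$) at the outset.
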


\begin{proof}
If $m=1$ then~\eqref{E:r_m_i} trivially holds since $R^1_I=S^1_I$. Thus, in what follows we may assume that $m\geq 2$.

Fix $a\in (0,1)$. Given $f\in \mathcal M(0,1)$ and $s\in (0,a)$, we have
$$
R_I f^*(s)= \frac{s}{I(s)} f^{**}(s) \leq (f^{**}(s))^{\frac{m-1}{m}} \sup_{t\in (0,a)} \frac{t}{I(t)} (f^{**}(t))^{\frac{1}{m}}.
$$
Therefore,
\begin{equation}\label{E:1}
R^m_If^*(s) = R^{m-1}_I (R_I f^*)(s) \leq R^{m-1}_I (f^{**})^{\frac{m-1}{m}}(s) \sup_{t\in (0,a)} \frac{t}{I(t)}(f^{**}(t))^{\frac{1}{m}}.
\end{equation}
Furthermore, let $k\in \{1,2,\dots, m-1\}$. Then
\begin{align*}
R_I (f^{**})^{\frac{k}{m}} (s) 
&=\frac{1}{I(s)} \int_0^s \left(\frac{\int_0^r f^*(u)\,du}{r}\right)^{\frac{k}{m}}\,dr
\leq \frac{1}{I(s)} \left(\int_0^s f^*(u)\,du\right)^{\frac{k}{m}} \int_0^s r^{-\frac{k}{m}}\,dr\\
&=\frac{m}{m-k} \cdot \frac{s^{1-\frac{k}{m}}}{I(s)} \left(\int_0^s f^*(u)\,du\right)^{\frac{k}{m}}
=\frac{m}{m-k} \cdot \frac{s}{I(s)} (f^{**}(s))^{\frac{k}{m}}\\
&\leq \frac{m}{m-k} (f^{**}(s))^{\frac{k-1}{m}} \sup_{t\in (0,a)} \frac{t}{I(t)}(f^{**}(t))^{\frac{1}{m}}.
\end{align*}
Hence,
\begin{equation}\label{E:2}
R^k_I(f^{**})^{\frac{k}{m}}(s)= R^{k-1}_I(R_I(f^{**})^{\frac{k}{m}})(s)
\leq \frac{m}{m-k} R^{k-1}_I (f^{**})^{\frac{k-1}{m}}(s) \sup_{t\in (0,a)} \frac{t}{I(t)}(f^{**}(t))^{\frac{1}{m}}.
\end{equation}
Using~\eqref{E:1} and~\eqref{E:2} subsequently for $k=m-1, m-2,\dots,1$, we obtain
\begin{align*}
R^m_If^*(s) 
&\leq R^{m-1}_I (f^{**})^{\frac{m-1}{m}}(s) \sup_{t\in (0,a)} \frac{t}{I(t)}(f^{**}(t))^{\frac{1}{m}}\\
&\leq m R^{m-2}_I (f^{**})^{\frac{m-2}{m}}(s) \left(\sup_{t\in (0,a)} \frac{t}{I(t)}(f^{**}(t))^{\frac{1}{m}}\right)^2\\
&\leq \dots \leq \frac{m^{m-2}}{(m-2)!} R_I (f^{**})^{\frac{1}{m}}(s) \left(\sup_{t\in (0,a)} \frac{t}{I(t)}(f^{**}(t))^{\frac{1}{m}}\right)^{m-1}\\
&\leq \frac{m^{m-1}}{(m-1)!}\left(\sup_{t\in (0,a)} \frac{t}{I(t)}(f^{**}(t))^{\frac{1}{m}}\right)^{m}\\
&=\frac{m^{m-1}}{(m-1)!} \sup_{t\in (0,a)} \frac{t^{m-1}}{(I(t))^m} \int_0^t f^*(r)\,dr = \frac{m^{m-1}}{(m-1)!} \sup_{t\in (0,a)} S^m_If^*(s).
\end{align*}
Passing to supremum over all $s\in (0,a)$, we get
\begin{equation}\label{E:3}
\sup_{s\in (0,a)} R^m_If^*(s) 
\leq \frac{m^{m-1}}{(m-1)!} \sup_{s\in (0,a)} S^m_If^*(s).
\end{equation}

Conversely, given $s\in (0,1)$, we have by the monotonicity of $I$
\begin{align*}
(m-1)!R^m_If^*(s)
&=\frac{1}{I(s)} \int_0^s \left( \int_t^s \frac{\,dr}{I(r)}\right)^{m-1} f^*(t) \,dt 
\geq \frac{1}{I(s)} \left(\int_{\frac{s}{2}}^s \frac{\,dr}{I(r)}\right)^{m-1} \int_0^{\frac{s}{2}} f^*(t)\,dt\\
&\geq \frac{1}{I(s)} \left(\frac{s}{2I(s)}\right)^{m-1} \frac{1}{2} \int_0^s f^*(t)\,dt\\
&=\frac{1}{2^m} \cdot \frac{s^{m-1}}{(I(s))^m} \int_0^s f^*(t)\,dt = \frac{1}{2^m} S^m_If^*(s).
\end{align*}
This yields the inequality in the reverse direction to~\eqref{E:3}. The proof is complete.
\end{proof}

\begin{proof}[Proof of Theorem~\ref{T:ll}]
\textup{(a)} Using the definition of the associate norm and the equation~\eqref{E:associate} (the first time with $j=m$ and $J=I$ and the second time with $j=1$ and $J$ as in~\eqref{E:J}), similarly as in the proof of Theorem~\ref{T:opt_range}, part \textup{(ii)} $\Leftrightarrow$ \textup{(iii)}, and applying Lemma~\ref{T:nondec}, we obtain that for every $a\in (0,1)$
\begin{align}\label{E:rs}
\sup_{\|g\|_{L^1(0,1)}\leq 1} \|H^m_I(\chi_{(0,a)}g)\|_{X(0,1)}
&=\sup_{\|f\|_{X'(0,1)}\leq 1} \left\|\chi_{(0,a)}R^m_If^*\right\|_{L^\infty(0,1)}\\ \nonumber
&=\sup_{\|f\|_{X'(0,1)}\leq 1} \sup_{s\in (0,a)} R^m_If^*(s)
\approx \sup_{\|f\|_{X'(0,1)}\leq 1} \sup_{s\in (0,a)} S^m_If^*(s)\\ \nonumber
&=\sup_{\|f\|_{X'(0,1)}\leq 1} \left\|\chi_{(0,a)}S^m_If^*\right\|_{L^\infty(0,1)}\\ \nonumber
&=\sup_{\|g\|_{L^1(0,1)}\leq 1} \|K^m_I(\chi_{(0,a)}g)\|_{X(0,1)},
\end{align}
up to multiplicative constants depending on $m$. Notice that we are also using that for every $a\in (0,1)$ and every $f\in X'(0,1)$, we have
\begin{equation}\label{E:111}
\esup_{t\in (0,a)} R^m_If^*(t)= \sup_{t\in (0,a)} R^m_If^*(t)
\end{equation}
and
\begin{equation}\label{E:222}
\esup_{t\in (0,a)} S^m_If^*(t)= \sup_{t\in (0,a)} S^m_If^*(t).
\end{equation}
The argument which justifies~\eqref{E:111} and~\eqref{E:222} is the same as the one appearing in the proof of Theorem~\ref{T:K}.
Therefore, we have proved the equivalence of \textup{(i)} and \textup{(ii)}. Furthermore, we have
\begin{align}\label{E:s}
\sup_{\|f\|_{X'(0,1)}\leq 1} \sup_{s\in (0,a)} S^m_If^*(s)
&=\sup_{s\in (0,a)} \sup_{\|f\|_{X'(0,1)}\leq 1} \frac{s^{m-1}}{(I(s))^m} \int_0^s f^*(t)\,dt\\ \nonumber
&=\sup_{s\in (0,a)} \frac{s^{m-1}\|\chi_{(0,s)}\|_{X(0,1)}}{(I(s))^m}=\sup_{s\in (0,a)} \frac{s^{m-1}\varphi_X(s)}{(I(s))^m}.
\end{align}
By combined using of~\eqref{E:rs} and~\eqref{E:s}, we obtain that conditions \textup{(i)} and \textup{(ii)} are equivalent to \textup{(iii)}.

\textup{(b)} Suppose that $X(0,1) \neq L^1(0,1)$. Let $J$ be a positive measurable function on $(0,1]$ fulfilling~\eqref{E:supremum} and let $j\in \mathbb N$. We will show that the condition
\begin{equation}\label{E:k}
\lim_{a\to 0_+} \sup_{\|f\|_{X(0,1)}\leq 1} \|H^j_J(\chi_{(0,a)}f)\|_{L^1(0,1)}=0
\end{equation}
is satisfied. Since both $H^m_I$ and $K^m_I$ have the form $H^j_J$ for a suitable choice of $j$ and $J$, we obtain that \textup{(i)} and \textup{(ii)} are satisfied as well (and, in particular, that they are equivalent).

Let us now prove~\eqref{E:k}. We have
$$
H^j_J: L^1(0,1) \rightarrow (L^1)^r_{j,J}(0,1) \hookrightarrow L^1(0,1).
$$
According to Remark~\ref{T:remark_2}, we obtain
$$
H^j_J: X(0,1) \rightarrow \rightarrow L^1(0,1).
$$
By Theorem~\ref{T:compact_operator}, this implies~\eqref{E:k}.

Furthermore, let $X(0,1)=L^1(0,1)$. Using the part \textup{(a)} with $X(0,1)=L^1(0,1)$, we obtain that \textup{(i)} and \textup{(ii)} are equivalent and that they are satisfied if and only if
$$
\lim_{s\to 0_+} \frac{s^{m-1}\varphi_{L^1}(s)}{(I(s))^m}= \lim_{s\to 0_+} \left(\frac{s}{I(s)}\right)^m=0.
$$
This is equivalent to~\eqref{E:lx2}, as required.
\end{proof}

\begin{proof}[Proof of Theorem~\ref{T:lll}]
\textup{(a)} Using the fact that each function $f$ belonging to the unit ball of $L^\infty(0,1)$ satisfies $|f|\leq 1$ and applying the equality~\eqref{E:char}, we obtain
\begin{align*}
\lim_{a\to 0_+} &\sup_{\|f\|_{L^\infty(0,1)}\leq 1} \left\|H^j_J(\chi_{(0,a)}f)\right\|_{X(0,1)} 
=\lim_{a\to 0_+} \left\|H^j_J(\chi_{(0,a)})\right\|_{X(0,1)}\\
&=\frac{1}{j!} \lim_{a\to 0_+} \left\|\chi_{(0,a)}(t) \left(\int_t^a \frac{\,dr}{J(r)}\right)^{j}\right\|_{X(0,1)}.
\end{align*}

Suppose that $\int_0^1 \frac{\,dr}{J(r)} <\infty$. Then
\begin{align*}
\lim_{a\to 0_+} \left\|\chi_{(0,a)}(t) \left(\int_t^a \frac{\,dr}{J(r)}\right)^{j}\right\|_{X(0,1)}
&\leq \lim_{a\to 0_+} \left(\int_0^a \frac{\,dr}{J(r)}\right)^j \|\chi_{(0,a)}\|_{X(0,1)} \\
&\leq \|1\|_{X(0,1)} \lim_{a\to 0_+} \left(\int_0^a \frac{\,dr}{J(r)}\right)^j=0,
\end{align*}
thanks to the absolute continuity of the Lebesgue integral. Thus, condition~\eqref{E:linftyx} is satisfied for all rearrangement-invariant norms $\|\cdot\|_{X(0,1)}$.

Assume that $\int_0^1 \frac{\,dr}{J(r)}=\infty$. Obviously, condition~\eqref{E:linftyxa} implies
\begin{equation}\label{E:000}
\lim_{a\to 0_+} \left\|\chi_{(0,a)}(t) \left(\int_t^a \frac{\,dr}{J(r)}\right)^{j}\right\|_{X(0,1)}=0,
\end{equation}
and therefore also~\eqref{E:linftyx}. Conversely, if~\eqref{E:linftyx} is fulfilled then, owing to Lemma~\ref{T:m-1}, we have~\eqref{E:linftyxa}.

\textup{(b)} We have
\begin{align*}
\lim_{a\to 0_+} &\sup_{\|f\|_{X(0,1)}\leq 1} \|H^j_J(\chi_{(0,a)}f)\|_{L^\infty(0,1)}\\
&=\lim_{a\to 0_+} \sup_{\|f\|_{X(0,1)}\leq 1} \int_0^a \frac{|f(s)|}{J(s)} \left(\int_0^s \frac{\,dr}{J(r)}\right)^{j-1} \,ds
=\lim_{a\to 0_+} \left\|\frac{\chi_{(0,a)}(s)}{J(s)} \left(\int_0^s \frac{\,dr}{J(r)}\right)^{j-1}\right\|_{X'(0,1)}.
\end{align*}
This yields the equivalence of~\eqref{E:lxinfty} and~\eqref{E:llxinfty}. Further, condition~\eqref{E:llxinfty} is obviously never satisfied in the case that $\int_0^1 \frac{\,dr}{J(r)} =\infty$.
\end{proof}

In order to prove Theorems~\ref{T:lorentz} and~\ref{T:lz_gauss} we shall need the following proposition which characterizes almost-compact embeddings between Lorentz-Zygmund spaces. It can be derived as a particular case of~\cite{K} where almost-compact embeddings between more general classical and weak Lorentz spaces were studied. For the sake of completeness we also give an alternative proof.

\begin{proposition}\label{T:Lorentz-Zygmund}
Let $p_1$, $p_2$, $q_1$, $q_2 \in [1,\infty]$, $\alpha_1$, $\alpha_2 \in \mathbb R$ be such that both triplets $(p,q,\alpha)=(p_1,q_1,\alpha_1)$ and $(p,q,\alpha)=(p_2,q_2,\alpha_2)$ satisfy one of the conditions~\eqref{E:L-Z_r.i.1} -- \eqref{E:L-Z_r.i.4}. Then
\begin{equation}\label{E:ac_lz}
L^{p_1,q_1;\alpha_1}(0,1) \overset{*}{\hookrightarrow} L^{p_2,q_2;\alpha_2}(0,1)
\end{equation}
holds if and only if $p_1 > p_2$, or $p_1=p_2$ and the following conditions are satisfied:
\begin{align}\label{E:lz_cond_1}
&\hbox{if } p_1=p_2<\infty \hbox{ and } q_1 \leq q_2 \hbox{ then } \alpha_1 >\alpha_2;\\
&\hbox{if } p_1=p_2=\infty \hbox{ or } q_1 > q_2 \hbox { then } \alpha_1 + \frac{1}{q_1} > \alpha_2 +\frac{1}{q_2}.\label{E:lz_cond_2}
\end{align}
In particular, if $p_1$, $p_2$, $q_1$, $q_2\in [1,\infty]$ are such that both triplets $(p,q,\alpha)=(p_1,q_1,0)$ and $(p,q,\alpha)=(p_2,q_2,0)$ satisfy one of the conditions~\eqref{E:L-Z_r.i.1} -- \eqref{E:L-Z_r.i.4} then 
\begin{equation}\label{E:lorentz}
L^{p_1,q_1}(0,1) \overset{*}{\hookrightarrow} L^{p_2,q_2}(0,1)
\end{equation}
holds if and only if $p_1>p_2$. 
\end{proposition}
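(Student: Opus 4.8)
The plan is to reduce the statement to the last characterization of almost-compact embeddings recalled in Section~\ref{S:ri}, namely $X(0,1)\overset{*}{\hookrightarrow}Y(0,1)$ if and only if $\lim_{a\to 0_+}\sup_{\|f\|_{X(0,1)}\le 1}\|\chi_{(0,a)}f^*\|_{Y(0,1)}=0$, and to feed into it two elementary building blocks together with the continuous-embedding table \eqref{E:lz_embedding}. First I would record two direct almost-compact embeddings, each proved by a one-line pointwise estimate and property (P2). \emph{Lemma A:} for admissible $(p,q,\beta)$ and $(p,q,\beta')$ with $\beta>\beta'$ one has $L^{p,q;\beta}(0,1)\overset{*}{\hookrightarrow}L^{p,q;\beta'}(0,1)$, since on $(0,a)$ the bound $(\log\frac2s)^{\beta'}\le(\log\frac2a)^{\beta'-\beta}(\log\frac2s)^{\beta}$ gives $\|\chi_{(0,a)}f^*\|_{L^{p,q;\beta'}}\le(\log\frac2a)^{\beta'-\beta}\|f\|_{L^{p,q;\beta}}\to 0$. \emph{Lemma B:} for $p_2<p_3\le\infty$ and admissible $(p_2,q_2,\alpha_2)$ one has $L^{p_3,\infty}(0,1)\overset{*}{\hookrightarrow}L^{p_2,q_2;\alpha_2}(0,1)$, because every unit-ball element obeys $f^*(s)\le s^{-1/p_3}$, whence by (P2) the relevant supremum is at most $\|\chi_{(0,a)}(s)s^{-1/p_3}\|_{L^{p_2,q_2;\alpha_2}}\approx a^{1/p_2-1/p_3}(\log\frac2a)^{\alpha_2}\to 0$, the positive power of $a$ defeating the logarithm. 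I would also use the immediate fact that almost-compactness is preserved under composition with a continuous embedding on either side.

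For sufficiency I would argue by factorization. If $p_1>p_2$, I pick $p_3\in(p_2,p_1)$ and write $L^{p_1,q_1;\alpha_1}\hookrightarrow L^{p_3,\infty}\overset{*}{\hookrightarrow}L^{p_2,q_2;\alpha_2}$, the first arrow being continuous by \eqref{E:lz_embedding} and the second being Lemma B. If $p_1=p_2=p$, I insert an intermediate space and write $L^{p,q_1;\alpha_1}\hookrightarrow L^{p,q_2;\alpha_3}\overset{*}{\hookrightarrow}L^{p,q_2;\alpha_2}$, the second arrow being Lemma A, so $\alpha_3>\alpha_2$. Reading the continuity of the first arrow off \eqref{E:lz_embedding} forces $\alpha_3<\alpha_1$ when $q_1\le q_2$ and $p<\infty$, and $\alpha_3<\alpha_1+\frac1{q_1}-\frac1{q_2}$ in the remaining cases; in each case an admissible $\alpha_3$ in the required interval exists \emph{precisely} when \eqref{E:lz_cond_1}--\eqref{E:lz_cond_2} hold. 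The degenerate target $L^\infty(0,1)$ is excluded at once, since it contains no nonzero function of absolutely continuous norm.

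For necessity I would combine two consequences of $X\overset{*}{\hookrightarrow}Y$. It implies the continuous embedding $X\hookrightarrow Y$, hence by \eqref{E:lz_embedding} that $p_1\ge p_2$ and, when $p_1=p_2$, the corresponding (possibly non-strict) conditions; in particular when $p_1=p_2<\infty$ and $q_1>q_2$ this already yields the strict inequality in \eqref{E:lz_cond_2}. It also implies $\lim_{a\to 0_+}\varphi_Y(a)/\varphi_X(a)=0$, the necessary condition of \cite{FMP} recalled in Section~\ref{S:ri}. Using $\varphi_{L^{p,q;\alpha}}(t)\approx t^{1/p}(\log\frac2t)^{\alpha}$ for $p<\infty$ and $\varphi_{L^{p,q;\alpha}}(t)\approx(\log\frac2t)^{\alpha+1/q}$ for $p=\infty$, this ratio condition upgrades the boundary cases, ruling out $\alpha_1=\alpha_2$ when $p_1=p_2<\infty$ and $q_1\le q_2$, and forcing $\alpha_1+\frac1{q_1}>\alpha_2+\frac1{q_2}$ when $p_1=p_2=\infty$; together these give exactly \eqref{E:lz_cond_1}--\eqref{E:lz_cond_2}. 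The Lorentz assertion \eqref{E:lorentz} is then the special case $\alpha_1=\alpha_2=0$, in which \eqref{E:lz_cond_1}--\eqref{E:lz_cond_2} can never hold for $p_1=p_2$, leaving only $p_1>p_2$.

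I expect the main obstacle to be not the conceptual core, which is short, but the bookkeeping: checking the admissibility conditions \eqref{E:L-Z_r.i.1}--\eqref{E:L-Z_r.i.4} of the intermediate Lorentz--Zygmund spaces in every subcase, correctly handling the endpoint indices $p\in\{1,\infty\}$ and $q\in\{1,\infty\}$ (including the vacuous situations forced by admissibility when $Y(0,1)=L^\infty(0,1)$), and justifying the slowly varying integral asymptotics for $\varphi_{L^{p,q;\alpha}}$ used in the necessity step.
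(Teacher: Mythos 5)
Your proposal is correct and follows essentially the same route as the paper: sufficiency by inserting an intermediate Lorentz--Zygmund space with a slightly larger logarithmic exponent and extracting the factor $(\log\frac{2}{a})^{-\varepsilon}$ (your Lemma~A with $\alpha_3=\alpha_2+\varepsilon$ is exactly the paper's computation), and necessity by combining the continuous-embedding table~\eqref{E:lz_embedding} with the fundamental-function condition $\lim_{a\to 0_+}\varphi_Y(a)/\varphi_X(a)=0$ from~\cite{FMP}, which the paper likewise uses to dispose of the boundary cases where the two fundamental functions are equivalent. The only cosmetic difference is that for $p_1>p_2$ you factor through $L^{p_3,\infty}(0,1)$ via the pointwise bound $f^*(s)\le s^{-1/p_3}$ and gain a power of $a$, whereas the paper treats $p_1>p_2$ and $p_1=p_2$ uniformly with the same $\varepsilon$-shift of $\alpha_2$; both are valid.
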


\begin{proof}
Suppose that either $p_1>p_2$, or $p_1=p_2$ and conditions~\eqref{E:lz_cond_1} and~\eqref{E:lz_cond_2} are satisfied. Then we can find $\varepsilon>0$ such that $\|\cdot\|_{L^{p_2,q_2;\alpha_2+\varepsilon}(0,1)}$ is equivalent to a rearrangement-invariant norm and, if $p_1=p_2$, then one of the conditions
~\eqref{E:lz_embedding} is fulfilled with $\alpha_2+\varepsilon$ in place of $\alpha_2$. Therefore, we have
$$
L^{p_1,q_1;\alpha_1}(0,1) \hookrightarrow L^{p_2,q_2;\alpha_2+\varepsilon}(0,1).
$$
Consequently, 
\begin{align*}
&\lim_{a\to 0_+} \sup_{\|f\|_{L^{p_1,q_1;\alpha_1}(0,1)}\leq 1} \|\chi_{(0,a)}f^*\|_{L^{p_2,q_2;\alpha_2}(0,1)} \\
&=\lim_{a\to 0_+} \sup_{\|f\|_{L^{p_1,q_1;\alpha_1}(0,1)}\leq 1} \left\|\chi_{(0,a)}(s)f^*(s) \left(\log  \frac 2s \right)^{-\varepsilon}s^{\frac{1}{p_2}-\frac{1}{q_2}}\left(\log \frac 2s \right)^{\alpha_2+\varepsilon}\right\|_{L^{q_2}(0,1)}\\
&\leq \lim_{a\to 0_+} \left\|\chi_{(0,a)}(s)\left(\log \frac 2s \right)^{-\varepsilon}\right\|_{L^\infty(0,1)} \sup_{\|f\|_{L^{p_1,q_1;\alpha_1}(0,1)}\leq 1} \left\|f^*(s)s^{\frac{1}{p_2}-\frac{1}{q_2}}\left(\log \frac 2s \right)^{\alpha_2+\varepsilon}\right\|_{L^{q_2}(0,1)}\\
&=\sup_{\|f\|_{L^{p_1,q_1;\alpha_1}(0,1)}\leq 1} \|f\|_{L^{p_2,q_2;\alpha_2+\varepsilon}(0,1)} \lim_{a\to 0_+} \left(\log \frac 2a \right)^{-\varepsilon}=0, 
\end{align*}
i.e.,~\eqref{E:ac_lz} is satisfied.

Conversely, suppose that~\eqref{E:ac_lz} is in progress. Then, in particular,
$$
L^{p_1,q_1;\alpha_1}(0,1) \hookrightarrow L^{p_2,q_2;\alpha_2}(0,1),
$$
so either $p_1>p_2$, or $p_1=p_2$ and one of the conditions in~\eqref{E:lz_embedding} is satisfied. Assume that $p_1=p_2$ and denote $p=p_1=p_2$. There are three cases in which both $\|\cdot\|_{L^{p,q_1;\alpha_1}(0,1)}$ and $\|\cdot\|_{L^{p,q_2;\alpha_2}(0,1)}$ are equivalent to rearrangement-invariant norms, one of the conditions in~\eqref{E:lz_embedding} is fulfilled but~\eqref{E:lz_cond_1} or~\eqref{E:lz_cond_2} not. The first one is 
\begin{equation}\label{E:first}
p<\infty, ~~ q_1\leq q_2, ~~\alpha_1=\alpha_2,
\end{equation}
the second one is
\begin{equation}\label{E:second}
p=\infty,~~ q_1\leq q_2, ~~\alpha_1 +\frac{1}{q_1}=\alpha_2+\frac{1}{q_2}<0,
\end{equation}
and the third one is
\begin{equation}\label{E:third}
p=\infty,~~q_1= q_2=\infty, ~~\alpha_1=\alpha_2=0.
\end{equation}
Using~\cite[proof of Theorem 6.3]{OP} we get that in all cases, fundamental functions of $\|\cdot\|_{L^{p,q_1;\alpha_1}(0,1)}$ and $\|\cdot\|_{L^{p,q_2;\alpha_2}(0,1)}$ are equivalent up to multiplicative constants. 
Therefore, a necessary condition for~\eqref{E:ac_lz} to be true,
\begin{equation}\label{E:fund_function_lz}
\lim_{s\to 0_+} \frac{\varphi_{L^{p,q_2;\alpha_2}}(s)}{\varphi_{L^{p,q_1;\alpha_1}}(s)}=0,
\end{equation}
is not satisfied. Hence, whenever $p_1=p_2$ and~\eqref{E:ac_lz} is fulfilled then both~\eqref{E:lz_cond_1} and~\eqref{E:lz_cond_2} hold.


\end{proof}

\begin{proof}[Proof of Theorem~\ref{T:lorentz}]
Let $\alpha \in [\frac{1}{n'},1)$. According to Theorem~\ref{T:mazya}, \textup{(i)} holds if and only if
\begin{equation}\label{E:operator}
T^m_\alpha=H_{s^{1-m(1-\alpha)}}: L^{p_1,q_1}(0,1) \rightarrow \rightarrow L^{p_2,q_2}(0,1).
\end{equation}

First, suppose that $L^{p_2,q_2}(0,1) = L^\infty(0,1)$. It follows from the last part of the proof of Theorem~\ref{T:compact_operator} that~\eqref{E:operator} is not fulfilled with $L^{p_1,q_1}(0,1)=L^1(0,1)$. Assume that $L^{p_1,q_1}(0,1) \neq L^1(0,1)$. Then, according to Theorem~\ref{T:opt_domain}, \eqref{E:operator} is satisfied if and only if
\begin{equation}\label{E:domain_mazya}
L^{p_1,q_1}(0,1) \overset{*}{\hookrightarrow} (L^\infty)^d_{1,s^{1-m(1-\alpha)}}(0,1).
\end{equation}
Due to Lemma~\ref{T:lemma_infty},
$$
\|f\|_{(L^\infty)^d_{1,s^{1-m(1-\alpha)}}(0,1)} \approx \int_0^1 f^*(s)s^{m(1-\alpha)-1}\,ds =\|f\|_{L^{\frac{1}{m(1-\alpha)},1}(0,1)},
$$
up to multiplicative constants independent of $f\in \mathcal M(0,1)$. Hence,
$$
(L^\infty)^d_{1,s^{1-m(1-\alpha)}}(0,1)=L^{\frac{1}{m(1-\alpha)},1}(0,1).
$$
Consequently, by Proposition~\ref{T:Lorentz-Zygmund}, \eqref{E:operator} holds with $L^{p_2,q_2}(0,1)=L^\infty(0,1)$ if and only if $p_1 >\frac{1}{m(1-\alpha)}$. Observe that in this situation, condition~\eqref{E:operator} is fulfilled for all pairs $(p_2,q_2)$ satisfying the assumptions of Theorem~\ref{T:lorentz}, since we always have $L^\infty(0,1) \hookrightarrow L^{p_2,q_2}(0,1)$.

Thus, in what follows we may assume that $L^{p_2,q_2}(0,1) \neq L^\infty(0,1)$ and $p_1 \leq \frac{1}{m(1-\alpha)}$. Due to Theorem~\ref{T:opt_range}, \eqref{E:operator} is satisfied if and only if
\begin{equation}\label{E:alpha}
(L^{p_1,q_1})^r_{1,s^{1-m(1-\alpha)}}(0,1) \overset{*}{\hookrightarrow} L^{p_2,q_2}(0,1).
\end{equation}
It follows from~\cite[Theorem 6.9]{CPS} that 
\begin{equation}\label{E:cases}
(L^{p_1,q_1})^r_{1,s^{1-m(1-\alpha)}}(0,1)=
\begin{cases}
L^{\frac{p_1}{1-mp_1(1-\alpha)},q_1}(0,1),\, &\hbox{if } p_1<\frac{1}{m(1-\alpha)};\\
L^{\infty,q_1;-1}(0,1),\, &\hbox{if } p_1=\frac{1}{m(1-\alpha)} \hbox{ and } q_1>1;\\
L^\infty(0,1),\, &\hbox{if } p_1=\frac{1}{m(1-\alpha)} \hbox{ and } q_1=1.
\end{cases}
\end{equation}
Thus, if $p_1<\frac{1}{m(1-\alpha)}$ then~\eqref{E:alpha} is fulfilled if and only if $p_2 <\frac{p_1}{1-mp_1(1-\alpha)}$, see Proposition~\ref{T:Lorentz-Zygmund}. In the case when $p_1 =\frac{1}{m(1-\alpha)}$, \eqref{E:alpha} is characterized by $p_2<\infty$. Indeed, observe that there is no Lorentz space $L^{p_2,q_2}(0,1)$ different from $L^\infty(0,1)$, having the first index equal to $\infty$ and satisfying one of the conditions~\eqref{E:L-Z_r.i.1} -- \eqref{E:L-Z_r.i.4} with $p=p_2$, $q=q_2$ and $\alpha=0$ at the same time. On the other hand, if $p_1 =\frac{1}{m(1-\alpha)}$ and $p_2<\infty$ then~\eqref{E:alpha} is satisfied according to~\eqref{E:cases} and Proposition~\ref{T:Lorentz-Zygmund}.

Let $\alpha=1$. According to Theorem~\ref{T:mazya}, \textup{(i)} holds if and only if
\begin{equation}\label{E:operator_1}
T^m_1=H^m_s: L^{p_1,q_1}(0,1) \rightarrow \rightarrow L^{p_2,q_2}(0,1).
\end{equation}
First, suppose that $L^{p_1,q_1}(0,1)\neq L^\infty(0,1)$. Then, due to Theorem~\ref{T:opt_range} and~\cite[Theorem 6.11]{CPS}, \eqref{E:operator_1} is satisfied if and only if 
$$
(L^{p_1,q_1})^r_{m,s}(0,1)=L^{p_1,q_1}(0,1) \overset{*}{\hookrightarrow} L^{p_2,q_2}(0,1),
$$
which is equivalent to $p_2<p_1$, see Proposition~\ref{T:Lorentz-Zygmund}. Finally, \eqref{E:operator_1} is satisfied with $L^{p_1,q_1}(0,1)=L^\infty(0,1)$ if and only if  
\begin{equation}\label{E:pqm}
(L^\infty)^r_{m,s}(0,1)=L^{\infty,\infty;-m}(0,1) \overset{*}{\hookrightarrow} L^{p_2,q_2}(0,1).
\end{equation}
As observed above, this cannot be fulfilled when $p_2=\infty$. Furthermore, owing to Proposition~\ref{T:Lorentz-Zygmund}, \eqref{E:pqm} is satisfied if $p_2<\infty$.  

By applying the equivalence of \textup{(i)} and \textup{(iii)} to the particular case when $p_1=q_1$ and $p_2=q_2$, we obtain that \textup{(ii)} is equivalent to \textup{(iii)} as well. The proof is complete.
\end{proof}

\begin{proof}[Proof of Theorem~\ref{T:lebesgue}]
Suppose that $\lim_{s\to \infty} \frac{s}{\Phi(s)}=0$. By Theorem~\ref{T:probability}, condition~\eqref{E:comp_emb_prob_leb} is fulfilled if and only if
\begin{equation}\label{E:pmphi}
(L^p)^r_{m,\Phi}(0,1) \overset{*}{\hookrightarrow} L^q(0,1).
\end{equation}

Let $p\in [1,\infty)$. Since, due to~\eqref{E:lim_1},
$$
\lim_{s\to 0_+} \left(\frac{\Phi^{-1}(\log \frac 2s)}{\log \frac 2s}\right)^m=0,
$$
we have
\begin{align*}
\lim_{a\to 0_+} &\sup_{\|f\|_{(L^p)^r_{m,\Phi}(0,1)}\leq 1} \left\|\chi_{(0,a)}f^*\right\|_{L^p(0,1)}
=\lim_{a\to 0_+} \sup_{\left\|\left(\frac{\log \frac 2t}{\Phi^{-1}(\log \frac 2t)}\right)^m f^*(t)\right\|_{L^p(0,1)}\leq 1} \left\|\chi_{(0,a)}f^*\right\|_{L^p(0,1)}\\
&\leq \lim_{a\to 0_+} \sup_{\left\|\left(\frac{\log \frac 2t}{\Phi^{-1}(\log \frac 2t)}\right)^m f^*(t)\right\|_{L^p(0,1)}\leq 1}
\left\|\left(\frac{\log \frac 2t}{\Phi^{-1}(\log \frac 2t)}\right)^m f^*(t)\right\|_{L^p(0,1)} \sup_{t\in (0,a)} \left(\frac{\Phi^{-1}(\log \frac 2t)}{\log \frac 2t}\right)^m\\
&\leq \lim_{a\to 0_+} \sup_{t\in (0,a)} \left(\frac{\Phi^{-1}(\log \frac 2t)}{\log \frac 2t}\right)^m=0,
\end{align*}
which yields that $(L^p)^r_{m,\Phi}(0,1) \overset{*}{\hookrightarrow} L^p(0,1)$.

Suppose that $q\leq p$. Then $L^p(0,1) \hookrightarrow L^q(0,1)$, and therefore \eqref{E:pmphi} is satisfied. Conversely, assume that $q>p$. Since $\sqrt{\Phi}$ is concave on $[0,\infty)$ and $\sqrt{\Phi(0)}=0$, we deduce that the function $t\mapsto \frac{\sqrt{\Phi(t)}}{t}$ is nonincreasing on $(0,\infty)$. Using that $\Phi^{-1}$ is nondecreasing on $(0,\infty)$, we obtain that the function
$$
s\mapsto \frac{\sqrt{\Phi(\Phi^{-1}(s))}}{\Phi^{-1}(s)}=\frac{\sqrt{s}}{\Phi^{-1}(s)}
$$
is nonincreasing on $(0,\infty)$. Therefore, 
\begin{align*}
\lim_{a\to 0_+} \frac{\varphi_{L^q}(a)}{\varphi_{(L^p)^r_{m,\Phi}}(a)}
&=\lim_{a\to 0_+} \frac{a^{\frac 1q}}{\left\|\chi_{(0,a)} \left(\frac{\log \frac 2t}{\Phi^{-1}(\log \frac 2t)}\right)^m\right\|_{L^p(0,1)}}
=\lim_{a\to 0_+} \frac{a^{\frac 1q}}{\left( \int_0^a \left( \frac{\log \frac 2t}{\Phi^{-1}(\log \frac 2t)}\right)^{mp}\, dt \right)^{\frac 1p}}\\
&\geq \lim_{a\to 0_+} \frac{a^{\frac 1q}}{\left( \frac{\sqrt{\log \frac 2a}}{\Phi^{-1}(\log \frac 2a)}\right)^{m} \left(\int_0^a \left(\sqrt{\log \frac 2t}\right)^{mp}\,dt\right)^{\frac 1p}}\\
&\approx \lim_{a\to 0_+} \frac{a^{\frac 1q}}{\left( \frac{\sqrt{\log \frac 2a}}{\Phi^{-1}(\log \frac 2a)}\right)^{m} a^{\frac 1p} \left(\sqrt{\log \frac 2a}\right)^{m}}\\
&=\frac{\lim_{a\to 0_+} {a^{\frac 1q - \frac 1p}}}{\lim_{a\to 0_+} \left(\frac{\log \frac 2a}{\Phi^{-1}(\log \frac 2a)}\right)^m}=\infty.
\end{align*}
Hence, \eqref{E:pmphi} is not fulfilled.

Suppose that $p=\infty$ and $q<\infty$. Then $L^\infty(0,1) \hookrightarrow L^q(0,1)$, and thus also $(L^\infty)^r_{m,\Phi}(0,1) \hookrightarrow (L^q)^r_{m,\Phi}(0,1)$. It follows from the first part of the proof that $(L^q)^r_{m,\Phi}(0,1) \overset{*}{\hookrightarrow} L^q(0,1)$. Hence, \eqref{E:pmphi} is satisfied. Finally, if $q=\infty$ then it follows from Theorem~\ref{T:l^1_probability} \textup{(b)} that~\eqref{E:comp_emb_prob_leb} is not fulfilled.

Now, assume that $\lim_{s\to \infty} \frac{s}{\Phi(s)}\in (0,\infty)$. Then,
$$
\lim_{s\to \infty} \frac{\Phi^{-1}(s)}{s}
=\lim_{s\to \infty} \frac{\Phi^{-1}(s)}{\Phi(\Phi^{-1}(s))} 
=\lim_{s\to \infty} \frac{s}{\Phi(s)} \in (0,\infty).
$$
Consequently, $\Phi^{-1}(s) \approx s$, $s\in (\log 2,\infty)$. Thus, if we set $I(s)=s$, $s\in (0,1]$, then for every $f\in \mathcal M(0,1)$ we have $P^m_\Phi f \approx H^m_If$, up to multiplicative constants independent of $f\in \mathcal M(0,1)$. Combining this with Theorem~\ref{T:probability}, we obtain that~\eqref{E:comp_emb_prob_leb} holds if and only if
\begin{equation}\label{E:alm_comp}
\lim_{a\to 0_+} \sup_{\|f\|_{L^p(0,1)}\leq 1} \|H^m_I(\chi_{(0,a)}f)\|_{L^q(0,1)}=0.
\end{equation}
By Theorems~\ref{T:mazya} and~\ref{T:lorentz}, both applied with $\alpha=1$, we deduce that~\eqref{E:alm_comp} is fulfilled if and only if $q<p$.
\end{proof}

\begin{proof}[Proof of Theorem~\ref{T:lz_gauss}]
Theorem~\ref{T:probability} applied with $\Phi(t)= \frac{1}{\beta}t^\beta$, $t\in [0,\infty)$, yields that condition~\eqref{E:lz_comp} is satisfied if and only if
\begin{equation}\label{E:gauss}
(L^{p_1,q_1;\alpha_1})^r_{m,\frac{1}{\beta}s^\beta}(0,1) \overset{*}{\hookrightarrow} L^{p_2,q_2;\alpha_2}(0,1).
\end{equation}
Furthermore, it follows from~\cite[Theorem 7.12]{CPS} that 
$$
(L^{p_1,q_1;\alpha_1})^r_{m,\frac{1}{\beta}s^\beta}(0,1)=
\begin{cases}
L^{p_1,q_1;\alpha_1+\frac{m(\beta-1)}{\beta}}(0,1)\, &\hbox{if } p_1<\infty;\\
L^{\infty,q_1;\alpha_1-\frac{m}{\beta}}(0,1)\, &\hbox{if } p_1=\infty.
\end{cases}
$$
By applying Proposition~\ref{T:Lorentz-Zygmund} we get the result.
\end{proof}



\section*{Acknowledgement}

I would like to express my thanks to Lubo\v s Pick for careful reading of this paper and many valuable comments and suggestions.

\end{document}